\theoremstyle{plain}
\newtheorem{define}{Definition}[section]
\newtheorem{notati}[define]{Notation}
\newtheorem{lemma}[define]{Lemma}%[section]
\newtheorem{proposition}[define]{Proposition}
\newtheorem*{proposition*}{Proposition}
\newtheorem{corollary}[define]{Corollary}
\newtheorem{theorem}[define]{Theorem}%[section]
\newtheorem{remark}[define]{Remark}%[section]
\newtheorem{example}[define]{Example}
\numberwithin{equation}{section}
\newcommand{\ri}{{\mathrm{i}}}
\newcommand{\re}{{\mathrm{e}}}
\title{Highest-weight vectors and three-point functions in GKO coset decomposition}
\author{Mikhail Bershtein \and Boris Feigin \and Aleksandr Trufanov}
\date{March 2025}
\begin{document}

\maketitle
%\newpage
\begin{abstract}
	We revisit the classical Goddard-Kent-Olive coset construction. We find the formulas for the highest weight vectors in coset decomposition and calculate their norms. We also derive formulas for matrix elements of natural vertex operators between these vectors. This leads to relations on conformal blocks. Due to the AGT correspondence, these relations are equivalent to blowup relations on Nekrasov partition functions with the presence of the surface defect. These relations can be used to prove Kyiv formulas for the Painlev\'{e} tau-functions (following Nekrasov's method).
\end{abstract}
%\newpage 
\tableofcontents
%\newpage
\section{Introduction}

\paragraph{Coset construction} Goddard-Kent-Olive coset construction is one of the most basic and important constructions in the theory of vertex algebras. This construction can be viewed as an affine analog of the decomposition of tensor product of representations of \(\mathfrak{sl}(2)\). Let \(\widehat{\mathfrak{sl}}(2)_k\) denotes affine (vertex) algebra \(\mathfrak{sl}_2\) where central element acts by number \(k\in \mathbb{C}\). On the tensor product of representations of \(\widehat{\mathfrak{sl}}(2)_1\) and \(\widehat{\mathfrak{sl}}(2)_{k+1}\) there is a natural action of diagonal \(\widehat{\mathfrak{sl}}(2)_{k+1}\). It was observed in \cite{Goddard:1986} that multiplicity spaces have a natural structure of representation of the Virasoro algebra with central charge \(c=13-6(\frac{k+2}{k+3}+\frac{k+3}{k+2})\). Moreover, there is an isomorphism of vertex algebras (assume that \(k \not \in  \mathbb{Q}\))
\begin{equation}\label{eq:coset algebras}
	\widehat{\mathfrak{sl}}(2)_1 \otimes \widehat{\mathfrak{sl}}(2)_k \simeq \widehat{\mathfrak{sl}}(2)_{k+1} \overline{\otimes} \mathrm{Vir}_{c}
\end{equation}
The sign \(\overline{\otimes}\) on the right side of this isomorphism stands for the extension by the sum of degenerate representations. Due to this extension, each highest weight vector (primary field) \(v_\lambda\) of $\widehat{\mathfrak{sl}}(2)_1 \otimes \widehat{\mathfrak{sl}}(2)_k $ corresponds to infinitely many highest weight vectors \(u_l(\lambda)\), \(l \in \frac12\mathbb{Z}\) of \(\widehat{\mathfrak{sl}}(2)_{k+1} {\otimes} \mathrm{Vir}_{c}\). See decompositions \eqref{eq:W1 W2} in the main text. 

Usually in conformal fields theory, we start with computation of correlation functions of primary fields. Therefore it is natural to ask for correlation functions of \(u_l(\lambda)\). The main result of the paper is a computation of a three-point function of \(u_l(\lambda), u_n(\nu), u_m(\mu)\) given in Theorem~\ref{Th:main}. To be more precise we compute a ratio of this three-point function with the three-point function of \(u_0(\lambda)=v_\lambda, u_0(\nu)=v_\nu, u_0(\mu)=v_\mu\). The resulting formula has a structure similar to DOZZ formula for the three-point function in the Liouville theory~\cite{DO:1994},\cite{ZZ:1996}. 

\paragraph{Motivations} One of the approaches to DOZZ formula \cite{Teschner:1995Liouville} is based on the interplay of two Virasoro algebras with the same central charge in Liouville theory, namely chiral and antichiral algebras. Similarly, in the proof of Theorem~\ref{Th:main} we use interplay of $\widehat{\mathfrak{sl}}(2)_{k+1}$ and $\mathrm{Vir}_{c}$ symmetries in which relation between \(k\) and \(c\) is very important. It appears that right side of \eqref{eq:coset algebras} is a simplest case of corner vertex algebras \(\mathcal{D}_{n,k}^\psi\) \cite{Creutzig:2020vertex}, \cite{Creutzig:2021qft}. We hope that methods used in this paper can be used to study more general corner algebras, at least corresponding to \(\mathfrak{sl}(2)\). Another possible generalization is the study of generic \(\mathfrak{sl}(2)\) coset of the form \(\mathfrak{sl}(2)_{k_1}\oplus \mathfrak{sl}(2)_{k_1}/\mathfrak{sl}(2)_{k_1+k_2}\). In the last case, the coset algebra is $\mathcal{D}(2|1,\alpha)$ $W$-algebra~\cite{Feigin:2001coset}.

Our first motivation was the study of relations on conformal blocks. Conformal blocks of Virasoro and  \(\widehat{\mathfrak{sl}}(2)\) algebras are determined by the symmetry up to three-point functions. Therefore, having found three-point functions we can now write relations on conformal blocks, which have the form
\begin{equation}\label{eq:Psi-PsiF}
	\Psi_k(\dots)=\sum C \cdot \Psi_{k+1}(\dots) \cdot \mathrm{F}_c(\dots).
\end{equation}
Here \(\Psi_k\) denotes $\widehat{\mathfrak{sl}}(2)_k$ conformal blocks, \( \mathrm{F}_c\) denotes \(\mathrm{Vir}_{c}\) conformal block, \(\dots\) stands for bunch of arguments on which conformal block depends and \(C\) is a coefficient which comes from three-point functions. We omitted some elementary function that can come from $\widehat{\mathfrak{sl}}(2)_1$ blocks. 

Due to AGT correspondence the relation \eqref{eq:Psi-PsiF} has interpretation in terms of 4d supersymmetric gauge theory. Namely, the function \(\mathrm{F}_c\) is equal (again, up to elementary function) to Nekrasov partition function for \(SU(2)\) gauge theory and function \(\Psi_k\) is equal to Nekrasov partition function with surface defect. The relation \eqref{eq:Psi-PsiF} itself takes the meaning of the blowup relation \cite{Nakajima:2005instanton}, to be more precise this is the blowup relation with the presence of surface defect suggested in \cite{Nekrasov:2023blowups}, \cite{Jeong:2020}. Hence our Theorem~\ref{Th:main} gives the proof of blowup relations with surface defect (for \(SU(2)\) group). Such method of the proof of blowup relations follows \cite{Bershtein2016coupling}, see also \cite{Arakawa:2022urod}.

It is worth mentioning that coset (or blowup) relations \eqref{eq:Psi-PsiF} have remarkable application in the theory of isomonodromic deformations as was shown in \cite{Nekrasov:2023blowups}, \cite{Jeong:2020}. In CFT language this idea can be restated as a \(k\rightarrow \infty\) limit of algebras~\eqref{eq:coset algebras} or conformal blocks~\eqref{eq:Psi-PsiF}. The algebra \(\widehat{\mathfrak{sl}}(2)_{k+1}\) in this limit becomes classical, so we get vertex algebra with big center \cite{Feigin:2017Extensions}, \cite{Creutzig:2021qft}, \cite{Feigin:2022vertex}. The spectrum of the center is the space of \(\mathfrak{sl}(2)\) connections. The conformal blocks \(\Psi_k\) satisfy Knizhnik-Zamolodchikov equations which in classical limit goes to isomonodromic deformation equations \cite{Reshetikhin:1992knizhnik}, \cite{Harnad:1994quantum}, \cite{Nekrasov:2023blowups}. Hence one got a proof of Kyiv formula for isomonodromic tau function \cite{Gamayun:2012Conformal}. 

Note also one more application of our main theorem about three-point functions. Namely, such quantities can be computed using free field  (Wakimito \cite{Wakimoto:1986}) realization of \(\widehat{\mathfrak{sl}}(2)_k\). This leads to Selberg-type integrals, and as a corollary of the Theorem \ref{Th:main} we found new integrals of this type. A particular case of our integrals is a Forrester integral which was conjectured in \cite{Forrester:1995normalization} and proven in \cite{Petrov:2014}. 

\paragraph{Results and Plan of the paper} In Section \ref{sec:sl(2)} we recall standard definition and properties of $\widehat{\mathfrak{sl}}(2)$ and Virasoro algebras, their representations, and vertex operators. For $\widehat{\mathfrak{sl}}(2)$ vertex operators, we use both the definition by commutation relations~\cite{Awata:1992} and the definition by coinvariants, where the space depends on the choice of the Borel subalgebra at each point~\cite{Feigin:1994}.

In section \ref{sec:coset} we revisit GKO coset construction. The first result is an explicit formula for the highest weight vectors \(u_l(\lambda)\), see Theorem~\ref{Th:FormUm}. This formula is written in free field realization, i.e. we consider Wakimoto representation of $\widehat{\mathfrak{sl}}(2)_k$. \footnote{While this paper was in preparation, the authors became aware of the paper \cite{Hadasz:2023decomposition} which has some overlap with our results. In particular, a formula for the vectors \(u_l(\lambda)\) in free field realization was found there, which is different from our formula. It would be interesting to compare these formulas.} The existence of the formula in free field realization is a standard feature in the representation theory of vertex algebras, c.f. formulas for Virasoro singular vectors in terms of Jack symmetric functions \cite{Mimachi:1995} or formulas for the highest weight vector in decomposition \cite{Belavin:2013instanton}.

The next result is Theorem \ref{Th:Norm} in which the norms of vectors \(\|u_l(\lambda)\|\) are computed. The computation is based on the recursion which is derived using degenerate fields \(I(z), J(z)\). In terms of left side of isomorphism \eqref{eq:coset algebras} these operators are products of spin \(\frac12\) degenerate fields for \(\widehat{\mathfrak{sl}}(2)_{1}\) and \(\widehat{\mathfrak{sl}}(2)_{k}\), while in terms of the right side of \eqref{eq:coset algebras} these operators are products of identity operators for \(\widehat{\mathfrak{sl}}(2)_{k+1}\) and $\Phi_{2,1}$ vertex operators for $\mathrm{Vir}_c$.

Section \ref{sec:main} is devoted to the proof of the Theorem~\ref{Th:main}. The idea is to consider a four-point conformal block with an insertion of a degenerate field \(b(x,z)\). Essentially, we mimic a standard approach to study theories with chiral and antichiral symmetries \cite{zamolodchikov:1989}, \cite{Teschner:1995Liouville}, \cite{Teschner:1997}, in purely chiral setting this was also used in \cite{Bershtein:2015Bilinear}. The inserted field \(b(x,z)\) in terms of left side of \eqref{eq:coset algebras} is a product of operator of spin \(\frac12\) for \(\widehat{\mathfrak{sl}}(2)_{1}\) and identity for \(\widehat{\mathfrak{sl}}(2)_{k}\), while in terms of the right side of \eqref{eq:coset algebras} is a product of spin \(\frac12\) for \(\widehat{\mathfrak{sl}}(2)_{k+1}\) and $\Phi_{1,2}$ for $\mathrm{Vir}_c$.

In the end of Section~\ref{sec:main} we discuss relations on conformal blocks and blowup relations, see formulas \eqref{eq:blowup} and \eqref{eq:blowup tor}.

Section \ref{sec:Kyiv} is included for completeness. We deduce Kyiv formulas for the tau function from the coset (or blowup) relations \eqref{eq:Psi-PsiF} closely following \cite{Nekrasov:2023blowups}, \cite{Jeong:2020}. We restrict ourselves to the case of Painlev\'e \(\mathrm{III}_3\). 

Finally, in Section \ref{sec:selbint} we use free field realizations and Theorem~\ref{Th:main} for the computation of Selberg-type integrals. We first prove Theorem~\ref{Th:U nu n} which is an operator analog of the formula for the highest vector \(u_l(\lambda)\). The integrals are computed in Theorem~\ref{Th:Selberg}.

\paragraph{Acknowledgements} M.B. is grateful to Nikita Nekrasov for explanations about relation between blowup relations with surface defect and Kyiv formula in 2013, which eventually lead to this work. We are grateful to  T.~Creutzig, N.~Genra, M.~Lashkevich, A.~Litvinov, H.~Nakajima, N.~Nekrasov, M.~Noumi,  F.~Petrov, A.~Shchechkin, J.~Teschner,  A.~Zamolodchikov for useful discussions. The results of the paper were reported at conferences and seminars in Tokyo (October 2022, January 2023), Edinburgh (September 2023), Trieste (December 2023), Moscow/Zoom (December 2023), Hamburg/Toronto/Zoom (March 2024); we are grateful to the organizers and participants for their interest and remarks.

The work of M.B is partially supported by the European Research Council under the European Union’s Horizon 2020 research and innovation programme under grant agreement No 948885. M.B. is very grateful to Kavli IPMU  and especially Y.~Fukuda, M.~Kapranov, K.~Kurabayashi, H.~Nakajima, A.~Okounkov, T.~Shiga, K.~Vovk, for the hospitality during 2022--2023 years.

\section{$\widehat{\mathfrak{sl}}(2)$ and Virasoro algebras} \label{sec:sl(2)}
\subsection{Definition}
Let \(\widehat{\mathfrak{sl}}(2)=\mathfrak{sl}(2)\otimes \mathbb{C}[[t,t^{-1}]\oplus \mathbb{C}K\) denotes the central extension of the of the algebra of Laurent series with coefficients in $\mathfrak{sl}_2$. It has topological basis \(e_m=e\otimes t^m\), \(h_m=h\otimes t^m\), \(f_m=f\otimes t^m\) \((m\in \mathbb{Z})\), and \(K\) with commutation relations 
\begin{subequations}
	\begin{align}
		[e_m, f_l] &= h_{m + l} + m\delta_{m + l, 0} K,\\
		[h_m, h_l] &= 2m\delta_{m + l, 0} K,\\
		[h_m, e_l] &= 2e_{m+l},\\
		[h_m, f_l] &=-2f_{m+l}.
	\end{align}
\end{subequations}
%
%\begin{define}
%	Algebra $\widehat{\mathfrak{sl}}(2)$  is Lie algebra with basis $e_l,h_l, f_l$ ($l\in\mathbb{Z}$), $K$ and commutation relations
%	\begin{subequations}
%		\begin{align}
%			[e_m, f_l] &= h_{m + l} + m\delta_{m + l, 0} K,\\
%			[h_m, h_l] &= 2m\delta_{m + l, 0} K,\\
%			[h_m, e_l] &= 2e_{m+l},\\
%			[h_m, f_l] &=-2f_{m+l}.
%		\end{align}
%	\end{subequations}
%\end{define}
\noindent Here and below all commutators which are not written are equal to zero. In particular, $K$ is a central element. 

It is convenient to  consider currents
\begin{equation}
    e(z) = \sum_{l\in \mathbb{Z}} e_l z^{-l-1}, ~~~ f(z) = \sum_{l\in \mathbb{Z}} f_l z^{-l-1}, ~~~ h(z) = \sum_{l\in \mathbb{Z}} h_l z^{-l-1}.
\end{equation}
\begin{define}
	Verma module $\mathcal{M}_{\lambda,k}$  is a module over algebra $\widehat{\mathfrak{sl}}(2)$ which is generated freely by $e_{n}, h_{n}, f_{n+1}$, $(n \in \mathbb{Z}_{<0})$ acting on a highest weight vector $v_{\lambda,k}$  such that
	\begin{align}
 	   \label{eq:efh v}&f_n v_{\lambda,k}= e_{n-1} v_{\lambda,k} = h_n v_{\lambda,k} = 0,\; \forall n>0;
 	     \\
 	   \label{eq:h0K v}
 	   &h_0 v_{\lambda,k}= \lambda v_{\lambda,k}, ~~ K v_{\lambda,k} = k v_{\lambda,k}.
	\end{align}
\end{define}
Let \(\widehat{\mathfrak{b}}=\langle K, e_{n}, h_{n}, f_{n+1}| n \in \mathbb{Z}_{\ge 0}  \rangle \) denotes Borel subalgebra in \(\widehat{\mathfrak{sl}}(2)\) (note that \(K, e_{n}, h_{n}, f_{n+1}\) form a \emph{topological} generating set of \(\widehat{\mathfrak{b}}\), i.e. infinite sums are allowed). The formulas \eqref{eq:efh v}, \eqref{eq:h0K v} define one dimensional representation of \(\widehat{\mathfrak{b}}\), which we denote by \(\mathbb{C}_{\lambda,k}\). The Verma module can be also written as an induced module $\mathcal{M}_{\lambda,k}=\operatorname{Ind}^{\widehat{\mathfrak{sl}}(2)}_{\widehat{\mathfrak{b}}}\mathbb{C}_{\lambda,k}$.
\begin{remark}
	We will often use $\kappa = k + 2$ instead of $k$.
\end{remark}
\begin{remark}
	We say that affine algebra $\widehat{\mathfrak{sl}}(2)_k$ acts on the module $V$  if the central elements $K$ acts by $k\in \mathbb{C}$ on $V$.
\end{remark}
	The Verma module $\mathcal{M}_{\lambda,k}$ has unique irreducible quotient. We denote it by $\mathcal{L}_{\lambda,k}$ .
\begin{theorem} \phantomsection \cite{KacKazhdan:1979} \label{Th:KacKazhdan}
	Verma module $\mathcal{M}_{\lambda,k}$  is irreducible iff for any $m,n>0$
	\begin{subequations}
	\label{eq:generic}	
		\begin{align}
		    m\lambda + (m - 1)(k - \lambda) + (2m - 1) - n&\neq0\\
		    (m - 1)\lambda + m (k - \lambda) + (2m - 1) - n&\neq0\\
		    k + 2 \neq 0
		\end{align}
	\end{subequations}
\end{theorem}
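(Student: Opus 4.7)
The plan is to prove this via the Shapovalov (contravariant) form, following Kac-Kazhdan's original strategy. First, I would introduce the Chevalley anti-involution $\omega$ on $\widehat{\mathfrak{sl}}(2)$ sending $e_n \mapsto f_{-n}$, $f_n \mapsto e_{-n}$, $h_n \mapsto h_{-n}$, $K \mapsto K$, and define the bilinear form $S_{\lambda,k}$ on $\mathcal{M}_{\lambda,k}$ by $S_{\lambda,k}(v_{\lambda,k},v_{\lambda,k})=1$ together with the invariance $S_{\lambda,k}(xu,v)=S_{\lambda,k}(u,\omega(x)v)$. A standard argument shows the radical of $S_{\lambda,k}$ coincides with the maximal proper submodule; moreover the form is block-diagonal with respect to the weight decomposition $\mathcal{M}_{\lambda,k}=\bigoplus_{m,r\ge 0}\mathcal{M}_{\lambda,k}[\lambda-m\alpha-r\delta]$. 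Thus $\mathcal{M}_{\lambda,k}$ is irreducible iff $\det S_{m,r}\neq 0$ for every bigrading $(m,r)$.

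The core of the proof is then to establish the Kac-Kazhdan determinant formula
\begin{equation*}
\det S_{m,r} \;=\; \text{const}\cdot\prod_{\beta>0}\prod_{n\ge 1}\Bigl((\lambda+\rho,\beta)-\tfrac{n}{2}(\beta,\beta)\Bigr)^{P(\mu-n\beta)},
\end{equation*}
where $\mu=\lambda-m\alpha-r\delta$, $P$ denotes the (bigraded) Kostant partition function for $U(\widehat{\mathfrak{n}}_-)$, and the product runs over positive affine roots with multiplicity. For $\widehat{\mathfrak{sl}}(2)$ these are the real roots $\alpha+j\delta$ ($j\ge 0$), $-\alpha+j\delta$ ($j\ge 1$) of squared length $2$, and the imaginary roots $j\delta$ ($j\ge 1$) of squared length $0$, each of multiplicity one. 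Using $(\lambda+\rho,\alpha)=\lambda+1$ and $(\lambda+\rho,\delta)=k+2=\kappa$, the factors reduce to $\lambda+1+(j)\kappa-n$ (from $\alpha+j\delta$), $-(\lambda+1)+j\kappa-n$ (from $-\alpha+j\delta$), and $j\kappa$ (from $j\delta$). Rewriting $j=m-1$ in the first family and $j=m$ in the second recovers exactly the three non-vanishing conditions in the statement.

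To establish the determinant formula I would proceed in two steps. First, show that each linear factor $\phi_{\beta,n}:=(\lambda+\rho,\beta)-\tfrac{n}{2}(\beta,\beta)$ indeed divides $\det S_{m,r}$ with at least the stated multiplicity: on the hyperplane $\phi_{\beta,n}=0$ one constructs a singular vector in $\mathcal{M}_{\lambda,k}$ at weight $\lambda-n\beta$ (for real $\beta$ this follows from embeddings of Verma modules for the $\mathfrak{sl}_2$-triple associated with $\beta$, and for imaginary $\beta=j\delta$ one uses the Sugawara/Heisenberg type argument showing the whole module degenerates when $\kappa=0$), and each descendant in $U(\widehat{\mathfrak{n}}_-)$ of that vector contributes to the kernel of $S_{m,r}$, producing $P(\mu-n\beta)$ independent null directions. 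Second, match the total degree: compute the leading behaviour of $\det S_{m,r}$ as a polynomial in $\lambda$ and $\kappa$ by identifying the top-degree monomial with the product of diagonal entries, and compare with the total degree of the right-hand side using the character identity $\sum_{\beta>0,n\ge 1}P(\mu-n\beta)=\dim\mathcal{M}_{\lambda,k}[\mu]\cdot(\text{expected degree})$; this forces equality up to a nonzero constant.

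The main obstacle is the second step, the degree count, which requires a careful combinatorial computation using the product formula for the character of $\mathcal{M}_{\lambda,k}$ and its derivative with respect to the parameters; this is where the affine case genuinely differs from the finite-dimensional one because the presence of imaginary roots of multiplicity one (and, in higher rank, higher multiplicity) has to be tracked. Once both inclusions of divisibility and degree are in place, the determinant formula follows, and the vanishing analysis in the previous paragraph directly yields the Kac-Kazhdan criterion as stated.
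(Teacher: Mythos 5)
The paper itself gives no proof of this statement; it is quoted verbatim from Kac--Kazhdan, so there is no internal argument to compare with. Your proposal reconstructs the standard proof from that reference, and the skeleton is correct: the contravariant form with respect to the Chevalley anti-involution has radical equal to the maximal proper submodule, irreducibility is equivalent to non-vanishing of $\det S_{m,r}$ on every weight space, and the Kac--Kazhdan determinant formula reduces this to the vanishing loci of the linear factors $(\lambda+\rho,\beta)-\tfrac{n}{2}(\beta,\beta)$. Your dictionary is also right: for $\widehat{\mathfrak{sl}}(2)$ with $h_0$-eigenvalue $\lambda$ and level $k$ one has $(\lambda+\rho,\alpha)=\lambda+1$ and $(\lambda+\rho,\delta)=k+2=\kappa$, so the real roots $\alpha+(m-1)\delta$ and $-\alpha+m\delta$ give $\lambda+1+(m-1)\kappa-n$ and $-(\lambda+1)+m\kappa-n$, which after expanding are exactly the first two conditions of the statement, while the imaginary roots $j\delta$ give the factor $j\kappa$, i.e.\ the condition $k+2\neq 0$.

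Be aware, however, that what you have written is an outline rather than a proof: the two steps you defer are precisely the hard content of the cited theorem. For the divisibility step, exhibiting a singular vector at weight $\lambda-n\beta$ on the hyperplane $\phi_{\beta,n}=0$ is itself nontrivial (for real $\beta$ it requires Shapovalov elements or an analytic-continuation/Verma-embedding argument valid at generic points of the hyperplane; the imaginary case $\kappa=0$ is of a different nature), and a single singular vector together with its $U(\widehat{\mathfrak{n}}_-)$-descendants does not by itself certify the full exponent $P(\mu-n\beta)$ when several factors vanish simultaneously --- the clean bookkeeping is via the Jantzen filtration and sum formula, or by pairing the lower bound with the degree count. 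For the degree count, the identity you invoke is garbled as stated; the correct comparison is between the degree of the product of diagonal entries of $S_{m,r}$ in a PBW basis and $\sum_{\beta>0}\sum_{n\ge 1}\operatorname{mult}(\beta)\,P(\mu-n\beta)$. With those caveats, your reduction of the determinant formula to the three stated conditions is accurate, and the overall strategy is the one used in the literature the paper cites.
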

We will call pair $(\lambda,k)$ generic if \(\lambda\) and \(k\) are linearly independent over \(\mathbb{Q}\). It is typically sufficient to require that they do not satisfy conditions \eqref{eq:generic}. Similarly, we call \(k\) generic if it is irrational.

\subsection{Virasoro algebra and its modules}
\begin{define}
	A Virasoro algebra is Lie algebra with basis $L_n$ ($n\in\mathbb{Z}$), $C$ and commutation relations
	\begin{equation}\label{eq:Virasaro rel}
	    [L_n,L_m] = (n - m)L_{n+m} + \frac{C(n^3-n)}{12}\delta_{n+m,0}.
	\end{equation}
\end{define}
\noindent In particular, the generator $C$ is central.

It is convenient to consider a current
\begin{equation}
    L(z) = \sum_{n\in\mathbb{Z}} L_n z^{-n-2}.
\end{equation}
\begin{define}
	Verma module $\mathbb{M}_{P,b}$ is module over Virasoro algebra which is freely generated by $L_{n}$, \(n<0\) acting on a vector highest weight vector vector $v_{P,b} = |\Delta(P,b)\rangle$ such that
	\begin{align}
	    &L_n v_{P,b} = 0,~~ \forall n>0;\\
	    &L_0 v_{P,b} = \Delta(P,b) v_{P,b}, ~~~ C v_{P,b} = c(b) v_{P,b};
	\end{align}
	where $\Delta(P,b) = \frac{1}{4} \left(b+b^{-1}\right)^2-P^2$ and $c(b) = 1 + 6 \left(b+b^{-1}\right)^2 $.
\end{define}
\begin{define}
	A vector $u \in \mathbb{M}_{P,b}$ is called singular if $L_{n} u = 0$, $\forall n>0$.
\end{define}
\begin{theorem}[\cite{FeiginFuchs:1990}\cite{kac:1990}] \phantomsection \label{Th:Delta_mn}
	The Verma module $\mathbb{M}_{P,b}$ over Virasoro algebra is irreducible iff 
    $P \neq P_{m,n}(b)$ for any $m,n \in \mathbb{Z}_{>0}$. Here
\begin{equation}
    P_{m,n}(b) = \frac{m b^{-1} + n b}{2}.
\end{equation}
 If $P = P_{m,n}(b)$ then there is a singular vector $u \in \mathbb{M}_{P,b}$ such that 
 \begin{equation}
     L_0 u = (\Delta(P_{m,n}(b),b) + m n) u.
 \end{equation}
\end{theorem}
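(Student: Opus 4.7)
The plan is to prove this via the Shapovalov form and the Kac determinant formula, which is the standard route. First, introduce on $\mathbb{M}_{P,b}$ the contravariant bilinear form $\langle \cdot, \cdot \rangle$ determined by $\langle v_{P,b}, v_{P,b}\rangle = 1$ and the anti-involution $L_n^{*} = L_{-n}$. The module is graded by $L_0$-eigenvalue,
\begin{equation*}
\mathbb{M}_{P,b} = \bigoplus_{N \geq 0} \mathbb{M}_{P,b}^{(N)}, \qquad L_0\big|_{\mathbb{M}_{P,b}^{(N)}} = \Delta(P,b) + N,
\end{equation*}
with $\dim \mathbb{M}_{P,b}^{(N)} = p(N)$, and distinct graded components are automatically orthogonal. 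The radical of $\langle \cdot, \cdot \rangle$ coincides with the maximal proper submodule, so irreducibility is equivalent to nondegeneracy of the form on every level, i.e.\ to $\det S_N \neq 0$ for all $N$, where $S_N$ is the Gram matrix on $\mathbb{M}_{P,b}^{(N)}$ in the monomial basis $L_{-\lambda_1}\cdots L_{-\lambda_\ell} v_{P,b}$.

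The main technical step is to establish the Kac determinant formula
\begin{equation*}
\det S_N = \text{const} \cdot \prod_{\substack{r,s \geq 1 \\ rs \leq N}} \bigl(\Delta - \Delta_{r,s}(b)\bigr)^{p(N-rs)},
\end{equation*}
where $\Delta_{r,s}(b) = \Delta(P_{r,s}(b),b)$. I would prove this in the usual three moves: (i) argue that $\det S_N$ is a polynomial in $\Delta$ of degree at most $\sum_{rs \leq N} p(N-rs)$, by carefully counting how $L_0$-eigenvalues propagate through the form; (ii) exhibit singular vectors $\chi_{r,s} \in \mathbb{M}_{P_{r,s},b}^{(rs)}$ for every pair $r,s \geq 1$, which forces $(\Delta - \Delta_{r,s})^{p(N-rs)}$ to divide $\det S_N$ since the singular vector generates a submodule isomorphic to $\mathbb{M}_{P_{r,s},b}[rs]$ contributing $p(N-rs)$ null directions at level $N$; (iii) compare degrees to conclude the factorization is exact up to a nonzero constant (which can be fixed by computing the leading behaviour as $\Delta \to \infty$). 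The cleanest construction of $\chi_{r,s}$ in step (ii) is the Feigin–Fuchs / free field realization: realize the Virasoro module inside a Heisenberg Fock space and take the singular vector to be a specific composition of screening operators applied to the Fock vacuum at the appropriate momentum. The $L_0$-eigenvalue is then $\Delta(P_{r,s},b) + rs$ by degree count.

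With the Kac formula in hand, both halves of the theorem are immediate. Since $\Delta - \Delta_{r,s} = (P_{r,s} - P)(P_{r,s} + P)$ and $-P_{r,s} = P_{-r,-s}$ also occurs in the list $\{P_{m,n}\}_{m,n\geq 1}$ after relabeling, the condition that some factor vanishes is exactly $P = P_{m,n}(b)$ for some $m,n \geq 1$; so $\mathbb{M}_{P,b}$ is reducible iff some $\det S_N$ vanishes iff $P = P_{m,n}$ for some positive $m,n$. Existence of the singular vector with $L_0$-eigenvalue $\Delta(P_{m,n},b) + mn$ at the reducibility point is precisely the output of step (ii). The hard part of the whole argument is step (ii): producing the singular vectors explicitly, and the free field/screening charge route (which the paper itself develops further in later sections) is in my view the most transparent way to accomplish it.
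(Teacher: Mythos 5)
The paper does not actually prove this theorem — it is quoted from the cited references [FeiginFuchs:1990], [kac:1990] — and your proposal is precisely the standard argument of those references (Shapovalov form, Kac determinant formula via the three steps of degree bound, Feigin–Fuchs/screening construction of singular vectors at level $rs$, and degree matching), so the approach is correct and matches the source the paper relies on. One small repair: the clause ``$-P_{r,s} = P_{-r,-s}$ also occurs in the list $\{P_{m,n}\}_{m,n\ge 1}$ after relabeling'' is false as written (no positive relabeling produces $-P_{r,s}$); the correct way to absorb the sign is to observe that $\mathbb{M}_{P,b}$ depends only on $\Delta(P,b)=\frac14(b+b^{-1})^2-P^2$, so $\mathbb{M}_{P,b}\cong\mathbb{M}_{-P,b}$ and the determinant condition $P=\pm P_{m,n}(b)$ is equivalent to the theorem's criterion with $P$ understood up to this reflection.
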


We will use notation $\Delta_{m,n} = \Delta(P_{m,n}(b),b)$.

\begin{example}\phantomsection \label{Exa:Phi 12 21}
	An important case of the reducible Verma module is $\mathbb{M}_{P_{2,1},b}$. The explicit formula for singular vector reads
	\begin{equation}
	 u_{2,1} = (L_{-1}^2 + b^{-2}L_{-2})v_{P_{2,1},b}.
	\end{equation}
	In case of $P = P_{1,2}$ the formula is similar (obtained by $ b \leftrightarrow b^{-1}$ symmetry)
	\begin{equation}
	 u_{1,2} = (L_{-1}^2 + b^{2}L_{-2})v_{P_{1,2},b}.
	\end{equation}
\end{example}

\subsection{Sugawara construction}
This construction provides an action of the Virasoro algebra on $\widehat{\mathfrak{sl}}(2)$ highest weight modules, such as $\mathcal{M}_{\lambda,k}$ and $\mathcal{L}_{\lambda,k}$. 

We will use normal ordering of fields. See, e.g. \cite[Ch. 2]{FB:2004}
\begin{define}
	Let  $A(z) =\sum_n A_n z^{-n-1}, B(w)=\sum_n B_n w^{-n-1}$ be two fields. \emph{Normally ordered product} is defined by a formula
	\begin{equation}
	    :\!A(z) B(w)\!: = A_+(z) B(w) + B(w) A_{-}(z),
	\end{equation}
	where $A_+(z) = \sum_{m\ge 0} A_{-m-1} z^{m}$ and  $A_-(z) = \sum_{m < 0} A_{-m-1} z^{m}$.
\end{define}
\begin{theorem} \phantomsection
	Series coefficients of 
	\begin{equation}\label{Sug}
	  L(z) =\sum L_n z^{-n-2} =\frac{1}{2(k+2)} :\!e(z)f(z) + f(z)e(z) + \frac{1}{2}h^2(z)\!:  
	\end{equation}
	acting on any highest weight representation of the level $k$  satisfy Virasoro algebra relations~\eqref{eq:Virasaro rel} with central charge $c = \frac{3k}{k + 2}$.
\end{theorem}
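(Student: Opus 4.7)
The plan is to verify the equivalent operator product expansion
\[
L(z)L(w) \sim \frac{c/2}{(z-w)^4} + \frac{2L(w)}{(z-w)^2} + \frac{\partial_w L(w)}{z-w};
\]
by the standard contour-integral argument this is equivalent to \eqref{eq:Virasaro rel} with $c = 3k/(k+2)$. The proof is then a direct computation in OPE/Wick calculus, carried out inside any highest weight $\widehat{\mathfrak{sl}}(2)_k$-module so that the normally ordered products act as well-defined endomorphisms.

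First I would record the singular parts of the basic current OPEs that follow from the commutation relations, namely $e(z)f(w) \sim k/(z-w)^2 + h(w)/(z-w)$, $h(z)h(w) \sim 2k/(z-w)^2$, $h(z)e(w) \sim 2e(w)/(z-w)$, $h(z)f(w) \sim -2f(w)/(z-w)$, with $e(z)e(w)$ and $f(z)f(w)$ regular. I would then apply Wick's theorem term by term to $L(z)L(w)$, across the three summands $:\!ef\!:$, $:\!fe\!:$ and $\tfrac12:\!h^2\!:$. Single contractions yield $(z-w)^{-2}$ and $(z-w)^{-1}$ poles whose residues are again bilinear in the currents; repackaging these via the identity $:\!e(w)f(w)\!: + :\!f(w)e(w)\!: + \tfrac12:\!h(w)^2\!: = 2(k+2)L(w)$ turns them into the expected $2L(w)/(z-w)^2$ and $\partial_w L(w)/(z-w)$, the derivative term being automatically produced by translation invariance.

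Double contractions produce only a $c$-number $(z-w)^{-4}$ term, and this is where the main obstacle sits. Contributions come from $e$--$f$ pairings (each carrying a factor of $k$) and from $h$--$h$ pairings (carrying $2k$); one must count multiplicities carefully in $(:\!ef\!:+:\!fe\!:)(:\!ef\!:+:\!fe\!:)$ and in $(\tfrac12:\!h^2\!:)^2$, and then divide by the Sugawara normalization $(2(k+2))^2$. The characteristic feature of the computation is the mismatch between the $k$ appearing in the numerator of $c$ and the shifted $k+2$ in the denominator, which reflects the dual Coxeter number of $\mathfrak{sl}_2$; matching the combinatorial weights to arrive at $c/2 = 3k/\bigl(2(k+2)\bigr)$ is the only non-routine piece, while everything else is mechanical bookkeeping.
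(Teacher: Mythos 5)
The paper itself gives no proof of this theorem; it is quoted as the classical Sugawara construction with a reference to Kac's book, where the argument is run in modes (first $[L_m,x_n]=-n\,x_{m+n}$ for $x\in\{e,h,f\}$ via the Casimir element, then the Virasoro bracket). Your plan to verify the OPE $L(z)L(w)$ instead is a perfectly standard alternative route, so the choice of strategy is not the issue.

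The issue is the Wick bookkeeping, which as written would produce the wrong answer. For non-commuting currents the abelian dichotomy ``single contractions give bilinears, double contractions give only a $c$-number'' is not the whole story: the contraction $e(z)f(w)\sim k/(z-w)^2+h(w)/(z-w)$ has an \emph{operator-valued} first-order part, and in the generalized (non-commutative) Wick theorem this part must itself be contracted against the remaining currents. These nested terms are exactly where the dual Coxeter number enters. Concretely, write $T=\,:\!ef\!:+:\!fe\!:+\tfrac12:\!h^2\!:$, so $L=\tfrac{1}{2(k+2)}T$ and the required fourth-order pole of $T(z)T(w)$ is $\tfrac{c}{2}\,(2(k+2))^2=6k(k+2)$. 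Counting only the double contractions of the central parts, as your sketch does (``$e$--$f$ pairings each carrying $k$, $h$--$h$ pairings carrying $2k$''), yields $4k^2+2k^2=6k^2$, i.e.\ $c=3k^2/(k+2)^2$ instead of $3k/(k+2)$; the missing $12k$ comes precisely from re-contracting the $h(w)/(z-w)$ (and $\pm2e$, $-2f$) pieces. The same nested terms shift the $(z-w)^{-2}$ coefficient from $2k\,(\dots)$ to $2(k+2)\,(\dots)$, which is the only reason exactly $2L(w)$ appears with your normalization, so ``repackaging via the identity $:\!ef\!:+:\!fe\!:+\tfrac12:\!h^2\!:\,=2(k+2)L$'' does not go through on the naively single-contracted terms alone. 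To close the gap, either invoke the non-commutative Wick formula (Kac, or Frenkel--Ben-Zvi Ch.~3) and include the iterated contractions, or follow the mode-based route of the cited reference, where the shift $k\mapsto k+2$ is produced transparently by the quadratic Casimir when proving $[L_m,x_n]=-n\,x_{m+n}$, after which the central charge is read off by acting on the highest weight vector.
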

In particular, the theorem works for modules $\mathcal{M}_{\lambda,k}$ and $\mathcal{L}_{\lambda,k}$. The Virasoro algebra defined by formula (\ref{Sug}) is called the Sugawara Virasoro algebra; see \cite[sec. 12]{kac:1990}. We define the character using the Sugawara operator \(L_0\).
\begin{proposition}
	The character of the Verma module is 
	\begin{equation}
		\text{Tr}_{\mathcal{M}_{\lambda, k}}(q^{L_0} z^{h_0}) =  \frac{q^{\frac{\lambda(\lambda + 2)}{4(k + 2)}}z^{\lambda}}{\prod_{m \in \mathbb{Z}_{>0}} (1 - q^m)(1 - q^{m - 1} z^2)(1 - q^m z^{-2} )}.
	\end{equation}
\end{proposition}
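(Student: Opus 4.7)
The plan is to count weights on the explicit PBW basis of $\mathcal{M}_{\lambda,k}$ that is immediate from the ``freely generated'' clause in the definition, reducing the computation to a product of geometric series.

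First I would compute the $L_0$- and $h_0$-eigenvalues on the highest-weight vector $v_{\lambda,k}$. The eigenvalue $h_0 v_{\lambda,k}=\lambda v_{\lambda,k}$ is given. For $L_0$ I invoke the Sugawara formula~\eqref{Sug}: extracting the coefficient of $z^{-2}$ in $L(z)$ and applying it to $v_{\lambda,k}$, the normal-ordering conventions together with the annihilation conditions $e_m v_{\lambda,k}=0$ ($m\ge 0$), $h_m v_{\lambda,k}=f_m v_{\lambda,k}=0$ ($m>0$) kill every term except the zero-mode contribution, which can be rewritten using $[e_0,f_0]=h_0$ to give
\begin{equation*}
L_0\, v_{\lambda,k} \;=\; \frac{1}{2(k+2)}\Bigl(\lambda + \tfrac{1}{2}\lambda^2\Bigr)\,v_{\lambda,k} \;=\; \frac{\lambda(\lambda+2)}{4(k+2)}\, v_{\lambda,k}.
\end{equation*}
This produces the prefactor $q^{\lambda(\lambda+2)/(4(k+2))}\,z^{\lambda}$ in the character.

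Next, the Sugawara construction yields the standard conformal-weight shifts $[L_0,X_n]=-n X_n$ for $X\in\{e,h,f\}$, while the $\mathfrak{sl}(2)$-brackets give $[h_0,e_n]=2e_n$, $[h_0,h_n]=0$, $[h_0,f_n]=-2f_n$. By the free generation clause of the definition, a PBW basis of $\mathcal{M}_{\lambda,k}$ is provided by ordered monomials in the creation operators $\{e_{-n}\}_{n\ge 1}$, $\{h_{-m}\}_{m\ge 1}$ and $\{f_{-l}\}_{l\ge 0}$ applied to $v_{\lambda,k}$, each acting with a definite $L_0$-grade and $h_0$-weight. Each free generator contributes a factor $\bigl(1-q^{\text{grade}}z^{\text{weight}}\bigr)^{-1}$ to the trace, so summing the three independent geometric series in the occupation numbers and multiplying by the prefactor gives a product of three infinite products, which after a single reindexing of the $f$-series yields the stated formula.

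The argument is essentially bookkeeping and presents no serious obstacle. The one step that requires care is the Sugawara computation of $L_0 v_{\lambda,k}$: one must properly track the normal ordering of the zero-mode terms in $:\!e(z)f(z)+f(z)e(z)\!:$ and apply $[e_0,f_0]=h_0$ before concluding that only $h_0$ and $h_0^2$ survive on the highest-weight vector.
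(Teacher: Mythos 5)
The paper states this proposition without proof, so there is nothing to compare against on that side; your route (PBW basis for the free action of the lowering modes, plus the Sugawara zero-mode computation for the eigenvalue $L_0 v_{\lambda,k}=\frac{\lambda(\lambda+2)}{4(k+2)}v_{\lambda,k}$) is exactly the standard argument one would supply, and both of those steps are carried out correctly: the only surviving zero-mode terms are indeed $e_0f_0+\tfrac12 h_0^2$ after normal ordering, and $[L_0,X_n]=-nX_n$, $[h_0,e_n]=2e_n$, $[h_0,f_n]=-2f_n$ give each PBW generator a well-defined bidegree, so the trace factors into geometric series.

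The one step that does not go through as written is the final matching with the displayed formula. With the paper's conventions ($e_0v_{\lambda,k}=0$, and $f_0,f_{-1},\dots$, $e_{-1},e_{-2},\dots$, $h_{-1},h_{-2},\dots$ acting freely), your own weight assignments give the $e$-series $\prod_{m\ge1}(1-q^mz^{2})^{-1}$ and the $f$-series $\prod_{l\ge0}(1-q^{l}z^{-2})^{-1}=\prod_{m\ge1}(1-q^{m-1}z^{-2})^{-1}$, i.e.
\begin{equation*}
\text{Tr}_{\mathcal{M}_{\lambda,k}}(q^{L_0}z^{h_0})=\frac{q^{\frac{\lambda(\lambda+2)}{4(k+2)}}z^{\lambda}}{\prod_{m\ge1}(1-q^m)(1-q^{m}z^{2})(1-q^{m-1}z^{-2})},
\end{equation*}
which differs from the stated display by interchanging $z^{2}\leftrightarrow z^{-2}$ in the last two factors. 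No ``reindexing of the $f$-series'' can repair this: reindexing shifts the power of $q$, not the sign of the exponent of $z$, and the two expressions are genuinely different (the grade-zero subspace is $\mathbb{C}[f_0]v_{\lambda,k}$, so the $q^{\Delta}$-coefficient must be $z^{\lambda}(1+z^{-2}+z^{-4}+\cdots)$, not $z^{\lambda}(1+z^{2}+\cdots)$). So you should either state the result in the form above, or note explicitly that the displayed formula corresponds to the opposite choice of Borel (equivalently to replacing $z^{h_0}$ by $z^{-h_0}$), rather than asserting that your computation reproduces it verbatim; as it stands, that closing sentence papers over a real discrepancy between your (correct) derivation and the formula as printed.
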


\subsection{Integrable modules on level $1$} \label{ssec:level 1}
\begin{define}
	Heisenberg algebra is a Lie algebra with basis $a_n$ ($n\in\mathbb{Z}$) and $\mathbf{1}$ with commutation relations
	\begin{equation}
		[a_n, a_m] = n\delta_{n+m,0}\mathbf{1}. 
	\end{equation}
\end{define}
The operator $\mathbf{1}$ will act by \(1\) on any representation that we consider. It is convenient to introduce bosonic field
\begin{equation}\label{eq:bosonic field k=1}
    \phi(z) = \sum_{n\neq 0} \frac{1}{-n}a_n z^{-n} +a_0 \log(z) + Q,
\end{equation}
where $Q$ is an additional generator such that $[a_n, Q] = \delta_{n,0}$.

\begin{define}
	The Fock module $\mathcal{F}_\alpha$ is a Heisenberg algebra module freely generated by \(a_n\), \(n<0\) acting on a highest weight vector $|\alpha\rangle = v_{\frac{\alpha}{\sqrt{2}}}$ such that
	\begin{equation}
	    a_{n} |\alpha\rangle  = 0,~ \text{if} ~ n>0;~~~   a_{0} |\alpha\rangle  = \alpha |\alpha\rangle .
	\end{equation}
\end{define}
There is a simple realization of $\widehat{\mathfrak{sl}}(2)_1$ using Heisenberg algebra.
\begin{theorem}[\cite{Frenkel:1981}]\phantomsection\label{Th:Frenkel}
	The direct sums of the Fock modules $\bigoplus_{n\in \mathbb{Z}} \mathcal{F}_{\sqrt{2}n}$ and $\bigoplus_{n\in \mathbb{Z}+\frac{1}{2}}  \mathcal{F}_{\sqrt{2}n}$ have a structure of $\widehat{\mathfrak{sl}}(2)_1$ modules given by formulas
	\begin{subequations}\label{eq:ehf level 1}
		\begin{align}
		    e(z)&=\colon\!\!\exp(\sqrt{2}\phi(z))\!\colon,\\
		    h(z)&=\sqrt{2}\partial\phi(z),\\
		    f(z)&=\colon\!\!\exp(-\sqrt{2}\phi(z))\!\colon.
		\end{align}
	\end{subequations}
	Moreover, these sums are irreducible $\widehat{\mathfrak{sl}}(2)_1$-modules
	\begin{equation}\label{int1}
	    \mathcal{L}_{0,1} = \bigoplus_{n\in \mathbb{Z}} \mathcal{F}_{\sqrt{2}n},~~~   \mathcal{L}_{1,1} = \bigoplus_{n\in \mathbb{Z}+\frac{1}{2}} \mathcal{F}_{\sqrt{2}n}.
	\end{equation}
\end{theorem}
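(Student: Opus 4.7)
The plan is three-fold: verify that the currents satisfy the $\widehat{\mathfrak{sl}}(2)_1$ relations on each direct sum; show that $|0\rangle$ and $|1/\sqrt{2}\rangle$ are highest-weight vectors of weights $0$ and $1$; and match characters to identify the irreducible modules. Start by noting that $V_{\pm\sqrt{2}}(z)=\colon\!\!\exp(\pm\sqrt{2}\phi(z))\!\colon$ shifts the $a_0$-eigenvalue by $\pm\sqrt{2}$, so it preserves both sublattices $\sqrt{2}\mathbb{Z}$ and $\sqrt{2}(\mathbb{Z}+\tfrac{1}{2})$. Using the contraction identity $\colon\!\!e^{\alpha\phi(z)}\!\colon\colon\!\!e^{\beta\phi(w)}\!\colon = (z-w)^{\alpha\beta}\colon\!\!e^{\alpha\phi(z)+\beta\phi(w)}\!\colon$ together with Taylor expansion of $\phi(z)-\phi(w)$ around $z=w$, one obtains the singular parts
\begin{equation*}
e(z)f(w)\sim\frac{1}{(z-w)^2}+\frac{h(w)}{z-w},\quad h(z)h(w)\sim\frac{2}{(z-w)^2},\quad h(z)e(w)\sim\frac{2e(w)}{z-w},\quad h(z)f(w)\sim\frac{-2f(w)}{z-w},
\end{equation*}
with $e(z)e(w)$ and $f(z)f(w)$ regular (the pairwise exponents $\alpha\beta$ lie in $2\mathbb{Z}$, so no cocycle corrections are needed). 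Extracting mode commutators yields exactly the $\widehat{\mathfrak{sl}}(2)_k$ relations with level $k=1$, read off from the $h\,h$ OPE.

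For the highest-weight step, $V_{\pm\sqrt{2}}(z)|0\rangle$ is a series in non-negative powers of $z$, whence $e_n|0\rangle=f_n|0\rangle=0$ for $n\ge 0$, while $h_n=\sqrt{2}a_n$ annihilates $|0\rangle$ for $n>0$; hence $|0\rangle$ is a highest-weight vector with $(\lambda,k)=(0,1)$, yielding a surjection $\mathcal{L}_{0,1}\twoheadrightarrow U(\widehat{\mathfrak{sl}}(2)_1)\cdot|0\rangle$ out of the irreducible quotient of $\mathcal{M}_{0,1}$. Analogously $V_{\sqrt{2}}(z)|1/\sqrt{2}\rangle$ starts at $z^1$ and $V_{-\sqrt{2}}(z)|1/\sqrt{2}\rangle$ starts at $z^{-1}$, so $e_n|1/\sqrt{2}\rangle=0$ for $n\ge 0$ and $f_n|1/\sqrt{2}\rangle=0$ for $n\ge 1$, with $h_0$-eigenvalue $\sqrt{2}\cdot\tfrac{1}{\sqrt{2}}=1$; this gives a surjection $\mathcal{L}_{1,1}\twoheadrightarrow U(\widehat{\mathfrak{sl}}(2)_1)\cdot|1/\sqrt{2}\rangle$.

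To close the argument I would match characters. The Sugawara $L_0$-eigenvalue on $|\sqrt{2}n\rangle$ equals $n^2$ (since at $k=1$ the Sugawara stress tensor coincides with the Heisenberg one, as both agree on the vacuum and are derivations of the same vertex algebra structure), so
\begin{equation*}
\operatorname{Tr}_{\bigoplus_{n\in\mathbb{Z}}\mathcal{F}_{\sqrt{2}n}}\!\bigl(q^{L_0}z^{h_0}\bigr)=\frac{\sum_{n\in\mathbb{Z}}q^{n^2}z^{2n}}{\prod_{m>0}(1-q^m)},
\end{equation*}
and similarly for the half-integer sum; by the Kac--Weyl character formula these coincide with $\chi(\mathcal{L}_{0,1})$ and $\chi(\mathcal{L}_{1,1})$, forcing the surjections above to be isomorphisms and each Fock sum to be irreducible. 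The main obstacle is precisely this character identification, which amounts to the Jacobi triple product identity and relies essentially on $k=1$ being integrable; the OPE and highest-weight calculations are routine by comparison.
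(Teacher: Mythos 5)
The paper gives no proof of this statement: it is the classical Frenkel--Kac construction, quoted with a citation to \cite{Frenkel:1981}. Your argument is the standard lattice-vertex-operator proof of that result (OPE check of the level-one currents, identification of the highest-weight vectors $|0\rangle$ and $|1/\sqrt2\rangle$, character comparison), and in outline it is sound; the OPE exponents, the level read off from the $h\,h$ and $e\,f$ poles, the absence of cocycle corrections in rank one, and the trace computation with Sugawara $L_0=$ Heisenberg $L_0$ (a fact worth verifying by the direct OPE identity $\frac{1}{6}\colon\! ef+fe+\tfrac12 h^2\colon\,=\tfrac12\colon\!(\partial\phi)^2\colon$ rather than the ``both are derivations'' remark) are all correct.

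The one step stated incorrectly is the direction of the surjection. A highest-weight vector only gives you a map $\mathcal{M}_{0,1}\twoheadrightarrow U(\widehat{\mathfrak{sl}}(2)_1)\cdot|0\rangle$, and hence a surjection $U(\widehat{\mathfrak{sl}}(2)_1)\cdot|0\rangle\twoheadrightarrow\mathcal{L}_{0,1}$ (every nonzero quotient of a Verma module surjects onto its irreducible quotient); a map \emph{out of} $\mathcal{L}_{0,1}$, as you assert, exists only after you check that the singular vectors $f_0|0\rangle$ and $e_{-1}^{2}|0\rangle$ (resp.\ $f_0^{2}\,|1/\sqrt2\rangle$ and $e_{-1}\,|1/\sqrt2\rangle$) vanish in the Fock space, and that the maximal submodule is generated by them. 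Fortunately the automatic direction is exactly what your character argument needs: from $U\cdot|0\rangle\subseteq\bigoplus_n\mathcal{F}_{\sqrt2 n}$, $U\cdot|0\rangle\twoheadrightarrow\mathcal{L}_{0,1}$, and the equality of the characters of the Fock sum and of $\mathcal{L}_{0,1}$ (Weyl--Kac at level one), one forces $U\cdot|0\rangle$ to exhaust the Fock sum and the surjection onto $\mathcal{L}_{0,1}$ to be an isomorphism. So reverse that one arrow (or add the two-line singular-vector check) and the proof is complete.
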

Here and below in exponents of Heisenberg algebra we use bosonic normal ordering.

The modules $\mathcal{L}_{0,1}$, $\mathcal{L}_{1,1}$ are integrable \cite{kac:1990}. The highest weight vectors $v_n \in \mathcal{F}_{\sqrt{2}n}$, \(n \in \frac12\mathbb{Z}\) in components of decompositions \eqref{int1}  are called extremal vectors. It follows from the Theorem~\ref{Th:Frenkel}  that \(v_0, v_{1/2}\) are highest weight vectors of \(\mathcal{L}_{0,1}\) and \(\mathcal{L}_{1,1}\) correspondingly. Other extremal vectors can be found by the formulas
\begin{equation}
	e_{2n+1}v_n=v_{n+1}, \quad f_{2n-1}v_n=v_{n-1}.
\end{equation}
The following formulas for characters follows from Theorem \ref{Th:Frenkel}.

\begin{corollary}
	We have
	\begin{equation}\label{eq:char level 1}
		\text{Tr}_{\mathcal{L}_{0, 1}} (q^{L_0} x^{h_0}) = \frac{\sum_{n \in \mathbb{Z}} q^{n^2} x^{2n}}{\prod_{m = 1}^{\infty} (1 - q^m)}
		;~~~ \text{Tr}_{\mathcal{L}_{1, 1}} (q^{L_0} x^{h_0}) = \frac{\sum_{n \in \mathbb{Z}+\frac{1}{2}} q^{n^2} x^{2n}}{\prod_{m = 1}^{\infty} (1 - q^m)}.
	\end{equation}
\end{corollary}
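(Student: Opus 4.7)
The statement follows directly from the Frenkel decompositions \eqref{int1}: since the character of a direct sum is the sum of characters, it suffices to compute the bigraded character of a single Fock module $\mathcal{F}_{\sqrt{2}n}$, where $h_0$ is read off from \eqref{eq:ehf level 1} and $L_0$ is defined by the Sugawara formula \eqref{Sug} at level $k=1$. The plan is first to determine the action of $L_0$ and $h_0$ on the highest weight vector $|\sqrt{2}n\rangle$, then to assemble the character of $\mathcal{F}_{\sqrt{2}n}$ from its Heisenberg descendants, and finally to sum over $n$.

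For the eigenvalues on $|\sqrt{2}n\rangle$, the identity $h(z)=\sqrt{2}\,\partial\phi(z)$ combined with the mode expansion \eqref{eq:bosonic field k=1} gives $h_\ell=\sqrt{2}\,a_\ell$, so $h_0|\sqrt{2}n\rangle = 2n\,|\sqrt{2}n\rangle$. For $L_0$, I would substitute \eqref{eq:ehf level 1} into \eqref{Sug}; using the standard vertex-operator product
\begin{equation*}
:\!e^{\sqrt{2}\phi(z)}\!:\;:\!e^{-\sqrt{2}\phi(w)}\!:\;=\;(z-w)^{-2}:\!e^{\sqrt{2}(\phi(z)-\phi(w))}\!:,
\end{equation*}
a short normal-ordering calculation will collapse the three summands of \eqref{Sug} to the free-boson stress tensor $L(z)=\tfrac{1}{2}:\!\partial\phi(z)\partial\phi(z)\!:$. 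This yields $L_0|\alpha\rangle = \tfrac{\alpha^2}{2}|\alpha\rangle$, and in particular $L_0|\sqrt{2}n\rangle = n^2|\sqrt{2}n\rangle$.

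For the full character of $\mathcal{F}_{\sqrt{2}n}$, I would use that the module is freely generated over $|\sqrt{2}n\rangle$ by $\{a_{-m}\}_{m\ge 1}$. From $[a_0,a_{-m}]=0$ together with $[L_0,a_{-m}]=m\,a_{-m}$ each generator has $h_0$-weight zero and $L_0$-weight $m$. Hence
\begin{equation*}
\mathrm{Tr}_{\mathcal{F}_{\sqrt{2}n}}(q^{L_0} x^{h_0}) = \frac{q^{n^2} x^{2n}}{\prod_{m\ge 1}(1-q^m)}.
\end{equation*}
Summing over $n\in\mathbb{Z}$ then gives the first formula in \eqref{eq:char level 1}, and summing over $n\in\mathbb{Z}+\tfrac{1}{2}$ gives the second.

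The only genuinely non-trivial step is the OPE calculation identifying the Sugawara $L(z)$ at $k=1$ with the free-boson stress tensor $\tfrac{1}{2}:\!\partial\phi(z)^2\!:$ under the Heisenberg realization; once that is in hand, everything else is standard Fock-module bookkeeping.
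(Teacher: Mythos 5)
Your proposal is correct and follows exactly the route the paper intends: the corollary is stated as an immediate consequence of Theorem \ref{Th:Frenkel}, i.e.\ one sums the bigraded characters $q^{n^2}x^{2n}/\prod_{m\ge1}(1-q^m)$ of the Fock modules $\mathcal{F}_{\sqrt{2}n}$ over $n\in\mathbb{Z}$ or $n\in\mathbb{Z}+\tfrac12$, using $h_0=\sqrt{2}a_0$ and the standard identification of the level-one Sugawara $L(z)$ with $\tfrac12:\!\partial\phi(z)^2\!:$. Your supporting computations (the $L_0$ and $h_0$ eigenvalues on $|\sqrt{2}n\rangle$ and the free generation of the Fock module by the $a_{-m}$) are all accurate.
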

\subsection{Wakimoto module} \label{ssec:Wakimoto}
\begin{define}
 Let us introduce algebra $Heis_{\alpha,\beta,\gamma}$ with basis $\beta_n,\gamma_n,\alpha_n$ (\(n \in \mathbb{Z}\)), $\mathbf{1}$ and commutation relations
\begin{equation}
    [\beta_n,\gamma_m]=\delta_{n+m,0}\mathbf{1},\quad [\alpha_n,\alpha_m]=n \delta_{n+m,0}\mathbf{1}.
\end{equation}
\end{define}
\noindent  Recall that all other commutators are equal to zero. The operator $\mathbf{1}$ will act by \(1\) on any representation which we consider.

It is convenient to consider currents
\begin{equation}
    \gamma(z)=\sum_{n\in\mathbb{Z}} \gamma_n z^{-n},~~ \beta(z)=\sum_{n\in\mathbb{Z}} \beta_n z^{-n-1},~~ \partial \varphi(z)=\sum_{n\in\mathbb{Z}} \alpha_n z^{-n-1}.
\end{equation}
\begin{define}
	A module $\mathcal{F}_{\lambda, k}$ is module over algebra  $Heis_{\alpha, \beta, \gamma}$ which is freely generated by $\alpha_n, \beta_{n}, \gamma_{n+1}$ ($n<0$) acting on highest weight vector $v$ such that
	\begin{equation}
	    \beta_{n-1} v=\gamma_n v =a_n v =0,\; n>0;~~ \quad a_0 v=\frac{\lambda}{\sqrt{2(k+2)}} v. 
	\end{equation}
\end{define}

\begin{proposition}[\cite{Wakimoto:1986}]
	There is an action of $\widehat{\mathfrak{sl}}(2)_k$ on $\mathcal{F}_{\lambda, k}$ given by the formulas
	\begin{equation}\label{eq:wakimoto}
		\begin{aligned}
		e(z)=\beta(z),\quad h(z)=-2:\! \gamma(z)\beta(z)\!: + \sqrt{2(k + 2)} \partial \varphi(z),\\
		f(z)=-:\!\gamma^2(z)\beta(z)\!: + \sqrt{2(k + 2)}\partial\varphi(z) \gamma(z)+ k\partial\gamma(z).
		\end{aligned}
	\end{equation}
\end{proposition}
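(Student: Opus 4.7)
The plan is to verify that the currents $e(z), h(z), f(z)$ defined in \eqref{eq:wakimoto} satisfy the operator product expansions equivalent to the defining commutation relations of $\widehat{\mathfrak{sl}}(2)_k$, using only the fundamental OPEs $\beta(z)\gamma(w)\sim\frac{1}{z-w}$ (so that $\gamma(z)\beta(w)\sim -\frac{1}{z-w}$) and $\partial\varphi(z)\partial\varphi(w)\sim\frac{1}{(z-w)^2}$, together with the mutual locality of $\beta,\gamma$ with $\partial\varphi$. Since each of $e, h, f$ is a normally ordered polynomial in $\beta,\gamma,\partial\varphi$ and their derivatives, they act as well-defined fields on $\mathcal{F}_{\lambda,k}$, so the entire content of the proposition reduces to the OPEs
\begin{equation*}
e(z)e(w)\sim f(z)f(w)\sim 0, \qquad e(z)f(w)\sim \frac{k}{(z-w)^2}+\frac{h(w)}{z-w},
\end{equation*}
\begin{equation*}
h(z)h(w)\sim \frac{2k}{(z-w)^2}, \qquad h(z)e(w)\sim \frac{2e(w)}{z-w}, \qquad h(z)f(w)\sim -\frac{2f(w)}{z-w}.
\end{equation*}

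First I would dispatch the easy checks. The identity $e(z)e(w)=\beta(z)\beta(w)\sim 0$ is immediate. The OPE $h(z)e(w)$ follows from a single contraction of $\gamma(z)\beta(w)\sim -\frac{1}{z-w}$ inside $-2:\!\gamma\beta\!:(z)\cdot\beta(w)$. For $h(z)h(w)$, the $(\partial\varphi)(\partial\varphi)$ piece yields $\frac{2\kappa}{(z-w)^2}$, while the double $\beta\gamma$-contraction inside $4:\!\gamma\beta\!:(z)\,:\!\gamma\beta\!:(w)$ yields $-\frac{4}{(z-w)^2}$; the sum is $\frac{2k}{(z-w)^2}$, and this first cancellation already exhibits why the shift $\kappa=k+2$ is exactly what produces the correct level $k$. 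The central computation $e(z)f(w)=\beta(z)f(w)$ proceeds by contracting $\beta(z)$ against each of the three summands of $f(w)$: the term $-:\!\gamma^2\beta\!:(w)$ produces $\frac{-2:\!\gamma\beta\!:(w)}{z-w}$ (with a factor of two from the two $\gamma$'s), the term $\sqrt{2\kappa}\,\partial\varphi(w)\gamma(w)$ produces $\frac{\sqrt{2\kappa}\,\partial\varphi(w)}{z-w}$, and the term $k\,\partial\gamma(w)$ produces $\frac{k}{(z-w)^2}$ via $\beta(z)\partial\gamma(w)\sim\partial_w\frac{1}{z-w}$. The simple poles combine to $\frac{h(w)}{z-w}$ and the double pole has coefficient exactly $k$, as required.

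The main obstacle is the pair of identities $h(z)f(w)\sim -\frac{2f(w)}{z-w}$ and $f(z)f(w)\sim 0$, which genuinely entangle all three summands of $f$. In $h(z)f(w)$, each piece of $f$ contributes a $(z-w)^{-2}$ pole after Wick contraction: $-\frac{4\gamma(w)}{(z-w)^2}$ from the double contraction inside $:\!\gamma\beta\!:(z)\cdot:\!\gamma^2\beta\!:(w)$; $-\frac{2k\gamma(w)}{(z-w)^2}$ from $\beta(z)\partial\gamma(w)$ inside the $k\partial\gamma$ term; and $+\frac{2\kappa\gamma(w)}{(z-w)^2}$ from the $\partial\varphi\partial\varphi$ contraction. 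These cancel precisely because $-4-2k+2(k+2)=0$, and the simple-pole residues then combine, via the Taylor expansion $\gamma(z)=\gamma(w)+(z-w)\partial\gamma(w)+\dots$ in the $\partial\gamma$ piece, to reconstruct exactly $-\frac{2f(w)}{z-w}$. The final and most tedious check, $f(z)f(w)\sim 0$, is carried out by splitting $f=f_1+f_2+f_3$ with $f_1=-:\!\gamma^2\beta\!:$, $f_2=\sqrt{2\kappa}\,\partial\varphi\gamma$, $f_3=k\partial\gamma$, and applying Wick's theorem to $f_1(z)f_1(w)$ (which has both single and double $\beta\gamma$-contractions), to the symmetric cross pairings $f_1(z)f_2(w)+f_2(z)f_1(w)$ and $f_1(z)f_3(w)+f_3(z)f_1(w)$, and to $f_2(z)f_2(w)$. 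The simultaneous vanishing of every pole of order $(z-w)^{-1}$ and $(z-w)^{-2}$ is what pins down the specific coefficients $\sqrt{2(k+2)}$ and $k$ in \eqref{eq:wakimoto}, through the same arithmetic identity $(\sqrt{2\kappa})^2=2(k+2)$ that appeared above. Once these cancellations are verified term by term, the Wakimoto currents satisfy the defining OPEs of $\widehat{\mathfrak{sl}}(2)_k$ and the proposition follows.
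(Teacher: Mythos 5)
The paper itself offers no proof of this proposition: it is quoted from Wakimoto's original work with a pointer to \cite[Ch.~11--12]{FB:2004}, where the result is obtained more conceptually (via the vertex-algebra/semi-infinite induction construction of Wakimoto modules). Your direct OPE verification is the standard elementary argument, and it is correct: reducing the claim to the OPEs $e(z)e(w)\sim f(z)f(w)\sim 0$, $e(z)f(w)\sim \frac{k}{(z-w)^2}+\frac{h(w)}{z-w}$, $h(z)h(w)\sim\frac{2k}{(z-w)^2}$, $h(z)e(w)\sim\frac{2e(w)}{z-w}$, $h(z)f(w)\sim-\frac{2f(w)}{z-w}$ is exactly what is needed (these encode the bracket relations including the level $K=k$), and each contraction you record is right, in particular the double-pole cancellation in $h(z)f(w)$ through $-4-2k+2(k+2)=0$ and the reassembly of the simple-pole residues into $-2f(w)$ after Taylor-expanding $\gamma(z)$ at $w$. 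The only part you leave as a plan, $f(z)f(w)\sim 0$, does go through exactly as you describe: with $f_1=-:\!\gamma^2\beta\!:$, $f_2=\sqrt{2\kappa}\,\partial\varphi\,\gamma$, $f_3=k\partial\gamma$, the pairs $f_1f_2+f_2f_1$, $f_2f_3+f_3f_2$, $f_3f_3$ are regular, while $f_1f_1$, $f_1f_3+f_3f_1$, $f_2f_2$ contribute $(-4-2k+2\kappa)\,\frac{\gamma^2(w)}{(z-w)^2}+(-4-2k+2\kappa)\,\frac{:\!\gamma\partial\gamma\!:(w)}{z-w}$, which vanishes by the same identity; spelling this out would make the sketch a complete proof. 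So your route is more computational but fully self-contained, whereas the reference the paper leans on buys a construction of the Wakimoto module that explains, rather than just verifies, the shift $\kappa=k+2$.
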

  The $\widehat{\mathfrak{sl}}(2)_k$-module $\mathcal{F}_{\lambda, k}$ is called Wakimoto module. See also \cite[Ch.11--12]{FB:2004}. The following proposition easily follows from the equality of the characters.
\begin{proposition}\phantomsection \label{Prop:wakverm}
	The Wakimoto module $\mathcal{F}_{\lambda, k}$ is isomorphic to the Verma module $\mathcal{M}_{\lambda,k}$ for generic $\lambda, k$.
\end{proposition}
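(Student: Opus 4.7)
The strategy is the standard one: construct a homomorphism from $\mathcal{M}_{\lambda,k}$ to $\mathcal{F}_{\lambda,k}$ via the universal property of the Verma module, then use Kac--Kazhdan irreducibility plus a character computation to upgrade it to an isomorphism for generic $(\lambda,k)$.

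First I would check that the generating vector $v \in \mathcal{F}_{\lambda,k}$ satisfies the highest-weight conditions \eqref{eq:efh v}, \eqref{eq:h0K v} for $\widehat{\mathfrak{sl}}(2)_k$. For $e_n$ this is immediate from $e(z)=\beta(z)$ and the Fock annihilation relations $\beta_m v = 0$ for $m\geq 0$. For $h_n$ and $f_n$ ($n\geq 1$), one expands the normally ordered products in \eqref{eq:wakimoto} and uses that each term contains a factor $\beta_m$, $\gamma_m$ or $\alpha_m$ with $m>0$ acting on $v$ (the normal ordering is precisely what guarantees this). For $h_0 v$ the normally ordered $:\gamma\beta:$ part contributes zero and the $\partial\varphi$ part contributes $\sqrt{2(k+2)}\,\alpha_0 v = \lambda v$. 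By the universal property of the Verma module, one then obtains a well-defined $\widehat{\mathfrak{sl}}(2)_k$-homomorphism
\begin{equation*}
\Phi: \mathcal{M}_{\lambda,k} \longrightarrow \mathcal{F}_{\lambda,k}, \qquad v_{\lambda,k}\mapsto v.
\end{equation*}

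Next, for generic $(\lambda,k)$ the Kac--Kazhdan conditions \eqref{eq:generic} hold, so Theorem~\ref{Th:KacKazhdan} tells us that $\mathcal{M}_{\lambda,k}$ is irreducible. Hence $\Phi$ is either zero or injective; since $\Phi(v_{\lambda,k})=v\neq 0$, it is injective.

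To conclude, I would compare characters with respect to the bigrading by Sugawara $L_0$ and $h_0$. The Verma character is given by the formula already recorded in the text,
\begin{equation*}
\operatorname{Tr}_{\mathcal{M}_{\lambda,k}}(q^{L_0}z^{h_0}) = \frac{q^{\frac{\lambda(\lambda+2)}{4(k+2)}}z^{\lambda}}{\prod_{m\geq 1}(1-q^m)(1-q^{m-1}z^2)(1-q^m z^{-2})}.
\end{equation*}
For $\mathcal{F}_{\lambda,k}$ one computes the $(L_0,h_0)$-weights of the generators using \eqref{eq:wakimoto} and the Sugawara formula on $v$: each $\alpha_{-m}$ contributes a factor $(1-q^m)^{-1}$, each $\beta_{-m}$ a factor $(1-q^m z^2)^{-1}$, and each $\gamma_{-m+1}$ a factor $(1-q^{m-1}z^{-2})^{-1}$ (the $h_0$-weights $\pm 2$ for $\beta,\gamma$ follow from the $:\gamma\beta:$ term in $h(z)$, and the $L_0$-weights can be read off either directly from \eqref{Sug} or, equivalently, from a free-field Virasoro acting on the Heisenberg generators). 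The two characters coincide. Since $\Phi$ is an injection between graded pieces of equal (finite) dimension, it is bijective, which proves the proposition.

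The only genuinely non-mechanical step is verifying the $(L_0,h_0)$-weights of the Wakimoto generators, i.e.\ unpacking the Sugawara operator in the $(\beta,\gamma,\varphi)$ free-field language; everything else is a formal consequence of universality plus Kac--Kazhdan. I would keep ``generic'' as ``$(\lambda,k)$ avoids the countable family of hyperplanes \eqref{eq:generic}'' as in the remark following Theorem~\ref{Th:KacKazhdan}.
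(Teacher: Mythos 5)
Your proof is correct and follows essentially the same route as the paper, which simply asserts that the proposition "easily follows from the equality of the characters"; you have merely supplied the standard details that the paper leaves implicit (the map $\mathcal{M}_{\lambda,k}\to\mathcal{F}_{\lambda,k}$ from the universal property, injectivity via Kac--Kazhdan irreducibility, and the bigraded character comparison to get surjectivity). The only minor caveat is that the Verma character as printed in the paper has the $z^{2}$ and $z^{-2}$ factors interchanged relative to the convention $f_0,f_{-1},\dots$, $e_{-1},e_{-2},\dots$ as creation operators, but this does not affect your argument since the correctly computed characters of the two modules do coincide.
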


\subsection{Vertex operators} \label{ssec:operstatecorr}
There are (at least) two standard approaches to the definition of vertex operators in conformal field theory. The first one is based on the definition of conformal blocks via coinvariants and operator-state correspondence. In the second approach vertex operators are defined using commutation relation with the algebra generators. We will use both approaches.
We start from the first one, mainly following the paper \cite{Feigin:1994}. We restrict ourselves to the genus zero conformal block. We fix a global coordinate \(t\) on \(\mathbb{P}^1\).

%(see also \cite{Teschner:1997}). We define vertex operators through the space of coinvariants. 

\subsubsection{Generic representations} We will need a slight generalization of the representations considered above. Let \(B\subset SL(2)\) be a Borel subgroup of invertible upper triangular matrices. Then the flag manifold \(\mathbb{P}^1=SL(2)/B\) parametrize all Borel subalgebras in the Lie algebra \(\mathfrak{sl}(2)\). To be more precise for any $x\in \mathbb{A}^1 \subset\mathbb{P}^1$ we can consider Borel subalgebra $\mathfrak{b}_{x}  \subset \mathfrak{sl}(2)$ given by 
\begin{equation}
    \mathfrak{b}_{x} = \left\{
    \begin{pmatrix}
        c_1 - x c_2 & c_2 \\
        2xc_1-x^2 c_2 & x c_2 - c_1
    \end{pmatrix}|c_1, c_2\in \mathbb{C}\right\}.
\end{equation}
This subalgebra is conjugated from \(\mathfrak{b}=\langle h,e\rangle\) by the \(\re^{x f} \in SL(2)\). Let \(h_x,e_x\) be generators of \(\mathfrak{b}_x\) such that \([h_x,e_x]=2e_x\), \(e_x \in [\mathfrak{b}_x,\mathfrak{b}_x]\). For example, for one can take \(h_x=\re^{x f} h  \re^{-x f}=h+2xf\), \(e_x=\re^{x f} e  \re^{-x f}=e-xf-x^	2f\) for \(x \neq \infty\) and \(h_\infty=-h\), \(e_\infty=-f\) for \(x = \infty\).

Let \(z \in \mathbb{P}^1\). Consider the Lie algebra $\mathfrak{sl}(2)\otimes\mathbb{C}[[t - z, (t - z)^{-1}]$  of \emph{Laurent series} with coefficients in \(\mathfrak{sl}(2)\).  Let us denote by $ \widehat{\mathfrak{sl}}(2)_{z}$ its central extension with the commutator given by 
\begin{equation}
	[x \otimes f(t-z), y \otimes g(t-z)]=[x,y]\otimes f(t-z)g(t-z)+K \operatorname{Tr}(xy) \operatorname{Res}_{t=z} (g df). 
\end{equation}
%\begin{notati} 
We denote $\widehat{\mathfrak{b}}_{x,z} = \mathfrak{b}_{x}\oplus \mathfrak{sl}(2)\otimes(t - z)\mathbb{C}[[t - z]]\oplus \mathbb{C}K\subset \widehat{\mathfrak{sl}}(2)_{z}$ the Borel subalgebra in $ \widehat{\mathfrak{sl}}(2)_{z}$. Let $\mathbb{C}_{\lambda,k}$ denotes the $1$-dimensional $\widehat{\mathfrak{b}}_{x,z}$-module 
generated by vector \(1_{\lambda,k}\) such that 
\begin{equation}
	h_x 1_{\lambda,k}=\lambda 1_{\lambda,k},\; K 1_{\lambda,k}=k1_{\lambda,k},\quad \big( \mathbb{C} e_x \oplus   \mathfrak{sl}(2)\otimes(t - z)\mathbb{C}[[t - z]]\big)1_{\lambda,k}=0.
\end{equation}
Let us define $\mathcal{M}_{\lambda, k}(x,z)=\operatorname{Ind}^{\widehat{\mathfrak{sl}}(2)_{z}}_{\widehat{\mathfrak{b}}_{x,z}}\mathbb{C}_{\lambda,k}$. 
%Clearly the  module \(\mathcal{M}_{\lambda, k}(x,z) \) is isomorphic to Verma module \(\mathcal{M}_{\lambda, k}\) under certain isomorphism between $\widehat{\mathfrak{sl}}(2)_{z}$ and \(\widehat{\mathfrak{sl}}(2)\).
%\end{notati}

We have an isomorphism of Lie algebras
\begin{equation}
	\alpha_{x,z}:
	\widehat{\mathfrak{sl}}(2) \rightarrow \widehat{\mathfrak{sl}}(2)_z,
\end{equation} 
which maps $\widehat{\mathfrak{b}}$ to $\widehat{\mathfrak{b}}_{x,z}$. Such isomorphism is not unique, one of the possible choices which we will mainly use in the paper is 
\begin{equation}
	\alpha^{(0)}_{x,z}(e_n)=(e-xh-x^2f)\otimes (t-z)^n,\quad \alpha^{(0)}_{x,z}(h_n)=(h+2xf)\otimes (t-z)^n, \quad \alpha^{(0)}_{x,z}(f_n)=f\otimes (t-z)^n.
\end{equation}
Of course these formulas do not work if \(x\) or \(z\) are equal to infinity. So in the neighbourhood of \((\infty,\infty)\) we use another isomorphism
\begin{equation}
	\alpha^{(\infty)}_{y,w}(e_n)=(y^2e+yh-f)\otimes (s-w)^n,\; \alpha^{(\infty)}_{y,w}(h_n)=(-h-2ye)\otimes (s-w)^n, \; \alpha^{(\infty)}_{y,w}(f_n)=-e\otimes (s-w)^n.
\end{equation}
Here \(s=t^{-1}\), \(w=z^{-1}\) and \(y=-x^{-1}\).

We use the same notation for the corresponding maps between Verma modules $\alpha_{x,z}: \mathcal{M}_{\nu,k}\rightarrow \mathcal{M}_{\nu,k}(x,z)$ mapping $v_{\nu, k}\mapsto v_{\nu, k}(x,z)$. 

Let \(\mathbb{C}[\mathbb{P}^1{\setminus} \{z_1,z_2,z_3\}]\)  denotes the space of rational functions in \(t\) with possible poles at \(z_1, z_2, z_3\). We have a Lie algebras homomorphism
\begin{equation}
    \mathfrak{sl}(2)\otimes \mathbb{C}[\mathbb{P}^1{\setminus} \{z_1,z_2,z_3\}] \rightarrow \widehat{\mathfrak{sl}}(2)_k\oplus \widehat{\mathfrak{sl}}(2)_k\oplus\widehat{\mathfrak{sl}}(2)_k,
\end{equation}
which map any element to the direct sum of series expansions at points \(z_i\). 

The following proposition is standard (see e.g. \cite[Lemma 3.1]{Feigin:1994}).
\begin{proposition}\phantomsection \label{Prop:coinv generic}
	Let \(x_1,x_2,x_3 \in \mathbb{P}^1\) and \(z_1,z_2,z_3 \in \mathbb{P}^1\) be tuples of different points. 
	There is a unique up to constant homomorphism of $\mathfrak{sl}(2)\otimes \mathbb{C}[\mathbb{P}^1{\setminus} \{\vec{z}\}]$-modules
	\begin{equation}
	     \mathtt{m}_k \colon  \mathcal{M}_{\lambda,k}(x_1,z_1) \otimes  \mathcal{M}_{\nu, k}(x_2,z_2) \otimes  \mathcal{M}_{\mu, k}(x_3,z_3) \rightarrow \mathbb{C}.
	\end{equation}  
	Here \(\vec{z}=\{z_1,z_2,z_3\}\) and \(\vec{x}=\{x_1,x_2,x_3\}\). Moreover, the homomorphism is uniquely determined by its value on the product of highest vectors \(v_{\lambda,k}\otimes v_{\mu,k} \otimes v_{\nu,k}\).
\end{proposition}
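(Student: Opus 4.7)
The plan is to analyze the coinvariants
$$V\big/\bigl(\mathfrak{g}_{\mathrm{rat}}\cdot V\bigr),\qquad V:=\mathcal{M}_{\lambda,k}(x_1,z_1)\otimes\mathcal{M}_{\nu,k}(x_2,z_2)\otimes\mathcal{M}_{\mu,k}(x_3,z_3),$$
where I abbreviate $\mathfrak{g}_{\mathrm{rat}}=\mathfrak{sl}(2)\otimes\mathbb{C}[\mathbb{P}^1\setminus\{\vec{z}\}]$. Any $\mathfrak{g}_{\mathrm{rat}}$-equivariant map to $\mathbb{C}$ factors through these coinvariants, so it suffices to show that the quotient is one-dimensional and generated by the image of the product of highest weight vectors. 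For each $i$ introduce the vector-space complement
$$\mathfrak{n}_{x_i,z_i}=(\mathfrak{sl}(2)/\mathfrak{b}_{x_i})\oplus\mathfrak{sl}(2)\otimes(t-z_i)^{-1}\mathbb{C}[(t-z_i)^{-1}]$$
to $\widehat{\mathfrak{b}}_{x_i,z_i}$ inside $\widehat{\mathfrak{sl}}(2)_{z_i}$, so that by PBW each $\mathcal{M}_{\lambda_i,k}(x_i,z_i)$ is a free $U(\mathfrak{n}_{x_i,z_i})$-module of rank one on its highest weight vector.

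The core step is a partial-fraction lemma: the linear map
$$\alpha\colon\mathfrak{g}_{\mathrm{rat}}\longrightarrow\bigoplus_{i=1}^{3}\mathfrak{n}_{x_i,z_i}$$
given by Laurent expansion at each $z_i$ followed by projection off $\widehat{\mathfrak{b}}_{x_i,z_i}$ is a vector-space isomorphism. Truncating by pole order $\le N$ at each $z_i$, Riemann--Roch on $\mathbb{P}^1$ gives $\dim\mathbb{C}[\mathbb{P}^1\setminus\{\vec{z}\}]_{\le N}=3N+1$, so the source has dimension $9N+3$, matching the target ($3N$ per principal part plus $1$ per $\mathfrak{sl}(2)/\mathfrak{b}_{x_i}$). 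Injectivity reduces to the following: a globally regular $\mathfrak{sl}(2)$-valued function on $\mathbb{P}^1$, which must be a constant, whose value at each $z_i$ lies in $\mathfrak{b}_{x_i}$, must be zero. This is where the distinctness of $x_1,x_2,x_3$ is essential: two distinct Borels in $\mathfrak{sl}(2)$ intersect in a Cartan, a third distinct Borel contains no nonzero element of that Cartan, hence $\bigcap_{i=1}^{3}\mathfrak{b}_{x_i}=0$.

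Given $\alpha$, I would finish with the PBW filtration. Filter $V$ by the total number of $\mathfrak{n}_{x_i,z_i}$-generators applied to the product of highest weight vectors; then $\operatorname{gr}V\cong S\bigl(\bigoplus_{i=1}^{3}\mathfrak{n}_{x_i,z_i}\bigr)$. For any $g\in\mathfrak{g}_{\mathrm{rat}}$, its expansion at $z_i$ splits as an $\mathfrak{n}_{x_i,z_i}$-part (which raises PBW degree by exactly one) and a $\widehat{\mathfrak{b}}_{x_i,z_i}$-part (which preserves or lowers it), so on $\operatorname{gr}V$ the element $g$ acts as multiplication by $\alpha(g)$. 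Since $\alpha$ is surjective, the image of $\mathfrak{g}_{\mathrm{rat}}$ on $\operatorname{gr}V$ generates the augmentation ideal, and
$$\operatorname{gr}\bigl(V\big/\mathfrak{g}_{\mathrm{rat}}\!\cdot\! V\bigr)\ \cong\ S\bigl(\textstyle\bigoplus_i\mathfrak{n}_{x_i,z_i}\bigr)\big/\bigl(\textstyle\bigoplus_i\mathfrak{n}_{x_i,z_i}\bigr)S\bigl(\textstyle\bigoplus_i\mathfrak{n}_{x_i,z_i}\bigr)\ \cong\ \mathbb{C},$$
spanned by the class of $v_{\lambda,k}(x_1,z_1)\otimes v_{\nu,k}(x_2,z_2)\otimes v_{\mu,k}(x_3,z_3)$. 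This gives simultaneously uniqueness, non-triviality, and the normalization property.

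The main obstacle is the partial-fraction lemma: surjectivity is classical Mittag-Leffler, but injectivity is precisely where both distinctness hypotheses on the $x_i$ and on the $z_i$ enter, and the ``correct'' packaging of the three Borels $\mathfrak{b}_{x_i}$ together with the principal parts is the only subtle point. Everything else is formal PBW bookkeeping once that geometric input is in hand.
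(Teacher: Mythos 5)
Your argument is the standard coinvariants computation, and the parts of it that you carry out are essentially right: the partial-fraction lemma (with the correct use of $\bigcap_i\mathfrak{b}_{x_i}=0$ for three distinct Borels, which is exactly where distinctness of the $x_i$ enters), the PBW filtration, and the symbol computation $\sigma(g\cdot v)=\alpha(g)\,\sigma(v)$ together do prove that $\operatorname{gr}(\mathfrak{g}_{\mathrm{rat}}\cdot V)$ contains the augmentation ideal, hence that the coinvariants are spanned by the image of $v_{\lambda,k}\otimes v_{\nu,k}\otimes v_{\mu,k}$. That yields uniqueness up to constant and the statement that $\mathtt{m}_k$ is determined by its value on the product of highest weight vectors. (A small overstatement along the way: on $\operatorname{gr}V$ an element $g$ does not act purely as multiplication by $\alpha(g)$ --- the $\widehat{\mathfrak{b}}_{x_i,z_i}$-parts act nontrivially in degree $0$, as scalars plus derivations --- but this does not affect the inclusion $\operatorname{gr}(\mathfrak{g}_{\mathrm{rat}}V)\supseteq I_+$ that you actually use.)

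The genuine gap is the claimed non-triviality. Your chain of isomorphisms only gives a surjection $S/I_+\twoheadrightarrow\operatorname{gr}\bigl(V/\mathfrak{g}_{\mathrm{rat}}V\bigr)$, i.e.\ $\dim\le 1$: to get an isomorphism you must rule out $v_{\lambda,k}\otimes v_{\nu,k}\otimes v_{\mu,k}\in\mathfrak{g}_{\mathrm{rat}}\cdot V$, and the symbol argument does not do this, because elements of $\mathfrak{g}_{\mathrm{rat}}\cdot V$ are sums $\sum_j g_j\cdot w_j$ whose top-degree symbols may cancel. So existence of a \emph{nonzero} homomorphism --- which is part of the assertion and is what the paper needs in order to define $\mathtt{Y}_k$ --- is not established. (Note also that one cannot fix this by a naive Frobenius-reciprocity trick at a single point $z_3$, since the expansion map $\mathfrak{g}_{\mathrm{rat}}\to\widehat{\mathfrak{sl}}(2)_{z_3}$ with zero central term is not a Lie algebra homomorphism; only the sum of the three cocycles vanishes by the residue theorem.) The usual remedy is to exhibit the functional explicitly --- e.g.\ by solving the Ward identities for the three-point function of primaries and checking consistency on descendants, or via a free-field/Wakimoto construction --- which is why the paper itself does not prove this statement but refers to \cite[Lemma 3.1]{Feigin:1994}; your write-up should either supply such a construction or likewise quote the non-vanishing from the literature.
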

We define map $\mathtt{m}_k(\vec{x},\vec{z})$ as a composition \(
\mathtt{m}_k(\vec{x},\vec{z})=\mathtt{m}_k\circ(\alpha_{x_1,z_1}\otimes \alpha_{x_2,z_2}\otimes \alpha_{x_3,z_3}) \)
\begin{equation}
	\mathtt{m}_k(\vec{x},\vec{z}) \colon \mathcal{M}_{\lambda, k}\otimes  \mathcal{M}_{\nu, k}\otimes  \mathcal{M}_{\mu, k}  \rightarrow \mathbb{C}.
\end{equation}   
Using \(PGL(2)\) action on \(\mathbb{P}^1\) (global conformal transformations) we can  assume that \(\vec{z}=\{0,z,\infty\}\), and
similarly we can assume that \(\vec{x}=\{0,x,\infty\}\) using \(SL(2)\) action on the space of Borel subalgebras \(\mathbb{P}^1=Sl_2/B\). We write simply 
\begin{equation}
	\mathtt{m}_k(x,z)=\mathtt{m}_k\circ (a^{(0)}_{0,0}\otimes a^{(0)}_{x,z}\otimes a^{(\infty)}_{0,0}).
\end{equation}
Note that notation \(a^{(\infty)}_{0,0}\) means that \(y=w=0\), i.e. \(x=z=\infty\).

For generic $\mu$ the  map \(\mathtt{m}_k(x,z)\) could be rewritten as 
    \begin{equation}
    	\mathtt{Y}_k(x,z): \mathcal{M}_{\nu,k} \rightarrow \operatorname{Hom}_{\mathbb{C}}( \mathcal{M}_{\lambda,k}, 
    	\overline{\mathcal{M}_{\mu,k}}).
    \end{equation} 
Here and below \(\overline{\mathcal{M}_{\mu,k}}\) stands for completion of \(\mathcal{M}_{\mu,k}\) with respect to natural gradation.
%Denote the highest vector in $\mathcal{M}_{\nu, k}(x,z)$ by $v_{\nu, k}(x,z)$. 
    
%\begin{define}
% We define a map 
%    \begin{equation}
%        \mathtt{Y}_k(x,z):\mathcal{M}_{\nu,k} \rightarrow \text{End}( \mathcal{M}_{\lambda,k}, \mathcal{M}_{\mu,k})
%    \end{equation} 
%    as a composition of two maps written before.
%\end{define}
Consider $\mathcal{V}_{\nu}(x,z) =  \mathtt{Y}(v_{\nu, k}|x,z)$. 
\begin{proposition}\phantomsection \label{Prop: Awata Yamada}
	The operator $\mathcal{V}_{\nu}(x,z)$ enjoys commutation relations
	\begin{subequations}\label{eq:Vnu comm rel}
		\begin{align}
		        [f_n,\mathcal{V}_{\nu}(x,z)] &= z^n\partial_x \mathcal{V}_{\nu}(x,z),\\
		        [h_n,\mathcal{V}_{\nu}(x,z)] &= z^n(- 2 x\partial_x + \nu)\mathcal{V}_{\nu}(x,z),\\
		        [e_n,\mathcal{V}_{\nu}(x,z)] &= z^n(-  x^2\partial_x + \nu x )\mathcal{V}_{\nu}(x,z).    
		\end{align}
	\end{subequations}
\end{proposition}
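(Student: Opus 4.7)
The plan is to derive each commutator in \eqref{eq:Vnu comm rel} from the coinvariance of the three-point functional $\mathtt{m}_k(x,z)$ established in Proposition~\ref{Prop:coinv generic}. For each $X\in\{e,h,f\}$ and $n\in\mathbb{Z}$, the element $X\otimes t^n$ lies in $\mathfrak{sl}(2)\otimes\mathbb{C}[\mathbb{P}^1\setminus\{0,z,\infty\}]$, so the sum of its Laurent expansions at $0$, $z$, $\infty$—acting on the corresponding tensor factor via $\alpha^{(0)}_{0,0}$, $\alpha^{(0)}_{x,z}$, $\alpha^{(\infty)}_{0,0}$—annihilates $\mathtt{m}_k(x,z)$.

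First I would carry out the three expansions. At $0$ one has $X\otimes t^n=\alpha^{(0)}_{0,0}(X_n)$, acting as the abstract mode $X_n$ on $w_1\in\mathcal{M}_{\lambda,k}$. At $z$ one expands $t^n=\sum_{m\ge 0}\binom{n}{m}z^{n-m}(t-z)^m$ and rewrites $X$ in terms of $\alpha^{(0)}_{x,z}(e_0)$, $\alpha^{(0)}_{x,z}(h_0)$, $\alpha^{(0)}_{x,z}(f_0)$:
\begin{equation*}
f=\alpha^{(0)}_{x,z}(f_0),\quad h=\alpha^{(0)}_{x,z}(h_0)-2x\alpha^{(0)}_{x,z}(f_0),\quad e=\alpha^{(0)}_{x,z}(e_0)+x\alpha^{(0)}_{x,z}(h_0)-x^2\alpha^{(0)}_{x,z}(f_0).
\end{equation*}
Applied to $v_{\nu,k}$, only $m=0$ survives since $e_mv_{\nu,k}=h_mv_{\nu,k}=f_mv_{\nu,k}=0$ for $m>0$; using $e_0v_{\nu,k}=0$ and $h_0v_{\nu,k}=\nu v_{\nu,k}$, the middle-slot contribution becomes $z^nf_0v_{\nu,k}$, $z^n(\nu-2xf_0)v_{\nu,k}$, $z^n(\nu x-x^2f_0)v_{\nu,k}$ for $X=f,h,e$. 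At $\infty$, the coordinate $s=t^{-1}$ combined with $\alpha^{(\infty)}_{0,0}(e_n)=-f\otimes s^n$, $\alpha^{(\infty)}_{0,0}(h_n)=-h\otimes s^n$, $\alpha^{(\infty)}_{0,0}(f_n)=-e\otimes s^n$ gives $X\otimes t^n=-\alpha^{(\infty)}_{0,0}(Y_{-n})$ with $(X,Y)\in\{(f,e),(h,h),(e,f)\}$, so $X\otimes t^n$ acts as $-Y_{-n}$ on $w_3\in\mathcal{M}_{\mu,k}$.

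The key nontrivial step is to identify $f_0$ in the middle slot with $\partial_x$ on the coinvariant:
\begin{equation*}
\mathtt{m}_k(x,z)(w_1\otimes f_0v_{\nu,k}\otimes w_3)=\partial_x\mathtt{m}_k(x,z)(w_1\otimes v_{\nu,k}\otimes w_3).
\end{equation*}
This reflects the fact that $\mathfrak{b}_x=\operatorname{Ad}(e^{xf})\mathfrak{b}$ is obtained from $\mathfrak{b}$ by conjugation with $\exp(xf)$, yielding $\partial_x\alpha^{(0)}_{x,z}(Y)=[f\otimes 1,\alpha^{(0)}_{x,z}(Y)]$ in $\widehat{\mathfrak{sl}}(2)_z$ for every $Y\in\widehat{\mathfrak{sl}}(2)$. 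Consequently, identifying $\mathcal{M}_{\nu,k}(x+\epsilon,z)$ with $\mathcal{M}_{\nu,k}(x,z)$ by $\operatorname{Ad}(\exp(\epsilon f))$ gives $\partial_xv_{\nu,k}(x,z)=f\cdot v_{\nu,k}(x,z)=\alpha^{(0)}_{x,z}(f_0v_{\nu,k})$. This is the main technical obstacle of the argument; once it is in place, the three Laurent expansions above immediately assemble.

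Combining these ingredients, the coinvariance of $X\otimes t^n$ at $w_2=v_{\nu,k}$ reads
\begin{equation*}
\mathtt{m}_k(x,z)(X_nw_1,v_{\nu,k},w_3)+z^n\mathcal{X}\,\mathtt{m}_k(x,z)(w_1,v_{\nu,k},w_3)=\mathtt{m}_k(x,z)(w_1,v_{\nu,k},Y_{-n}w_3),
\end{equation*}
with $\mathcal{X}=\partial_x,\ -2x\partial_x+\nu,\ -x^2\partial_x+\nu x$ for $X=f,h,e$. Under the operator-state correspondence $\mathtt{m}_k(x,z)(w_1\otimes v_{\nu,k}\otimes w_3)=\langle w_3,\mathcal{V}_\nu(x,z)w_1\rangle$ and the contragredient pairing at infinity, the action of $Y_{-n}$ on $w_3$ equals the action of $X_n=\sigma(Y_{-n})$ on $\mathcal{V}_\nu$ from the left, where $\sigma$ is the Chevalley anti-involution $\sigma(e_n)=f_{-n}$, $\sigma(h_n)=h_{-n}$, $\sigma(f_n)=e_{-n}$. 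This converts the identity above into $\langle w_3,[X_n,\mathcal{V}_\nu(x,z)]w_1\rangle=z^n\mathcal{X}\langle w_3,\mathcal{V}_\nu(x,z)w_1\rangle$, yielding \eqref{eq:Vnu comm rel} since $w_1,w_3$ are arbitrary.
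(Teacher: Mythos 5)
Your proof is correct and is essentially the argument the paper has in mind: its one-line proof ("direct computation, cf.\ Prop.~3.1 of Feigin--Malikov") is exactly your expansion of the global element $X\otimes t^n$ at $0,z,\infty$ against the coinvariant $\mathtt{m}_k(x,z)$, using the highest-weight conditions in the middle slot and the Shapovalov/contragredient pairing at infinity, with the middle-slot identification $f_0\leftrightarrow\partial_x$ that the paper itself builds into the operator--state formula \eqref{eq:operator-state}. Just note that your key identity $\partial_x v_{\nu,k}(x,z)=\alpha^{(0)}_{x,z}(f_0 v_{\nu,k})$ amounts to fixing the $x$-dependence of the family (equivalently $v_{\nu,k}(x,z)=\re^{x f\otimes 1}\,v_{\nu,k}(0,z)$ in the weight-space completion, which your relation $\partial_x\alpha^{(0)}_{x,z}=[f\otimes 1,\alpha^{(0)}_{x,z}(\cdot)]$ justifies), consistent with the paper's remark that these commutation relations may equally be taken as the definition of $\mathcal{V}_\nu(x,z)$.
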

This Proposition follows from a direct computation (cf. \cite[Prop. 3.1]{Feigin:1994}). On the other hand this proposition can be considered as a definition of the vertex operator, (see \cite{Awata:1992}).

In order to write down \(\mathtt{Y}(v|x,z)\) for arbitrary \(v \in  \mathcal{M}_{\nu,k}\) we will need notations
\begin{equation}
    e(x,z) = \re^{x f_0} e(z)  \re^{-x f_0} =  e(z) - x h(z) - x^2 f(z),
\end{equation}
\begin{equation}
    h(x,z) = \re^{x f_0} h(z)  \re^{-x f_0} =  h(z) + 2 x f(z), ~~   f(x,z) = \re^{x f_0} f(z)  \re^{-x f_0} = f(z).
\end{equation}
\begin{proposition}
    The map  $\mathtt{Y}_k(x,z):\mathcal{M}_{\nu,k} \rightarrow \text{End}( \mathcal{M}_{\lambda,k}, \mathcal{M}_{\mu,k})$%[[x,x^{-1},z,z^{-1}]]x^{\mu- \nu - \lambda} z^{\frac{ \mu(\mu + 2)- \nu(\nu + 2) -\lambda(\lambda + 2) }{2\kappa}}
    is defined by formula
    \begin{multline}\label{eq:operator-state}
              \mathtt{Y}_k(e_{-k_t}\dots e_{-k_2}e_{-k_1}h_{-j_s}\dots h_{-j_2}h_{-j_1}f_{-i_r}\dots f_{-i_2}f_{-i_1}f_0^m
              \, v_{\nu, k}; x,z) 
              \\
              = ~  \frac{ :\!\partial^{k_t-1}_z e(x,z)\dots\partial^{i_2-1}_z f(x,z)\partial^{i_1-1}_z f(x,z)\!\partial^m_x\mathcal{V}_{\nu}(x,z)\!:}{\prod^t_{q=1}(k_q - 1)!\prod^s_{q=1} (j_q - 1)!\prod^r_{q=1}(i_q - 1)!}.
    \end{multline}
\end{proposition}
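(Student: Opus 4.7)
The strategy is to exploit uniqueness from Proposition~\ref{Prop:coinv generic}: $\mathtt{Y}_k(x,z)$ is pinned down by its value on the highest weight vector $\mathcal{V}_\nu(x,z)=\mathtt{Y}_k(v_{\nu,k};x,z)$ together with the $\mathfrak{sl}(2)$-equivariance of the coinvariant map. It therefore suffices to verify two local propagation rules on a single generator and then induct on the total degree of the PBW monomial $e_{-k_t}\cdots f_0^m v_{\nu,k}$, peeling generators off from the outside. The two propagation rules are
\begin{subequations}\label{eq:plan propagate}
\begin{align}
    &\mathtt{Y}_k(f_0\, u\,;x,z) \;=\; \partial_x \mathtt{Y}_k(u\,;x,z), \label{eq:plan f0}\\
    &\mathtt{Y}_k(a_{-n}\, u\,;x,z) \;=\; \tfrac{1}{(n-1)!}:\!\partial_z^{n-1} a(x,z)\, \mathtt{Y}_k(u\,;x,z)\!:,\qquad a\in\{e,h,f\},\; n\ge 1. \label{eq:plan a-n}
\end{align}
\end{subequations}

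The translation identity \eqref{eq:plan f0} holds in the base case $u=v_{\nu,k}$ by Proposition~\ref{Prop: Awata Yamada} at $n=0$: $[f_0,\mathcal{V}_\nu(x,z)]=\partial_x\mathcal{V}_\nu(x,z)$. Its extension to arbitrary $u$ is automatic once \eqref{eq:plan a-n} is known, because the $x$-dependence of the twisted currents is engineered so that $[f_0,a(x,z)]=\partial_x a(x,z)$ for each $a\in\{e,h,f\}$. Indeed, using $e(x,z)=e(z)-xh(z)-x^2 f(z)$, $h(x,z)=h(z)+2xf(z)$, $f(x,z)=f(z)$ one computes $[f_0,e(x,z)]=-h(x,z)=\partial_x e(x,z)$, $[f_0,h(x,z)]=2f(x,z)=\partial_x h(x,z)$, $[f_0,f(x,z)]=0=\partial_x f(x,z)$; the identity then propagates term-by-term through the nested normal ordering via the Leibniz rule.

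For the Borcherds-type identity \eqref{eq:plan a-n}, I would use the coinvariant definition together with $\mathfrak{sl}(2)$-equivariance of $\mathtt{m}_k(\vec{x},\vec{z})$. The insertion of a descendant $a_{-n}u$ at $z$ is converted via $\mathfrak{sl}(2)$-equivariance into a contour integral of the current $a(w)$ against $(w-z)^{-n}$ around $z$, where the twist by $\alpha^{(0)}_{x,z}$ replaces $a(w)$ by the twisted current $a(x,z)$; extracting the appropriate Laurent coefficient reproduces exactly the normally ordered product in \eqref{eq:plan a-n}. Equivalently, one can verify that the proposed right-hand side satisfies the same commutation relations with all modes $b_m$ of $\widehat{\mathfrak{sl}}(2)$ as $\mathtt{Y}_k(a_{-n}u;x,z)$ does, and invoke uniqueness from Proposition~\ref{Prop:coinv generic}.

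Granting both \eqref{eq:plan f0} and \eqref{eq:plan a-n}, formula \eqref{eq:operator-state} follows by peeling off the generators of the PBW monomial from the outside, starting with the outermost $e_{-k_t}$ and proceeding through the $h_{-j_q}$'s, the $f_{-i_q}$'s, and finally the $f_0^m$ factor; each step inserts one more current (or a $\partial_x$) at the outermost layer of the iterated normal ordering. The main obstacle is purely combinatorial: one must fix the iterated-normal-ordering convention to match the nested form appearing on the right-hand side of \eqref{eq:operator-state}. This is resolved by adopting the convention ``insert each new current outermost,'' which aligns with peeling the leftmost factor of the monomial first; once this choice is made, the induction reduces to mechanical substitution.
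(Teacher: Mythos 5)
The paper offers no proof of this proposition: it is stated as the standard operator--state correspondence (reconstruction) formula, with the surrounding text pointing to the coinvariant definition of \cite{Feigin:1994} and the commutation-relation definition of \cite{Awata:1992} as the two equivalent viewpoints. Your sketch fills this in along exactly the route the paper's setup suggests, and it is essentially correct: the key identity is your rule for negative modes, $\mathtt{Y}_k(a_{-n}u;x,z)=\tfrac{1}{(n-1)!}:\!\partial_z^{n-1}a(x,z)\,\mathtt{Y}_k(u;x,z)\!:$, which indeed follows either from the coinvariant/contour (residue-splitting) argument, with the twist $\alpha^{(0)}_{x,z}$ correctly turning the abstract modes into the conjugated currents $e(x,z),h(x,z),f(x,z)$, or from checking commutation relations and invoking the uniqueness of Proposition~\ref{Prop:coinv generic}; iterating it from the outermost generator inward gives \eqref{eq:operator-state}. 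Your verification that $[f_0,a(x,z)]=\partial_x a(x,z)$ for all three twisted currents is correct. One small gloss: the Leibniz propagation you describe only yields $[f_0,\mathtt{Y}_k(u;x,z)]=\partial_x\mathtt{Y}_k(u;x,z)$; to convert this into the propagation rule $\mathtt{Y}_k(f_0u;x,z)=\partial_x\mathtt{Y}_k(u;x,z)$ (needed for the innermost factor $\partial_x^m\mathcal{V}_\nu$) you must also use the $m=0$ case of the commutator identity coming from $\mathfrak{sl}(2)\otimes\mathbb{C}[\mathbb{P}^1\setminus\{\vec z\}]$-equivariance, namely $\mathtt{Y}_k(f_0u;x,z)=[f_0,\mathtt{Y}_k(u;x,z)]$ (for the primary this is Proposition~\ref{Prop: Awata Yamada}); this is the same coinvariant machinery you already invoke, so it is a presentational gap rather than a mathematical one.
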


\subsubsection{Degenerate representations} Assume now that \(k\) is generic  and \(\nu \in \mathbb{Z}_{\ge 0}\). Then the representation \(\mathcal{M}_{\nu,k}\) has singular vector \(f_0^{\nu+1}v_{\nu,k}\). Denote by $\mathcal{L}_{\nu,k}$ the irreducible quotient. Since \(k\) is generic we have $\mathcal{L}_{\nu,k}\simeq \operatorname{Ind}^{\widehat{\mathfrak{sl}}(2)}_{\mathfrak{sl}(2)[[t]]\oplus \mathbb{C}K}\mathcal{L}_\nu$ where \(\mathcal{L}_\nu\) is \(\nu+1\) dimensional representation of \(\mathfrak{sl}(2)\). 

Similarly to the construction above we can define modules \(\mathcal{L}_{\nu,k}(x,z)\). Contrary to generic case above the modules \(\mathcal{L}_{\nu,k}(x,z)\) do not depend on \(x\) as $\widehat{\mathfrak{sl}}(2)_{z}$ modules (up to isomorphism). The analogue of Proposition \ref{Prop:coinv generic} now states that 	there is a unique up to constant homomorphism of $\mathfrak{sl}(2)\otimes \mathbb{C}[\mathbb{P}^1{\setminus} \{\vec{z}\}]$ modules
\begin{equation}
	\mathtt{m}_k \colon  \mathcal{M}_{\lambda,k}(x_1,z_1) \otimes  \mathcal{L}_{\nu, k}(x_2,z_2) \otimes  \mathcal{M}_{\mu, k}(x_3,z_3) \rightarrow \mathbb{C}.
\end{equation}  
if and only if \(\lambda,\mu,\nu\) satisfy fusion rule. The corresponding fusion rule reads 
\begin{equation}
	-\nu \le \lambda-\mu \le \nu, \text{ and } \lambda-\mu+\nu \in 2 \mathbb{Z}.
\end{equation}
This condition is standard; it can also be deduced from more nontrivial fusion rules for admissible representations in \cite[Th. 3.2]{Feigin:1994} or integrable representations \cite[Th 1]{Tsuchiya:1988vertex}. Composing with isomorphisms \(\alpha_{x,z}\) we get a map
\begin{equation}
	\mathtt{m}_k(\vec{x},\vec{z}) \colon  \mathcal{M}_{\lambda,k} \otimes  \mathcal{L}_{\nu, k} \otimes  \mathcal{M}_{\mu, k} \rightarrow \mathbb{C}.
\end{equation}  

For generic \(\mu\) we have a map \(\mathtt{Y}_k(x,z): \mathcal{L}_{\nu,k} \rightarrow \operatorname{Hom}_{\mathbb{C}}( \mathcal{M}_{\lambda,k}, 
\overline{\mathcal{M}_{\mu,k}})\). Let \(X_\nu(x,z)= \mathtt{Y}(v_{\nu,k}|x,z)\). This operator satisfies the relations as in Proposition \ref{Prop: Awata Yamada}. The main difference that vertex operator here has \emph{finitely many terms} in \(x\) expansion, namely
\begin{equation}\label{eq:X nu}
	X_\nu(x,z)=\sum_{l=0}^\nu \binom{\nu}{l} X_{l, \nu}(z) x^{l}.
\end{equation}
The summands satisfy the following commutation relations 
%
%\begin{define}
%	Let us define operators $X_{l, \nu}(z): V_1 \rightarrow V_2$ where $V_1, V_2$ are $\widehat{\mathfrak{sl}}(2)_k$ modules, $\nu \in \mathbb{Z}_{\ge 0}, l\in\{0,1\dots\nu\}$, with following commutation relations
	\begin{subequations}
	\begin{align}
		[f_n,   X_{l, \nu}(z)] &=  z^n (\nu - l) X_{l+1, \nu}(z),\\
		[e_n,  X_{l, \nu}(z)] &=  z^n  l  X_{l-1, \nu}(z),\\
		[h_n,   X_{l, \nu}(z) ] &= z^n(\nu - 2 l) X_{l, \nu}(z).
	\end{align}
	\end{subequations}
%\end{define}
Finally the fusion rules are equivalent to the following proposition.
\begin{proposition}\phantomsection \label{onlyonevertex}
	Let $\nu\in\mathbb{Z}_{\ge 0}$ and \(\lambda\) be such that pair \((\lambda,k)\) is generic. Then, for any $r\in\{0,1,\dots \nu\}$ there exist a unique (up to constant) vertex operator  $X_\nu(x,z): \mathcal{M}_{\lambda-\nu+2r,k}\rightarrow\mathcal{M}_{\lambda,k}$. % and $\lambda$ is general.
\end{proposition}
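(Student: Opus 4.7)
The plan is to reduce the statement to the fusion rule for degenerate representations recalled just above, via the coinvariants / operator-state correspondence developed in the preceding subsection. First, I would use the operator-state correspondence to identify a vertex operator $X_\nu(x,z)\colon \mathcal{M}_{\mu,k}\to \overline{\mathcal{M}_{\lambda,k}}$ with a coinvariants map
\[
\mathtt{m}_k\colon \mathcal{M}_{\lambda,k}(\infty,\infty)\otimes \mathcal{L}_{\nu,k}(x,z)\otimes \mathcal{M}_{\mu,k}(0,0)\to \mathbb{C},
\]
and invoke the degenerate analog of Proposition~\ref{Prop:coinv generic} already stated in the text: such a map exists and is unique up to scalar exactly when the fusion rule $-\nu\le \lambda-\mu\le \nu$, $\lambda-\mu+\nu\in 2\mathbb{Z}$ holds.

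The second step is purely combinatorial. The parametrization $r\mapsto \mu=\lambda-\nu+2r$ sets up a bijection between $r\in\{0,1,\dots,\nu\}$ and the set of $\mu$ satisfying the fusion rule, since $\lambda-\mu=\nu-2r$ ranges over $\{\nu,\nu-2,\dots,-\nu\}$, lying in $[-\nu,\nu]$ and of correct parity. For every such $r$, genericity of $(\lambda,k)$ implies genericity of $(\lambda-\nu+2r,k)$ (they differ by an integer), so Theorem~\ref{Th:KacKazhdan} ensures irreducibility of both Verma modules and thus the coinvariants map descends to a well-defined, unique-up-to-scalar vertex operator without any residual ambiguity from sub-quotients.

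The main obstacle is the fusion rule itself, which is cited from \cite{Feigin:1994} and \cite{Tsuchiya:1988vertex}. Its proof amounts to verifying that the singular vector $f_0^{\nu+1}v_{\nu,k}\in \mathcal{M}_{\nu,k}$ imposes exactly the inequality $-\nu\le \lambda-\mu\le \nu$ (together with the parity constraint from the $h_0$-grading) on the coinvariant, and nothing more. An elementary self-contained route is also available: the relation $[h_0,X_{l,\nu}(z)]=(\nu-2l)X_{l,\nu}(z)$ forces the $h_0$-weight congruence, while $[f_n,X_{l,\nu}(z)]=z^n(\nu-l)X_{l+1,\nu}(z)$ and $[e_n,X_{l,\nu}(z)]=z^n\,l\,X_{l-1,\nu}(z)$ allow one to reconstruct all the $X_{l,\nu}(z)$ inductively from $X_{0,\nu}(z)$, reducing uniqueness to an elementary one-parameter freedom in fixing the top component on $v_{\mu,k}$.
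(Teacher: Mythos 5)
Your proposal matches the paper's own treatment: the paper gives no argument beyond the remark that ``the fusion rules are equivalent to the following proposition,'' i.e.\ it reduces the statement, exactly as you do, to the quoted existence/uniqueness of the coinvariant map for $\mathcal{M}_{\lambda,k}\otimes\mathcal{L}_{\nu,k}\otimes\mathcal{M}_{\mu,k}$ under the fusion rule (cited from \cite{Feigin:1994}, \cite{Tsuchiya:1988vertex}), converted to an operator via operator-state correspondence for generic target weight, with $r\mapsto\mu=\lambda-\nu+2r$ enumerating precisely the admissible weights. Your closing ``self-contained'' sketch via the commutation relations of the $X_{l,\nu}(z)$ is an optional extra (and as stated it only addresses uniqueness of the components given the top one, not existence), but your main reduction is correct and is essentially the paper's argument.
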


\subsubsection{Integrable representations} \label{ssec:integrable}
The similar definitions work for integrable representations of  $\widehat{\mathfrak{sl}}(2)_1$.
Similarly to the above, we can define the representations \(\mathcal{L}_{i,1}(x,z)\), \((i=0,1)\) of $\widehat{\mathfrak{sl}}(2)_{z}$. These representations do not depend on \(x\) up to isomorphism, so these parameters are usually excluded in the construction of vertex operators, but we prefer to keep them.

It was shown in \cite[Th. 1]{Tsuchiya:1988vertex}, \cite[sec. 2]{Tsuchiya:1989} that for $i,j,r\in \{0,1\}$ there exists and unique up to constant homomorphism of $\mathfrak{sl}(2)\otimes \mathbb{C}[\mathbb{P}^1{\setminus} \{\vec{z}\}]$-modules
\begin{equation}\label{eq:fusion level 1}
	\mathtt{m}_1: \mathcal{L}_{i,1}(x_1,z_1)\otimes  \mathcal{L}_{j,1}(x_2,z_2) \otimes  \mathcal{L}_{r,1}(x_3,z_3)\rightarrow \mathbb{C},
\end{equation}
if and only if $i + j +r\in 2\mathbb{Z}$. Composing with isomorphisms \(\alpha_{x,z}\) we get a map
\begin{equation}
	\mathtt{m}_1(\vec{x},\vec{z}) \colon  \mathcal{L}_{i,1} \otimes  \mathcal{L}_{j, 1} \otimes  \mathcal{L}_{r, 1} \rightarrow \mathbb{C}.
\end{equation}

This allows to define $Y_1(z):\mathcal{L}_{j,1} \rightarrow \operatorname{Hom}(\mathcal{L}_{i,1}, \mathcal{L}_{r,1})$. It is clear that $Y_1(v_{0}|z) = \operatorname{Id}_{\mathcal{L}_{i,1}}$. Let us define 
\begin{equation}
	b(x,z)=Y_1(v_{{1}/{2}}|x,z),\quad b(x,z) = b_0(z) + x b_1(z).
\end{equation} 
The components \(b_0(z), b_1(z)\) satisfy relations similar to \eqref{eq:Vnu comm rel}. 
%{\color{olive}
%\begin{notati}
%   We denote
%\begin{equation}
%    b_0(z) = Y_1(v_{\frac{1}{2}}|z), ~ b_1(z) = Y_1(v_{-\frac{1}{2}}|z), ~ b(x,z) = b_0(z) + x b_1(z).
%\end{equation} 
%\end{notati}
%\begin{remark} 
% Operators $b_{0,1}(z)$ could be defined by commutation relations with $\widehat{\mathfrak{sl}}(2)$.   
%\end{remark}
%We define $\mathtt{Y}_1(v|x,z) = \re^{xf_0}Y(v|z) \re^{-xf_0}$, that coincides with definition through Borel subalgebra $\mathfrak{b}_x$.
%}
In particular using commutation relations with \(h_n\) and realization of $\widehat{\mathfrak{sl}}(2)_1$ modules in terms of Heisenberg algebra (see Theorem \ref{Th:Frenkel}) we can write explicit formulas for \(b_0(z), b_1(z)\).
\begin{proposition}
	The operators \(b_0(z), b_1(z)\) are equal to exponents ot the bosonic field \(\phi(z)\) defined in \eqref{eq:bosonic field k=1}
    \begin{equation}\label{eq:b01}
    	b_{0}(z) = :\!\exp (\frac{\phi(z)}{\sqrt{2}})\!:(-1)^{\frac{h_0(h_0-1)}{2}},\quad b_{1}(z) = :\!\exp (-\frac{\phi(z)} {\sqrt{2}})\!:(-1)^{\frac{h_0(h_0-1)}{2}} . 
	\end{equation}
 Recall that ${h_0} = {a_0}{\sqrt{2}}$. 
\end{proposition}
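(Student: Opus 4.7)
The plan is to pin down $b_0(z)$ and $b_1(z)$ first up to scalars that may depend on the $h_0$-eigenvalue, and then fix those scalars using the commutation relations that couple the two components. Existence and uniqueness up to a single common constant is supplied by the level-one fusion rule \eqref{eq:fusion level 1} applied with one factor $\mathcal{L}_{i,1}$, one factor $\mathcal{L}_{1,1}$, and one factor $\mathcal{L}_{1-i,1}$.

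First I would read off the $h_n$-commutation relations. The primary $v_{1/2}$ has $h_0$-eigenvalue $1$, so the $\nu=1$ specialization of the $X_{l,\nu}$ relations stated just before Proposition \ref{onlyonevertex} yields $[h_n, b_0(z)] = z^n b_0(z)$ and $[h_n, b_1(z)] = -z^n b_1(z)$. Since $h(z)=\sqrt{2}\,\partial\phi(z)$ by Theorem \ref{Th:Frenkel}, this says precisely that $b_0$ and $b_1$ are Heisenberg primary fields of charge $\pm 1/\sqrt{2}$. A standard argument on Heisenberg Fock modules (comparing the action on descendants, or equivalently matching characters on each Fock summand in \eqref{int1}) then forces
\begin{equation}
b_0(z) = c_0(h_0)\,:\!\exp(\phi(z)/\sqrt{2})\!:,\qquad b_1(z) = c_1(h_0)\,:\!\exp(-\phi(z)/\sqrt{2})\!:,
\end{equation}
where $c_0,c_1$ are scalar functions of the zero mode, i.e.\ constants on each summand $\mathcal{F}_{\sqrt{2}n}$.

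Next I would fix $c_0$ and $c_1$ by exploiting the $\mathfrak{sl}(2)$ commutation relations that mix $b_0$ and $b_1$. The same $\nu=1$ specialization gives $[f_0,b_0(z)] = b_1(z)$ and $[e_0,b_1(z)] = b_0(z)$. Substituting the Frenkel formulas \eqref{eq:ehf level 1} for $e(z)$ and $f(z)$ (together with whatever cocycle factor is implicit in their definition on $\mathcal{L}_{0,1}\oplus\mathcal{L}_{1,1}$) and computing the action of $e_0,f_0$ on a vertex operator via the standard bosonic OPE, one obtains a finite-difference recursion expressing $c_1(h_0)$ in terms of $c_0(h_0-2)$ and vice versa. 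Normalizing $c_0(0)=1$, the unique consistent solution of this recursion on the full lattice of $h_0$-eigenvalues is $c_0(h_0)=c_1(h_0)=(-1)^{h_0(h_0-1)/2}$, which is the asserted cocycle.

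The main obstacle is the cocycle bookkeeping in this last step: the expressions for $e(z),f(z)$ in Theorem \ref{Th:Frenkel}, taken at face value, are correct only up to their own cocycle, and one must fix a convention for these signs before the recursion for $c_0,c_1$ has a definite right-hand side. Once the convention is fixed, the calculation reduces to the familiar lattice-vertex-algebra cocycle computation for the weight lattice of $\mathfrak{sl}(2)$ refined to the half-lattice generated by the spin-$\tfrac12$ weight, and the particular exponent $h_0(h_0-1)/2$ emerges as the unique integral quadratic form on this half-lattice compatible with the sign choices made in \eqref{eq:ehf level 1}--\eqref{int1}.
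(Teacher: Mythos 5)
Your proposal is correct and follows essentially the route the paper itself indicates (the paper only sketches it): pin down $b_0,b_1$ as Heisenberg vertex operators of charge $\pm\tfrac{1}{\sqrt2}$ from the $h_n$-commutation relations and Theorem \ref{Th:Frenkel}, then fix the sector-dependent scalars — the cocycle $(-1)^{h_0(h_0-1)/2}$ — from the $e_0,f_0$ relations mixing the two components. Your treatment of the cocycle bookkeeping is the right refinement of the paper's one-line justification, so there is nothing further to add.
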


%\subsection{$\widehat{\mathfrak{sl}}(2)$ vertex operators}
%The next definition is a general definition for $\widehat{\mathfrak{sl}}(2)$ vertex operators.
%It is convenient to define vertex operators as intertwining operators
%\begin{define}
%Vertex operator for $\widehat{\mathfrak{sl}}(2)$ associated with $\mathfrak{sl}(2)$ representation $R$ is a intertwining operator $V: V_1 \otimes R(t)\rightarrow V_2$ where $V_1, V_2$ are $\widehat{\mathfrak{sl}}(2)_k$ modules, and  $R(t)$ is evaluation representation.
%\end{define}

%\begin{define}
%  Let us define operators $X_{l, \nu}(z): V_1 \rightarrow V_2$ where $V_1, V_2$ are $\widehat{\mathfrak{sl}}(2)_k$ modules, $\nu \in \mathbb{Z}_{\ge 0}, l\in\{0,1\dots\nu\}$, with following commutation relations
%\begin{align}
%    [f_n,   X_{l, \nu}(z)] &=  z^n (\nu - l) X_{l+1, \nu}(z),\\
%    [e_n,  X_{l, \nu}(z)] &=  z^n  l  X_{l-1, \nu}(z),\\
%    [h_n,   X_{l, \nu}(z) ] &= z^n(\nu - 2 l) X_{l, \nu}(z).
%\end{align}
%\end{define}
%\begin{remark}
%The operators $X_{l,\nu}(z)$ correspond to the case $R$ is finite-dimensional irreducible $\mathfrak{sl}(2)$-module.
%\end{remark}
%{\color{olive}
%\begin{remark}
%The operators $b_{0,1}(z)$ are equal to $X_{0, 1}(z)$ and $X_{1, 1}$ in case of level $k = 1$.
%\end{remark}
%}
\subsubsection{Bosonization of vertex operators}
Under certain conditions the vertex operators \(\mathcal{V}_\nu(x,z)\) can be written in terms of currents \(\beta(z),\gamma(z), \varphi(z)\) used in sec. \ref{ssec:Wakimoto}. 
%\begin{define}\label{omnnu}
Let us define operators $\mathcal{O}^{(N)}_{r; \nu}(z)$ which act between two Wakimoto modules $\mathcal{F}_{\lambda,k} \rightarrow \mathcal{F}_{\lambda + \nu - 2N,k}$ by formula:
\begin{equation}\label{omnnu}
    \mathcal{O}^{(N)}_{r, \nu}(z) = \int_{0\le t_N\le \dots \le t_1\le z} \prod_{i = 1}^N dt_i :\!\exp\left(\frac{\nu}{\sqrt{2\kappa}}\varphi(z)\right)\!:\gamma(z)^r ~ \prod_{i=1}^N S(t_i),
\end{equation}
where $S(t) = :\!\exp\left(\frac{-2}{\sqrt{2\kappa}}\varphi(t)\right)\!:\beta(t)$ is a screening field.
%\end{define}

Here for simplicity we assumed the integration is performed over the relative cycle which is \(N\)-dimensional simplex. Such choice works under certain inequalities on the parameters \(\lambda, \nu, k\), for other values the vertex operators can be defined via analytic continuation. See \cite{EFK:1998} for the detailed discussion of the contour. The following proposition is standard (see e.g. \cite[Th, 5.7.4]{EFK:1998}, \cite{Gerasimov:1990}, \cite{Awata:1991})
\begin{proposition}\phantomsection \label{Prop:O comm rel}
	Commutation relations between operators  $\mathcal{O}^{(N)}_{r; \nu}(z)$ and $\widehat{\mathfrak{sl}}(2)$ generators have the form
	\begin{subequations}\label{eq:O comm rel}
		\begin{align}
			\label{eq:O comm rel e}
		    [e_n,  \mathcal{O}^{(N)}_{r; \nu}(z)] &=  z^n  r \mathcal{O}^{(N)}_{r-1; \nu}(z),
		    \\
		    [h_n,  \mathcal{O}^{(N)}_{r; \nu}(z) ] &= z^n(\nu-2 r)\mathcal{O}^{(N)}_{r; \nu}(z),
		    \\
		    \label{eq:O comm rel f}
		    [f_n,    \mathcal{O}^{(N)}_{r; \nu}(z)] &=  z^n (\nu-r)\mathcal{O}^{(N)}_{r + 1; \nu}(z).
		\end{align}
	\end{subequations}
\end{proposition}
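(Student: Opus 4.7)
My strategy is a direct computation in the Wakimoto realization \eqref{eq:wakimoto}, leveraging the defining screening property of $S(t)$. Abbreviate $V_\nu(z) = \,:\!\exp(\nu\varphi(z)/\sqrt{2\kappa})\!:$ so that the integrand of $\mathcal{O}^{(N)}_{r;\nu}(z)$ reads $V_\nu(z)\gamma(z)^r\prod_{i=1}^N S(t_i)$. Identity \eqref{eq:O comm rel e} is immediate: since $e(z)=\beta(z)$ has non-trivial OPE only with $\gamma$, the mode $e_n$ commutes with every bosonic exponential, with the $\beta$-factor inside each $S(t_i)$, and with itself, so only $\gamma(z)^r$ responds, giving $[e_n,\gamma(z)^r]=r z^n\gamma(z)^{r-1}$.

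The middle identity is a weight computation: each building block has a definite $h$-charge, promoted at mode $n$ to a scalar multiple of $z^n$. Namely $[h_n,V_\nu(z)]=\nu z^n V_\nu(z)$ (from $\partial\varphi$), $[h_n,\gamma(z)]=-2z^n\gamma(z)$ (from $:\!\gamma\beta\!:$), and, by the charge-neutrality built into $S(t)$ (the exponential contributes $-2$ and the $\beta$ contributes $+2$), $[h_n,S(t)]=0$. Leibniz then yields $z^n(\nu-2r)\mathcal{O}^{(N)}_{r;\nu}(z)$.

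Identity \eqref{eq:O comm rel f} is the substantive part, which I plan to handle in three steps.
\begin{enumerate}
\item \emph{Direct commutator at $z$.} An OPE calculation gives $[f_n,V_\nu(z)\gamma(z)^r]=z^n(\nu-r)V_\nu(z)\gamma(z)^{r+1}+\mathcal{E}_n(z)$, where the coefficient $\nu$ comes from contracting the $\sqrt{2\kappa}\partial\varphi\cdot\gamma$ piece of $f$ against $V_\nu$, the $-r$ from contracting the $\beta$ in $-\!:\!\gamma^2\beta\!:$ against each of the $r$ copies of $\gamma(z)$, and $\mathcal{E}_n(z)$ collects correction operators involving $\beta(z)$, $\partial\gamma(z)$, $h(z)$ and the $k\partial\gamma$ piece of $f$.
\item \emph{Screening property.} Direct OPE manipulation shows that $[f(z'),S(t)]$ is a total $t$-derivative of a local expression at $t$, hence $[f_n,S(t)]=\partial_t R_n(t)$. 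This is the standard fact that $S(t)$ screens $\widehat{\mathfrak{sl}}(2)$: it has conformal weight one and its charges are tuned so that every residue of $f(z')S(t)$ reassembles into $\partial_t$ of something local.
\item \emph{Boundary analysis on the simplex.} Commuting $f_n$ through $\prod_i S(t_i)$ produces $N$ inner terms of the form $\int\cdots\partial_{t_i}R_n(t_i)\cdots dt_i$. After integration by parts, the interior faces $t_i=t_{i\pm 1}$ of adjacent simplex walls cancel in pairs (opposite orientations), and the base $t_N=0$ vanishes by the choice of cycle. The sole surviving boundary sits at $t_1=z$ and precisely cancels $\mathcal{E}_n(z)$, leaving the clean answer $z^n(\nu-r)\mathcal{O}^{(N)}_{r+1;\nu}(z)$.
\end{enumerate}
The main obstacle is step~3, namely the bookkeeping of which piece of $\mathcal{E}_n(z)$ is matched by which $t_1\to z$ limit of $R_n(t_1)$. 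A cleaner alternative, in the range of $r,N$ for which the source and target Fock modules are Verma (Proposition~\ref{Prop:wakverm}), is to invoke Proposition~\ref{onlyonevertex}: any nonzero free-field expression of the right bidegree coincides, up to scale, with a component $X_{l,\nu}(z)$ of $X_\nu(x,z)$, and then \eqref{eq:O comm rel} follows verbatim from Proposition~\ref{Prop: Awata Yamada}.
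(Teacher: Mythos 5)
Your commutators with $e_n$ and $h_n$ are fine, and your step 2 is correct: one finds $f(w)S(t)\sim \kappa\,\partial_t\bigl(\tfrac{1}{w-t}\,{:}\exp(-\tfrac{2}{\sqrt{2\kappa}}\varphi(t)){:}\bigr)$, so $[f_n,S(t)]=\kappa\,\partial_t\bigl(t^n{:}\exp(-\tfrac{2}{\sqrt{2\kappa}}\varphi(t)){:}\bigr)$. The genuine problem is the architecture of steps 1 and 3. If you carry out the OPE of $f(w)$ with ${:}\exp(\tfrac{\nu}{\sqrt{2\kappa}}\varphi(z)){:}\,\gamma(z)^r$ you get \emph{exactly} $z^n(\nu-r)$ times the integrand of $\mathcal{O}^{(N)}_{r+1;\nu}(z)$ and nothing else: the $\sqrt{2\kappa}\,\partial\varphi\,\gamma$ piece contributes $\nu$, the $-{:}\gamma^2\beta{:}$ piece contributes $-r$ (only a simple pole, since there is a single $\beta$), and the $k\partial\gamma$ piece has no contraction at all. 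So $\mathcal{E}_n(z)=0$; this is forced anyway by the $N=0$ case of the proposition, which contains no screenings (and is used as such in the definition of $I(z)$). Consequently your step 3 is set up to prove the wrong identity: there is no leftover at $z$ for the $t_1=z$ boundary to cancel, and that boundary term, which equals $\kappa z^n$ times the integrand with $S(t_1)$ replaced by ${:}\exp(-\tfrac{2}{\sqrt{2\kappa}}\varphi(t_1)){:}$ at $t_1=z$, must instead \emph{vanish}. It does so only because of the branch factors $(z-t_1)^{-\nu/\kappa}$, $(t_i-t_{i+1})^{2/\kappa}$ and $t_N^{\,\lambda/\kappa}$ in the integrand, i.e. in a region of parameters where these exponents make the boundary evaluations zero, extended elsewhere by analytic continuation — exactly the contour caveat the paper flags (with the reference to EFK). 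For the same reason the "interior faces cancel in pairs by orientation" argument is not quite right as stated: the two evaluations on a shared face differ by operator ordering/branch phases, and the clean statement is that each coincidence face vanishes individually in the suitable parameter range.

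Your proposed shortcut via Proposition~\ref{onlyonevertex} does not close the gap either: that proposition classifies \emph{intertwiners}, and knowing that a free-field expression has the correct $h_0$-charge and conformal weight does not place it in that one-dimensional space — showing that $\sum_r\binom{\nu}{r}x^r\mathcal{O}^{(N)}_{r;\nu}(z)$ satisfies the relations of Proposition~\ref{Prop: Awata Yamada} is precisely the content of \eqref{eq:O comm rel}, and the paper deduces Proposition~\ref{Prop:O&Vnu 1} from the present proposition, not conversely, so the argument is circular. For comparison, the paper gives no proof here at all: it treats the statement as standard and cites \cite{EFK:1998}, \cite{Gerasimov:1990}, \cite{Awata:1991}, which carry out the direct computation you are attempting, with the boundary/contour issues handled carefully. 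Your plan is therefore the right kind of argument, but as written it would fail at step 3 and needs the corrected bookkeeping above.
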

%\begin{remark}
%	The operators $\mathcal{O}^{(N)}_{r;\nu}$ correspond case when $R$ is contragradient Verma module for $\mathfrak{sl}(2)$.
%\end{remark}
\begin{proposition}\phantomsection \label{Prop:O&Vnu 1}
	If $\mu, k$ are generic and $\lambda + \nu - \mu = 2 N$, $N \in \mathbb{Z}_{\ge 0}$ then the operator $\mathcal{V}_{\nu}(x,z) \colon  \mathcal{M}_{\lambda,k}\rightarrow  \mathcal{M}_{\mu,k}$ has the form 
	\begin{equation} \label{eq:Vnu=O 1}
		\mathcal{V}_{\nu}(x,z)=C \sum_{r\ge 0} \binom{\nu}{r} \mathcal{O}^{(N)}_{r; \nu  }(z) x^{r}.
	\end{equation}
\end{proposition}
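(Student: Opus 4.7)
The plan is to exploit the characterization of $\mathcal{V}_\nu(x,z)$ up to scalar by the commutation relations \eqref{eq:Vnu comm rel} of Proposition \ref{Prop: Awata Yamada}, combined with the uniqueness of the three-point coinvariant in Proposition \ref{Prop:coinv generic}. Set
\[
\widetilde{\mathcal{V}}_\nu(x,z) := \sum_{r\ge 0} \binom{\nu}{r}\, \mathcal{O}^{(N)}_{r;\nu}(z)\, x^r.
\]
By construction $\mathcal{O}^{(N)}_{r;\nu}(z)$ maps $\mathcal{F}_{\lambda,k}$ into $\mathcal{F}_{\lambda+\nu-2N,k} = \mathcal{F}_{\mu,k}$, and since $\mu, k$ are generic (and hence so is $\lambda = \mu-\nu+2N$), Proposition \ref{Prop:wakverm} identifies these Wakimoto modules with the Verma modules $\mathcal{M}_{\lambda,k}$ and $\mathcal{M}_{\mu,k}$. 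Thus $\widetilde{\mathcal{V}}_\nu(x,z)$ lies in the same space $\operatorname{Hom}_{\mathbb{C}}(\mathcal{M}_{\lambda,k},\overline{\mathcal{M}_{\mu,k}})$ as $\mathcal{V}_\nu(x,z)$, and it will suffice to show that they obey the same defining relations and that $\widetilde{\mathcal{V}}_\nu$ does not vanish.

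The key combinatorial check is that $\widetilde{\mathcal{V}}_\nu(x,z)$ satisfies \eqref{eq:Vnu comm rel}. Inserting Proposition \ref{Prop:O comm rel} term by term, the $h_n$ relation is immediate, since the weight $(\nu-2r)$ matches the action of $(-2x\partial_x + \nu)$ on $x^r$. For $f_n$ one obtains
\[
[f_n, \widetilde{\mathcal{V}}_\nu(x,z)] = z^n \sum_{r\ge 0} \binom{\nu}{r}(\nu-r)\, \mathcal{O}^{(N)}_{r+1;\nu}(z)\, x^r,
\]
and the identity $\binom{\nu}{r}(\nu-r) = \binom{\nu}{r+1}(r+1)$, combined with a shift of summation index, rewrites this as $z^n \partial_x \widetilde{\mathcal{V}}_\nu(x,z)$. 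The $e_n$ relation follows from the same binomial identity: both $[e_n,\widetilde{\mathcal{V}}_\nu]$ and $z^n(-x^2\partial_x + \nu x)\widetilde{\mathcal{V}}_\nu$ collapse to $z^n \sum_{r\ge 0} \binom{\nu}{r}(\nu-r)\,\mathcal{O}^{(N)}_{r;\nu}(z)\, x^{r+1}$. Uniqueness then forces $\widetilde{\mathcal{V}}_\nu(x,z) = C^{-1}\mathcal{V}_\nu(x,z)$ for some scalar $C$, provided $\widetilde{\mathcal{V}}_\nu$ is nonzero.

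The main obstacle is precisely this nondegeneracy. I would establish it by examining the $r=0$ term on the highest weight vector $v_{\lambda,k}$: contracting the plane-wave exponential $:\!\exp(\tfrac{\nu}{\sqrt{2\kappa}}\varphi(z))\!:$ against the $N$ screening currents $S(t_i)$ reduces $\mathcal{O}^{(N)}_{0;\nu}(z)\,v_{\lambda,k}$ to an $N$-dimensional Selberg-type contour integral of Dotsenko--Fateev form, with exponents determined by $\lambda$, $\nu$, and $k$. Since no positive power of $x$ can cancel the leading $r=0$ piece, nonvanishing of this integral for generic parameters suffices to detect nontriviality of $\widetilde{\mathcal{V}}_\nu$ and hence pins down the constant $C$. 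Justifying this nonvanishing is the technical heart of the proof: one can either appeal to an explicit Selberg-type evaluation (as carried out later in Section~\ref{sec:selbint}) or, more robustly, argue by analytic continuation from a region where the integrand is absolutely integrable and manifestly of one sign.
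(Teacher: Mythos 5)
Your proof takes essentially the same route as the paper's: verify that the right-hand side satisfies the defining commutation relations \eqref{eq:Vnu comm rel} by a term-by-term application of Proposition \ref{Prop:O comm rel} (your binomial bookkeeping is correct), then invoke the uniqueness of the vertex operator from Proposition \ref{Prop:coinv generic} to conclude proportionality up to the scalar $C$. The additional nonvanishing check you flag (via the Selberg-type evaluation of the $r=0$ matrix element for generic parameters) is a legitimate point that the paper leaves implicit, but it does not alter the structure of the argument.
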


Here \(C\) is the scalar factor which cannot be fixed by commutation relations, see Proposition \ref{Prop:coinv generic}. Note that the operator \(X_\nu(x,z)\) above can be written using formula \eqref{eq:Vnu=O 1} for \(\nu \in \mathbb{Z}_{\ge 0}\).

\begin{proof}
	Due to uniqueness of \(\mathcal{V}_\nu\) it is sufficient to check that right side of satisfies commutation relations \eqref{eq:Vnu comm rel}. This is direct computation using Proposition \ref{Prop:O comm rel}.	
	%	we get $ v_r(z) = (-1)^r \mathcal{O}^{(N)}_{r; -\nu - 2}(z)$.
	%	
	%	
	%	Consider decomposition $\mathcal{V}_{\nu}(x,z) = \sum_{r\in \mathbb{Z}} v_r(z) x^{\nu + 1 + r}$. Then the commutations relations \eqref{eq:Vnu comm rel} are equivalent to
	%	\begin{subequations}
		%		\begin{align}
			%		    [f_m, (-1)^r v_r(z)] &= z^m \big((- \nu - 2) - r  \big) (-1)^{r+1}  v_{r+1}(z)),\\
			%		    [e_m, (-1)^r v_r(z)] &= z^m r (-1)^{r-1} v_{r-1}(z),\\
			%		    [h_m,(-1)^r v_r(z)] &= z^m\big((- \nu - 2)- 2 r\big) (-1)^n v_{r}(z).    
			%		\end{align}
		%	\end{subequations}
	%	Using uniqueness of \(\mathcal{V}_\nu\) we get $ v_r(z) = (-1)^r \mathcal{O}^{(N)}_{r; -\nu - 2}(z)$.
\end{proof}

\begin{proposition}\phantomsection \label{Prop:O&Vnu 2}
	Let $\mu, k$ be generic and $\lambda - \nu - 2 - \mu = 2 N,~ N \in \mathbb{Z}_{\ge 0}$. Then the expansion of the operator   $\mathcal{V}_{\nu}(x,z) \colon  \mathcal{M}_{\lambda,k}\rightarrow  \mathcal{M}_{\mu,k}$ has the form \(\mathcal{V}_{\nu}(x,z)=C \sum_{r \in \mathbb{Z}} 
	\mathcal{V}_{\nu}^{[r]}(x,z)  x^{\nu+1+r}\) where 
	\begin{equation} \label{eq:Vnu=O 2}
%		\mathcal{V}_{\nu}(x,z)=\sum_{r \in \mathbb{Z}} 
%		\mathcal{V}_{\nu}^{[r]}(x,z)  x^{\nu+1+r}, \text{ where }		
		\mathcal{V}_{\nu}^{[r]}(x,z)=
		(-1)^r\mathcal{O}^{(N)}_{r; -\nu - 2}(z),\;\; 
		\text{ for }r\ge 0.
	\end{equation}
\end{proposition}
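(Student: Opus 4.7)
The plan is to parallel the proof of Proposition~\ref{Prop:O&Vnu 1}: invoke uniqueness of the vertex operator from Proposition~\ref{Prop:coinv generic}, and verify directly that the proposed Ansatz satisfies the defining commutation relations~\eqref{eq:Vnu comm rel}. The first step is to determine the form of the expansion. The relation $[h_0,\mathcal{V}_\nu(x,z)]=(-2x\partial_x+\nu)\mathcal{V}_\nu(x,z)$ forces the coefficient of $x^p$ in $\mathcal{V}_\nu(x,z)v_{\lambda,k}$ to have $h_0$-eigenvalue $\lambda+\nu-2p$. For this to lie in the weight set $\mu+2\mathbb{Z}$ of $\mathcal{M}_{\mu,k}$ with $\mu=\lambda-\nu-2-2N$ and $\nu$ generic, we are forced into $p\in(\nu+1)+\mathbb{Z}$, so the expansion necessarily has fractional powers $x^{\nu+1+r}$, $r\in\mathbb{Z}$, as claimed.

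Writing $\mathcal{V}_\nu(x,z)=\sum_{r\in\mathbb{Z}}\mathcal{V}_\nu^{[r]}(z)\,x^{\nu+1+r}$ and extracting the coefficient of $x^{\nu+1+r}$ in~\eqref{eq:Vnu comm rel} yields
\begin{equation*}
[e_n,\mathcal{V}_\nu^{[r]}]=-r\,z^n\,\mathcal{V}_\nu^{[r-1]}, \qquad [h_n,\mathcal{V}_\nu^{[r]}]=(-\nu-2-2r)\,z^n\,\mathcal{V}_\nu^{[r]}, \qquad [f_n,\mathcal{V}_\nu^{[r]}]=(\nu+2+r)\,z^n\,\mathcal{V}_\nu^{[r+1]}.
\end{equation*}
I would then check the Ansatz $\mathcal{V}_\nu^{[r]}=(-1)^r\mathcal{O}^{(N)}_{r;-\nu-2}$ by substituting $\nu\mapsto-\nu-2$ in Proposition~\ref{Prop:O comm rel}. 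The $h_n$-relation matches directly; the sign $(-1)^r$ is precisely what converts the coefficient $r$ (in $[e_n,\mathcal{O}^{(N)}_{r;-\nu-2}]$) into $-r$ for $\mathcal{V}_\nu^{[r-1]}$, and similarly turns $-\nu-2-r$ into $\nu+2+r$ in the $[f_n,\cdot]$ relation. The image module also matches, since $\mathcal{O}^{(N)}_{r;-\nu-2}$ shifts the Fock charge by $-\nu-2-2N=\mu-\lambda$, landing in $\mathcal{F}_{\mu,k}\simeq\mathcal{M}_{\mu,k}$.

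Uniqueness from Proposition~\ref{Prop:coinv generic} then closes the argument: for generic $\nu$ the coefficient $\nu+1+r\ne 0$ whenever $r\ge 0$, so the $[f_n,\cdot]$ recursion propagates $\mathcal{V}_\nu^{[0]}=C\,\mathcal{O}^{(N)}_{0;-\nu-2}$ into the claimed formula with a single overall scalar $C$. The main technical subtlety I expect to address lies in giving meaning to~\eqref{omnnu} after the substitution $\nu\mapsto-\nu-2$: the vertex exponent and the $N$ screening exponents can fail to give a convergent simplex integral on the naive cycle $0\le t_N\le\dots\le t_1\le z$, so, following~\cite{EFK:1998}, one either deforms the relative cycle or defines $\mathcal{O}^{(N)}_{r;-\nu-2}$ via analytic continuation in $\nu$. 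This is also why the formula is asserted only for $r\ge 0$: for $r<0$ the factor $\gamma(z)^r$ in the integrand has no direct meaning, while $\mathcal{V}_\nu^{[r]}$ with $r<0$ still exists by uniqueness and would require a separate description outside the simplex Ansatz.
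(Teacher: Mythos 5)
Your proposal is correct and follows essentially the same route as the paper: expand in the fractional powers $x^{\nu+1+r}$, extract from \eqref{eq:Vnu comm rel} the component relations, observe that after the reflection $\nu\mapsto-\nu-2$ and the sign twist $(-1)^r$ they coincide with \eqref{eq:O comm rel}, and conclude by uniqueness of the solution, noting that the subsystem for $r\ge 0$ is closed. The added remarks on the weight argument fixing the exponent $\nu+1+r$ and on the analytic continuation of the contour are consistent with what the paper states elsewhere.
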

Note that, contrary to the formula \eqref{eq:Vnu=O 1} we cannot write that whole vertex operator \(\mathcal{V}_{\nu}\) is simply the sum of \(\mathcal{O}^{(N)}_{r; -\nu - 2}\). This is because there are also summands \(\mathcal{V}_{\nu}^{[r]}(x,z)\) with \(r<0\) which do not have formula like \eqref{omnnu}. Also note the reflection \(\nu \leftrightarrow -\nu-2\) in the indices in the formula \eqref{eq:Vnu=O 2}.
\begin{proof}
	Same as above, the equations \eqref{eq:Vnu comm rel} lead to the system of relations on commutators of \(\mathcal{V}_{\nu}^{[r]}(z)\) and \(e_n,h_n,f_n\). This system  coincides with relations \eqref{eq:O comm rel} (but for all \(r \in \mathbb{Z}\)) and has unique solution. One can additionally note that the system of equation on operators  \(\mathcal{V}_{\nu}^{[r]}(z)\) with \(r\ge 0\) is closed and these operators can be uniquely determined without using \(r< 0\) ones.
%	Due to uniqueness of \(\mathcal{V}_\nu\) it is sufficient to check that right side of satisfies commutation relations \eqref{eq:Vnu comm rel}, This is direct computation using Proposition \ref{Prop:O comm rel}.	
%	we get $ v_r(z) = (-1)^r \mathcal{O}^{(N)}_{r; -\nu - 2}(z)$.
%	
%	
%	Consider decomposition $\mathcal{V}_{\nu}(x,z) = \sum_{r\in \mathbb{Z}} v_r(z) x^{\nu + 1 + r}$. Then the commutations relations \eqref{eq:Vnu comm rel} are equivalent to
%	\begin{subequations}
%		\begin{align}
%		    [f_m, (-1)^r v_r(z)] &= z^m \big((- \nu - 2) - r  \big) (-1)^{r+1}  v_{r+1}(z)),\\
%		    [e_m, (-1)^r v_r(z)] &= z^m r (-1)^{r-1} v_{r-1}(z),\\
%		    [h_m,(-1)^r v_r(z)] &= z^m\big((- \nu - 2)- 2 r\big) (-1)^n v_{r}(z).    
%		\end{align}
%	\end{subequations}
%	Using uniqueness of \(\mathcal{V}_\nu\) we get $ v_r(z) = (-1)^r \mathcal{O}^{(N)}_{r; -\nu - 2}(z)$.
\end{proof}

\subsection{Virasoro vertex operators} \label{ssec:Virasoro}

In the Virasoro case we follow the same pattern. We keep the notation \(t\) for some fixed global coordinate on \(\mathbb{P}^1\). For any \(z\in \mathbb{P}^1\) let 	\(\mathrm{Vir}_z\) denotes Lie algebra with basis \((t-z)^n \partial_t\) and central element \(C\) with the commutator defined by 
\begin{equation}
	[f(t) \partial_t, g(t) \partial_t]= (f g'-f' g) \partial_t + \frac{1}{12}C\operatorname{Res}_{t=z} (f''' g dt).
\end{equation}
There is an isomorphism \(\alpha_z\) between the abstract Virasoro algebra and \(\mathrm{Vir}_z\), \(L_n \mapsto -(t-z)^{n+1}\partial_t\). Using this isomorphism, we can define highest weight modules $\mathbb{M}_{P,b}(z)$ over \(\mathrm{Vir}_z\)

Let \(\mathrm{Vect}(\mathbb{P}^1){\setminus} \{z_1,z_2,z_3\})\) denotes Lie algebra of meromorphic vector fields with poles only at \(z_1,z_2,z_3\). There is a Lie algebra map
\begin{equation}
	\mathrm{Vect}(\mathbb{P}^1){\setminus} \{z_1,z_2,z_3\}) \rightarrow \overline{\mathrm{Vir}_{z_1}} \oplus \overline{\mathrm{Vir}_{z_2}} \oplus \overline{\mathrm{Vir}_{z_3}}, 
\end{equation}
which map any element to the direct sum of series expansions at points \(z_i\). The completions \(\overline{\mathrm{Vir}_{z}}\) denote Lie algebra with elements of the form \( f(t-z)\partial_t + \alpha C\), where \(f \in \mathbb{C}[[t - z, (t - z)^{-1}]\) is a Laurent series. Clearly \(\overline{\mathrm{Vir}_{z}}\) acts on $\mathbb{M}_{P,b}(z)$.

For generic \(P_1, P_2, P_3\) there exists a unique up to constant homomorphism of \(\mathrm{Vect}\big(\mathbb{P}^1{\setminus} \{z_1,z_2,z_3\}\big)\) modules 
 \begin{equation}
 	\mathtt{m}^{\mathrm{Vir}}
 	\colon \mathbb{M}_{P_1,b}(z_1)\otimes \mathbb{M}_{P_2,b}(z_2)\otimes \mathbb{M}_{P_3,b}(z_3) \rightarrow \mathbb{C}.
 \end{equation}
Moreover, the homomorphism is uniquely determined by its value on the product of highest vectors \(v_{P_1,b}\otimes v_{P_2,b} \otimes v_{P_3,b}\). Composing with \(\otimes \alpha_{z_i}\) and moving points to \(\{0,z,\infty\}\) we get a map 
\begin{equation}
	\mathtt{m}^{\mathrm{Vir}}(z)
	\colon \mathbb{M}_{P_1,b}\otimes \mathbb{M}_{P_2,b}\otimes \mathbb{M}_{P_3,b} \rightarrow \mathbb{C}.
\end{equation}
For generic \(P_3\) this map leads to an operator state correspondence map \(\mathtt{Y}^{\mathrm{Vir}}(z): \mathbb{M}_{P_2,b} \rightarrow \operatorname{Hom}_{\mathbb{C}}( \mathbb{M}_{P_1,b}, 
\overline{\mathbb{M}_{P_3,b}})\). Let \(\Phi_\Delta(z)=\mathtt{Y}^{\mathrm{Vir}}(v_{P,b}|z)\), here \(\Delta=\Delta(P,b)\). The \(\mathrm{Vect}\big(\mathbb{P}^1{\setminus} \{z_1,z_2,z_3\}\big)\) invariance of \(\mathtt{m}^{\mathrm{Vir}}\) leads to the following commutation relations.
\begin{proposition}
	The operator $\Phi_\Delta(z)$ satisfies 	
%	The operator $\Phi_\Delta(w):V_1 \rightarrow V_2$, where  $V_1, V_2$ are representations of the Virasoro algebra 
	\begin{equation} \label{eq:Phi_Delta L_n}
	    [L_n, \Phi_\Delta(z)] =  z^{n + 1}\partial\Phi_\Delta(z) + (n + 1)z^n \Delta \Phi_\Delta(z).
	\end{equation}
%	The $\Delta$ is called conformal dimension of the $\Phi_\Delta(w)$.
\end{proposition}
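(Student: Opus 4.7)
The plan is to derive the commutation relation directly from the coinvariance of $\mathtt{m}^{\mathrm{Vir}}(z)$ under $\mathrm{Vect}(\mathbb{P}^1 \setminus \{0, z, \infty\})$, which is essentially its defining property. For each $n \in \mathbb{Z}$, I would apply coinvariance to the global meromorphic vector field $X_n = t^{n+1}\partial_t$ on $\mathbb{P}^1$, whose only singularities lie at $0$ and $\infty$. The first step is to compute the local expansion of $X_n$ at the three marked points via $\alpha_z(L_m) = -(t-z)^{m+1}\partial_t$. At $t=0$, $X_n = -\alpha_0(L_n)$, so $X_n$ acts as $-L_n$ on the first factor $\mathbb{M}_{P_1, b}$. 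At $t=z$, the binomial expansion $t^{n+1} = \sum_{k\ge 0}\binom{n+1}{k}z^{n+1-k}(t-z)^k$ yields
\[
	X_n|_z = -\sum_{k \ge 0}\binom{n+1}{k} z^{n+1-k}\,\alpha_z(L_{k-1}).
\]
Finally, in the coordinate $s = t^{-1}$ at infinity, $X_n = -s^{1-n}\partial_s = \alpha_\infty(L_{-n})$, so $X_n$ acts as $L_{-n}$ on the third factor $\mathbb{M}_{P_3, b}$.

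The second step is to insert $v_2 = v_{P,b}$ in the middle slot and exploit the highest-weight property $L_k v_{P,b} = 0$ for $k \ge 1$ together with $L_0 v_{P,b} = \Delta v_{P,b}$. This truncates the sum at $z$ to the $k=0$ and $k=1$ terms, leaving $-z^{n+1}L_{-1} v_{P,b}$ and $-(n+1)z^n \Delta\, v_{P,b}$. The $L_{-1}$-contribution is then rewritten via the translation identity $\mathtt{Y}^{\mathrm{Vir}}(L_{-1} v_{P,b}|z) = \partial_z \Phi_\Delta(z)$, a standard consequence of coinvariance applied to the globally regular vector field $\partial_t$ (whose action at $\infty$ is $L_1$, annihilating $v_{P_3, b}$, and which on descendants produces the translation axiom).

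Combining the three contributions in the coinvariance equation $\sum_i X_n|_{z_i} = 0$ on $v_1\otimes v_{P,b}\otimes v_3$, and using the adjoint identity $\mathtt{m}(v_1\otimes v_{P,b}\otimes L_{-n}v_3) = \langle v_3, L_n \Phi_\Delta(z) v_1\rangle$ for the pairing induced by $\mathtt{m}^{\mathrm{Vir}}$ on $\mathbb{M}_{P_3, b}$ (under which the Virasoro transpose exchanges $L_m \leftrightarrow L_{-m}$), one obtains the matrix element identity
\[
	\langle v_3, (L_n \Phi_\Delta(z) - \Phi_\Delta(z) L_n) v_1\rangle = z^{n+1}\langle v_3, \partial_z \Phi_\Delta(z) v_1\rangle + (n+1)z^n \Delta\langle v_3, \Phi_\Delta(z) v_1\rangle,
\]
from which \eqref{eq:Phi_Delta L_n} follows as an operator equation $\mathbb{M}_{P_1, b} \to \overline{\mathbb{M}_{P_3, b}}$. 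The main technical obstacle I anticipate is the careful handling of the contribution at infinity: one must pin down the operator-state convention, verify that the Virasoro transpose at $\infty$ gives $L_n$ acting to the left on $\Phi_\Delta(z) v_1$ with the correct sign, and track the identifications through $\alpha_\infty$. Once these conventions are pinned down, the rest of the argument is routine bookkeeping of binomial coefficients together with the truncation from the highest-weight condition.
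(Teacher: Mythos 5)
Your derivation is correct and follows exactly the route the paper has in mind: the paper offers no written proof beyond the remark that the relation follows from the $\mathrm{Vect}\big(\mathbb{P}^1\setminus\{z_1,z_2,z_3\}\big)$-invariance of $\mathtt{m}^{\mathrm{Vir}}$, and your argument simply carries out that invariance for the fields $t^{n+1}\partial_t$, with the correct local expansions at $0$, $z$, $\infty$, the highest-weight truncation at $z$, and the Shapovalov-type identification at infinity. The only point left implicit, as you note, is the standard translation identity $\mathtt{Y}^{\mathrm{Vir}}(L_{-1}v_{P,b}|z)=\partial_z\Phi_\Delta(z)$, which fixes the $z$-dependent normalization of the coinvariant and is at the same level of rigor as the paper itself.
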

The commutation relations \eqref{eq:Phi_Delta L_n} can serve as a definition of Virasoro vertex operator.

For the \(\Delta=\Delta_{m,n}\) Verma module becomes reducible (see Theorem \ref{Th:Delta_mn}). We can replace Verma modules by their irreducible quotients \(\mathbb{L}_{P_{m,n},b}\). The corresponding vertex operators will be denoted by \(\Phi_{m,n}(z)\). The singular vector condition on \(v_{P_{m,n},b}\) corresponds to the equation on the vertex operator. In the important cases  \(\Phi_{1,2}(z)\) and \(\Phi_{2,1}(z)\) we have (cf. Example \ref{Exa:Phi 12 21})
\begin{subequations}
	\begin{align}
		\partial_z \Phi_{2,1}(z) + b^{-2} :\! L(z) \Phi_{2,1}(z)\!: = 0,
		\\
		\label{eq: Phi12}
		\partial_z \Phi_{1,2}(z) + b^{2} :\! L(z)\Phi_{1,2}(z)\!: = 0.
	\end{align}
\end{subequations}
These differential equations lead to the famous Virasoro fusion rules \cite{BPZ:1984}.
%
%
%\begin{proposition}
%  The Virasoro vertex operators $\Phi_{2,1}(z),\Phi_{1,2}(z)$ are called degenerate if their conformal dimension is $\Delta_{2,1},\Delta_{1,2}$ and they satisfy differential equations:
%\begin{equation}
%      \partial_z \Phi_{2,1}(z) + b^{-2} :\! L(z) \Phi_{2,1}(z)\!: = 0
%\end{equation}
%and
%\begin{equation}
%      \partial_z \Phi_{1,2}(z) + b^{2} :\! L(z)\Phi_{1,2}(z)\!: = 0.
%\end{equation}
%\end{proposition}
%The existance of

\section{Coset construction} \label{sec:coset}
\subsection{Definition, decomposition}
    
Let us consider the algebra $\widehat{\mathfrak{sl}}(2)_1\oplus \widehat{\mathfrak{sl}}(2)_k$ and its module $\mathcal{L}_{i,1}\otimes \mathcal{M}_{\lambda,k}$. There is an action of diagonal $\widehat{\mathfrak{sl}}(2)_{k + 1}$ on this space. Let us denote the generators of this algebra by $e_n^\Delta, f_n^\Delta, h_n^\Delta$. 

Note that there are three different actions of the Virasoro algebra on $\mathcal{L}_{i,1}\otimes \mathcal{M}_{\lambda,k}$ which come from the Sugawara construction. We will call the corresponding currents by $L^{(1)}(z), L^{(2)}(z)$ and $L^\Delta(z)$.

\begin{theorem}[\cite{Goddard:1986}] \phantomsection
	The modes of $L^{GKO}(z) = L^{(1)}(z) +  L^{(2)}(z) - L^\Delta(z)$ satisfy the relations of the Virasoro algebra and commute with diagonal affine algebra $\widehat{\mathfrak{sl}}(2)_{k + 1}$. 
\end{theorem}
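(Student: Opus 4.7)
The strategy is to reduce the claim to two standard consequences of Sugawara's construction: first, each $L^{(i)}(z)$ ($i=1,2$) makes the currents $e^{(i)}(z), h^{(i)}(z), f^{(i)}(z)$ primary of conformal weight $1$ while commuting with the opposite copy; second, $L^\Delta(z)$ plays the same role for the diagonal currents $J^\Delta(z)=J^{(1)}(z)+J^{(2)}(z)$, since it is by definition the Sugawara tensor built from them. Both facts are immediate from the formula \eqref{Sug} applied to each copy and to the diagonal $\widehat{\mathfrak{sl}}(2)_{k+1}$ respectively.

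First I would verify that $L^{GKO}(z)$ commutes with the diagonal affine algebra. For any $J \in \{e,h,f\}$,
\begin{equation*}
[L^{(1)}(z)+L^{(2)}(z),\, J^\Delta(w)] = [L^{(1)}(z), J^{(1)}(w)] + [L^{(2)}(z), J^{(2)}(w)],
\end{equation*}
because $L^{(i)}$ is built only out of the $i$-th copy so the cross terms vanish. Each bracket on the right is the standard current-derivative Sugawara bracket, and their sum is exactly $[L^\Delta(z), J^\Delta(w)]$ (the Sugawara bracket for the diagonal copy applied to $J^\Delta$). Subtracting gives $[L^{GKO}(z), J^\Delta(w)] = 0$, which is the second assertion.

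Next, since $L^\Delta(z)$ is a normally ordered quadratic expression in the modes of $J^\Delta$, the previous step forces $[L^{GKO}(z), L^\Delta(w)] = 0$. Writing $L^{tot} := L^{(1)}+L^{(2)}$, this yields
\begin{equation*}
[L^{GKO}(z), L^{GKO}(w)] = [L^{GKO}(z), L^{tot}(w)] = [L^{tot}(z), L^{tot}(w)] - [L^\Delta(z), L^{tot}(w)].
\end{equation*}
Substituting $L^{tot} = L^{GKO}+L^\Delta$ in the last commutator and reusing $[L^{GKO}, L^\Delta]=0$ reduces it to $[L^\Delta(z), L^\Delta(w)]$. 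Both $L^{tot}$ (as a sum of two mutually commuting Virasoro currents) and $L^\Delta$ satisfy Virasoro relations, so the subtraction produces a Virasoro bracket for $L^{GKO}$ with central charge $1 + \tfrac{3k}{k+2} - \tfrac{3(k+1)}{k+3}$, which rearranges to $c = 13 - 6\bigl(\tfrac{k+2}{k+3}+\tfrac{k+3}{k+2}\bigr)$ recorded in the introduction.

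The only genuine calculation is the Sugawara OPE underlying the two preliminary facts, which is already the content of the Sugawara theorem quoted around \eqref{Sug}; everything after that is formal bookkeeping. The main conceptual point — and the reason the argument works at all — is that $L^\Delta$ is expressible purely in terms of the diagonal currents, which turns the easy commutation with $J^\Delta$ into commutation with the whole subalgebra generated by $L^\Delta$, and makes the Virasoro bracket for $L^{GKO}$ separate cleanly into the difference of two known Virasoro brackets.
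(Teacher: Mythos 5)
Your argument is correct and is exactly the classical GKO argument; the paper states this theorem with a citation to Goddard--Kent--Olive and gives no proof of its own, so there is nothing to compare against. The two key points you use — that the Sugawara commutator with a current is the level-independent weight-one primary bracket, so $[L^{(1)}(z)+L^{(2)}(z)-L^{\Delta}(z),\,J^{\Delta}(w)]=0$, and that commuting with all modes of $J^{\Delta}$ implies commuting with the normally ordered quadratic $L^{\Delta}$ (termwise, since only finitely many summands act nontrivially on any vector of a highest-weight module) — are the standard ones, and your central-charge bookkeeping $c=1+\tfrac{3k}{k+2}-\tfrac{3(k+1)}{k+3}=13-6\bigl(\tfrac{k+2}{k+3}+\tfrac{k+3}{k+2}\bigr)$ checks out.
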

Let us use the notation $\mathrm{Vir}^{\mathrm{coset}}$ for the algebra generated by $L^{GKO}_n$. The following result is standard, we follow \cite{Bershtein2016coupling} for the statement and sketch of the proof.

%{\color{teal}The next theorem follows from the equality of the characters}
\begin{theorem}\phantomsection \label{Th:cosetdec} 
	For generic $\lambda, k$ there are the following decompositions of  $\mathcal{L}_{i,1}\otimes \mathcal{M}_{\lambda,k}$ as  \(\mathrm{Vir}^{\mathrm{coset}} \oplus \widehat{\mathfrak{sl}}(2)_{k + 1} \) modules
	\begin{subequations}\label{eq:W1 W2}
		\begin{align}
			\label{eq:W1}
		%W_0 = 
		\mathcal{L}_{0, 1} \otimes  \mathcal{M}_{\lambda, k} &= \bigoplus\nolimits_{l\in \mathbb{Z}} \mathbb{M}_{P(\lambda) + lb,b} \otimes \mathcal{M}_{\lambda+2l, k + 1},
		\\
		\label{eq:W2}
		%W_1 = 
		\mathcal{L}_{1, 1} \otimes  \mathcal{M}_{\lambda, k} &= \bigoplus\nolimits_{l \in \mathbb{Z}+\frac{1}{2}} \mathbb{M}_{P(\lambda) + lb,b} \otimes \mathcal{M}_{\lambda+2l, k + 1}.
		\end{align}
	\end{subequations}	
	where $b = b_{GKO}=-\ri \sqrt{\frac{k+2}{k+3}}$ and $P(\lambda) = P_{GKO}(\lambda)=-\frac{\lambda +1}{2 (k+2) } b_{GKO}$.
\end{theorem}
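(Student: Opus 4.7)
The plan is to prove the decomposition in two stages: construct enough highest weight vectors to account for every summand on the right, and then use character identities to show the sum exhausts the left-hand side.

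First I verify that the central charge of $\mathrm{Vir}^{\mathrm{coset}}$ matches $c(b)$. The Sugawara formula gives $c^{GKO} = c^{(1)} + c^{(2)} - c^\Delta = 1 + \frac{3k}{k+2} - \frac{3(k+1)}{k+3}$, and a direct simplification shows this equals $c(b) = 1 + 6(b+b^{-1})^2$ for $b^2 = -(k+2)/(k+3)$, i.e.\ for $b = b_{GKO}$.

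Next, for each admissible $l$ I would construct a vector $u_l(\lambda) \in \mathcal{L}_{i,1} \otimes \mathcal{M}_{\lambda,k}$ of $h_0^\Delta$-weight $\lambda + 2l$ that is annihilated by the positive modes of both $\widehat{\mathfrak{sl}}(2)_{k+1}$ and $\mathrm{Vir}^{\mathrm{coset}}$. A direct Sugawara-eigenvalue computation yields
\begin{equation*}
\Delta(P(\lambda) + lb, b) + \frac{(\lambda+2l)(\lambda+2l+2)}{4(k+3)} = l^2 + \frac{\lambda(\lambda+2)}{4(k+2)},
\end{equation*}
so $u_l(\lambda)$ must sit in the $L_0^{(1)} + L_0^{(2)}$-eigenspace with the right-hand value, in which the extremal vector $v_l \otimes v_{\lambda,k}$ already lives. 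I then seek $u_l(\lambda)$ as the unique (up to scalar) correction of this extremal vector by same-weight, same-grading terms, solving a finite triangular linear system imposed by the annihilation constraints; solvability is guaranteed by the genericity of $(\lambda, k)$, and an explicit free-field formula is the content of Theorem~\ref{Th:FormUm}. For $l \ge 0$, the extremal vector is actually at the \emph{minimum} $L_0^{(1)} + L_0^{(2)}$-grading of its weight space, so $\mathrm{Vir}^{\mathrm{coset}}$-primariness follows from diagonal-primariness automatically (since $L_n^{GKO}$ commutes with the diagonal and any lowering of $L_0^{(1)} + L_0^{(2)}$ forces zero); for $l < 0$ the minimum is attained instead by $v_0 \otimes f_0^{-l} v_{\lambda,k}$, so $u_l(\lambda)$ sits strictly above the minimum and additional $f_0$-descendant corrections are required.

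Once $u_l(\lambda)$ exists, the $\mathrm{Vir}^{\mathrm{coset}} \oplus \widehat{\mathfrak{sl}}(2)_{k+1}$-submodule it generates is a quotient of $\mathbb{M}_{P(\lambda) + lb, b} \otimes \mathcal{M}_{\lambda + 2l, k+1}$; by Theorems~\ref{Th:KacKazhdan} and~\ref{Th:Delta_mn} both factors are irreducible Verma modules for generic $(\lambda,k)$, so the quotient must be the full Verma tensor product. The sum over $l$ is direct by distinctness of $h_0^\Delta$-weights, and exhaustion follows from comparing characters: after cancelling the common denominators coming from~\eqref{eq:char level 1} and the Verma-module character formula, the character identity on both sides reduces term by term to the Sugawara-weight identity displayed above. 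The main obstacle is thus the explicit construction of $u_l(\lambda)$—especially for $l<0$, where simple grading arguments fail—and this is precisely what Theorem~\ref{Th:FormUm} supplies via the Wakimoto free-field realization.
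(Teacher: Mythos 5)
Your overall strategy (produce, for every $l$, a vector annihilated by the positive modes of both $\widehat{\mathfrak{sl}}(2)_{k+1}^\Delta$ and $\mathrm{Vir}^{\mathrm{coset}}$, note that generically it generates an irreducible copy of $\mathbb{M}_{P(\lambda)+lb,b}\otimes\mathcal{M}_{\lambda+2l,k+1}$, and then match characters) is a legitimate alternative to the paper's argument, and your central-charge check, the Sugawara-weight identity, and the reduction of the character identity to it are all correct. However, the step that carries essentially all of the representation-theoretic content — the \emph{existence} of $u_l(\lambda)$ for every $l$ — is not proved. Declaring that the annihilation constraints form a ``finite triangular linear system'' whose ``solvability is guaranteed by the genericity of $(\lambda,k)$'' is an assertion, not an argument: in a fixed bigraded component the conditions $e_0^\Delta u=f_1^\Delta u=L_1^{GKO}u=L_2^{GKO}u=0$ are generically far more equations than unknowns, and nothing you write explains why a nonzero solution exists. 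Your fallback, invoking Theorem~\ref{Th:FormUm}, is circular in the paper's logic: the very definition of $u_l(\lambda)$ and Step~2 of the proof of Theorem~\ref{Th:FormUm} use the decomposition \eqref{eq:W1 W2} being proved (only Step~1, that $\re^{-g_0}v_l(\lambda)$ is a highest weight vector for the \emph{diagonal} algebra, is independent, and that alone does not give coset-primariness), and it covers only $2l\le 0$.

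In addition, your shortcut for $l\ge 0$ rests on a false premise: the extremal vector $v_l\otimes v_{\lambda,k}$ is \emph{not} at the minimal $L_0^{(1)}+L_0^{(2)}$-grading of its $h_0^\Delta$-weight space once $l\ge \tfrac32$, since $v_{l-1}\otimes e_{-1}v_{\lambda,k}$ has the same weight $\lambda+2l$ and energy $(l-1)^2+1+\tfrac{\lambda(\lambda+2)}{4(k+2)}<l^2+\tfrac{\lambda(\lambda+2)}{4(k+2)}$; so ``any lowering of $L_0^{(1)}+L_0^{(2)}$ forces zero'' fails there (consistently, $u_l(\lambda)$ for $l\ge 1$ is not the bare extremal vector). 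A smaller repairable point: the sum over $l$ is not direct ``by distinctness of $h_0^\Delta$-weights'' — the weight supports of different summands overlap; directness holds because the summands are pairwise non-isomorphic irreducibles. By contrast, the paper's (sketched) proof avoids constructing any singular vectors: it combines the same character identity with the vanishing of extensions between the summands on the right-hand side, following the admissible-representation case in \cite{Iohara:2011}, and the splitting follows abstractly. To rescue your route you would need an independent existence proof for $u_l(\lambda)$, e.g.\ a direct verification in the Wakimoto realization that $\re^{-g_0}v_l(\lambda)$ (and its image under $\tau\otimes\tau$ for $l>0$) is killed by $L^{GKO}_1,L^{GKO}_2$ in addition to $e_0^\Delta,f_1^\Delta$, which is exactly the computation your proposal leaves out.
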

\begin{proof}[Sketch of the proof] The proof is based on the same two arguments as the proof for the more difficult case of admissible representations, see \cite[Theorem 10.2]{Iohara:2011}. First, one checks the identity of characters for the modules on the left and right sides. Second, one shows that there are no extensions between summands on the right side.
\end{proof}

\begin{define}\phantomsection \label{Def:Shap sl2} 
	The Shapovalov form on Verma module $\mathcal{M}_{\lambda,k}$ of \(\widehat{\mathfrak{sl}}(2)\) algebra is defined by the following properties
	\begin{equation}
		\langle v_{\lambda,k},v_{\lambda,k}\rangle = 1,\qquad e_n^\dagger = f_{-n},\;\; h_n^\dagger = h_{-n},\;\; f_n^\dagger = e_{-n},
	\end{equation}
	where $v_{\lambda,k}$ is the highest weight vector in $\mathcal{M}_{\lambda,k}$.
\end{define}
The Shapovalov form descents to the irreducible quotient $\mathcal{L}_{\lambda,k}$. We define form on the tensor product of the modules by the formula
\begin{equation}
    \langle x_1\otimes x_2, y_1\otimes y_2\rangle = \langle x_1,  y_1\rangle \langle x_2,  y_2\rangle .
\end{equation}
Hence we have Shapovalov form on the modules $\mathcal{L}_{i,1}\otimes \mathcal{M}_{\lambda,k}$.

\begin{define}\phantomsection \label{Def:Shap Vir}
    Shapovalov form on the Verma module $\mathbb{M}_{P,b}$ of the Virasoro algebra is defined by the following properties
      \begin{equation}
      \langle v_{P,b}, v_{P,b}\rangle = 1, ~~~ L_n^\dagger = L_{-n},
  \end{equation}
  where $ v_{P,b}$ is the highest weight vector in $\mathbb{M}_{P,b}$.
\end{define}

\subsection{Formula for the highest weight vectors}
\begin{notati}
	Let us denote the highest weight vector in coset decomposition by 
    \begin{equation}
    	u_l(\lambda) \in \mathbb{M}_{P + lb,b} \otimes \mathcal{M}_{2l +\lambda, k + 1}\subset (\mathcal{L}_{0, 1} \oplus \mathcal{L}_{1, 1}) \otimes  \mathcal{M}_{\lambda, k}.
	\end{equation}
\end{notati}
Recall (see sec. \ref{ssec:level 1}) notation \(v_l\) for extremal vectors in \(\mathcal{L}_{i,1}\). For the tensor products of extremal and highest weight vectors we will use notation
\begin{equation}
	v_l(\lambda)  = v_{l}\otimes v_{\lambda,k}.
\end{equation}
Let us fix normalization of $u_l(\lambda)$ through the Shapovalov scalar product with $v_l(\lambda)$
\begin{equation}\label{eq:ul normalization}
	\langle u_l(\lambda), v_l(\lambda)\rangle = 1.
\end{equation}
\begin{example}
	The simplest examples of the highest weight vectors $u_{l}(\lambda)$ have the form
	\begin{subequations}\label{eq:ul Verma}
		\begin{align}
			u_{0}(\lambda)&=v_{0}\otimes v_{\lambda,k},
			\\
			\label{eq:ul Verma 1/2}
			u_{-{1}/{2}}(\lambda)&=f_{0}v_{{1}/{2}}\otimes
			v_{\lambda,k}-\frac{1}{\lambda}v_{{1}/{2}}\otimes f_{0}v_{\lambda,k}
			\\
			\label{eq:ul Verma 1}			
			u_{-1}(\lambda)&=v_{-1}\otimes  v_{\lambda,k} - \frac{1}{(\lambda{-}1) \lambda}
			e_{-1} v_0\otimes f_{0}^2v_{\lambda,k} \, -  \,
		 	\frac{1}{\lambda} h_{-1}v_0\otimes f_{0}v_{\lambda,k}-\frac{1}{(k{+}\lambda{+}2)}v_0\otimes f_{-1}v_{\lambda,k} \notag
		 	\\	 	
		 	&  + \frac{1}{\lambda ( k{+}\lambda{+}2)}
		 	v_0\otimes h_{-1}f_{0}v_{\lambda,k}    
		 	+ \frac{1}{(\lambda{-}1) \lambda ( k{+}\lambda{+}2)}
		 	v_0\otimes e_{-1} f_{0}^2 v_{\lambda,k}.
			%\end{multline}
		\end{align}
	\end{subequations}
\end{example}    
Recall that for general $\lambda,k$ Verma module $\mathcal{M}_{\lambda,k}$ is isomorphic to Wakimoto module $\mathcal{F}_{\lambda,k}$ (see Proposition \ref{Prop:wakverm}). Our first goal is to find explicit formulas for vectors \(u_{l}(\lambda)\), or actually to their images in \(\mathcal{L}_{i,1}\otimes \mathcal{F}_{\lambda,k}\). Let us define current $g(z)$ acting on \(\mathcal{L}_{i,1}\otimes \mathcal{F}_{\lambda,k}\)
\begin{equation}
	g(z) =  e^{(1)}(z) \gamma(z), ~~~ g_0 = \sum_{n \in \mathbb{Z}}  e^{(1)}_n \gamma_{-n}.
\end{equation}
\begin{theorem} \phantomsection \label{Th:FormUm} 
	For generic \(\lambda,k\) and $2l\in \mathbb{Z}_{\le 0}$ then $l$-th highest weight vector in decompositions \eqref{eq:W1 W2} with respect to  $\mathrm{Vir}^{\mathrm{coset}}\oplus \widehat{\mathfrak{sl}}(2)_{k+1}$ is given by
	\begin{equation}\label{eq:ul Wakimoto}
		u_{l}(\lambda) = \re^{-g_0} v_{l}(\lambda).
	\end{equation}
\end{theorem}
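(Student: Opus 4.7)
My plan is to verify that $u_l(\lambda):=\re^{-g_0}v_l(\lambda)$ satisfies three properties that jointly characterize the highest weight vector in the decomposition \eqref{eq:W1 W2}: (a) the normalization $\langle u_l(\lambda),v_l(\lambda)\rangle=1$; (b) $u_l(\lambda)$ is a $\widehat{\mathfrak{sl}}(2)^\Delta_{k+1}$-highest weight vector of weight $2l+\lambda$; and (c) $L^{GKO}_0\,u_l(\lambda)=\Delta(P+lb,b)\,u_l(\lambda)$, so that $u_l(\lambda)$ is also a $\mathrm{Vir}^{\mathrm{coset}}$-highest weight vector. Once all three hold, $u_l(\lambda)$ is pinned to the standard HWV of $\mathbb{M}_{P+lb,b}\otimes\mathcal{M}_{2l+\lambda,k+1}$.

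The normalization (a) will follow from the bigrading of $\mathcal{L}_{i,1}\otimes\mathcal{F}_{\lambda,k}$ by $(h^{(1)}_0,h^{(2)}_0)$: each summand of $g_0=\sum_n e^{(1)}_n\gamma_{-n}$ shifts this bigrading by $(+2,-2)$, and since the Shapovalov form is block-diagonal with respect to it, every term $g_0^m v_l(\lambda)$ with $m\ge 1$ is orthogonal to $v_l(\lambda)$. For (b) I would establish two short commutator identities. First, $[h^\Delta_n,g_0]=0$ for every $n$: the contributions $[h^{(1)}_n,e^{(1)}_m]\gamma_{-m}=2e^{(1)}_{n+m}\gamma_{-m}$ and $e^{(1)}_m[h^{(2)}_n,\gamma_{-m}]=-2e^{(1)}_m\gamma_{n-m}$ cancel after reindexing. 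Second, $[e^\Delta_n,g_0]=[\beta_n,g_0]=e^{(1)}_n$; since $e^{(1)}_n$ commutes with every $e^{(1)}_m$ and with every $\gamma$-mode, the iterated bracket $[g_0,[g_0,e^\Delta_n]]$ vanishes and the adjoint action truncates to $\re^{g_0}e^\Delta_n\re^{-g_0}=\beta_n=e^{(2)}_n$. These two facts immediately give $h^\Delta_n u_l(\lambda)=(2l+\lambda)\delta_{n,0}u_l(\lambda)$ for $n\ge 0$ and $e^\Delta_n u_l(\lambda)=\re^{-g_0}\beta_n v_l(\lambda)=0$ for $n\ge 0$, as $\beta_n v_{\lambda,k}=0$ there.

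To handle (b) and (c) together I would next examine the Virasoro side. A parallel calculation using $[L^{(1)}_n,e^{(1)}_m]=-m\,e^{(1)}_{n+m}$ and $[L^{(2)}_n,\gamma_{-m}]=(m-n)\gamma_{n-m}$ (reflecting the conformal dimensions $h_{e^{(1)}}=1$ and $h_\gamma=0$) shows that $L^{(1)}_n+L^{(2)}_n$ commutes with $g_0$, so that $(L^{(1)}_0+L^{(2)}_0)u_l(\lambda)=\bigl(l^2+\tfrac{\lambda(\lambda+2)}{4(k+2)}\bigr)u_l(\lambda)$. The remaining annihilations $f^\Delta_n u_l(\lambda)=0$ for $n>0$ will then come from the decomposition theorem itself: the subspace of $\mathcal{L}_{i,1}\otimes\mathcal{M}_{\lambda,k}$ at weight $2l+\lambda$ that is killed by $e^\Delta_n$ ($n\ge 0$) and $h^\Delta_n$ ($n>0$) reduces, for generic parameters, to $\mathbb{M}_{P+lb,b}\otimes\mathbb{C}v_{2l+\lambda,k+1}$, which is the full space of $\widehat{\mathfrak{sl}}(2)^\Delta_{k+1}$-HWVs of the correct weight. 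Consequently $u_l(\lambda)$ is a full $\widehat{\mathfrak{sl}}(2)^\Delta_{k+1}$-HWV; Sugawara then yields $L^\Delta_0 u_l(\lambda)=\tfrac{(2l+\lambda)(2l+\lambda+2)}{4(k+3)}u_l(\lambda)$, and an elementary algebraic simplification using $b^2=-(k+2)/(k+3)$ and $P=-(\lambda+1)b/(2(k+2))$ confirms $L^{GKO}_0 u_l(\lambda)=\Delta(P+lb,b)\,u_l(\lambda)$. Combined with (a), this identifies $u_l(\lambda)$ with the standard HWV.

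The main obstacle will be the uniqueness statement just invoked: for generic $\lambda,k$ and $l'>l$, $\mathcal{M}_{2l'+\lambda,k+1}$ should contain no non-zero vector strictly below top weight which is annihilated by $e_n$ ($n\ge 0$) and $h_n$ ($n>0$). The naive alternative of directly truncating $\mathrm{Ad}(\re^{g_0})(f^\Delta_n)$ fails because $[f^\Delta_n,g_0]$ produces terms of the form $\sum_{p+q=n-m}e^{(1)}_m\gamma_p\gamma_q$ whose further brackets with $g_0$ do not close. I expect the uniqueness claim to be extractable from standard character/Kac--Kazhdan-type reasoning, exploiting the irreducibility of affine Verma modules at generic parameters.
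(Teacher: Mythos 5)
There is a genuine gap, and it occurs precisely where you abandon the direct computation. Your stated reason for not conjugating $f^\Delta_n$ --- that the brackets ``do not close'' because $[f^\Delta_n,g_0]$ produces terms $e^{(1)}_m\gamma_p\gamma_q$ --- is incorrect. Since at level $1$ the modes $e^{(1)}_m$ commute among themselves (no central term, as $\operatorname{Tr}(e^2)=0$) and commute with all $\gamma$-modes, and the $\gamma$-modes commute with each other, $g_0=\sum_n e^{(1)}_n\gamma_{-n}$ commutes with $e^{(1)}(w)\gamma^2(w)$; the only nonzero second bracket comes from $h^{(1)}(w)\gamma(w)$ and equals $-2e^{(1)}(w)\gamma^2(w)$, after which the series terminates. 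One gets
\begin{equation*}
\re^{g_0}f^\Delta(w)\re^{-g_0}=f^\Delta(w)+h^{(1)}(w)\gamma(w)+\gamma'(w),
\end{equation*}
and for $l\le 0$ the modes of index $\ge 1$ of the right-hand side visibly annihilate $v_l(\lambda)$ (since $f^{(1)}_nv_l=0$ for $n\ge 2l$, and $\gamma_n,\,h^{(1)}_n$ with $n\ge 1$ annihilate the respective vacua). This is exactly the paper's Lemma~\ref{Lem:efconj} and Step~1 of its proof; the hypothesis $l\le 0$ is used here, which explains why it appears in the statement.

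Because you replaced this step, your argument now rests on the uniqueness claim that a vector of $h_0^\Delta$-weight $\lambda+2l$ annihilated by $e^\Delta_n$ ($n\ge0$) and $h^\Delta_n$ ($n>0$) must lie in $\mathbb{M}_{P+lb,b}\otimes\mathbb{C}v_{\lambda+2l,k+1}$, and that claim is false. In the summand with $l'=l-j<l$ the vector $e_{-1}^{\,j}v_{\lambda+2(l-j),k+1}$ has $h_0$-weight $\lambda+2l$ and is killed by all $e_n$ ($n\ge0$, as the $e$'s commute) and all $h_n$ ($n>0$, since $[h_n,e_{-1}]=2e_{n-1}$ kills the highest weight vector); so $\mathbb{M}_{P+(l-j)b,b}\otimes\mathbb{C}e_{-1}^{\,j}v_{\lambda+2(l-j),k+1}$ also consists of such vectors. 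Your closing paragraph only discusses summands with $l'>l$ (weights strictly below the top), so the Kac--Kazhdan-type argument you envisage does not address the actual contamination, which comes from $l'<l$ and weights \emph{above} the top. One could try to repair this by also invoking the $L_0^{(1)}+L_0^{(2)}$-eigenvalue (for $l\le0$ the extremal-type contaminants sit at the wrong degree), but that requires classifying or bounding all such invariant vectors, which you have not done. The clean route is the direct conjugation of $f^\Delta$, which does close; with it, your steps (a), (b), the commutativity $[L^{(1)}_0+L^{(2)}_0,g_0]=0$, and the uniqueness of the highest weight vector with prescribed $h_0^\Delta$ and $L_0^{GKO}$ eigenvalues coming from the decomposition \eqref{eq:W1 W2} reproduce the paper's proof.
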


\begin{remark}
	In the proof below we will see that the right side of \eqref{eq:ul Wakimoto} is defines the highest weight vector in \(\mathcal{L}_{i,1}\otimes \mathcal{F}_{\lambda,k}\) for arbitrary \(\lambda,k\). On the other hand, in order to obtain highest weight vector in \(\mathcal{L}_{i,1}\otimes \mathcal{M}_{\lambda,k}\) we use Proposition~\ref{Prop:wakverm}, i.e. condition that \(\lambda,k\) are generic is essential for the proof. 
	Furthermore, the definition of vectors \(u_{l}(\lambda)\) was based on decomposition \eqref{eq:W1 W2}, i.e. only for generic values of parameters. See also Remark~\ref{Rem:RT proof} below.
	
%    For non-generic $\lambda,k$, the vectors defined by formula (\ref{eq:ul Wakimoto}) may have poles in the PBW basis. Thus, they are not well-defined for the Verma module $\mathcal{M}_{\lambda,k}$.  For example, the vector $u_{-1/2}(\lambda)$ has a pole at $\lambda = 0$.
%
%    Since the images of $u_l(\lambda)$ in \(\mathcal{L}_{i,1}\otimes \mathcal{F}_{\lambda,k}\) do not have poles, we see that these formulas produce some singular vectors there. 
%    However, we do not discuss here whether such vectors are uniquely determined, nor the decomposition of \(\mathcal{L}_{i,1}\otimes \mathcal{F}_{\lambda,k}\) as \(\mathrm{Vir}^{\mathrm{coset}} \oplus \widehat{\mathfrak{sl}}(2)_{k + 1} \) modules.
\end{remark}

\begin{example}
	Let us also present the examples of highest weight vectors $u_{l}(\lambda)$ after Wakimoto realization
	\begin{subequations}
		\begin{align}
			u_{0}(\lambda)&=v_{0}\otimes v_{\lambda,k},
			\\
			u_{-{1}/{2}}(\lambda)&=v_{-{1}/{2}}\otimes
			v_{\lambda,k}-v_{{1}/{2}}\otimes \gamma_{0}v_{\lambda,k}
			\\
			u_{-1}(\lambda)&= v_{-1}\otimes  v_{\lambda,k}- v_{0}\otimes  \gamma_{-1} v_{\lambda,k} - h_{-1}v_{0}\otimes  \gamma_0 v_{\lambda,k} -  v_1 \otimes \gamma_0^2 v_{\lambda,k}.
		\end{align}
	\end{subequations}
	Comparing with the formulas \eqref{eq:ul Verma} one can see significant simplification, in particular there are less summands and there are no denominators after Wakimoto realization.
\end{example}    

\begin{remark}\phantomsection \label{Rem:tau} 
	In the theorem above we had \(l\le 0\). Let us comment on the other case \(l>0\). There is an automorphism $\tau$ of algebra $\widehat{\mathfrak{sl}}(2)_k$ 
	\begin{equation}
	    \tau(e_n) = f_{n+1}, ~~ \tau(f_n) = e_{n-1}, ~~ \tau(h_n) = \delta_{n,0}k - h_n.
	\end{equation}
	This automorphism leads to a map between Verma modules $\tau\colon\mathcal{M}_{\lambda,k}\rightarrow\mathcal{M}_{k-\lambda,k}$ such that
	\begin{equation}
	    \tau(v_{\lambda,k})=v_{k-\lambda,k},
	    \quad \tau(x v)=\tau(x) \tau(v),
	\end{equation}
	where $x\in \widehat{\mathfrak{sl}}(2)_k$, $v \in \mathcal{M}_{\lambda,k}$ and $v_{\lambda,k}$ denotes the highest weight vector in $\mathcal{M}_{\lambda,k}$.  There exist similar map  $\tau\colon \mathcal{L}_{i,1} \rightarrow \mathcal{L}_{1-i,1}$, for $i\in\{0,1\}$.
	
	Consider $\tau\otimes\tau: \mathcal{M}_{\lambda,k}\otimes \mathcal{L}_{i,1} \rightarrow  \mathcal{M}_{k-\lambda,k}\otimes \mathcal{L}_{1-i,1}$. %The map $\tau\otimes\tau$ is isometry. 
	We have
	\begin{equation}
	    \tau\otimes\tau(v_l(\lambda)) = v_{\frac{1}{2}-l}(k-\lambda).
	\end{equation}
%	Therefore
%	\begin{equation}
%	    \tau\otimes\tau(u_l(\lambda)) = u_{\frac{1}{2}-l}(k-\lambda).
%	\end{equation}
	Hence the formulas for \(u_l(\lambda)\) with \(l>0\) can be obtained using \(\tau\). An analog of formula \eqref{eq:ul Wakimoto} for \(l>0\) requires a change of Wakimoto realization, obtained by permutation of \(e(z)\) and \(f(z)\), i.e., instead of formula \eqref{eq:wakimoto} we have \(f(z)=\gamma(z)\) but \(e(z)\) is the sum of three terms.

    Another analog of formula \eqref{eq:ul Wakimoto} for \(l>0\) uses conjugated operator \(g_0^\dagger\). It is given in Remark~\ref{Rem:l>0}.
\end{remark}

We will first prove Theorem \ref{Th:FormUm} by a direct computation.
\begin{lemma}\phantomsection \label{Lem:efconj}
Conjugation by \(\re^{g_0}\) acts on the operators of diagonal $\widehat{\mathfrak{sl}}(2)_{k+1}$ as follows
\begin{align}
	&\re^{g_0}e^\Delta(w) \re^{-g_0} = \beta(w),
    \\
    \label{eq:fconj}
    &\re^{g_0}f^\Delta(w) \re^{-g_0} =  f^{\Delta}(w) + h^{(1)}(w)\gamma(w) %- e^{(1)}(w)\gamma^2(w)     
    +\gamma'(w).
\end{align}
\end{lemma}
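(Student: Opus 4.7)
The plan is to compute both conjugations by the Hadamard expansion
\begin{equation*}
    \re^{g_0} Y(w) \re^{-g_0} = \sum_{n \ge 0} \frac{1}{n!}\operatorname{ad}_{g_0}^n(Y(w)),
\end{equation*}
applied with $Y = e^\Delta$ and $Y = f^\Delta$, and to observe that in both cases the series truncates after finitely many terms. The structural reason this works is that $g_0 = \sum_n e^{(1)}_n \gamma_{-n}$ is a sum of products of two commuting systems acting on distinct tensor factors of $\mathcal{L}_{i,1}\otimes \mathcal{F}_{\lambda,k}$; moreover both $e^{(1)}(w)$ and $\gamma(w)$ lie in $\ker\operatorname{ad}_{g_0}$ (for $e^{(1)}$ this uses $e^2 = 0$ in $\mathfrak{sl}_2$ together with $\operatorname{Tr}(ee) = 0$, so that $[e^{(1)}_n, e^{(1)}_m] = 0$ at level $1$; for $\gamma$ it is trivial). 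Hence only the elementary brackets
\begin{equation*}
    [\gamma_{-n}, \beta(w)] = -w^{-n-1}, \quad [e^{(1)}_n, h^{(1)}(w)] = -2 w^n e^{(1)}(w), \quad [e^{(1)}_n, f^{(1)}(w)] = w^n h^{(1)}(w) + n w^{n-1}
\end{equation*}
drive the calculation, together with the resummations $\sum_n w^n \gamma_{-n} = \gamma(w)$ and $\sum_n n w^{n-1} \gamma_{-n} = \gamma'(w)$.

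For the $e$-identity, write $e^\Delta(w) = e^{(1)}(w) + \beta(w)$; since $g_0$ annihilates the first summand, only the second contributes, giving $\operatorname{ad}_{g_0}(e^\Delta(w)) = -\sum_n e^{(1)}_n w^{-n-1} = -e^{(1)}(w)$, which is also in $\ker\operatorname{ad}_{g_0}$. The series therefore stops after one step and returns $e^\Delta(w) - e^{(1)}(w) = \beta(w)$. For the $f$-identity, split $f^\Delta = f^{(1)} + f^{(2)}$ with $f^{(2)}$ given by the Wakimoto formula \eqref{eq:wakimoto}. A direct calculation yields $\operatorname{ad}_{g_0}(f^{(1)}(w)) = h^{(1)}(w)\gamma(w) + \gamma'(w)$, and only the $-\!:\!\gamma^2\beta\!:$ summand of $f^{(2)}$ reacts with $g_0$, producing $\operatorname{ad}_{g_0}(f^{(2)}(w)) = \gamma^2(w) e^{(1)}(w)$ (the normal ordering can be dropped inside the commutator with $\gamma_{-n}$ because $[\gamma_{-n}, \beta(w)]$ is a scalar). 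Applying $\operatorname{ad}_{g_0}$ a second time kills all summands except $h^{(1)}(w)\gamma(w)$, and $\operatorname{ad}_{g_0}(h^{(1)}(w)\gamma(w)) = -2 \gamma^2(w) e^{(1)}(w)$; a third iteration vanishes. Summing the Hadamard series, the $\gamma^2(w) e^{(1)}(w)$ contributions cancel as $1 - \tfrac{1}{2}\cdot 2 = 0$, leaving $f^\Delta(w) + h^{(1)}(w)\gamma(w) + \gamma'(w)$, as asserted.

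The main technical point to watch is bookkeeping: each commutator of $g_0$ with a local field produces an infinite sum in $n$ which must be re-identified as a local field in $w$ via the above resummations, and one must use that $e^{(1)}_n$ and $\gamma_m$ commute past one another freely (they act on disjoint tensor factors). Once this is granted, the truncation of the Hadamard series at step $1$ (for $e^\Delta$) or $2$ (for $f^\Delta$) is forced by the fact that after the first commutator everything is built from $\gamma$'s, $e^{(1)}$'s and their derivatives, among which only the single factor $h^{(1)}$ appearing once has a nontrivial bracket with $g_0$, and that channel closes off at the next step.
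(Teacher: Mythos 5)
Your proposal is correct and follows essentially the same route as the paper: both expand the conjugation as $\sum_n \frac{1}{n!}\operatorname{ad}_{g_0}^n$, obtain the same nested commutators ($[g_0,e^\Delta(w)]=-e^{(1)}(w)$, $[g_0,f^\Delta(w)]=h^{(1)}(w)\gamma(w)+e^{(1)}(w)\gamma^2(w)+\gamma'(w)$, $[g_0,[g_0,f^\Delta(w)]]=-2e^{(1)}(w)\gamma^2(w)$), and observe the same truncation and cancellation. The only difference is cosmetic: the paper extracts these brackets from the OPEs $g(z)e^\Delta(w)$ and $g(z)f^\Delta(w)$, whereas you compute them directly mode by mode and resum.
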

\begin{proof}
Th exponent of adjoint action has the form
\begin{align}
	\re^{g_0}e^\Delta(w) \re^{-g_0} &=  e^\Delta(w) + [g_0, e^\Delta(w)] + \frac{1}{2}[g_0,[g_0, e^\Delta(w)]]+ \dots\;,
	\\
    \re^{g_0}f^\Delta(w) \re^{-g_0} &=  f^\Delta(w) + [g_0, f^\Delta(w)] + \frac{1}{2}[g_0,[g_0, f^\Delta(w)]]+ \dots\;.
\end{align}
Hence it is sufficient to find all commutators. In order to do this we will use OPE (see \cite[Ch 3]{FB:2004})
\begin{equation}
	g(z) e^\Delta(w) = e^{(1)}(z) \gamma(z) (e^{(1)}(w) + \beta(w)) = \frac{- e^{(1)}(w)}{z - w}+ \text{reg}.
\end{equation}
It means that
\begin{equation}
	[g_0, e^\Delta(w)] = - e^{(1)}(w).
\end{equation}
Moreover, the OPE of $g(z)$ and $e^{(1)}(w)$ has no singular term, hence
\([g_0,[g_0, e^\Delta(w)]] = 0.\). Therefore
\begin{equation}
	\re^{g_0}e^\Delta(w) \re^{-g_0} = e^{(2)}(w)=\beta(w).
\end{equation}
Similarly,
\begin{multline}
     g(z) f^{\Delta}(w) =  e^{(1)}(z) \gamma(z) (f^{(1)}(w) + f^{(2)}(w)) =
     \\
%     = \frac{\gamma(z)}{(z - w)^2} +  \frac{h^{(1)}(w)\gamma(w) + e^{(1)}(w)\gamma^{2}(w)}{z - w} + \text{reg}.   
	 = \frac{\gamma(w)}{(z - w)^2} +  \frac{h^{(1)}(w)\gamma(w) + e^{(1)}(w)\gamma^{2}(w)+\gamma'(w)}{z - w} + \text{reg}.  	
\end{multline}
Then using \cite[Prop. 3.3.1]{FB:2004} we have 
\begin{equation}
	[g(z), f^{\Delta}(w)]= \gamma(w) \partial_w \delta(z,w)+ \big(h^{(1)}(w)\gamma(w) + e^{(1)}(w)\gamma^{2}(w)+\gamma'(w)\big)\delta(z,w),
\end{equation}
where \(\delta(z,w)=\sum_{n \in \mathbb{Z}} z^n w^{-n-1}\). Hence,
\begin{equation}
    [g_0, f^{\Delta}(w)] = h^{(1)}(w)\gamma(w) + e^{(1)}(w)\gamma^{2}(w)+\gamma'(w).
\end{equation}
 Taking now the OPE of $g(z)$ and $[g_0, f^{\Delta}(w)]$ we get
\begin{equation}
    [g_0,[g_0,f^{\Delta}(w)]] = - 2 e^{(1)}(w)\gamma^{2}(w).
\end{equation}
It is easy to see that successive commutators are equal to zero. Therefore we obtain formula~\eqref{eq:fconj}.
\end{proof}
Now we are ready to prove the theorem.
\begin{proof}[Proof of the Theorem \ref{Th:FormUm}]
	We decompose the proof into three steps.

	\textbf{Step 1.} Let us show that $\re^{-g_0}v_{-l}(\lambda)$ is the highest weight vector with respect to diagonal $\widehat{\mathfrak{sl}}(2)^\Delta$. It is sufficient to consider action $e^{\Delta}_0$ and $f^\Delta_1$. Using Lemma \ref{Lem:efconj} we have
    \begin{align}
    	&e^\Delta_0 \re^{-g_0} v_{-l}(\lambda) = \re^{-g_0}\re^{g_0} e^\Delta_0 \re^{-g_0} v_{-l}(\lambda) = \re^{-g_0}\beta_0 v_{-l}(\lambda) = 0,
    	\\
   	    &f^\Delta_1 \re^{-g_0} v_{-l}(\lambda) = \re^{-g_0}\re^{g_0} f^\Delta_1 \re^{-g_0} v_{-l}(\lambda) = \re^{-g_0}  \Big( f^{(2)}_1 + f^{(1)}_1 - \sum_{s+r = 1} h^{(1)}_r \gamma_s-\gamma_1\Big)v_{-l}(\lambda) = 0. 
	\end{align}
	
	\textbf{Step 2.} It follows from the decompositions \eqref{eq:W1 W2} that there exist only one up to proportionality vector \(v \in \mathcal{L}_{i, 1} \otimes  \mathcal{M}_{\lambda, k}\) which is the highest weight vector for  $\widehat{\mathfrak{sl}}(2)^\Delta$ and has the same eigenvalues of \(h_0^\Delta \) and \(L_0^{GKO}\) as $u_{l}(\lambda)$. Hence, the vectors $u_{l}(\lambda)$ and $\re^{-g_0} v_{l}(\lambda)$ are proportional.
	
%	Consider the action of $\mathrm{Vir}^{\mathrm{coset}}$.   There is a polynomial $P(L_{n>0})$ such that the vector $P(L_{n>0}) \re^{-g_0}v_{-l}(\lambda)$ is the highest weight vector for $\mathrm{Vir}^{\mathrm{coset}}$. It is the highest weight vector for $\widehat{\mathfrak{sl}}(2)^\Delta_{k+1}$ too. Then $P(L_{n>0})$ should be constant for degree reasons.
%	
	\textbf{Step 3.} It remains to check normalization property 
    \begin{equation}\label{eq:e{-g0}v norm}
        \langle \re^{-g_0}v_{-l}(\lambda), v_{-l}(\lambda)\rangle = 1.
    \end{equation}
    Note that in the series \(\re^{-g_0}v_{-l}(\lambda)=\sum_{j \ge 0}(-g_0)^jv_{-l}(\lambda)/j! \) all summands except the first one has \(h_0^{(1)}\) gradation different from the one of \( v_{-l}(\lambda)\). Hence they are orthogonal to  \(v_{-l}(\lambda)\),
\end{proof}

\subsection{Operator $I(z)$ and another proof of the Theorem \ref{Th:FormUm}}

Let us consider the operator $I(z): (\mathcal{L}_{0,1}\oplus \mathcal{L}_{1,1})\otimes M_{\lambda, k} \rightarrow \overline{(\mathcal{L}_{0,1}\oplus \mathcal{L}_{1,1})\otimes M_{\lambda + 1, k}}$ defined by formula
\begin{equation}\label{eq:defI}
	I(z) =  b_1(z)\otimes\mathcal{O}^{(0)}_{0, 1}(z) - b_0(z)\otimes\mathcal{O}^{(0)}_{1, 1}(z). 
\end{equation}
Recall that notations \(b_0(z), b_1(z), \mathcal{O}^{(N)}_{r, \nu}(z)\) were introduced in Subsection \ref{ssec:operstatecorr} and overline stands for the completion. In particular, according to Proposition  \ref{Prop:O&Vnu 1}, the operators \(\mathcal{O}^{(0)}_{0, 1}, \mathcal{O}^{(0)}_{1, 1}\) used in formula \eqref{eq:defI} are components of \(X_1\) corresponding to degenerate representation. The operator $I(z)$ corresponds to the skew-symmetric tensor product of two $2$-dimensional representations of \(\mathfrak{sl}_2\), see formula \eqref{eq:v_I} below. 

%Recall that from Proposition \ref{Prop:O&Vnu 1} follows that \(\mathcal{O}^{(0)}_{0, 1}, \mathcal{O}^{(0)}_{1, 1}\) in this case are components of \(X_\nu\) corresponding to degenerate representation. 

\begin{proposition}\phantomsection \label{Th:Iprop}
	1) The operator $I(z)$ commutes with $\widehat{\mathfrak{sl}}(2)_{k+1}^\Delta$.
	
	2) The operator $I(z)$ is $\mathrm{Vir}^{\mathrm{coset}}$ vertex operator $\Phi_{2,1}(z)$.
\end{proposition}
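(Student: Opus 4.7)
For (1) I would directly compute the commutators, using that $I(z)$ is the $\widehat{\mathfrak{sl}}(2)$-singlet in the tensor product of two spin-$\tfrac12$ doublets. For (2) I would use (1) to reduce $[L^{\mathrm{GKO}}_n, I(z)]$ to $[L^{(1)}_n + L^{(2)}_n, I(z)]$ and then use that $b(x,z)$ and $X_1(x,z)$ are Sugawara primaries whose dimensions sum to $\Delta_{2,1}$.

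\textbf{Part (1).} From Proposition~\ref{Prop: Awata Yamada} with $\nu=1$ one reads off
\begin{equation*}
[e^{(1)}_n, b_1] = z^n b_0,\quad [e^{(1)}_n, b_0] = 0,\quad [f^{(1)}_n, b_0] = z^n b_1,\quad [f^{(1)}_n, b_1] = 0,\quad [h^{(1)}_n, b_i] = (-1)^{i} z^n b_i,
\end{equation*}
and from the $(N,\nu)=(0,1)$ specialization of Proposition~\ref{Prop:O comm rel} the analogous brackets for $\mathcal{O}^{(0)}_{r,1}$ with $e^{(2)}_n, f^{(2)}_n, h^{(2)}_n$. Expanding $[e^\Delta_n, I(z)]$ by the Leibniz rule gives $z^n b_0\otimes\mathcal{O}^{(0)}_{0,1} - z^n b_0\otimes\mathcal{O}^{(0)}_{0,1} = 0$, and the $f^\Delta_n$ and $h^\Delta_n$ cases cancel analogously by the same mechanism.

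\textbf{Part (2).} Since Part~(1) shows $I(z)$ commutes with every mode of the diagonal currents, the Leibniz identity $[A,:\!BC\!:] = :\![A,B]C\!: + :\!B[A,C]\!:$ applied to the Sugawara expression for $L^\Delta(z)$ yields $[L^\Delta_n, I(z)] = 0$, so $[L^{\mathrm{GKO}}_n, I(z)] = [L^{(1)}_n + L^{(2)}_n, I(z)]$. Both $b(x,z)$ and $X_1(x,z)$ are Sugawara primaries of dimensions $\Delta_b = \tfrac14$ and $\Delta_X = \tfrac{3}{4(k+2)}$ (spin-$\tfrac12$ at levels $1$ and $k$ respectively). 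Hence $I(z)$ is a $\mathrm{Vir}^{\mathrm{coset}}$-primary of dimension $\Delta_b + \Delta_X = \tfrac{k+5}{4(k+2)}$; substituting $b_{\mathrm{GKO}}^{-2} = -\tfrac{k+3}{k+2}$ into $\Delta_{2,1}(b) = -\tfrac12 - \tfrac{3b^{-2}}{4}$ gives the same value.

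\textbf{Identification as $\Phi_{2,1}$.} On each component of the decomposition of Theorem~\ref{Th:cosetdec}, $I(z)$ acts as the identity on the affine Verma factor (because it is diagonal-invariant and preserves the weight $\lambda+2l$) and as a Virasoro primary of dimension $\Delta_{2,1}$ on the Virasoro factor; its momentum shift $P(\lambda+1) - P(\lambda) - \tfrac{b}{2}$ simplifies to $+\tfrac12 b^{-1}_{\mathrm{GKO}}$, one of the two $\Phi_{2,1}$ fusion channels. Uniqueness up to scalar of Virasoro primary vertex operators between generic Verma modules then gives $I(z)=\Phi_{2,1}(z)$ after a normalization choice. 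The only step requiring care is the identity $[L^\Delta_n, I(z)] = 0$, which needs a mode-by-mode Leibniz argument for normal-ordered products; once that is in place, the remainder of the proof is bookkeeping.
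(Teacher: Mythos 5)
Your Part (1) is correct and amounts to the same computation as the paper's, just phrased at the level of operators: the paper instead passes through the operator--state correspondence, observing that $I(z)$ is the field of the single vector $v_I=v_{1/2}\otimes f_0v_{1,k}-v_{-1/2}\otimes v_{1,k}\in\mathcal{L}_{1,1}\otimes\mathcal{L}_{1,k}$, and checks $e^\Delta_{n\ge0}v_I=h^\Delta_{n\ge0}v_I=f^\Delta_{n\ge0}v_I=0$ there; your mode-by-mode Leibniz cancellation is the operator-side version of this. Likewise your observation that commuting with all diagonal modes kills $[L^\Delta_n,I(z)]$, together with the dimension count $\tfrac14+\tfrac{3}{4(k+2)}=\Delta_{2,1}(b_{\mathrm{GKO}})$, reproduces the paper's check that $L^{GKO}_0v_I=\Delta_{2,1}v_I$ and $L^{GKO}_{n>0}v_I=0$.

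The gap is in your final identification step in Part (2). Knowing that $I(z)$ is a coset primary of dimension $\Delta_{2,1}$, even with a momentum shift $+\tfrac12 b_{\mathrm{GKO}}^{-1}$ lying in a $\Phi_{2,1}$ fusion channel, does not by itself show that $I(z)$ is the \emph{degenerate} operator $\Phi_{2,1}$, i.e.\ that it satisfies the level-two null-vector decoupling equation $\partial_z\Phi_{2,1}(z)+b^{-2}\,{:}L(z)\Phi_{2,1}(z){:}=0$ --- and this degeneracy is exactly what is used later (the BPZ/hypergeometric equation and restricted fusion in the monodromy argument of Section 4). ``Uniqueness up to scalar of primary vertex operators between generic Verma modules'' only gives that, on each coset component, $I(z)$ is proportional to \emph{the} dimension-$\Delta_{2,1}$ intertwiner with that momentum shift; to conclude that this intertwiner is degenerate you must add an argument: either (a) check the singular-vector condition directly, which is precisely what the paper does by verifying $\bigl((L^{GKO}_{-1})^2+b_{\mathrm{GKO}}^{-2}L^{GKO}_{-2}\bigr)v_I=0$, or (b) invoke the standard fusion-polynomial fact that the degenerate field exists and is nonzero exactly into the channels $P\mapsto P\pm b^{-1}/2$, so that one-dimensionality of the space of such intertwiners forces coincidence with $\Phi_{2,1}$ and hence the vanishing of the singular-vector field. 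You assert the conclusion of (b) without supplying this input. Note also that your componentwise analysis quietly relies on the decomposition of Theorem~\ref{Th:cosetdec} for both $\mathcal{M}_{\lambda,k}$ and $\mathcal{M}_{\lambda+1,k}$, a Schur-type argument to factor $I(z)$ as (Virasoro intertwiner)$\,\otimes\,$(scalar) on each summand, and must tolerate components on which $I(z)$ could vanish; all of this is fine for generic $\lambda,k$, but it is considerably more machinery than the paper's one-vector check on $v_I$, which settles the degeneracy directly.
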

\begin{proof}
	Both operators \(b_i(z)\) and \(\mathcal{O}^{(0)}_{0, 1}(z)\) are obtained via operator-state correspondence map~$\mathtt{Y}$. Hence the operator $I(z)$ is  obtained via operator-state correspondence map \(\mathtt{Y}\otimes \mathtt{Y}\) applied to the vector
	\begin{equation}\label{eq:v_I}
	    v_I = v_{{1}/{2}}\otimes f_0 v_{1, k} -  v_{-{1}/{2}}\otimes v_{1, k} \in \mathcal{L}_{1,1}\otimes\mathcal{L}_{1,k}.
	\end{equation} 
	The commutation relations of the field \(I(z)\) with algebra generators are equivalent to the highest weight conditions for the vector \(v_I\). 
		
	It is easy to see that
	\begin{equation}
	    e^\Delta_{n\ge 0} v_I = h^\Delta_{n\ge 0} v_I = f^\Delta_{n\ge 0} v_I = 0   
	\end{equation}
	and
	\begin{equation}
	    L^{GKO}_0 v_I = \Delta_{2,1} v_I,\;\; L^{GK0}_{n>0} v_I = 0,
	\end{equation}
	where $\Delta_{2,1} = \Delta(P_{2,1}(b_{\text{GKO}}),b_{\text{GKO}})$ (see Theorem \ref{Th:cosetdec} for notation). Furthermore, it is easy to check that
	\begin{equation}
	    ((L^{GKO}_{-1})^2 + (b_{\text{GKO}})^{-2}L^{GKO}_{-2})v_I = 0.
	\end{equation}
	It means that the field $I(z)$ is the degenerate Virasoro vertex operator.
\end{proof}

\begin{proposition}\phantomsection \label{Prop:I conj}
	The operator $I(z)$ has the form
    \begin{equation}\label{vertcomwithdiag}
    	I(z) =\re^\frac{\varphi(z)}{\sqrt{2\kappa}} \re^{-g_0} b_1(z)  \re^{g_0}.
    \end{equation}
\end{proposition}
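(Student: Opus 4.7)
The plan is to reduce the identity to a short Baker–Campbell–Hausdorff calculation that mirrors Lemma~3.10. The first step is to unpack the right-hand side: since $N=0$ in \eqref{omnnu}, no screening integrals appear, so
\[
\mathcal{O}^{(0)}_{0,1}(z)= {:}\re^{\varphi(z)/\sqrt{2\kappa}}{:},\qquad \mathcal{O}^{(0)}_{1,1}(z)= {:}\re^{\varphi(z)/\sqrt{2\kappa}}{:}\,\gamma(z),
\]
and the definition \eqref{eq:defI} of $I(z)$ rewrites as
\[
I(z)\; =\; {:}\re^{\varphi(z)/\sqrt{2\kappa}}{:}\,\bigl(b_1(z)-b_0(z)\,\gamma(z)\bigr),
\]
so the claim reduces to the operator identity $\re^{-g_0}b_1(z)\re^{g_0}=b_1(z)-b_0(z)\gamma(z)$.

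For this, I would compute the adjoint action of $g_0$ on $b_1(z)$ by OPE, exactly in the style of Lemma~\ref{Lem:efconj}. Using the bosonization $e^{(1)}(z)={:}\re^{\sqrt{2}\phi(z)}{:}$ and the formulas \eqref{eq:b01}, one gets
\[
e^{(1)}(z)\,b_1(w)=\frac{b_0(w)}{z-w}+\text{reg},\qquad e^{(1)}(z)\,b_0(w)=\text{reg},
\]
because the exponent products have $\alpha\beta=-1$ and $+1$ respectively, with the sign factors $(-1)^{h_0(h_0-1)/2}$ matching in the residue to give precisely $b_0(w)$. Since $\gamma$ commutes with the level-$1$ factor, this implies $[e^{(1)}_n,b_1(w)]=w^n b_0(w)$ and $[e^{(1)}_n,b_0(w)]=0$, and so
\[
[g_0,b_1(w)]=\sum_{n\in\mathbb Z}[e^{(1)}_n,b_1(w)]\,\gamma_{-n}=b_0(w)\sum_{n\in\mathbb Z} w^n\gamma_{-n}=b_0(w)\,\gamma(w).
\]

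It remains to observe that the BCH series truncates. We have $[g_0,b_0(w)]=0$ by the regular OPE above, and $[g_0,\gamma(w)]=0$ because $\beta,\gamma$ act on the Wakimoto factor while $e^{(1)}$ acts on the level-$1$ factor and $[\gamma_n,\gamma_m]=0$. Hence $[g_0,b_0(w)\gamma(w)]=0$, the iterated commutators vanish from the second one on, and
\[
\re^{-g_0}\,b_1(z)\,\re^{g_0}= b_1(z)-[g_0,b_1(z)] = b_1(z)-b_0(z)\gamma(z).
\]
Multiplying by ${:}\re^{\varphi(z)/\sqrt{2\kappa}}{:}$ on the left yields \eqref{vertcomwithdiag}. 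The only delicate point is the bookkeeping of the Jordan–Wigner sign in $b_0,b_1$ when extracting the residue, but these signs are precisely what make the residue of $e^{(1)}(z)b_1(w)$ equal to $b_0(w)$ rather than its negative; beyond that, the argument is a one-line BCH truncation.
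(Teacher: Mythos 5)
Your proposal is correct and follows essentially the same route as the paper: the paper likewise computes the OPE $g(z)\,b_1(w)=\frac{b_0(w)\gamma(w)}{z-w}+\mathrm{reg}$ to get $[g_0,b_1(w)]=b_0(w)\gamma(w)$, notes that the second commutator vanishes by regularity, and truncates the adjoint expansion. Your extra step of unpacking $\mathcal{O}^{(0)}_{0,1},\mathcal{O}^{(0)}_{1,1}$ at $N=0$ and flagging the sign bookkeeping in $b_0,b_1$ is just a more explicit version of what the paper leaves implicit.
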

\begin{proof}
	Note that
	\begin{equation}
		g(z) b_1(w) = \frac{b_0(w) \gamma(w)  }{z - w} + \text{reg}.
	\end{equation}
	Hence $[g_0, b_1(w)] = b_0(w) \gamma(w)$. Moreover, we have $[g_0,[g_0, b_1(w)]] = 0$ since $g(z) b_0(w)\gamma(w)$ is regular. 
 \end{proof}

Now we give the second proof of Theorem~\ref{Th:FormUm}. 
%The next proposition means that leading term in operator $I(z)$ maps $u_{-l}(\lambda)$ to  $u_{-l - \frac{1}{2}}(\lambda)$.
\begin{proposition}\phantomsection \label{Prop:I vl}
    1) For any \(2l \in \mathbb{Z}_{\le 0}\) we have 
    \begin{equation}\label{inthighvect}
	    \oint_{C_0} \frac{dz}{z^{d(l,\lambda, k)+1}} I(z) 	u_{l}(\lambda) =  (-1)^{l(2l - 1)} u_{l - 1/2}(\lambda+1);
    \end{equation}
    where $d(l,\lambda, k) = (\frac{\lambda}{2\kappa} - l)$ and $C_0$ is a contour encircling $0$.
    
    2) For \(2l\in  \mathbb{Z}_{\le 0}\) we have
    \begin{equation}\label{eq:ul second}
        u_{l}(\lambda) = \re^{-g_0} v_{l}(\lambda).
    \end{equation}
\end{proposition}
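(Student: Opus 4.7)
The plan is to prove Parts~1 and~2 jointly by induction on $-2l \in \mathbb{Z}_{\ge 0}$, combining the abstract characterization of $V_l := \oint_{C_0} z^{-d-1} I(z) u_l(\lambda)\,dz$ from Proposition~\ref{Th:Iprop} with the explicit form of $I(z)$ in Proposition~\ref{Prop:I conj}.

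\textbf{Abstract characterization.} First I would show $V_l = c_l \cdot u_{l-1/2}(\lambda+1)$ for some scalar $c_l$. By Proposition~\ref{Th:Iprop}(1), $I(z)$ commutes with $\widehat{\mathfrak{sl}}(2)_{k+1}^\Delta$, hence $V_l$ is annihilated by its positive modes and has $h_0^\Delta$-eigenvalue $\lambda + 2l$. By Proposition~\ref{Th:Iprop}(2), $[L^{GKO}_n, I(z)] = z^{n+1}\partial I(z) + (n+1) z^n \Delta_{2,1} I(z)$; integrating by parts gives, for $n>0$,
\[
L^{GKO}_n V_l = \bigl((n+1)\Delta_{2,1} - (n-d)\bigr)\oint_{C_0} z^{n-d-1} I(z) u_l(\lambda)\,dz.
\]
These integrals vanish because, restricted to the summand $\mathbb{M}_{P(\lambda)+lb,b}\otimes \mathcal{M}_{\lambda+2l,k+1}$ of the coset decomposition, $I(z)$ acts as a $\Phi_{2,1}$-intertwiner into $\mathbb{M}_{P(\lambda+1)+(l-1/2)b,b}\otimes \mathcal{M}_{\lambda+2l,k+1}$ in the fusion channel $P\mapsto P+b^{-1}/2$, so the $z$-expansion of $I(z) u_l(\lambda)$ starts at $z^{\Delta_{l-1/2} - \Delta_l - \Delta_{2,1}}$; a direct computation using $b^2 = -\kappa/(\kappa+1)$ and $\Delta_{2,1} = -\frac{1}{2} - \frac{3}{4} b^{-2}$ shows this exponent equals $d = \lambda/(2\kappa) - l$. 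Uniqueness of highest-weight vectors in the target summand yields the stated proportionality.

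\textbf{Joint induction.} The base case $l=0$ is immediate: $u_0(\lambda) = v_0(\lambda)$, and $g_0 v_0(\lambda) = 0$ since every summand of $g_0 = \sum_{n\in\mathbb{Z}} e^{(1)}_n \gamma_{-n}$ kills $v_0\otimes v_{\lambda,k}$ ($e^{(1)}_n v_0 = 0$ for $n\ge 0$, while $\gamma_{-n} v_{\lambda,k} = 0$ for $n\le 0$). For the inductive step, assume Part~2 at level $l$: $u_l(\lambda) = \re^{-g_0} v_l(\lambda)$. Substituting formula~\eqref{vertcomwithdiag} for $I(z)$ and using that $\re^{\varphi(z)/\sqrt{2\kappa}}$ commutes with $\re^{-g_0}$ (disjoint Heisenberg bosons),
\[
V_l = \oint_{C_0} z^{-d-1}\, \re^{\varphi(z)/\sqrt{2\kappa}}\, \re^{-g_0}\, b_1(z)\, v_l(\lambda)\, dz = \re^{-g_0}\oint_{C_0} z^{-d-1}\, \re^{\varphi(z)/\sqrt{2\kappa}}\, b_1(z)\, v_l(\lambda)\, dz.
\]
Expanding $b_1(z) v_l = (-1)^{l(2l-1)}\bigl(z^{-l} v_{l-1/2} + O(z^{-l+1})\bigr)$, where the sign comes from $(-1)^{h_0(h_0-1)/2}$ in~\eqref{eq:b01} evaluated on $h_0 = 2l$, and $\re^{\varphi(z)/\sqrt{2\kappa}} v_{\lambda,k} = z^{\lambda/(2\kappa)} v_{\lambda+1,k} + O(z^{\lambda/(2\kappa)+1})$, and extracting the $z^d$ coefficient, yields $V_l = (-1)^{l(2l-1)}\, \re^{-g_0}\, v_{l-1/2}(\lambda+1)$.

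\textbf{Closing the induction.} To identify this with $(-1)^{l(2l-1)}\, u_{l-1/2}(\lambda+1)$, I would re-run Steps~1 and~3 of the proof of Theorem~\ref{Th:FormUm} at level $l-1/2$: Lemma~\ref{Lem:efconj} gives that $\re^{-g_0} v_{l-1/2}(\lambda+1)$ is highest weight for $\widehat{\mathfrak{sl}}(2)_{k+1}^\Delta$ with the correct weights, and the $h_0^{(1)}$-grading argument gives $\langle \re^{-g_0} v_{l-1/2}(\lambda+1),\, v_{l-1/2}(\lambda+1)\rangle = 1$; uniqueness then forces $\re^{-g_0} v_{l-1/2}(\lambda+1) = u_{l-1/2}(\lambda+1)$, which is Part~2 at level $l-1/2$. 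Combined with the abstract characterization this pins $c_l = (-1)^{l(2l-1)}$, giving Part~1 at level $l$, and the induction closes. The main obstacle is the abstract Virasoro-highest-weight step: one has to interpret $I(z)$ on each coset summand as a $\Phi_{2,1}$-intertwiner in the correct fusion channel and verify the numerical identity $\Delta_{l-1/2} - \Delta_l - \Delta_{2,1} = \lambda/(2\kappa) - l$; the remaining manipulations are bosonic bookkeeping.
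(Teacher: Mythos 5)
Your proposal is correct and follows essentially the same route as the paper's proof: characterize the contour integral abstractly via Proposition~\ref{Th:Iprop} and the coset decomposition (so it is proportional to $u_{l-1/2}(\lambda+1)$), compute it explicitly from formula~\eqref{vertcomwithdiag}, and run an induction starting from $u_0(\lambda)=v_0(\lambda)$; your numerical checks ($\Delta_{2,1}=-\tfrac12-\tfrac34 b^{-2}$, the channel $P\mapsto P+b^{-1}/2$, and $\Delta_{l-1/2}-\Delta_l-\Delta_{2,1}=\tfrac{\lambda}{2\kappa}-l$) are all consistent with what the paper leaves implicit. The one place you diverge is the closing of the induction: you identify $\re^{-g_0}v_{l-1/2}(\lambda+1)$ with $u_{l-1/2}(\lambda+1)$ by re-running Steps~1 and~3 of the first proof of Theorem~\ref{Th:FormUm} (i.e.\ by invoking Lemma~\ref{Lem:efconj} again), whereas the paper closes it more economically, using only the proportionality already established in the abstract step together with the normalization \eqref{eq:e{-g0}v norm} and $\langle u_{l-1/2}(\lambda+1),v_{l-1/2}(\lambda+1)\rangle=1$; this is what makes Part~2 a genuinely independent second proof of Theorem~\ref{Th:FormUm}. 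Your version is still a valid proof of the Proposition, but once you allow yourself Lemma~\ref{Lem:efconj} plus uniqueness and normalization, Part~2 follows directly for all $l$ without the $I(z)$ induction, so that step of your argument is redundant rather than wrong. (Two harmless slips: $\gamma_0 v_{\lambda,k}\neq 0$, so in the base case the vanishing at $n=0$ comes from $e^{(1)}_0 v_0=0$, not from the $\gamma$-side; and the integration-by-parts verification that $L^{GKO}_n$ kills $V_l$ is unnecessary once one knows $I(z)u_l$ lies in the single summand $\overline{\mathbb{M}_{P(\lambda+1)+(l-1/2)b,b}}\otimes v_{\lambda+2l,k+1}$, since the $z^d$-coefficient is then automatically proportional to its highest-weight vector.)
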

The exponent \(d(l,\lambda, k)\) in \eqref{inthighvect} means that the leading term of \(I(z)u_{l}(\lambda)\) coincides with \(u_{l - {1}/{2}}(\lambda+1)\).

\begin{proof}
	Due to Theorem \ref{Th:cosetdec} we have 
	\begin{equation}
		I(z) u_l \in \overline{(\mathcal{L}_{0,1}\oplus \mathcal{L}_{1,1})\otimes M_{\lambda + 1, k}} = 
		 \bigoplus\nolimits_{2l \in \mathbb{Z}} \overline{\mathbb{M}_{P(\lambda+1) + lb,b} \otimes \mathcal{M}_{2l +\lambda+1, k + 1}}. 
	\end{equation}
	Since \(I(z)\) commutes with $\widehat{\mathfrak{sl}}(2)_{k+1}^\Delta$ and \(u_l\) is the highest vector for this algebra with highest weight \(\lambda+2l\) we get \(I(z) u_l \in  \overline{\mathbb{M}_{P(\lambda+1) + (l-\frac12)b,b}}\otimes v_{\lambda+2l,k+1} \). Hence, the leading term of \(I(z)u_{l}(\lambda)\) is equal to \(c_{l,\lambda}u_{l-1/2}(\lambda+1)\) for some (probably zero) \(c_{l,\lambda}\). 
	
	On the other hand, using the formula \ref{vertcomwithdiag} we get
	\begin{equation}
		   \oint_{C_0} \frac{dz}{z^{d(s,\lambda, k)}} I(z) \re^{-g_0} v_{l}(\lambda) = 
		   (-1)^{l(2l - 1)} \re^{-g_0} v_{l-1/2}(\lambda+1).  
	\end{equation}
	Therefore by induction by \(l \le 0\) we obtain
	\begin{equation}
		\re^{-g_0} v_{l}(\lambda)= \left(\prod\nolimits_{2m\in \mathbb{Z}, l<m \le 0} c_{m,\lambda-2l+2m} \right)	u_l(\lambda).
	\end{equation}
	Taking into account correct normalization property \eqref{eq:e{-g0}v norm} we conclude that \(c_{l\lambda}=1\) for \(l\le 0\). This finishes the proof.
%	
%	Now we prove by induction that for \(l \le 0\) we have  \(c_{l,\lambda}=1\) and formula \eqref{eq:ul second}. 
%	
%	
%	
%	Let us prove it by induction by $l$. The second statement is obvious for $l = 0$. It is base of induction. Now, we assume that $u_{-l}(\lambda) = \re^{-g_0} v_{-l}(\lambda)$, and we are going to prove that 
%    \begin{equation}
%    \oint_{C_0} \frac{dz}{z^{d(s,\lambda, k)}} I(z) u_{-l}(\lambda) =   u_{-l - \frac{1}{2}}(\lambda+1);
%    \end{equation}
%and $u_{-l-\frac{1}{2}}(\lambda) = \re^{-g_0} v_{-l-\frac{1}{2}}(\lambda)$.
%
%
%%\textbf{Step $1$.}
%Let us denote $v = \oint_{C_0} \frac{dz}{z^{d(s,\lambda, k)}} I(z) u_{-l}(\lambda)$. Vector $v$ is the highest weight component of the Virasoro chain $I(z) u_{-l}(\lambda)$ for grading reasons. Vector $v$ is not zero because of proposition \ref{nonzerochain} and \ref{Prop:I conj}. The vector $\oint_{C_0} \frac{dz}{z^{d(s,\lambda, k)}} I(z) u_{-l}(\lambda)$ is the highest weight vector for $\widehat{\mathfrak{sl}}(2)^\Delta_{k+1}$ since $I(z)$ and $\widehat{\mathfrak{sl}}(2)^\Delta_{k+1}$ commute. Hence, the vector $v$ is the highest weight vector for algebra $\mathrm{Vir}^\mathrm{coset}\oplus\widehat{\mathfrak{sl}}(2)^\Delta_{k+1}$ ;
%
%%\textbf{Step $4$.} 
%Now, we know that $v =  \re^{-g_0} v_{-l-\frac{1}{2}}(\lambda)$. It is easy to check that it has correct normalization.
\end{proof}

Note that \(c_{l,\lambda}\) appearing in the proof above is non-trivial for \(l>0\). We will discuss them belowm see Example~\ref{Exa:I}.

It follows from the formula \eqref{inthighvect} that for \(l \le 0\)
\begin{equation}\label{actI}
  \left\langle I(1) u_{l}(\lambda),  u_{l - {1}/{2}}(\lambda+1) \right\rangle = (-1)^{l(2l - 1)} \left\|u_{l-{1}/{2}}(\lambda+1)\right\|^2.
\end{equation}
We will use this matrix element below.

\subsection{Norms of the highest weight vectors}\label{ssec:norms}
The following theorem is one of the main results of the paper.
\begin{theorem}\phantomsection \label{Th:Norm}
    For $2l \in \mathbb{Z}_{\le 0}$ norm of the vector of $u_{l}(\lambda)$ is
    \begin{equation}\label{eq:norm}
        \left\|u_{l}(\lambda)\right\|^2 =
        \prod_{m=0}^{1-2l}\frac{\lambda + 1 + m \kappa}{ \lambda +1+2l + m(\kappa + 1)} = 
        \left(\frac{\kappa }{\kappa +1}\right)^{-2 l} \frac{\Gamma \left(\frac{\lambda +1}{\kappa }-2 l\right) \Gamma \left(\frac{\lambda +1+2l}{\kappa +1}\right)}{\Gamma \left(\frac{\lambda +1}{\kappa }\right) \Gamma \left(\frac{\lambda +1+2l}{\kappa +1}-2 l\right)}.
    \end{equation}
\end{theorem}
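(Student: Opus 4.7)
\textbf{Proof plan for Theorem \ref{Th:Norm}.} The strategy is to derive a recursion for $N_l(\lambda):=\|u_l(\lambda)\|^2$ by applying the degenerate Virasoro field $I(z)$ constructed in Subsection above, and then to integrate this recursion starting from the trivial base case. Since the two expressions on the right hand side of \eqref{eq:norm} agree by the standard Gamma-product manipulation, it suffices to establish the first product form.

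\textbf{Base case.} For $l=0$, $u_0(\lambda)=v_0\otimes v_{\lambda,k}$ by the example \eqref{eq:ul Verma}, so $N_0(\lambda)=1$; the empty product on the right is also $1$.

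\textbf{Recursion from $I(z)$.} Equation \eqref{actI} gives
\begin{equation*}
N_{l-1/2}(\lambda+1)=(-1)^{l(2l-1)}\bigl\langle I(1)u_l(\lambda),\,u_{l-1/2}(\lambda+1)\bigr\rangle.
\end{equation*}
To evaluate the matrix element I would combine the conjugation identity \eqref{vertcomwithdiag} with the explicit Wakimoto form \eqref{eq:ul Wakimoto} to obtain
\begin{equation*}
I(1)u_l(\lambda)=\re^{\varphi(1)/\sqrt{2\kappa}}\re^{-g_0}b_1(1)v_l(\lambda),
\end{equation*}
and pair against $u_{l-1/2}(\lambda+1)=\re^{-g_0}v_{l-1/2}(\lambda+1)$. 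After taking adjoints with respect to the Shapovalov form, the pairing reduces to a finite sum of free-field (bosonic exponentials plus $\beta\gamma$) contractions whose residue in $z=1$ yields an explicit rational function $R(l,\lambda,\kappa)$ with $N_{l-1/2}(\lambda+1)=R(l,\lambda,\kappa)\,N_l(\lambda)$. A parallel, more conceptual route is to exploit Proposition \ref{Th:Iprop}: because $I(z)$ commutes with $\widehat{\mathfrak{sl}}(2)_{k+1}^\Delta$ and because the $\widehat{\mathfrak{sl}}(2)_{k+1}$ weights of $u_l(\lambda)$ and $u_{l-1/2}(\lambda+1)$ coincide (both equal $\lambda+2l$), the affine part of the matrix element is just $\langle v_{\lambda+2l,k+1},v_{\lambda+2l,k+1}\rangle=1$; the remaining factor is a Virasoro three-point function with a $\Phi_{2,1}$ insertion, which is computed from BPZ/hypergeometric data.

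\textbf{Solving the recursion.} For $2l\in\mathbb{Z}_{\le 0}$, iterating the recursion from the base case $N_0(\lambda+2l)=1$ expresses $N_l(\lambda)$ as a telescoping product of $-2l$ ratios $R$, each of shape $\tfrac{\lambda+1+m\kappa}{\lambda+1+2l+m(\kappa+1)}$; matching the indexing $m=0,\dots,1-2l$ gives the formula \eqref{eq:norm}. The companion degenerate field $J(z)$, corresponding to the other $\Phi_{2,1}$ fusion channel (shifting the Virasoro momentum $P\to P-b^{-1}/2$), yields an independent recursion in the complementary direction and serves as a consistency check, and is also useful for avoiding the singular loci of the first recursion.

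\textbf{Main obstacle.} The principal difficulty is pinning down the ratio $R(l,\lambda,\kappa)$ exactly. The direct free field route is bookkeeping-heavy because one must track how $\re^{-g_0}$ interleaves with $b_1$, $\re^{\varphi/\sqrt{2\kappa}}$, and the screening charges, including delicate sign factors such as $(-1)^{h_0(h_0-1)/2}$ in \eqref{eq:b01}. The Virasoro BPZ route sidesteps these contractions but requires carefully matching the normalization of $I(z)$ (coming from the explicit free field formula \eqref{eq:defI}) with the Virasoro primary normalization of $\Phi_{2,1}$, and fixing the phase produced by the chosen branch of the hypergeometric three-point function.
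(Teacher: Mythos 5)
Your scaffolding (the degenerate operators $I(z)$, $J(z)$, a recursion in $l$, the trivial base case, telescoping) matches the paper's strategy, but the step that is supposed to produce the recursion ratio has a genuine gap. First, relation \eqref{actI} is tautological: it only says that the leading matrix element of $I(1)$ equals $\|u_{l-1/2}(\lambda+1)\|^2$, so by itself it cannot produce a ratio $N_{l-1/2}(\lambda+1)/N_l(\lambda)$; all the information must come from an independent evaluation of that matrix element. Second, your proposed evaluation --- ``after taking adjoints with respect to the Shapovalov form, the pairing reduces to a finite sum of free-field contractions'' --- does not work: the Shapovalov form of the Verma module does \emph{not} go over to the natural Fock pairing under the Wakimoto isomorphism, so the Shapovalov adjoints of $\gamma$-modes, of $:\!\re^{c\varphi}\!:$, and of $\re^{-g_0}$ are not free-field operators, and there is no finite-contraction evaluation of $\langle \re^{-g_0}v_{l-1/2}(\lambda+1),\,\re^{\varphi(1)/\sqrt{2\kappa}}\re^{-g_0}b_1(1)v_l(\lambda)\rangle$. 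The nontriviality of the norms is precisely the nontriviality of the Shapovalov form in the Wakimoto basis (this is exactly the caveat stated after Theorem \ref{Th:Norm}), so this step is circular. Your ``conceptual'' variant has the same problem: BPZ data fixes the fusion channels and coordinate dependence of the $\Phi_{2,1}$ three-point function, not its normalization, which is the unknown quantity.

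The paper's proof closes this gap differently, and the ingredient you relegate to a ``consistency check'' is in fact the engine. One shows, by uniqueness of intertwiners (not by free-field manipulation), that $I(z)^\dagger$ is proportional to $J(z^{-1})$ with an a priori unknown constant $C(\kappa,\lambda)$ (Corollary \ref{Cor:conjI}). Conjugating Proposition \ref{Prop:I vl} then says that the coefficient of $u_{l+1/2}(\lambda)$ in $J(z)u_l(\lambda+1)$ is $C(\kappa,\lambda)^{-1}\|u_l(\lambda+1)\|^2/\|u_{l+1/2}(\lambda)\|^2$ up to sign (Corollary \ref{helpfulnorms}). That coefficient is extracted by pairing against the \emph{bare} extremal vector $v_{l+1/2}(\lambda)$ --- not against another Wakimoto-dressed vector $u$ --- because $\langle u_{l+1/2}(\lambda),v_{l+1/2}(\lambda)\rangle=1$ by \eqref{eq:ul normalization} and the relevant weight space is one-dimensional, so this particular pairing \emph{is} computable in free fields. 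Since $\re^{g_0}J(z)\re^{-g_0}$ contains one screening charge (formula \eqref{eq:Jconj}), the computation gives the Beta integral $\int_0^1(1-t)^{-1-1/\kappa}t^{2l-(\lambda+1)/\kappa}\,dt$, and the unknown $C(\kappa,\lambda)$ is fixed from the hand-computed base cases $\|u_0(\lambda)\|^2=1$, $\|u_{-1/2}(\lambda)\|^2=(\lambda+1)/\lambda$ (Theorem \ref{Th:normrec}). In short: the recursion comes from $I^\dagger\propto J$ plus a screening integral evaluated against extremal vectors, not from contractions of two $u$-vectors, and without this mechanism your ratio $R(l,\lambda,\kappa)$ is never actually determined.
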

Note that although the Theorem \ref{Th:FormUm}  provides a formula for 	\(u_{l}\) in terms of the \textit{Wakimoto free field realization}, the norm formula above uses the Shapovalov form in the \textit{Verma module}.

The equality of the central and right sides in formula \eqref{eq:norm} follows from straightforward  computation. 
We will prove the equality between the left side and central (and right) sides in the next subsection. The proof uses a vertex operator $J(z)$ that is similar to the operator $I(z)$ used above. 

\begin{example}\label{Exa:norms} 
	The simplest examples of the formula for the norm are
	\begin{equation}\label{eq:example norms}
		\left\|u_{0}(\lambda)\right\|^2 = 1, ~~ 	\left\|u_{-1/2}(\lambda)\right\|^2=\frac{\lambda+1}{\lambda},
	~~ \left\|u_{-1}(\lambda)\right\|^2=\frac{(\lambda+1)(k + \lambda+3)}{(\lambda-1)(k + \lambda + 2)}.
	\end{equation}
	This expressions can be checked using formulas \eqref{eq:ul Verma}. Note that the formula factorizes into the product of linear terms, and this is absolutely unclear from the definition or formulas~\eqref{eq:ul Verma}.
\end{example}
The norms of $u_{l}(\lambda)$ for $l>0$ can be calculated using the map $\tau\otimes\tau$ that reflects \((\lambda,l) \leftrightarrow (k-\lambda,1/2-l)\) (see Remark \ref{Rem:tau}). Moreover, the resulting formula can be rewritten similar to the right side of \eqref{eq:norm} but with an interchanged numerator and denominator, namely for \(l>0\) we obtain
\begin{equation}
	\left\|u_{l}(\lambda)\right\|^2
	= 
	\left(\frac{\kappa }{\kappa{+}1}\right)^{2 l{-}1}\! \frac{\Gamma \left(\frac{\kappa-\lambda -1}{\kappa }{+}2 l{-}1\right) \Gamma \left(\frac{\kappa-\lambda -2l}{\kappa +1}\right)}{\Gamma \left(\frac{\kappa-\lambda -1}{\kappa }\right) \Gamma \left(\frac{\kappa-\lambda-2l}{\kappa +1}{+}2l{-}1\right)}
	=
	\left(\frac{\kappa}{\kappa{+}1}\right)^{2 l} \frac{\Gamma \left(\frac{\lambda{+}1}{\kappa }\right) \Gamma \left(\frac{2l{+}\lambda{+}1}{\kappa{+}1}{-}2 l\right)}{\Gamma \left(\frac{\lambda{+}1}{\kappa }{-}2 l\right) \Gamma \left(\frac{2l{+}\lambda{+}1}{\kappa {+}1}\right)}.
\end{equation}
This suggests the following to renormalization of half of the highest weight vectors, in order to obtain unified formulas for norms.
%\begin{equation}
%	\frac{1}{\left\|u_{-l}(\lambda)\right\|^2}=\left(\frac{\kappa }{\kappa +1}\right)^{2 l} \frac{\Gamma \left(\frac{\lambda +1}{\kappa }+2 l\right) \Gamma \left(\frac{-2 l+\lambda +1}{\kappa +1}\right)}{\Gamma \left(\frac{\lambda +1}{\kappa }\right) \Gamma \left(\frac{-2 l+\lambda +1}{\kappa +1}+2 l\right)}   .  
%\end{equation}
\begin{notati} \label{not:tilde u}
    The highest weight vectors \(\tilde u_l(\lambda)\) are defined by 
    \begin{equation}
        \tilde u_l(\lambda) = \begin{cases}
          u_l(\lambda)   , l > 0; \\
           \frac{u_l(\lambda)}{\left\|u_l(\lambda)\right\|^2}  , l < 0.
        \end{cases}
    \end{equation}
\end{notati}
%
%Renormalization of the highest weight vectors helps us to unify formula for norms 
\begin{corollary}[from Theorem \ref{Th:Norm}] \phantomsection \label{Cor:tilde U}
	For $2l \in \mathbb{Z}$ the norm of the vector of $\tilde{u}_{l}(\lambda)$ is	
	\begin{equation}
	    \left\|\tilde u_{l}(\lambda)\right\|^2 = \left(\frac{\kappa}{\kappa + 1}\right)^{2 l} \frac{\Gamma \left(\frac{\lambda +1}{\kappa }\right) \Gamma \left(\frac{2 l+\lambda +1}{\kappa +1} - 2 l\right)}{\Gamma \left(\frac{\lambda +1}{\kappa } - 2 l\right) \Gamma \left(\frac{ 2 l+\lambda +1}{\kappa +1}\right)} = \frac{\mathtt{t}^{1,-\frac{1}{\kappa}}_{-2 l}(-\frac{\lambda}{\kappa})}{\mathtt{t}^{1,-\frac{1}{\kappa}}_{-2 l}(-\frac{\lambda+1}{\kappa}) }. 
	\end{equation}  
\end{corollary}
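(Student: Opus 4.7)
The plan is a direct verification splitting into the two sign ranges of $l$ and checking they produce the same unified formula. No new ideas are needed beyond Theorem~\ref{Th:Norm} (for $l\le 0$), the reflected formula for $\|u_l(\lambda)\|^2$ with $l>0$ displayed just above Notation~\ref{not:tilde u} (obtained from the $\tau\otimes\tau$ argument of Remark~\ref{Rem:tau}), and Notation~\ref{not:tilde u} itself.

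For $2l\le 0$, Notation~\ref{not:tilde u} gives $\|\tilde u_l(\lambda)\|^2 = 1/\|u_l(\lambda)\|^2$. Inverting the rightmost expression in~\eqref{eq:norm} flips the exponent on $\kappa/(\kappa+1)$ from $-2l$ to $2l$ and swaps numerator and denominator of the $\Gamma$-ratio; the result matches the stated formula term by term after rewriting $\lambda+1+2l$ as $2l+\lambda+1$.

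For $2l>0$, Notation~\ref{not:tilde u} gives $\|\tilde u_l(\lambda)\|^2 = \|u_l(\lambda)\|^2$, and the second equality in the two-line formula for $\|u_l(\lambda)\|^2$ displayed just above Notation~\ref{not:tilde u} is already in exactly the stated form. The only non-obvious point is confirming the equality of the two displayed forms in that line, which is a short computation using $\Gamma(1-z)\Gamma(z) = \pi/\sin(\pi z)$ applied to both pairs of $\Gamma$-factors (after substituting $\frac{\kappa-\lambda-1}{\kappa} = 1 - \frac{\lambda+1}{\kappa}$ and the analogous identity at $\frac{\kappa-\lambda-2l}{\kappa+1}$), together with a $\Gamma(z+1)=z\Gamma(z)$ shift; the sine factors cancel in pairs and the leftover linear factor absorbs into the change of the $\kappa/(\kappa+1)$-exponent from $2l-1$ to $2l$.

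Finally, the last equality in the corollary, rewriting the $\Gamma$-ratio as a ratio of values of $\mathtt{t}^{1,-1/\kappa}_{-2l}$, is an unpacking of the definition of $\mathtt{t}$ evaluated at the two arguments $-\lambda/\kappa$ and $-(\lambda+1)/\kappa$, with the common $\kappa$-dependent prefactor of $\mathtt{t}$ cancelling between numerator and denominator. The entire proof is algebraic bookkeeping of $\Gamma$-shifts; the only slightly fiddly step — and the closest thing to an obstacle — is the Euler-reflection rewriting in the $l>0$ case.
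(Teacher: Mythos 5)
Your proposal is correct and follows essentially the same route the paper intends: Theorem~\ref{Th:Norm} inverted via Notation~\ref{not:tilde u} for $l\le 0$, the $\tau\otimes\tau$-reflected formula displayed before Notation~\ref{not:tilde u} for $l>0$, and routine $\Gamma$-bookkeeping (including unpacking $\mathtt{t}^{1,-1/\kappa}_{-2l}$) for the remaining equalities. The only cosmetic remark is that your Euler-reflection step is not even needed: since the $\Gamma$-arguments in the two $l>0$ forms differ by the integer $2l$, pure $\Gamma(z+1)=z\Gamma(z)$ telescoping reduces their ratio to $\bigl(\tfrac{\lambda+1-2l\kappa}{\kappa+1}\bigr)\big/\bigl(\tfrac{\lambda+1-2l\kappa}{\kappa}\bigr)=\tfrac{\kappa}{\kappa+1}$, which accounts for the exponent shift from $2l-1$ to $2l$.
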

Here we used the following notation 
%(see more in Appendix \ref{sec:Nekr}) 
for the product over an integral \textit{triangle}
\begin{equation}\label{eq:triangle}
	\mathtt{t}^{\epsilon_1,\epsilon_2}_n(\alpha)= 
		\begin{cases}
			\prod_{i,j \ge 0, i+j<n} (\alpha-i\epsilon_1-j\epsilon_2) \quad  &n>0 
			\\ 
			\prod_{i,j > 0, i+j\le-n} (\alpha+i\epsilon_1+j\epsilon_2) \quad   &n<0
		\end{cases}.
\end{equation}
We will also use notation for the product over an integral \textit{segment}
\begin{equation}\label{GammaS}  
	\mathtt{s}_n^{\epsilon_1,\epsilon_2}(\alpha)= \frac{\mathtt{t}^{\epsilon_1,\epsilon_2}_n(\alpha)}{\mathtt{t}^{\epsilon_1,\epsilon_2}_{n-1}(\alpha)} =\left(\epsilon_2 - \epsilon_1 \right)^n\frac{\Gamma(\frac{\alpha+\epsilon_1 - n \epsilon_2}{\epsilon_2-\epsilon_1}+n+1)}{\Gamma(\frac{\alpha+\epsilon_1 - n \epsilon_2}{\epsilon_2-\epsilon_1}+1)} .
\end{equation}
These functions have useful symmetry properties
\begin{align}
	&\mathtt{s}^{\epsilon_1,\epsilon_2}_n(\alpha) \mathtt{s}^{\epsilon_1,\epsilon_2}_{-n}(-\alpha - \epsilon_1 - \epsilon_2) = (-1)^n,
	\\
	\label{eq:t symmetry}
	&\mathtt{t}^{\epsilon_1,\epsilon_2}_n(\alpha) = \mathtt{t}^{\epsilon_1,\epsilon_2}_{-n-1}(-\alpha - \epsilon_1 - \epsilon_2) (-1)^{n(n+1)/2}.
\end{align}

\begin{remark}\label{Rem:RT proof}
	Below, we will give the computational proof of Theorem~\ref{Th:Norm}. However, there is also a representation-theoretic proof based on Kac-Kazhdan theorem (see Theorem \ref{Th:KacKazhdan}). Let us illustrate this with the first non-trivial vector \(u_{-1/2}(\lambda)\). Its norm is equal to \((\lambda+1)/\lambda\), see formula \eqref{eq:example norms}, i.e., it has one zero and one pole.
	
	The existence of the pole at \(\lambda=0\) in \(\left\|u_{-1/2}(\lambda)\right\|^2\) comes from the fact that for \(\lambda=0\) the module \(\mathcal{M}_{\lambda,k}\) has a singular vector \(f_0 v_{\lambda,k}\). Hence, the vector \(v_{1/2}\otimes f_0 v_{\lambda,k}\in \mathcal{L}_{1,1}\otimes \mathcal{M}_{\lambda,k}\) is the highest weight vector with   zero norm. This vector does not agree with normalization \eqref{eq:ul normalization}, actually as one can see from the formula \eqref{eq:ul Verma 1/2} that the vector \(u_{-1/2}(\lambda)\) has a pole at \(\lambda=0\) with residue given by \(v_{1/2}\otimes f_0 v_{\lambda,k}\).
	
	On the other hand, the zero \(\lambda+1\) in \(\left\|u_{-1/2}(\lambda)\right\|^2\) corresponds to the fact that the module \(\mathcal{M}_{\lambda+1,k+1}\) on the right side of decomposition \eqref{eq:W2} has a singular vector. This singular vector (with zero norm) is equal to \(u_{-1/2}(\lambda)\). Namely, one can see from the formula \eqref{eq:ul Verma 1/2} that the vector \(u_{-1/2}(\lambda)\) for \(\lambda=-1\) is equal to \(f_0^\Delta u_{1/2}(\lambda)\). This also implies that decomposition \eqref{eq:W2} fails for this \(\lambda\).
	
	As a more subtle example let us consider the vector \(u_{-1}(\lambda)\). Its expression \eqref{eq:ul Verma 1} has a pole at \(\lambda=0\), while the norm  \eqref{eq:example norms} has neither pole, nor zero at this point. Representation-theoretic reason for this is that, for \(\lambda=0\), the modules on both left and right sides of decomposition \eqref{eq:W1} has singular vectors. Namely, let \(v_{-2}=f_0v_0\in \mathcal{L}_{0,k}\) be the singular vector on the left side. It generates a submodule isomorphic to \(\mathcal{M}_{-2,k}\) and one can then study the submodule in the tensor product \(\mathcal{L}_{0,1}\otimes \mathcal{M}_{-2,k}\subset \mathcal{L}_{0,1}\otimes \mathcal{M}_{0,k}\) bearing in mind the decomposition \eqref{eq:W1}. Then, the vector \(u_1(-2)\) is a highest weight vector with highest weight \((0,k+1)\) and \(f_0^\Delta u_1(-2)\) is a singular vector. The norm of \(f_0^\Delta u_1(-2)\) should have a double zero at \(\lambda=0\) (since both \(f_0v_0\) and \(f_0u_1(-2)\) have	 simple zeroes). On the other hand, this vector is proportional to the residue \(\operatorname{Res}_{\lambda=0}u_1(\lambda)\sim f_0u_1(-2)\).

\end{remark}

\subsection{Calculation of the norms}
Let us define the operator  $J(z):(\mathcal{L}_{0,1}\oplus \mathcal{L}_{1,1})\otimes M_{\lambda, k}\rightarrow \overline{(\mathcal{L}_{0,1}\oplus \mathcal{L}_{1,1})\otimes M_{\lambda - 1, k}}$ by formula
\begin{equation}
	J(z) =  b_1(z)\otimes\mathcal{O}^{(1)}_{0, 1}(z) - b_0(z)\otimes\mathcal{O}^{(1)}_{1, 1}(z) .
\end{equation}
%\begin{remark}
Remark that the operator $J(z)$ can be described as the operator  $I(z)$ dressed by the screening $S(t)$ (see formula~\eqref{omnnu})
\begin{equation}\label{eq:Idress}
    J(z) = \int_0^z dt\, I(z) S(t).
\end{equation}
Note that the contour chosen here works only under some inequalities for the parameters. For other values, the definition is extended via analytic continuation.
%\end{remark}
\begin{proposition}\phantomsection \label{Th:Jprop}
%Following statements are true
%\begin{enumerate}
%    \item 
    1) The operator $J(z)$ commutes with $\widehat{\mathfrak{sl}}(2)_{k+1}^\Delta$;
    
%    \item  
    2) The operator $J(z)$ is $\mathrm{Vir}^{\mathrm{coset}}$ vertex operator $\Phi_{2,1}(z)$.
%\end{enumerate}
\end{proposition}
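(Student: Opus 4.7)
The plan is to mirror the proof of Proposition~\ref{Th:Iprop} as closely as possible, exploiting the observation that $J(z)$ differs from $I(z)$ only by the replacement $\mathcal{O}^{(0)}_{r;1}(z) \rightsquigarrow \mathcal{O}^{(1)}_{r;1}(z)$, or equivalently by dressing with one screening integral via~\eqref{eq:Idress}. This structural similarity makes both parts nearly automatic once the relevant input lemmas are in place.

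For part~(1), the key point is that Proposition~\ref{Prop:O comm rel} describes the commutators of $\mathcal{O}^{(N)}_{r;\nu}(z)$ with the generators of $\widehat{\mathfrak{sl}}(2)_k$ by formulas that depend only on $r,\nu$, with no reference to the screening count $N$. Consequently the commutation relations of $J(z)$ with the diagonal algebra $\widehat{\mathfrak{sl}}(2)_{k+1}^\Delta$ are literally identical to those of $I(z)$, and vanish by Proposition~\ref{Th:Iprop}(1). A more conceptual formulation uses~\eqref{eq:Idress}: the Wakimoto screening $S(t)$ commutes with $\widehat{\mathfrak{sl}}(2)_k^{(2)}$ by construction and trivially with $\widehat{\mathfrak{sl}}(2)_1^{(1)}$, so the commutativity of $I(z)$ with the diagonal passes through the $t$-integral.

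For part~(2), I would apply operator-state correspondence as in Proposition~\ref{Th:Iprop}. Both $b_i(z)$ and $\mathcal{O}^{(1)}_{r;1}(z)$ are themselves images under $\mathtt{Y}$, so $J(z)$ is attached to a vector $v_J \in \mathcal{L}_{1,1}\otimes \mathcal{L}_{1,k}$---in fact to the same abstract vector $v_I$ of~\eqref{eq:v_I}, only interpreted via a different target module. Part~(1) already supplies $e^\Delta_{n\ge 0}v_J = h^\Delta_{n\ge 0}v_J = f^\Delta_{n\ge 0}v_J = 0$. The $L^{GKO}_0$ weight is assembled from three pieces: $L^{(1)}_0$ contributes $1/4$ from $b_i(z)$; $L^{(2)}_0$ contributes the dimension of a $\mathcal{V}_1$ vertex operator, $3/(4\kappa)$, which is the same for $\mathcal{O}^{(0)}_{r;1}$ and $\mathcal{O}^{(1)}_{r;1}$ since the screening $S(t)$ is dimension one and its integral is dimension zero; and $L^\Delta_0$ vanishes because $v_J$ is a diagonal singlet. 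The sum $\tfrac14 + \tfrac{3}{4\kappa} = \tfrac{\kappa+3}{4\kappa} = \Delta_{2,1}$, as required. The highest-weight condition $L^{GKO}_{n>0}v_J = 0$ and the degenerate equation $\bigl((L^{GKO}_{-1})^2 + b_{GKO}^{-2}L^{GKO}_{-2}\bigr)v_J = 0$ then hold for the same reason they held for $v_I$, the vector itself being unchanged.

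The main subtlety I anticipate is the handling of the open contour $\int_0^z S(t)\,dt$ in~\eqref{eq:Idress}: closed-cycle screenings commute with Virasoro on the nose, but here the upper limit of integration is $z$, producing boundary contributions from $L^{GKO}(w) S(t)$ at $t = z$ which must combine cleanly with $\partial_z I(z)$ in the Virasoro transformation of $J(z)$. A clean way to bypass this is to appeal to uniqueness: the $\Phi_{2,1}$ intertwiner between generic Virasoro Verma modules in the fusion channel $P \to P - b_{GKO}^{-1}/2$ (opposite to the channel $P \to P + b_{GKO}^{-1}/2$ realized by $I(z)$, matching $\lambda \to \lambda - 1$) is unique up to scalar, so verifying the conformal weight and the degenerate condition of $J(z)$ on a highest-weight vector already suffices to identify it with $\Phi_{2,1}$.
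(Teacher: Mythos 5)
Your proposal is correct and follows essentially the same route as the paper, whose proof of Proposition~\ref{Th:Jprop} is literally ``the same as the proof of Proposition~\ref{Th:Iprop}'': identify $J(z)$ via the operator-state correspondence with the vector $v_I$ of \eqref{eq:v_I} realized in the fusion channel $\lambda\to\lambda-1$, and reuse the highest-weight, conformal-weight and degenerate-vector checks already performed there. Your extra verifications (the $N$-independence of the commutators in Proposition~\ref{Prop:O comm rel}, the weight count $\tfrac14+\tfrac{3}{4\kappa}=\Delta_{2,1}$, and the uniqueness argument sidestepping the open screening contour) are accurate elaborations of that same argument.
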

\begin{proof}
	The same as proof of Proposition~\ref{Th:Iprop}.
\end{proof}

We will also need screening current $	S(t) = :\! \re^{-\sqrt{\frac{2}{\kappa}}\varphi(t)}\!\!:\beta(t)$ conjugated by $\re^{-g_0}$. Namely, we have
%\begin{equation}\label{eq:Stilde}
%	S(t) = \re^{-\sqrt{\frac{2}{\kappa}}\varphi(t)}\beta(t) = \re^{-g_0}\tilde{S}(t)   \re^{g_0},
%\end{equation} 
%where $\tilde{S}(t)$ is defined by formula
 \begin{equation}\label{eq:Stilde}
     \tilde{S}(t) =\re^{g_0}S(t)   \re^{-g_0} = :\! \re^{-\sqrt{\frac{2}{\kappa}}\varphi(t)}\!\!: \big(\beta(t) - e^{(1)}(t)\big).
 \end{equation}
Using Proposition \ref{Prop:I conj} we get
\begin{equation}\label{eq:Jconj}
    \re^{g_0}J(z)\re^{-g_0} =\int_0^z dt \, :\! \re^\frac{\varphi(z)}{\sqrt{2\kappa}} \!\!:  b_1(z)   \widetilde S(t).
\end{equation}

%Let us recall that we have Shapovalov form on $\mathcal{L}_{i,1}$, where $i\in\{0,1\}$, and on $\mathcal{M}_{\lambda,k}$. 

Recall that $X^\dagger$ denotes the operator conjugated to an operator $X$ with respect to the Shapovalov form (see Definitions \ref{Def:Shap sl2}, \ref{Def:Shap Vir}).

\begin{proposition} 
	There are the following formulas for the conjugation. 
	\begin{enumerate}
	    \item For operators $b_{i}(z): \mathcal{L}_{j,1} \rightarrow \mathcal{L}_{1-j,1}$,  $i,j\in\{0,1\}$, we have
	    \begin{equation}\label{eq:b dagger}
	        (b_{0}(z))^\dagger = (-1)^j  z^{-1/2} b_{1}(z^{-1}), ~ (b_{1}(z))^\dagger = -(-1)^{j} z^{-1/2} b_{0}(z^{-1});
	    \end{equation}
	    \item For operator $\mathcal{O}_{i,1}^{(0)}(z):\mathcal{M}_{\lambda,k} \rightarrow \mathcal{M}_{\lambda + 1,k}$, where $i\in\{0,1\}$ we have
	    \begin{equation}\label{eq:C kappa lambda}
      (\mathcal{O}_{0,1}^{(0)}(z))^\dagger =  C(\kappa,\lambda)   z^{\frac{- 3}{2\kappa}}\mathcal{O}_{1,1}^{(1)}(z^{-1}), ~  (\mathcal{O}_{1,1}^{(0)}(z))^\dagger = C(\kappa,\lambda) (-1) z^{\frac{- 3}{2\kappa}}\mathcal{O}_{0,1}^{(1)}(z^{-1}).
%	        (\mathcal{O}_{i,1}^{(0)}(z))^\dagger = C(\kappa,\lambda) {\color{red}(-1)^{i} z^{\frac{- 3}{2\kappa}}}\mathcal{O}_{1-i,1}^{(1)}(z^{-1}).
	    \end{equation}
%     {\color{red} We need this $(-1)^{i}$. Otherwise $I(z)^{\dagger}$ could not be expressed in terms of $J(z)$. This sign is the same that comes from $(-x^{-1})$ when we consider $\mathcal{V}_{1}(x,z) = \mathcal{O}_{0,1}^{(0)}(z) + x\mathcal{O}_{1,1}^{(0)}(z)$ .  The proof for $b$ could be explicit and for $\mathcal{O}$ could follow from conjugation of $\mathcal{V}$ }
	\end{enumerate}
\end{proposition}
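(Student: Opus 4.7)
The plan is to prove both identities by the standard intertwiner strategy: match the commutation relations of both sides with the affine generators, invoke uniqueness of the intertwining vertex operator to conclude proportionality, and finally fix the remaining scalar factor from a single matrix element computation.

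For Part~1, I would first record the commutation relations of $b_0(z)$ and $b_1(z)$ with $e_n,h_n,f_n$ (these are the $X_{l,1}$ relations from Section~\ref{ssec:operstatecorr}, specialised to integrable level-one modules). Taking Shapovalov adjoints with $e_n^\dagger=f_{-n}$, $h_n^\dagger=h_{-n}$, $f_n^\dagger=e_{-n}$ and substituting $n\mapsto-n$, the resulting relations for $b_0(z)^\dagger$ match, term by term, those of a scalar multiple of $b_1(z^{-1})$, and likewise $b_1(z)^\dagger$ is a scalar multiple of $b_0(z^{-1})$. The level-one fusion rule~\eqref{eq:fusion level 1} makes such an intertwiner unique up to a scalar, so these proportionalities exhaust the algebraic content. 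The scaling factor $z^{-1/2}$ is forced by the conformal weight $1/4$ of $b_i(z)$ (the $\nu=1$ primary of $\widehat{\mathfrak{sl}}(2)_1$), and matching the two Shapovalov-dual commutators with $f_n$ and $e_n$ forces the two scalar coefficients to be negatives of each other. The overall sign $(-1)^j$ is then read off by evaluating one concrete matrix element between a highest weight vector of $\mathcal{L}_{j,1}$ and a matching descendant in $\mathcal{L}_{1-j,1}$ using the bosonic formula~\eqref{eq:b01}; the cocycle $(-1)^{h_0(h_0-1)/2}$ there is exactly what supplies the $j$-dependent sign.

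For Part~2 the plan is parallel. Proposition~\ref{Prop:O comm rel} at $\nu=1$ gives the commutation relations of $\mathcal{O}^{(0)}_{r,1}$ and $\mathcal{O}^{(1)}_{r,1}$; after Shapovalov conjugation and the substitution $n\mapsto-n$ these two families are interchanged, matching exactly the commutators of the claimed adjoint. Uniqueness of the intertwiner $\mathcal{M}_{\lambda+1,k}\to\mathcal{M}_{\lambda,k}$ of weight $\nu=1$ is supplied by Proposition~\ref{Prop:coinv generic} together with the bosonization of Proposition~\ref{Prop:O&Vnu 2}, while the scaling exponent $z^{-3/(2\kappa)}$ corresponds to the conformal weight $3/(4\kappa)$ of the $\nu=1$ primary of $\widehat{\mathfrak{sl}}(2)_k$. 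The scalar $C(\kappa,\lambda)$ is then determined by a single matrix element on the highest weight vector, which in the Wakimoto picture reduces to a one-dimensional integral over the screening contour $S(t)$.

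The main obstacle is not the algebraic matching of commutation relations, which is routine, but rather the normalization bookkeeping. In Part~1 one has to track the cocycle in~\eqref{eq:b01} carefully to extract the sign $(-1)^j$; in Part~2, while the proposition only asserts the existence of $C(\kappa,\lambda)$, obtaining a closed formula requires an explicit evaluation of the one-dimensional screening integral, which is the technical heart of the argument.
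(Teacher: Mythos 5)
Your proposal is correct and follows essentially the same route as the paper: match the commutation relations of the Shapovalov adjoint with those of the claimed operator, invoke uniqueness of the corresponding intertwiner to get proportionality, and fix the remaining scalar and sign by acting on highest weight vectors. The only cosmetic difference is that in Part~1 the paper obtains uniqueness from the Heisenberg algebra alone (a vertex operator between two Fock modules is determined up to a constant by its commutators with the modes $h_k$), whereas you use the full affine relations and the level-one fusion rule, and in Part~2 the paper, like you, does not evaluate $C(\kappa,\lambda)$ here but defers it to the proof of Theorem~\ref{Th:normrec}.
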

We will actually compute the proportionality coefficient $ C(\kappa,\lambda)$ in the proof of Theorem~\ref{Th:normrec}.
\begin{proof}
	Let \(b_i(z)=\sum b_{i,n}z^{-n-1/4}\). Then we have \([h_{k}, b_{i,s}]  = (-1)^i b_{i,k+s}\) and
	\begin{equation}
		[h_{k}, (b_{1-i,-s})^\dagger] =-([h_{-k}, b_{1-i,-s}])^\dagger =(-1)^i (b_{1-i,-k-s})^\dagger .
	\end{equation}
	Hence  operators $b_{i}(z)$ and $b_{1-i}(z^{-1})$ satisfy the same commutation relations with Heisenberg algebra generators \(h_k\), \(k \in \mathbb{Z}\). The vertex operator between two Fock modules $\mathcal{F}_\alpha$ and $\mathcal{F}_{\beta}$ is uniquely determined by these relations up to overall constant. This constant is fixed by the action on the highest weight vectors.
%	
%	\begin{equation}
%            [h_{-k}, (b_s^{0,1})^\dagger] =-([h_k, b_s^{0,1}])^\dagger =\mp (b_{k + s}^{0,1})^\dagger .
%    \end{equation}
%	Similarly
%        \begin{equation}
%            [h_{-k}, b_{-s}^{1,0}]  = \mp b_{ - k - s}^{1,0} .
%        \end{equation}
%It follows from the Proposition \ref{onlyonevertex} that there is only one up to constant vertex operator acting between $\mathcal{L}_{i,1}$ and $\mathcal{L}_{1 - i,1}$. Hence operators $b_{0,1}(z)$ and $b_{1,0}(z^{-1})$ are proportional. The factor can be found by the action on highest weight vectors. 
 
	The proof for the operators $\mathcal{O}^{(0)}_{i,1}(z)$ is similar. Namely it is easy to see that the corresponding operators satisfy the same commutation relations with $\widehat{\mathfrak{sl}}(2)$. Then, the proportionality follows from Proposition \ref{onlyonevertex}.
\end{proof}
\begin{corollary}\phantomsection \label{Cor:conjI}
	The conjugation of the operator $I(z) \colon \mathcal{L}_{j,1}\otimes \mathcal{M}_{\lambda,k} \rightarrow \overline{\mathcal{L}_{1 - j,1}\otimes \mathcal{M}_{\lambda + 1,k}}$ has the form
	\begin{equation}
	   (I(z))^\dagger = C(\kappa,\lambda) (-1)^{j}z^{-\frac{ 3}{2\kappa}- \frac{1}{2}} J(z^{-1}).
	\end{equation}
	Here the proportionality coefficient $C(\kappa,\lambda)$ is defined in \eqref{eq:C kappa lambda}.
\end{corollary}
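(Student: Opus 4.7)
The proof is a direct termwise computation, so the plan is short and mostly bookkeeping. Start from the definition
\[
I(z) = b_1(z)\otimes \mathcal{O}^{(0)}_{0,1}(z) - b_0(z)\otimes \mathcal{O}^{(0)}_{1,1}(z),
\]
and use the fact that conjugation with respect to the Shapovalov form on $\mathcal{L}_{j,1}\otimes \mathcal{M}_{\lambda,k}$ distributes over the tensor product: $(A\otimes B)^\dagger = A^\dagger \otimes B^\dagger$. This reduces the problem to the two formulas established in the previous proposition for the factor-wise conjugates $(b_i(z))^\dagger$ and $(\mathcal{O}^{(0)}_{i,1}(z))^\dagger$.

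Substituting those formulas, the first summand of $(I(z))^\dagger$ becomes
\[
(b_1(z))^\dagger \otimes (\mathcal{O}^{(0)}_{0,1}(z))^\dagger = \bigl(-(-1)^j z^{-1/2} b_0(z^{-1})\bigr) \otimes \bigl(C(\kappa,\lambda)\, z^{-3/(2\kappa)} \mathcal{O}^{(1)}_{1,1}(z^{-1})\bigr),
\]
and the second summand similarly produces the term $(-1)^j z^{-1/2} b_1(z^{-1}) \otimes C(\kappa,\lambda)(-1) z^{-3/(2\kappa)} \mathcal{O}^{(1)}_{0,1}(z^{-1})$. Pulling out the common scalar $C(\kappa,\lambda)(-1)^{j} z^{-\frac{3}{2\kappa}-\frac{1}{2}}$ and accounting for the overall minus sign in the definition of $I$, the bracket that remains is exactly
\[
b_1(z^{-1})\otimes \mathcal{O}^{(1)}_{0,1}(z^{-1}) - b_0(z^{-1})\otimes \mathcal{O}^{(1)}_{1,1}(z^{-1}) = J(z^{-1}),
\]
which yields the claimed identity.

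There is essentially no obstacle: the only things to be careful about are (i) tracking the signs coming from the asymmetry in the formulas $(b_0(z))^\dagger = (-1)^j z^{-1/2} b_1(z^{-1})$ versus $(b_1(z))^\dagger = -(-1)^j z^{-1/2} b_0(z^{-1})$ and from $(\mathcal{O}^{(0)}_{1,1}(z))^\dagger = -C(\kappa,\lambda) z^{-3/(2\kappa)} \mathcal{O}^{(1)}_{0,1}(z^{-1})$, and (ii) verifying that the exponent of $z$ is the same in both summands, which it is, since $-\tfrac12-\tfrac{3}{2\kappa}$ appears in each. The dependence on $j\in\{0,1\}$ enters only through $(-1)^j$ since the second factor of each tensor acts between Verma modules and does not see $j$.
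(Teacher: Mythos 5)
Your computation is correct and is exactly how the paper obtains this corollary: it follows by applying the factor-wise conjugation formulas $(b_i(z))^\dagger$ and $(\mathcal{O}^{(0)}_{i,1}(z))^\dagger$ from the preceding proposition to the two terms in the definition of $I(z)$, using $(A\otimes B)^\dagger=A^\dagger\otimes B^\dagger$, and collecting the common factor $C(\kappa,\lambda)(-1)^j z^{-\frac{3}{2\kappa}-\frac12}$ to recognize $J(z^{-1})$. Your sign bookkeeping matches the paper's conventions, so nothing is missing.
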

Now we can compute conjugation of the Proposition~\ref{Prop:I vl}.
\begin{corollary}\phantomsection \label{helpfulnorms}
	For \(l\le 0\) we have the formula for the action of operator $J(z)$ on the highest weight vectors
	\begin{equation}
		J(z) u_{l}(\lambda{+}1) = z^{l-\frac{\lambda+3}{\kappa}}\left( (-1)^{2(l+1/2)(l+1)}C(\kappa,\lambda)^{-1} \frac{\|u_{l}(\lambda + 1)\|^2}{\|u_{l + \frac{ 1}{2}}(\lambda)\|^2} u_{l + \frac{1}{2}}(\lambda) + O(z) \right).
	\end{equation}
\end{corollary}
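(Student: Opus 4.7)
The plan is to reduce the claim to Proposition~\ref{Prop:I vl} via the Shapovalov-adjoint relation of Corollary~\ref{Cor:conjI}. First I would observe that since $J(z)$ commutes with $\widehat{\mathfrak{sl}}(2)_{k+1}^\Delta$ (Proposition~\ref{Th:Jprop}) and $u_l(\lambda+1)$ is a diagonal highest-weight vector of $h_0^\Delta$-weight $\lambda+1+2l$, the decomposition of Theorem~\ref{Th:cosetdec} forces $J(z)u_l(\lambda+1)$ to lie in $\overline{\mathbb{M}_{P(\lambda)+(l+1/2)b,b}}\otimes v_{\lambda+1+2l,k+1}$, hence to be a $\mathrm{Vir}^{\mathrm{coset}}$-descendant of $u_{l+1/2}(\lambda)$. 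Writing
\begin{equation*}
J(z)u_l(\lambda+1)=\alpha(z)\,u_{l+1/2}(\lambda)+(\text{proper }\mathrm{Vir}^{\mathrm{coset}}\text{-descendants}),
\end{equation*}
the leading coefficient is isolated by Shapovalov-orthogonality between the highest vector and its proper descendants as $\alpha(z)=\|u_{l+1/2}(\lambda)\|^{-2}\langle J(z)u_l(\lambda+1),\,u_{l+1/2}(\lambda)\rangle$.

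The next step is to transfer $J$ across the Shapovalov pairing and convert it into $I$ using Corollary~\ref{Cor:conjI}. Taking the Shapovalov-adjoint of $(I(z))^\dagger=C(\kappa,\lambda)(-1)^{1-i}z^{-3/(2\kappa)-1/2}J(z^{-1})$ (treating scalar coefficients as self-adjoint) and substituting $z\mapsto z^{-1}$ yields
\begin{equation*}
(J(z))^\dagger=C(\kappa,\lambda)^{-1}(-1)^{1-i}\,z^{-3/(2\kappa)-1/2}\,I(z^{-1}),
\end{equation*}
where $i\in\{0,1\}$ is such that $u_l(\lambda+1)\in\mathcal{L}_{i,1}\otimes\mathcal{M}_{\lambda+1,k}$; note that $u_{l+1/2}(\lambda)\in\mathcal{L}_{1-i,1}\otimes\mathcal{M}_{\lambda,k}$ so that source and target of $(J(z))^\dagger$ match. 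Thus
\begin{equation*}
\langle J(z)u_l(\lambda+1),u_{l+1/2}(\lambda)\rangle=C(\kappa,\lambda)^{-1}(-1)^{1-i}z^{-3/(2\kappa)-1/2}\langle u_l(\lambda+1),\,I(z^{-1})u_{l+1/2}(\lambda)\rangle.
\end{equation*}

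Now I invoke Proposition~\ref{Prop:I vl} with $l$ replaced by $l+1/2$, which for $l\le -1/2$ gives
\begin{equation*}
I(z)u_{l+1/2}(\lambda)=z^{d(l+1/2,\lambda,k)}\bigl((-1)^{(l+1/2)(2l)}u_l(\lambda+1)+O(z)\bigr),
\end{equation*}
and the Shapovalov pairing against $u_l(\lambda+1)$ extracts $\|u_l(\lambda+1)\|^2$ on the leading term while the descendants pair trivially. Assembling all the factors of $z$ (from $-3/(2\kappa)-1/2$ and from $-d(l+1/2,\lambda,k)=-\lambda/(2\kappa)+l+1/2$) and collapsing the combined sign $(-1)^{1-i+(l+1/2)(2l)}$ to $(-1)^{(2l+1)(l+1)}=(-1)^{2(l+1/2)(l+1)}$ via the parity identity $i\equiv 2l\pmod 2$ yields the announced formula. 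The boundary value $l=0$ falls just outside the scope of Proposition~\ref{Prop:I vl}; it is handled by the analogous statement for the action of $I(z)$ on $u_{1/2}(\lambda)$, equivalently via the $\tau$-automorphism of Remark~\ref{Rem:tau}, or using the constants $c_{m,\lambda}$ mentioned after Proposition~\ref{Prop:I vl}.

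The main technical obstacle is bookkeeping. One must carefully combine the exponent shifts coming from the two conformal weights ($\Delta_{2,1}$ enters twice, once in each of $I$ and $J$), the integration exponent $d(l+1/2,\lambda,k)$, and the conjugation factor $z^{-3/(2\kappa)-1/2}$, so that the leading power of $z$ in $\alpha(z)$ comes out correctly; and one must combine the signs $(-1)^{1-i}$ from Corollary~\ref{Cor:conjI} with $(-1)^{(l+1/2)(2l)}$ from Proposition~\ref{Prop:I vl} into the single expression $(-1)^{2(l+1/2)(l+1)}$. Beyond that, the small additional input needed for $l=0$ is the only conceptual subtlety, since Proposition~\ref{Prop:I vl} as stated covers only $l+1/2\le 0$.
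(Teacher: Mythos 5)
Your argument is correct and is essentially the paper's own proof: the paper obtains this corollary precisely by conjugating Proposition~\ref{Prop:I vl} through the adjoint formula of Corollary~\ref{Cor:conjI}, exactly as you do, with the same isolation of the leading coefficient via the coset decomposition and Shapovalov orthogonality and the same sign bookkeeping. Your explicit flag of the boundary case $l=0$ (which requires the leading coefficient of $I(z)u_{1/2}(\lambda)$, lying just outside the stated range of Proposition~\ref{Prop:I vl}) is a point the paper passes over silently, so raising it is a refinement rather than a gap.
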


Equivalently we have the formula for the action of operator $\re^{g_0} J(z) \re^{-g_0}$ on the extremal vectors 
\begin{equation}\label{eq:conjJ}
	\re^{g_0} J(z) \re^{-g_0} v_{l}(\lambda{+}1)= z^{l-\frac{\lambda+3}{\kappa}}\left( (-1)^{2(l+1/2)(l+1)}C(\kappa,\lambda)^{-1} \frac{\|u_{l}(\lambda + 1)\|^2}{\|u_{l + \frac{ 1}{2}}(\lambda)\|^2}v_{l + \frac{1}{2}}(\lambda) + O(z) \right).    
\end{equation}

%Statements $(1)$ and $(3)$ are obvious.  Statement $(2)$ follows from  $(1)$. Statement $(3)$ entails $(4)$.
We will prove Theorem \ref{Th:Norm} by induction on $l$. It is sufficient to prove the following statement.

\begin{theorem}\phantomsection \label{Th:normrec}
	For \(l\in \mathbb{Z}_{<0}\) the ratio of the norms of the highest vectors in coset decomposition is equal to ratio of two Beta functions
	\begin{equation}
		\frac{\|u_{l}(\lambda + 1)\|^2}{\|u_{l+ \frac{1}{2}}(\lambda)\|^2} =  \frac{B(-\frac{1}{\kappa},2l-\frac{\lambda+1}{\kappa}+ 1)}{B(-\frac{1}{\kappa},-\frac{\lambda+1}{\kappa} + 1)}.   
	\end{equation}
\end{theorem}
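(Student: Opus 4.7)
\medskip
\noindent\textbf{Proof proposal.} The plan is to combine Corollary~\ref{helpfulnorms} with two independent free-field computations that pin down the two unknowns appearing on its right-hand side. Corollary~\ref{helpfulnorms} states
\begin{equation*}
	J(z)\, u_l(\lambda+1) = z^{\,l-(\lambda+3)/\kappa}\Bigl( (-1)^{2(l+1/2)(l+1)}\, C(\kappa,\lambda)^{-1}\, \tfrac{\|u_l(\lambda+1)\|^2}{\|u_{l+1/2}(\lambda)\|^2}\, u_{l+1/2}(\lambda) \,+\, O(z)\Bigr),
\end{equation*}
so once (a)~the normalization constant $C(\kappa,\lambda)$ defined by~\eqref{eq:C kappa lambda} and (b)~the leading coefficient of $J(z)u_l(\lambda+1)$ in the $u_{l+1/2}(\lambda)$ direction are evaluated explicitly in the Wakimoto/bosonic realization, the claimed ratio of norms will be pinned down.

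For step (b), I would use the conjugated formula~\eqref{eq:Jconj} together with $u_l(\lambda+1)=\re^{-g_0}v_l(\lambda+1)$ and $u_{l+1/2}(\lambda)=\re^{-g_0}v_{l+1/2}(\lambda)$, so that the matrix element of interest becomes a Fock-space correlator
\begin{equation*}
	\int_0^z dt\; \langle v_{l+1/2}(\lambda),\; :\!\re^{\varphi(z)/\sqrt{2\kappa}}\!\!:\, b_1(z)\,\widetilde S(t)\, v_l(\lambda+1)\rangle,
\end{equation*}
where $\widetilde S(t)=:\!\re^{-\sqrt{2/\kappa}\varphi(t)}\!\!:(\beta(t)-e^{(1)}(t))$ is the dressed screening of~\eqref{eq:Stilde}. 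The $b_1(z)$ factor and the $\beta(t)/e^{(1)}(t)$ contribution carry out the shift of the extremal vector $v_l\mapsto v_{l+1/2}$ in the $k=1$ factor and contribute only elementary rational prefactors, while the Heisenberg contractions of the $\varphi$-exponentials produce a Koba--Nielsen weight of the form $t^{-(\lambda+1)/\kappa+2l}(z-t)^{-1/\kappa}$. Rescaling $t=zs$ and extracting the leading $z$-power yields a one-dimensional Selberg integral that evaluates to $B\!\left(-\tfrac{1}{\kappa}+1,\;2l-\tfrac{\lambda+1}{\kappa}+1\right)$ up to an elementary factor.

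For step (a), $C(\kappa,\lambda)$ is evaluated by the same technique applied to the defining identity~\eqref{eq:C kappa lambda}: pair both sides with Wakimoto highest vectors. Since $\mathcal{O}^{(0)}_{0,1}(z)$ is a pure vertex exponential whereas $\mathcal{O}^{(1)}_{1,1}(z^{-1})$ carries exactly one screening contour, the same Selberg computation appears but with $l=0$, giving $C(\kappa,\lambda)$ proportional to $B(-\tfrac{1}{\kappa}+1,\,-\tfrac{\lambda+1}{\kappa}+1)^{-1}$ times elementary factors. Combining (a) and (b) as dictated by Corollary~\ref{helpfulnorms}, the elementary prefactors cancel between the two Beta integrals and one obtains
\begin{equation*}
	\frac{\|u_l(\lambda+1)\|^2}{\|u_{l+1/2}(\lambda)\|^2}=\frac{B(-1/\kappa,\,2l-(\lambda+1)/\kappa+1)}{B(-1/\kappa,\,-(\lambda+1)/\kappa+1)}.
\end{equation*}

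The main obstacle is combinatorial bookkeeping rather than anything conceptual: tracking signs (the $(-1)^{h_0(h_0-1)/2}$ in~\eqref{eq:b01}, the relative sign between the two channels in $J(z)$, and the power of $-1$ in Corollary~\ref{helpfulnorms}), and disentangling the contributions of $\beta(t)$ versus $e^{(1)}(t)$ in $\widetilde S(t)$ when acting on the integrable factor $v_l\in\mathcal{L}_{i,1}$. One also has to check that the contour in~\eqref{omnnu} and its analogue in~\eqref{eq:Idress} yield convergent integrals in a common parameter range (or invoke analytic continuation as in~\cite{EFK:1998}), so that the cancellation of elementary prefactors between (a) and (b) is legitimate.
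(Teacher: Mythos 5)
Your overall strategy is essentially the paper's: the core of the argument is exactly the computation you describe in step (b), namely evaluating
$\int_0^1 dt\,\langle v_{l+1/2}(\lambda), :\!\re^{\varphi(1)/\sqrt{2\kappa}}\!\!:\,b_1(1)\,\widetilde S(t)\,v_l(\lambda+1)\rangle$
via formula \eqref{eq:Jconj}, Theorem~\ref{Th:FormUm} and the normalization \eqref{eq:e{-g0}v norm}, which reduces to a one-dimensional Beta integral and, through Corollary~\ref{helpfulnorms}, gives $\|u_l(\lambda+1)\|^2/\|u_{l+1/2}(\lambda)\|^2=-C(\kappa,\lambda)\,B(-\tfrac1\kappa,\,2l-\tfrac{\lambda+1}{\kappa}+1)$. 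Where you diverge from the paper is in fixing $C(\kappa,\lambda)$: the paper does not evaluate the adjoint constant in free fields at all; it simply substitutes the explicitly known norms $\|u_0(\lambda)\|^2=1$ and $\|u_{-1/2}(\lambda)\|^2=(\lambda+1)/\lambda$ from Example~\ref{Exa:norms} into the same relation and solves for $C(\kappa,\lambda)=-B(-\tfrac1\kappa,1-\tfrac{\lambda+1}{\kappa})^{-1}$. Your direct route for (a) — pairing \eqref{eq:C kappa lambda} against highest-weight vectors, where the left side has leading coefficient $1$ and the right side is a one-screening integral with the $\gamma$--$\beta$ contraction — is viable and would reproduce the same $C$, but it forces you to carry the absolute sign and normalization conventions (the sign factors in \eqref{eq:b01}, the adjoint conventions of Definitions~\ref{Def:Shap sl2}, \ref{Def:Shap Vir}, the contour) through a second independent computation; the paper's anchoring to the base cases absorbs all of these ambiguities at once, which is why its bookkeeping is lighter.

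One concrete point you must repair: your stated Koba--Nielsen weight $t^{2l-(\lambda+1)/\kappa}(z-t)^{-1/\kappa}$ is missing the factor $(z-t)^{-1}$ coming from the contraction of $b_1(z)$ with the $e^{(1)}(t)=\,:\!\re^{\sqrt2\phi(t)}\!\!:$ part of $\widetilde S(t)$ in the level-one sector (only this term of $\widetilde S$ survives, by $h_0^{(1)}$-charge conservation). The correct integrand is $(z-t)^{-1-1/\kappa}t^{2l-(\lambda+1)/\kappa}$, so the integral is $B(-\tfrac1\kappa,\,2l-\tfrac{\lambda+1}{\kappa}+1)$ and not $B(1-\tfrac1\kappa,\,\cdot)$. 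This cannot be brushed aside as an ``elementary factor that cancels'': the discrepancy $B(-\tfrac1\kappa,x)/B(1-\tfrac1\kappa,x)=\tfrac{x-1/\kappa}{-1/\kappa}$ depends on $x$, hence on $l$ and $\lambda$, so if the shifted first argument were used consistently in both (a) and (b) the resulting ratio would differ from the claimed one by an $l$-dependent factor. With the contraction restored (and the same care taken in your evaluation of $C(\kappa,\lambda)$, where the analogous $\gamma(w)\beta(t)\sim (w-t)^{-1}$ contraction plays the same role), your argument closes and agrees with the paper.
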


\begin{proof}
	It follows from the formula (\ref{eq:conjJ}) that
	\begin{equation}\label{eq:2n}
		\left\langle v_{l+\frac{1}{2}}(\lambda) ,\, \re^{g_0}J(1)\re^{-g_0}  v_{l}(\lambda  + 1)\right\rangle =(-1)^{2(l+1/2)(l+1)}C(\kappa,\lambda)^{-1} \frac{\|u_{l}(\lambda + 1)\|^2}{\|u_{l + \frac{ 1}{2}}(\lambda)\|^2}.
	\end{equation}
	On the other hand we can use formula (\ref{eq:Jconj})
	\begin{equation}
		\text{LHS of \eqref{eq:2n}} = \int_0^1 	dt \left\langle v_{l + \frac{1}{2}}(\lambda), :\! \re^\frac{\varphi(1)}{\sqrt{2\kappa}}\!\!:   b_1(1)  \widetilde{S}(t)  v_{l}(\lambda + 1)\right\rangle. 
	\end{equation}
	Finally, we use definition of the $b_1(z)$ and $\widetilde S(t)$. 
	\begin{multline}
		\int_0^1 dt \left\langle v_{l + \frac{1}{2}}\otimes v_{\lambda, k} ,  :\! \re^\frac{\varphi(1)}{\sqrt{2\kappa}} \!\!:    b_1(1)   :\! \re^{-\sqrt{\frac{2}{\kappa}}\varphi(t)} \!\!:  \big(\beta(t) - e^{(1)}(t)\big)  v_{l}\otimes v_{\lambda  + 1, k}\right\rangle =
		\\
		= (-1)^{2(l+1/2)(l+1)+1} \int_0^1 dt \left\langle v_{l+\frac{1}{2}} ,\, :\! \re^{-\frac{\phi(1)}{\sqrt{2}}}\!\!:  \,    :\!\re^{\sqrt{2}\phi(t)} \!\!:   v_{l}\right\rangle \left\langle  v_{\lambda, k} , :\!\re^\frac{\varphi(1)}{\sqrt{2\kappa}} \!\!:   \,  :\!\re^{-\sqrt{\frac{2}{\kappa}}\varphi(t)} \!\!:   v_{\lambda  + 1, k}\right\rangle =
		\\
		=(-1)^{2(l+1/2)(l+1)+1}\int_0^1 dt  (1 - t)^{-1 - \frac{1}{\kappa}} t^{2l - \frac{\lambda +1}{\kappa}} =  (-1)^{2(l+1)(l+1/2)+1} B(-\frac{1}{\kappa},2l-\frac{\lambda+1}{\kappa}+1).
	\end{multline}
	Hence,
	\begin{equation}
		\frac{\|u_{l}(\lambda + 1)\|^2}{\|u_{l+\frac{1}{2}}(\lambda)\|^2} =-C(\kappa,\lambda)  B(-\frac{1}{\kappa},2l-\frac{\lambda+1}{\kappa} + 1).
	\end{equation}
	Using values for \(\|u_{0}(\lambda)\|^2\) and \(\|u_{-1/2}(\lambda)\|^2\) given in Example \ref{Exa:norms} we find that \(C(\kappa,\lambda) = -B(-\frac{1}{\kappa};-\frac{\lambda+1}{\kappa}  + 1)^{-1}\). 
\end{proof}

\begin{remark}\phantomsection \label{Rem:l>0}
	Using conjugated operators we can also write formula for highest vectors with \(l\ge 0\), namely 
	\begin{equation}\label{eq:vl l>0}
		u_l(\lambda)=\re^{g_0^\dagger} v_l(\lambda).
	\end{equation}
	This formula is proven similarly to the second proof of Theorem~\ref{Th:FormUm}. Namely, the analog of Proposition~\ref{Prop:I vl} holds for vectors given by \eqref{eq:vl l>0} and current \(I(z)^\dagger\).
%	{\color{red}
%		Using Shapovalov form for any vector \(v \in W_i\) we can assign functional on \(W_i\). We will write it as a bra vector \(\langle v |\). Similarly to the proof of Theorem \ref{Th:FormUm} one can show that for \(m\ge 0\) we have 
%		\begin{equation}
%			\langle v_m(\mu)|\re^{g_0} e_1^\Delta=\langle v_m(\mu)|\re^{g_0} f_0^\Delta=0
%		\end{equation}
%		and conclude that
%		\begin{equation}
%			\langle u_m(\mu)|  = \langle v_m(\mu)|\re^{g_0}.
%		\end{equation}
%	}
\end{remark}

\section{Matrix elements} \label{sec:main}
\subsection{Main Theorem}
%In this section we prove the main theorem of the article (see Theorem \ref{Th:main}). 
Recall the notations of subsection \ref{ssec:operstatecorr}. For  $i + j  + r \in 2\mathbb{Z}$ we have a map 
 \begin{equation}
     \mathtt{Y}_1\otimes \mathtt{Y}_k(x,z):\big(\mathcal{L}_{j,1}\otimes\mathcal{M}_{\nu,k} \big)\rightarrow \operatorname{Hom} \big(\mathcal{L}_{i,1}\otimes\mathcal{M}_{\lambda,k}, ~ \overline{\mathcal{L}_{r,1}\otimes\mathcal{M}_{\mu,k}} \big).
 \end{equation}
\begin{notati}
	We denote  
	\begin{equation}\label{eq:U nu n def}
	    \widetilde{\mathcal{U}}_{\nu,n}(x,z) = \mathtt{Y}_1\otimes \mathtt{Y}_k\big(\tilde  u_{n}(\nu) ; x,z\big), \qquad \mathcal{U}_{\nu,n}(x,z) = \mathtt{Y}_1\otimes \mathtt{Y}_k\big( u_{n}(\nu) ; x,z\big). 
	\end{equation}
\end{notati}
\begin{example} 
	The simplest examples of the vertex operators \(\widetilde{\mathcal{U}}_{\nu,n}\) has the form
	\begin{subequations}
		\begin{align}
			\widetilde{\mathcal{U}}_{\nu,0}(x,z) &= 1\otimes \mathcal{V}_{\nu}(x,z),
			\\
			\widetilde{\mathcal{U}}_{\nu,1/2}(x,z) &= b(x,z)\otimes \mathcal{V}_{\nu}(x,z), 
			\\
			\label{eq:V-1/2}
			\widetilde{\mathcal{U}}_{\nu,-1/2}(x,z) &= \frac{\nu}{\nu + 1}\Big(\partial_x b(x,z)\otimes \mathcal{V}_{\nu}(x,z)   - \frac{1}{\nu} b(x,z)\otimes \partial_x \mathcal{V}_{\nu}(x,z) \Big).
		\end{align}	
	\end{subequations}
%\begin{equation}
%    |\widetilde{\mathcal{U}}_{\nu,0}(x,z) = 1\otimes \mathcal{V}_{\nu}(x,z)   
%\end{equation}
%\begin{equation}
%    \widetilde{\mathcal{U}}_{\nu,1/2}(x,z) = b(x,z)\otimes \mathcal{V}_{\nu}(x,z)   
%\end{equation}
%\begin{equation}\label{eq:V-1/2}
%    \widetilde{\mathcal{U}}_{\nu,-1/2}(x,z) = \frac{\nu}{\nu + 1}\Big(\partial_x b(x,z)\otimes \mathcal{V}_{\nu}(x,z)   - \frac{1}{\nu} b(x,z)\otimes \partial_x \mathcal{V}_{\nu}(x,z) \Big)
%\end{equation}
\end{example}
The following proposition is an operator reformulation of the fact that $\tilde u_n(\nu)$ is a highest weight vector with respect to $\widehat{\mathfrak{sl}}(2)_{k+1}$ and coset Virasoro.
\begin{proposition}\phantomsection \label{Th:Vnunprimary}
    We have
    \begin{subequations}
	    \begin{align}
			[e^{\Delta}_r,   \widetilde{\mathcal{U}}_{\nu,n}(x, z)] &= z^r(-x^2 \partial_{x} + (\nu + 2n) x) \widetilde{\mathcal{U}}_{\nu,n}(x, z),
			\\
			[h^{\Delta}_r,   \widetilde{\mathcal{U}}_{\nu,n}(x, z)] &= z^r(-2 x \partial_{x} +  (\nu + 2n)) \widetilde{\mathcal{U}}_{\nu,n}(x, z),
			\\
			[f^{\Delta}_r,  \widetilde{\mathcal{U}}_{\nu,n}(x, z)] &= z^r \partial_{x}\, \widetilde{\mathcal{U}}_{\nu,n}(x, z),
			\\			
			[L^{\mathrm{coset}}_r, \widetilde{\mathcal{U}}_{\nu,n}(x, z)] &=  z^{r + 1}\partial_z\, \widetilde{\mathcal{U}}_{\nu,n}(x, z) + (r + 1)z^l \Delta(P + n b, b) \widetilde{\mathcal{U}}_{\nu,n}(x, z),
		\end{align}    	
    \end{subequations}
    where $b = b_{GKO}$ and $P(\lambda) = P_{GKO}(\lambda)$.   
\end{proposition}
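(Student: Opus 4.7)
The proposition is a direct consequence of the operator-state correspondence of Section~\ref{ssec:operstatecorr} applied to the specific vector $\tilde u_n(\nu)$. The guiding principle is that the commutation relations of a vertex operator $\mathtt{Y}(v|x,z)$ with the algebra generators are completely determined by the action of the algebra on the source state~$v$. Since $\tilde u_n(\nu)$ is by construction a highest-weight vector for the commuting pair $\widehat{\mathfrak{sl}}(2)_{k+1}^\Delta \oplus \mathrm{Vir}^{\mathrm{coset}}$, the resulting vertex operator $\widetilde{\mathcal{U}}_{\nu,n}(x,z)$ must be a primary field for each of these algebras with the advertised weights.

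The first step is to record the highest-weight conditions for $\tilde u_n(\nu) \in \mathbb{M}_{P+nb,b} \otimes \mathcal{M}_{\nu+2n,k+1}$, which follow immediately from the coset decomposition of Theorem~\ref{Th:cosetdec}:
\begin{align*}
e^\Delta_{r\ge 0} \tilde u_n(\nu) &= h^\Delta_{r>0} \tilde u_n(\nu) = f^\Delta_{r>0} \tilde u_n(\nu) = 0,\\
h^\Delta_0 \tilde u_n(\nu) &= (\nu+2n)\tilde u_n(\nu), \qquad K^\Delta \tilde u_n(\nu) = (k+1)\tilde u_n(\nu),\\
L^{\mathrm{coset}}_{r>0} \tilde u_n(\nu) &= 0, \qquad L^{\mathrm{coset}}_0 \tilde u_n(\nu) = \Delta(P+nb,b)\,\tilde u_n(\nu).
\end{align*}
The rescaling by $\|u_n(\nu)\|^{-2}$ from Notation~\ref{not:tilde u} (for $n<0$) is inessential at this stage, since the highest-weight conditions are invariant under scalar renormalisation of the source vector.

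The second step is to translate these state-level conditions into operator-level commutation relations. The map $\mathtt{Y}_1 \otimes \mathtt{Y}_k$ is, on pure tensors, the product of the two state-operator correspondences, and the diagonal generators $e^\Delta_r = e^{(1)}_r + e^{(2)}_r$ (and similarly for $h,f$) act on this product as derivations. Thus Proposition~\ref{Prop: Awata Yamada} applied in each factor gives, by the Leibniz rule,
\begin{equation*}
[e^\Delta_r,\, \mathtt{Y}_1 \otimes \mathtt{Y}_k(v;x,z)] = \mathtt{Y}_1 \otimes \mathtt{Y}_k(e^\Delta_r v;x,z)
\end{equation*}
in the appropriate $z^r$-weighted sense, and analogously for $h^\Delta_r,f^\Delta_r$. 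Specialising to $v = \tilde u_n(\nu)$ and using the highest-weight relations above reduces the right-hand side to the stated first-order differential operator in $x$ acting on $\widetilde{\mathcal{U}}_{\nu,n}(x,z)$, with $\nu+2n$ playing the role of the weight parameter $\nu$ in \eqref{eq:Vnu comm rel}. The commutation relation with $L^{\mathrm{coset}}_r$ follows by the same argument from \eqref{eq:Phi_Delta L_n}: the coset Virasoro commutes with $\widehat{\mathfrak{sl}}(2)_{k+1}^\Delta$, and $\tilde u_n(\nu)$ is a Virasoro primary of weight $\Delta(P+nb,b)$.

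No substantial obstacle arises: the proposition is really a bookkeeping identity saying that primariness of a state is equivalent to the primary-field commutation relations for the corresponding vertex operator, now in the setting of the combined algebra $\widehat{\mathfrak{sl}}(2)_{k+1}^\Delta \oplus \mathrm{Vir}^{\mathrm{coset}}$. The only conceptual ingredient to check is that the operator-state correspondence of Section~\ref{ssec:operstatecorr} is compatible with the tensor product structure on $\mathcal{L}_{j,1} \otimes \mathcal{M}_{\nu,k}$ and with the Sugawara-type embedding that defines $\mathrm{Vir}^{\mathrm{coset}}$; both are standard features of the GKO construction.
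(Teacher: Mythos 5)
Your proposal is correct and matches the paper's (implicit) argument: the paper states this proposition without a separate proof, framing it exactly as you do --- an operator reformulation, via the state-operator correspondence of Section~\ref{ssec:operstatecorr}, of the fact that $\tilde u_n(\nu)$ is a highest-weight vector for $\mathrm{Vir}^{\mathrm{coset}} \oplus \widehat{\mathfrak{sl}}(2)_{k+1}^\Delta$ with the stated weights, the normalisation factor in Notation~\ref{not:tilde u} being irrelevant to the commutation relations.
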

\begin{notati}
    Define three-point function
	\begin{equation} \label{eq:tilde C}
	    \tilde{C}_{m,n,l}(\mu,\nu,\lambda) = \big\langle \tilde{u}_m(\mu), \widetilde{\mathcal{U}}_{\nu,n}(1,1) \tilde{u}_l(\lambda)\big\rangle.
	\end{equation}	
\end{notati}
It follows from fusion rules \eqref{eq:fusion level 1} that \(\tilde{C}_{m,n,l}\) vanishes unless \(l+m+n \in \mathbb{Z}\). 

Note that, since normalization of \(\mathcal{U}_\nu\) is not given, each individual \(\tilde{C}_{m,n,l}\) is not defined.  However, the ratios 
%of \(\widetilde{\mathcal{U}}_{\nu,n}\) that corresponds to the same representations pf \(\widehat{\mathfrak{sl}}(2)_1\oplus \widehat{\mathfrak{sl}}(2)_k\) is 
are well defined. For example, one can ask for the ratio with the three-point functions of highest weight vectors
\begin{equation}
	\frac{\tilde{C}_{m,n,l}(\mu,\nu,\lambda)}{\tilde{C}_{\{m\},\{n\},\{l\}}(\mu,\nu,\lambda)}.
\end{equation}
Note that we fixed normalization of \(b_0(z), b_1(z)\) in the formula \eqref{eq:b01} such that
\begin{multline}\label{eq: C base}
	\langle v_{\mu,k},\mathcal{U}_{\nu}(1,1) \,v_{\lambda,k}\rangle
	=\langle \tilde{u}_{0}(\mu)\ \widetilde{\mathcal{U}}_{\nu,0}(1,1) \,\tilde{u}_{0}(\lambda)\rangle
	= \langle \tilde{u}_{{1}/{2}}(\mu)\ \widetilde{\mathcal{U}}_{\nu,0}(1,1) \,\tilde{u}_{{1}/{2}}(\lambda)\rangle 
	\\ = \langle \tilde{u}_{0}(\mu)\ \widetilde{\mathcal{U}}_{\nu,1/2}(1,1) \,\tilde{u}_{{1}/{2}}(\lambda)\rangle 
	= \langle \tilde{u}_{{1}/{2}}(\mu)\ \widetilde{\mathcal{U}}_{\nu,1/2}(1,1) \,\tilde{u}_{0}(\lambda)\rangle.
\end{multline}
Therefore, we can normalize using \(\tilde{C}_{0,0,0}(\mu,\nu,\lambda)\).

\begin{theorem} \phantomsection \label{Th:main}
	There is the following formula for matrix elements \(\tilde{C}_{m,n,l}(\mu,\nu,\lambda)\) 
	\begin{multline}\label{eq:td C t}
		\frac{\tilde{C}_{m,n,l}(\mu,\nu,\lambda)}{\tilde{C}_{0,0,0}(\mu,\nu,\lambda)} 
		= (-1)^{\frac{1}{2} (l-m+n) (l-m+n+1)+4 n (m-n) ( m- n-\frac{1}{2})} % (-1)^{\frac{1}{2} \left(-2 l^3+2 l^2 m+6 l^2 n+l^2+2 l m^2-4 l m n+2 l m+2 l n^2-2 l n-l-2 m^3+6 m^2 n-3 m^2-6 m n^2+2 m n+m+2 n^3+n^2-n\right)}
		\\
		%\sigma(m,n,l)(-1)^{\binom{l-m+n+1}2 {\color{red}- 2 ;n (n - m)}} 
		\cdot \frac{\mathtt{t}^{1,-\frac{1}{\kappa}}_{- l - m - n}(\frac{2 + \lambda + \mu+\nu}{-2\kappa})\mathtt{t}^{1,-\frac{1}{\kappa}}_{-l + m - n}(\frac{\lambda - \mu+\nu}{-2\kappa})\mathtt{t}^{1,-\frac{1}{\kappa}}_{- l - m + n}(\frac{\lambda + \mu - \nu}{-2\kappa})\mathtt{t}^{1,-\frac{1}{\kappa}}_{l-m-n}(\frac{ - \lambda + \mu+\nu}{-2\kappa})}{\mathtt{t}^{1,-\frac{1}{\kappa}}_{-2 l}(\frac{\lambda+1}{-\kappa})\mathtt{t}^{1,-\frac{1}{\kappa}}_{-2 m}(\frac{\mu+1}{-\kappa})\mathtt{t}^{1,-\frac{1}{\kappa}}_{-2 n}(\frac{\nu + 1}{-\kappa})},
	\end{multline}    
	where $l+m+n\in\mathbb{Z}$ and $\mathtt{t}^{\epsilon_1,\epsilon_2}_n(\alpha)$ defined by formula (\ref{eq:triangle}).
\end{theorem}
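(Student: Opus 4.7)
The strategy is the standard Teschner/DOZZ scheme adapted to the GKO coset: derive a set of functional (shift) relations for the normalized matrix elements $\tilde{C}_{m,n,l}(\mu,\nu,\lambda)$ by inserting appropriate degenerate fields, and then verify that the proposed formula \eqref{eq:td C t} satisfies them, with initial condition the overall normalization $\tilde{C}_{0,0,0}$ fixed by \eqref{eq: C base}. Structurally, the triangle products $\mathtt{t}^{1,-\frac{1}{\kappa}}_N(\alpha)$ are tailor-made for this purpose: their ratios under the relevant shifts in $N$ and $\alpha$ (see the discussion around \eqref{GammaS} and the symmetry \eqref{eq:t symmetry}) produce exactly the ratios of Gamma functions that one expects from Beta-integral evaluations of degenerate-field OPE coefficients.

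To obtain recursion in the index $l$ (with a compensating shift in $\lambda$) I would use the degenerate Virasoro vertex operators $I(z), J(z)$ introduced in Section \ref{sec:coset}. On the one hand, by Proposition \ref{Prop:I vl} and its analogue for $J$, these fields act on $\tilde{u}_l(\lambda)$ producing $\tilde{u}_{l\mp 1/2}(\lambda \pm 1)$ with explicit proportionality controlled by the norm ratios of Section \ref{ssec:norms}. On the other hand, since $I, J$ are $\Phi_{2,1}$ for coset Virasoro and the identity for $\widehat{\mathfrak{sl}}(2)^\Delta_{k+1}$, their OPE with $\widetilde{\mathcal{U}}_{\nu,n}$ has only two terms, corresponding to the coset momentum shifts $P(\nu) + nb \to P(\nu) + (n\pm 1/2) b$. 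Equating the two evaluations of a contour integral $\oint z^{-d-1} I(z) \widetilde{\mathcal{U}}_{\nu,n}(1,1) \tilde{u}_l(\lambda)\, dz$ (first acting on $\tilde{u}_l(\lambda)$, versus first fusing through $\widetilde{\mathcal{U}}_{\nu,n}(1,1)$) gives linear relations between $\tilde{C}_{m,n\pm 1/2, l}(\mu,\nu,\lambda)$ and $\tilde{C}_{m,n,l\pm 1/2}(\mu,\nu,\lambda\pm 1)$. The connection coefficients are the classical hypergeometric numbers, computable exactly as in the Beta-function calculation proving Theorem \ref{Th:normrec}.

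The harder and richer set of relations comes from the field $b(x,z)$, which is spin $1/2$ for $\widehat{\mathfrak{sl}}(2)^\Delta_{k+1}$ and $\Phi_{1,2}$ for coset Virasoro. The four-point object $F(x,w) = \langle \tilde{u}_m(\mu), b(x,w)\, \widetilde{\mathcal{U}}_{\nu,n}(1,1)\, \tilde{u}_l(\lambda)\rangle$ is polynomial of degree one in $x$ and satisfies the BPZ differential equation in $w$ arising from the $\Phi_{1,2}$ null vector \eqref{eq: Phi12}; together with the global $\widehat{\mathfrak{sl}}(2)_{k+1}$ Ward identities this forces the space of such blocks with fixed external data to be two-dimensional in each OPE channel. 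Expanding $b(x,w) \tilde{u}_l(\lambda)$ (which expresses $F$ through three-point functions of the form $\tilde{C}_{m,n,l\pm 1/2}$ with shifted external weights) and, alternatively, expanding $b(x,w)\, \widetilde{\mathcal{U}}_{\nu,n}(1,1)$ (which produces $\tilde{C}_{m\pm 1/2, n, l}$-type terms) yields two bases of the same solution space. The braiding/connection matrix between them is computable by the standard hypergeometric contour analysis, and substituting it gives the remaining independent shift relations.

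The main obstacle will be the careful computation of this $2\times 2$ connection matrix for $F(x,w)$: one must simultaneously track the $x$-polynomial structure (encoding the $\mathfrak{sl}(2)$ Clebsch--Gordan data for spin $1/2$) and the hypergeometric $w$-structure (encoding the Virasoro $\Phi_{1,2}$ braiding), and then pass from the $u_l$ normalization to the $\tilde{u}_l$ normalization using Theorem \ref{Th:Norm}. Once this matrix is in hand, it remains to verify that the right-hand side of \eqref{eq:td C t} solves the resulting recursions, which is a bookkeeping exercise using $\mathtt{t}^{\epsilon_1,\epsilon_2}_N/\mathtt{t}^{\epsilon_1,\epsilon_2}_{N-1} = \mathtt{s}^{\epsilon_1,\epsilon_2}_N$ and the symmetry \eqref{eq:t symmetry} to match the Gamma-function shifts to the prescribed connection numbers. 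Uniqueness of the solution of the recursion, given the base case $\tilde{C}_{0,0,0}$, then completes the proof; the somewhat unusual sign prefactor in \eqref{eq:td C t} is the cumulative effect of the elementary monodromy phases $(-1)^{j(2j{-}1)}$ appearing already in \eqref{inthighvect}.
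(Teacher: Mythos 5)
Your overall strategy coincides with the paper's: an induction/recursion seeded by the normalization \eqref{eq: C base}, with the key functional relation extracted from the four-point block $\big\langle \tilde u_m(\mu),\widetilde{\mathcal{U}}_{\nu,n}(1,1)\,b(x,z)\,\tilde u_l(\lambda)\big\rangle$, using that $b(x,z)$ is spin $\tfrac12$ for $\widehat{\mathfrak{sl}}(2)^\Delta_{k+1}$ and $\Phi_{1,2}$ for $\mathrm{Vir}^{\mathrm{coset}}$ (Proposition \ref{Th:bprop}), together with the norms of Theorem \ref{Th:Norm} and the matrix elements of Proposition \ref{Prop:b(x,z)}. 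The gap is in how you close the system. In this purely chiral setting, decomposing the block in two OPE channels and computing the $2\times 2$ hypergeometric connection matrix does not by itself constrain the structure constants: it merely expresses one set of channel coefficients through the other, trading the unknowns $\tilde C_{m,n,l\pm1/2}$ for the unknowns of the other channel without producing a closed recursion. The input that closes it in the paper is single-valuedness: because $b_0(z),b_1(z)$ are lattice vertex operators \eqref{eq:b01}, the correlator is a Laurent series (times a fixed half-integer power) both near $z=0$ and near $z=1$, hence after factoring an elementary power it has trivial monodromy and is rational in $z$; imposing rationality on the two-term combination of products of hypergeometric functions pins down the ratio of the channel coefficients (Proposition \ref{Prop:MonCanc}) and yields the closed $l$-recursion of Proposition \ref{Prop:lrecurrence}. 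Your proposal never invokes this constraint (nor an equivalent substitute such as crossing of a non-chiral correlator), so the ``remaining independent shift relations'' do not follow from the connection matrix alone; and the uniqueness of the solution of your larger system is asserted rather than argued, whereas in the paper uniqueness is immediate because the single $l$-recursion is combined with the permutation symmetries \eqref{eq:sym} of the coinvariant map (Section \ref{ssec:symm3pt}) — an ingredient absent from your plan.

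The other leg of your argument, the contour-integral relations built from $I(z)$ and $J(z)$, also has a problem as stated. In the paper these operators are applied only to highest-weight vectors (two-point situations, for the norms); against the generic primary $\widetilde{\mathcal{U}}_{\nu,n}(1,1)$ their OPE is a $\Phi_{2,1}$ fusion with two non-integer exponents, so the contour of $\oint z^{-d-1}I(z)\,dz$ cannot be slid past the insertion at $z=1$: the integrand has a branch point there and ``first fusing through $\widetilde{\mathcal{U}}_{\nu,n}(1,1)$'' is not a residue computation. To make this rigorous you would have to perform for $I(z)$ the same monodromy/hypergeometric analysis as for $b(x,z)$, at which point nothing is gained over the paper's route, which uses only the $b$-insertion; moreover these relations shift $\lambda$ by $\pm1$, so even granting them you would still need to show that your combined system determines all $\tilde C_{m,n,l}$ from \eqref{eq: C base}.
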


Let us prove the theorem modulo the results to be proven later in this section.

\begin{proof} The theorem is proven by induction. The equation \eqref{eq: C base} serve as base of the induction. The step is based on the following two observations.

%	\textbf{Observation 1} There is a following equality
%%\begin{equation}
%	\(	\frac{\tilde{C}_{1/2,0,1/2}(\mu,\nu,\lambda)}{\tilde{C}_{0,0,0}(\mu,\nu,\lambda)} = 1.
%	\)
%%\end{equation}        
%	Indeed, using \(\tilde{u}_{1/2}(\lambda)= u_{1/2}(\lambda)=v_{1/2}\otimes v_{\lambda,k} \) we compute matrix elements 
%	\begin{equation}
%		\langle \tilde{u}_{{1}/{2}}(\mu)| \widetilde{\mathcal{U}}_{\nu,0}(1,1) |\tilde{u}_{{1}/{2}}(\lambda)\rangle =% \langle v_{\frac{1}{2}}(\mu)|\mathcal{U}_{\nu}(1,1) |v_{\frac{1}{2}}(\lambda)\rangle =
%        \langle v_{\mu,k}|\mathcal{U}_{\nu}(1,1) |v_{\lambda,k}\rangle =  \langle \tilde{u}_{0}(\mu)| \widetilde{\mathcal{U}}_{\nu,0}(1,1) |\tilde{u}_{0}(\lambda)\rangle.
%    \end{equation}
%     This observation serves as a base of the induction.

     \textbf{Observation 1} The three-point function satisfies recurrence relation on $l$. This relation is formulated in the Proposition \ref{Prop:lrecurrence}. It is proven by considering a four-point conformal block with the insertion of a degenerate field.
        
	\textbf{Observation 2} The three-point function is symmetric with respect to permutation of the indices $n,m,l$. Namely we have
	\begin{subequations}\label{eq:sym}
	    \begin{align}
	    	\label{eq:ln sym}
            (-1)^{2l(l+1)(2l-1)+2n(n+1)(2n-1)+2m(m+1)(2m-1)}
            \frac{\tilde{C}_{m,l,n}(\mu,\lambda,\nu)}{\tilde{C}_{0,0,0}(\mu,\lambda,\nu)}
            =\frac{\tilde{C}_{m,n,l}(\mu,\nu,\lambda)}{\tilde{C}_{0,0,0}(\mu,\nu,\lambda)}\\
            \label{eq:lm sym}
            (-1)^{l-m+4n(m-n^2)}\frac{\tilde{C}_{l,n,m}(\lambda,\nu,\mu)}{\tilde{C}_{0,0,0}(\lambda,\nu,\mu)}
           	=\frac{\tilde{C}_{m,n,l}(\mu,\nu,\lambda)}{\tilde{C}_{0,0,0}(\mu,\nu,\lambda)}. 	
	    \end{align} 
	\end{subequations}  
	We prove them in the Section \ref{ssec:symm3pt}. Essentially these symmetries follow from the symmetry of the map $\mathtt{m}$ (i.e. coinvariants) under the permutation of points.
\end{proof}

Theorem~\ref{Th:main} supersedes all concrete computations of matrix elements which are performed in this paper.

\begin{example}
	Consider particular case \(n=0, \nu=0, l=m, \lambda=\mu\). Then we have
	\begin{equation}
		\tilde{C}_{l,0,l}(\lambda,0,\lambda) =  \|\tilde u_{l}(\lambda)\|^2.
	\end{equation}	
	This case corresponds to the matrix elements of the identity operator.
\end{example}
\begin{example}\phantomsection \label{Exa:I}
	Consider particular case \(n = -\frac{1}{2}, \nu=1 \). The corresponding operator is given by the formula \eqref{eq:V-1/2}. Assume now that \(\mu=\lambda+1\), then  \(\widetilde{\mathcal{U}}_{-1/2,1}\) coincides with $\frac{1}{2}I(z)$ and using formula \eqref{actI} and Notation \ref{not:tilde u} we have
	\begin{equation}
		\tilde{C}_{l-1/2,-1/2,l}(\lambda+1,1,\lambda) \sim  \left\langle \tilde{u}_{l-1/2}(\lambda+1), I(1) \tilde{u}_{l}(\lambda) \right\rangle =  \| \tilde{u}_{l}(\lambda)\|^2.
	\end{equation}
	Note that above we proved formula \eqref{actI} only for \(l\le 0\) but now we have an analog valid for any \(l\). In particular this determines coefficients \(c_{l,\lambda}\) which appeared in the proof of Proposition~\ref{Prop:I vl}. Here \(\sim\) stands for overall factor which does not depend on \(l\) and hidden in the normalization of vertex operator \(\mathcal{V}_{1}\).
	
	Similarly, for \(\mu = \lambda-1\) certain component of  \(\widetilde{\mathcal{U}}_{-1/2,1}\) coincides with $\frac{1}{2}J(z)$. Then using Corollary \ref{helpfulnorms} we have 
	\begin{equation}
		\tilde{C}_{l+1/2,-1/2,l}(\lambda-1,1,\lambda) \sim  \left\langle \tilde{u}_{l+1/2}(\lambda-1), J(1) \tilde{u}_{l}(\lambda) \right\rangle \sim   \| \tilde{u}_{l+1/2}(\lambda-1)\|^2.
	\end{equation}
%	
%	
%	 Recall that $I(z), J(z)$ correspond to $|u_{-\frac{1}{2}}(1)\rangle$ (see Propositions \ref{Th:Iprop},\ref{Th:Jprop}) . The norm  $\|\tilde{u}(1)_{-\frac{1}{2}}\|^2 = \frac{1}{2}$, so the matrix elements $\tilde{C}_{l+\frac{1}{2},-\frac{1}{2},l}(\lambda,\mu,\nu)$ are equal to matrix elements of $\frac{1}{2}I(z), \frac{1}{2}J(z)$.
\end{example}

\begin{example}\label{Exa:b}
	Consider particular case \(n = \frac{1}{2}, \nu=0, m = l \pm \frac{1}{2}, \lambda=\mu\). Then we have
	\begin{equation}
		\tilde{C}_{l+\frac{1}{2},\frac{1}{2},l}(\lambda,0,\lambda) \sim  	(-1)^{2l(l-\frac{1}{2})}\|\tilde u_{l}(\lambda)\|^2 , ~~ \tilde{C}_{l-1/2,\frac{1}{2},l}(\lambda,0,\lambda) \sim (-1)^{2l(l-\frac{1}{2})} \|\tilde u_{l-\frac{1}{2}}(\lambda)\|^2.
	\end{equation}	
	This case corresponds to matrix elements of $b(x,z)$. We calculate them below during the proof of the main theorem, see Proposition \ref{Prop:b(x,z)}.
\end{example}
 
\begin{remark} 
	In the case $\lambda +\nu -\mu=2N$ or  $\lambda -\nu - 2 -\mu=2N $ one can use bosonizations of the vertex operator given on Propositions \ref{Prop:O&Vnu 1}, \ref{Prop:O&Vnu 2}. This leads to new  Selberg-type integrals, see Theorem~\ref{Th:Selberg}.
%If $\lambda -\nu - 2 - 2N = \mu, n,m,l \ge 0$ we have identities for Selberg-type integrals. Using notation \ref{eq:S t} we have (see the proof in Appendix \ref{sec:selbint})
%\begin{equation}
%        \frac{ \langle\tilde u_{m}(\mu)|\widetilde{\mathcal{U}}_{\nu,n}(1,1)|\tilde u_{-l}(\lambda)\rangle }{\|\tilde u_{-l}(\lambda)\|^2} = \langle u_{m}(\mu)|\widetilde{\mathcal{U}}_{\nu,n}(1,1)| u_{-l}(\lambda)\rangle  = S^N_{n,m,l}\left(\frac{-\lambda}{\kappa}+1, \frac{\nu+2}{\kappa}, \frac{1}{\kappa}\right).
%\end{equation}
\end{remark}

The remaining part of this section is organized as follows.
In section \ref{ssec:symm3pt} we study symmetries of the three-point function and prove the relations \eqref{eq:sym}. Then we focus on the proof of Proposition~\ref{Prop:lrecurrence}. 

The idea is to consider four-point conformal block 
\(\langle \tilde u_{m}(\mu), \widetilde{\mathcal{U}}_{\nu,n}(1,1) b(x, z) \tilde u_{l}(\lambda)\rangle\) 
with insertion of degenerate field $b(x,z)$. We study properties of operator $b(x,z)$ in Section \ref{ssec:matrixelb}. In Sections \ref{ssec:VirasoroConformalBlocks} and \ref{ssec:sl2ConformalBlocks} we recall definitions of conformal blocks, BPZ and KZ equations, and their solutions in terms of hypergeometric functions.  We put all things together in Section~\ref{ssec:coset blocks}. Firstly we prove Proposition~\ref{Prop:lrecurrence} using triviality of monodromy of the conformal block with insertion of $b(x,z)$, this is a variation of an argument used in \cite{zamolodchikov:1989}, \cite{Teschner:1995Liouville}, \cite{Teschner:1997}, \cite{Bershtein:2015Bilinear}. Then we consider generic conformal blocks and use Theorem~\ref{Th:main} to find conformal blocks relations which are equivalent to the blowup relations.  

%{\color{red}
%  Finally, in Section~\ref{ssec:recur} we use the monodromy properties of the hypergeometric function to  obtain the recurrence.
%
%
%{\color{red}
%	The goal of the next $4$ subsections is to prove the recurrence relation by $l$ for $3$-point function. In order to do it we need a few steps
%	\begin{enumerate}
%		\item See that $b(x,z)$ is a degenerate field by $\widehat{\mathfrak{sl}}(2)_{k+1}^\Delta$ and $\mathrm{Vir}^{\mathrm{coset}}$, calculate its  matrix elements (see this subsection);
%		\item Recall the definition of Virasoro conformal block, BPZ equation and its solution in terms of hypergeometric functions (see subsection \ref{ssec:VirasoroConformalBlocks});
%		\item Recall the definition of $\widehat{\mathfrak{sl}}(2)$ conformal block, KZ equation for conformal block with degenerate field  and its solution in terms of hypergeometric functions(see  subsection \ref{ssec:sl2ConformalBlocks});
%		\item Use previous two subsections and monodromic properties of hypergeometric functions(see Appendix \ref{sec:hypmonodr}) in case of Coset decomposition (see subsection \ref{ssec:coset blocks}), obtain the recurrence relations on conformal blocks (see subsubsection \ref{sssec:RecurrenceRelations}) and blowup relations (see subsubsection \ref{sssec:BlowupRelations}).
%	\end{enumerate}
%	
%}
%
%}

\subsection{Symmetries of three-point functions } \label{ssec:symm3pt}

Recall that we have a unique up to a constant map ($i + j  + r \in 2\mathbb{Z}$)
\begin{equation}
	\mathtt{m} = \mathtt{m}_1 \otimes \mathtt{m}_k : \big(\mathcal{L}_{i,1}\otimes\mathcal{M}_{\lambda,k}(0,0) \big)\otimes \big(\mathcal{L}_{j,1}\otimes\mathcal{M}_{\nu,k}(1,1) \big)\otimes  \big(\mathcal{L}_{r,1}\otimes\mathcal{M}_{\mu,k}(\infty,\infty) \big)   \rightarrow \mathbb{C}.
\end{equation}
The matrix elements $\tilde{C}_{m,n,l}(\mu,\nu,\lambda)$ are actually defined through this map 
\begin{equation}
	\frac{\tilde{C}_{m,n,l}(\mu,\nu,\lambda)}{\tilde{C}_{\{m\},\{n\},\{l\}}(\mu,\nu,\lambda)} 
	= \frac{\mathtt{m}\left(\alpha^{(\infty)}_{0,0}\big(\tilde{u}_m(\mu)\big)\otimes \alpha^{(0)}_{1,1}\big(\tilde{u}_n(\nu)\big)\otimes\alpha^{(0)}_{0,0} \big(\tilde{u}_l(\lambda)\big) \right)}{\mathtt{m}\left(\alpha^{(\infty)}_{0,0}\big(\tilde{u}_{\{m\}}(\mu)\big)\otimes \alpha^{(0)}_{1,1}\big(\tilde{u}_{\{n\}}(\nu)\big)\otimes\alpha^{(0)}_{0,0} \big(\tilde{u}_{\{l\}}(\lambda)\big) \right)},
\end{equation}
where \(\{n\}\) denotes fractional part of \(n\) and \(\lfloor n \rfloor\) denotes floor. Moreover, one can consider generic positions of the insertions and Borel subalgebras in three-point function 
\begin{multline}
	\frac{\mathtt{m}\left(\alpha^{(\infty)}_{y_1,w_1}\big(\tilde{u}_m(\mu)\big)\otimes \alpha^{(0)}_{x_2,z_2}\big(\tilde{u}_n(\nu)\big)\otimes\alpha^{(0)}_{x_3,z_3} \big(\tilde{u}_l(\lambda)\big) \right)}{\mathtt{m}\left(\alpha^{(\infty)}_{y_1,w_1}\big(\tilde{u}_{\{m\}}(\mu)\big)\otimes \alpha^{(0)}_{x_2,z_2}\big(\tilde{u}_{\{n\}}(\nu)\big)\otimes\alpha^{(0)}_{x_3,z_3} \big(\tilde{u}_{\{l\}}(\lambda)\big) \right)}=\frac{\tilde{C}_{m,n,l}(\mu,\nu,\lambda)}{\tilde{C}_{\{m\},\{n\},\{l\}}(\mu,\nu,\lambda)}
	\\
	(1+y_1x_2)^{\lfloor m\rfloor + \lfloor n\rfloor -\lfloor l \rfloor} (1+y_1x_3)^{\lfloor m\rfloor + \lfloor l\rfloor -\lfloor n \rfloor} (x_2-x_3)^{\lfloor n\rfloor + \lfloor l\rfloor -\lfloor m \rfloor}
	\\
	(1-w_1z_2)^{l^2{-}\{l\}^2{-}m^2{+}\{m\}^2{-}n^2{+}\{n\}^2} (1-w_1z_3)^{n^2{-}\{n\}^2{-}l^2{+}\{l\}^2-m^{+}\{m\}^2} (z_2-z_3)^{m^2{-}\{m\}^2{-}n^2{+}\{n\}^2{-}l^2{+}\{l\}^2} 
	.
\end{multline}
We can swap \((x_2,z_2)\) and \((x_3,z_3)\). Since \(x_2-x_3\) and \(z_2-z_3\) change the sign after such swapping we get a symmetry with a sign factor given by
\begin{equation}
	(-1)^{\lfloor n\rfloor + \lfloor l\rfloor +\lfloor m \rfloor+m^2{-}\{m\}^2{+}n^2{-}\{n\}^2{+}l^2{-}\{l\}^2}\frac{\tilde{C}_{m,l,n}(\mu,\nu,\lambda)}{\tilde{C}_{\{m\},\{l\},\{n\}}(\mu,\nu,\lambda)}=\frac{\tilde{C}_{m,n,l}(\mu,\nu,\lambda)}{\tilde{C}_{\{m\},\{n\},\{l\}}(\mu,\nu,\lambda)}.
\end{equation}
By straightforward (and not illuminating) case-by-case check on can see that this formula is equivalent to~\eqref{eq:ln sym}.

In order to see another symmetry, we swap the first and third points and also replace the choice of the isomorphism \(\alpha_{x,z}\) in them. Note that for any \(n,x,z\) the vectors \(\alpha^{(0)}_{x,z}\big(\tilde{u}_n(\lambda)\big)\) 
and  \(\alpha^{(\infty)}_{y,w}\big(\tilde{u}_n(\lambda)\big)\) are 
proportional (here \(y=-x^{-1}\) and \(w=z^{-1}\) as above). Indeed, these vectors are \(\mathrm{Vir}^{\mathrm{coset}} \oplus \widehat{\mathfrak{sl}}(2)_{x,z,k + 1}\) highest weight vectors for the same Borel subalgebra \(\overline{\mathrm{Vir}}_{z,+}\oplus\widehat{\mathfrak{b}}_{x,z}\), where \(\overline{\mathrm{Vir}}_{z,+}\) denotes subalgebra \(\mathbb{C}[[t-z]](t-z)\partial_t\oplus \mathbb{C}C\in \overline{\mathrm{Vir}}\). Furthermore, these vectors share the same highest weight, therefore they are proportional due to decomposition \eqref{eq:W1 W2}. The proportionality coefficient is equal to
\begin{equation}\label{eq:alpha 0 infty}
	\alpha^{(0)}_{x,z}\big(\tilde{u}_n(\nu)\big)= x^{-2\lfloor n \rfloor} (-z^2)^{n^2-\{n\}^2} \alpha^{(\infty)}_{y,w}\big(\tilde{u}_n(\nu)\big).
\end{equation}
Indeed, the change of the basis in \(\mathfrak{b}_x\) from \((h+2xf,e-xh-x^2f)\) to \((-h-2y e, y^2e+yh -f)\) is performed by conjugation of element in \(\exp(\mathfrak{b}_x)\) which gives the first factor \(x^{-2\lfloor n \rfloor}\). And the transformation from \(t-z\) expansion into \(s-w=-t^{-1}z^{-1}(t-z)\) expansion gives the second term \((-z^2)^{n^2-\{n\}^2}\). 

Using all these signs we get 
%Finally, changing the variables to \(y_2,w_2\) we get
%\begin{multline}
%	\frac{\mathtt{m}\left(\alpha^{(0)}_{x_3,z_3} \big(\tilde{u}_l(\lambda)\big) \otimes \alpha^{(\infty)}_{y_2,w_2}\big(\tilde{u}_n(\nu)\big)\otimes\alpha^{(\infty)}_{y_1,w_1}\big(\tilde{u}_m(\mu)\big) \right)}{\mathtt{m}\left(\alpha^{(0)}_{x_3,z_3} \big(\tilde{u}_{\{l\}}(\lambda)\big) \otimes \alpha^{(\infty)}_{y_2,w_2}\big(\tilde{u}_{\{n\}}(\nu)\big)\otimes \alpha^{(\infty)}_{y_1,w_1}\big(\tilde{u}_{\{m\}}(\mu)\big) \right)}=\frac{\tilde{C}_{m,n,l}(\mu,\nu,\lambda)}{\tilde{C}_{\{m\},\{n\},\{l\}}(\mu,\nu,\lambda)}
%	\\
%	(-1)^{n^2-\{n\}^2+\lfloor l \rfloor +\lfloor n \rfloor -\lfloor m \rfloor } (y_2-y_1)^{\lfloor m\rfloor + \lfloor n\rfloor -\lfloor l \rfloor} (1+y_1x_3)^{\lfloor m\rfloor + \lfloor l\rfloor -\lfloor n \rfloor} (1+y_2x_3)^{\lfloor n\rfloor + \lfloor l\rfloor -\lfloor m \rfloor}
%	\\
%	(w_2-w_1)^{l^2{-}\{l\}^2{-}m^2{+}\{m\}^2{-}n^2{+}\{n\}^2} (1-w_1z_3)^{n^2{-}\{n\}^2{-}l^2{+}\{l\}^2-m^{+}\{m\}^2} (1-w_2z_3)^{m^2{-}\{m\}^2{-}n^2{+}\{n\}^2{-}l^2{+}\{l\}^2} 
%	.
%\end{multline}
%Therefore we get 
\begin{equation}
	(-1)^{n^2-\{n\}^2+\lfloor l \rfloor +\lfloor n \rfloor -\lfloor m \rfloor }\frac{\tilde{C}_{l,n,m}(\lambda,\nu,\mu)}{\tilde{C}_{0,0,0}(\lambda,\nu,\mu)}
	=\frac{\tilde{C}_{m,n,l}(\mu,\nu,\lambda)}{\tilde{C}_{0,0,0}(\mu,\nu,\lambda)}.   
\end{equation}
By case by case consideration this is equivalent to \eqref{eq:lm sym}.

\begin{remark}
	There is another way to state \( l \leftrightarrow m\) symmetry \eqref{eq:lm sym}. First, assume that \(n=0\). It is straightforward to get from the commutation relations that the conjugate operator \(\mathcal{V}_{\nu}(x,z)^\dagger\) is proportional \(\mathcal{V}_{\nu}(-x^{-1},z^{-1})\). Therefore the same holds for \(\widetilde{\mathcal{U}}_{\nu,0}(x,z)\). Since the ratio of matrix elements \(	\frac{\tilde{C}_{m,0,l}(\mu,\nu,\lambda)}{\tilde{C}_{\{m\},0,\{l\}}(\mu,\nu,\lambda)}\) is proportional to \(x^{l-m}\) then after the swapping of \(l,m\) we get sign \((-1)^{l-m}\). 
	
	In terms of conjugation of vertex operators, the proof above (namely the formula \eqref{eq:alpha 0 infty}) asserts that \(\widetilde{\mathcal{U}}_{\nu,n}(x,z)^\dagger\) is proportional to \((-1)^n \widetilde{\mathcal{U}}_{\nu,n}(-x^{-1},z^{-1})\) for integer \(n\). For half-integer \(n\) we have a sign as in formula \eqref{eq:b dagger}.
\end{remark}

\begin{remark}
	The automorphism \(\tau\) introduced in Remark \ref{Rem:tau} also gives symmetry of the three-point functions
	\begin{equation}
		\frac{\tilde{C}_{m,n,l}(\mu,\nu,\lambda)}{\tilde{C}_{0,0,0}(\mu,\nu,\lambda)}= (-1)^{n(2n-1)}\frac{\tilde{C}_{-m+\frac{1}{2},n,-l+\frac{1}{2}}(\kappa - 2 - \mu,\nu,\kappa - 2 -\lambda)}{\tilde{C}_{0,0,0}(\kappa - 2 -\mu,\nu,\kappa - 2 -\lambda)}\|\tilde u_m(\mu)\|^2 \|\tilde u_l(\lambda)\|^2.
	\end{equation}
	The norms on the right side appears from the fact that \(\tau\) inverses the norm of highest weight vectors, see Section~\ref{ssec:norms}.
\end{remark}

\subsection{Matrix elements of $b(x, z)$} \label{ssec:matrixelb}
Recall the level 1 vertex operator \(b(x,z)\) defined in sec.~\ref{ssec:integrable}. We can consider it as an operator acting $\mathcal{L}_{i,1}\otimes\mathcal{M}_{\lambda,k} \rightarrow  \overline{\mathcal{L}_{1-i,1}\otimes\mathcal{M}_{\lambda,k}}$. Taking into account decompositions \eqref{eq:W1 W2} we can ask for the \(\mathrm{Vir}^{\mathrm{coset}} \oplus \widehat{\mathfrak{sl}}(2)_{k + 1} \) description.
	
\begin{proposition}\phantomsection \label{Th:bprop}
	1) The operator $b(x,z)$ corresponds to degenerate vertex operator \(X_{1}(x,z)\) for  $\widehat{\mathfrak{sl}}(2)_{k+1}^\Delta$.
	
	2) The operator $b(x,z)$ is $\mathrm{Vir}^{\mathrm{coset}}$ vertex operator $\Phi_{1,2}(z)$.
\end{proposition}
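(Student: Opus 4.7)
The plan is to mirror the proof of Proposition~\ref{Th:Iprop}. Since the identity operator on $\mathcal{M}_{\lambda,k}$ is precisely the vertex operator $\mathcal{V}_0(x,z)$ attached to the vacuum $v_{0,k}$, the operator-state correspondence identifies
\[
    b(x,z) = (\mathtt{Y}_1 \otimes \mathtt{Y}_k)(v_b;\, x, z), \qquad v_b := v_{1/2}\otimes v_{0,k} \in \mathcal{L}_{1,1}\otimes \mathcal{M}_{0,k},
\]
so that both parts reduce to checking highest-weight and degeneracy properties of this single source vector $v_b$.

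For part 1), I would check directly that $v_b$ is annihilated by $e_n^\Delta$ for $n\geq 0$, by $h_n^\Delta$ and $f_n^\Delta$ for $n\geq 1$, with $h_0^\Delta v_b = v_b$. These equations decouple across the tensor product: the $\widehat{\mathfrak{sl}}(2)_k$-piece is immediate from the Verma highest-weight equations \eqref{eq:efh v}--\eqref{eq:h0K v} applied to $v_{0,k}$, while the $\widehat{\mathfrak{sl}}(2)_1$-piece amounts to $v_{1/2}$ being the $\mathfrak{sl}(2)$-highest-weight vector of the horizontal $2$-dimensional subrepresentation inside $\mathcal{L}_{1,1}$, visible from the Heisenberg realization \eqref{int1} and the leading $z^{\alpha\beta}$ behaviour of $e(z)=:\!\exp(\sqrt{2}\phi(z))\!:$ on $\mathcal{F}_{1/\sqrt{2}}$. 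Combined with the already-observed expansion $b(x,z)=b_0(z)+xb_1(z)$, which is polynomial of degree $\nu=1$ in $x$, Proposition~\ref{onlyonevertex} forces $b(x,z)$ to coincide, up to a nonzero scalar, with the unique degenerate vertex operator $X_1(x,z)$ for $\widehat{\mathfrak{sl}}(2)_{k+1}^\Delta$.

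For part 2), the Sugawara formula gives
\[
    L_0^{GKO} v_b = \bigl(\tfrac{1}{4} + 0 - \tfrac{3}{4(k+3)}\bigr) v_b = \tfrac{k}{4(k+3)} v_b,
\]
which equals $\Delta(P_{1,2}(b_{GKO}), b_{GKO})$ after matching $b_{GKO}^2=-\kappa/(\kappa+1)$, while $L_n^{GKO} v_b = 0$ for $n\geq 1$ follows from the corresponding vanishings of $L_n^{(1)}, L_n^{(2)}, L_n^\Delta$ on $v_b$. It then remains to verify the $\Phi_{1,2}$ degeneracy \eqref{eq: Phi12}, equivalent to
\[
    \bigl((L_{-1}^{GKO})^2 + b_{GKO}^2\, L_{-2}^{GKO}\bigr) v_b = 0.
\]
This is the main obstacle. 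I plan to expand $L_{-1}^{GKO}=L_{-1}^{(1)}+L_{-1}^{(2)}-L_{-1}^\Delta$ and $L_{-2}^{GKO}$ mode-by-mode via Sugawara, then use the explicit low-mode action of $e_n^{(j)}, f_n^{(j)}, h_n^{(j)}$ on the Heisenberg-realized vector $v_{1/2}$ together with the Verma annihilations on $v_{0,k}$ to reduce the whole expression to a finite linear combination that cancels. This is a finite but careful computation, structurally parallel to the corresponding check for $I(z)$ at the end of the proof of Proposition~\ref{Th:Iprop}, with the role of $b_{GKO}^{-2}$ replaced by $b_{GKO}^2$ and the nontrivial second tensor factor $\mathcal{O}^{(0)}_{r,1}(z)$ replaced by the trivial identity on $\mathcal{M}_{\lambda,k}$.
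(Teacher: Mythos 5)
Your proposal takes essentially the same route as the paper: both apply the operator--state correspondence to $v_b=v_{1/2}\otimes v_{0,k}$ and reduce the claim to highest-weight plus degeneracy conditions on this vector, your "degree $1$ in $x$" argument for part 1) being the operator-language version of the paper's condition $(f_0^{\Delta})^2v_b=0$, and your explicit match $L_0^{GKO}v_b=\frac{k}{4(k+3)}v_b=\Delta_{1,2}v_b$ being a correct (and slightly more detailed) version of the paper's assertion that $v_b$ is a coset highest-weight vector. The only caveats are that $v_{0,k}$ should be taken in the vacuum module $\mathcal{L}_{0,k}$ rather than the Verma module $\mathcal{M}_{0,k}$ (so that $f_0v_{0,k}=0$, which is what underlies the degeneracy), and that the remaining null-vector identity $\bigl((L^{GKO}_{-1})^2+b_{GKO}^{2}L^{GKO}_{-2}\bigr)v_b=0$ is left as a finite direct computation in your write-up exactly as it is in the paper's proof.
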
	
\begin{proof}
	The proof is similar to the proof of Proposition~\ref{Th:Iprop}. This operator is obtained by operator-state correspondence map \(\mathtt{Y}\otimes \mathtt{Y}\) applied to the vector
	\begin{equation}\label{eq:v_b}
		v_b = v_{{1}/{2}}\otimes v_{0, k} \in \mathcal{L}_{1,1}\otimes\mathcal{L}_{0,k}.
	\end{equation} 
	Clearly this vector is the highest weight vector for  \(\mathrm{Vir}^{\mathrm{coset}} \oplus \widehat{\mathfrak{sl}}(2)_{k + 1} \). Then it remains to show that 
	\begin{equation}
		(f_0^{\Delta})^2v_b=0,\quad 	    ((L^{GKO}_{-1})^2 + (b_{\text{GKO}})^{2}L^{GKO}_{-2})v_b = 0.
	\end{equation}
\end{proof}	
	
\begin{proposition}\label{Prop:b(x,z)}
	 Nontrivial matrix elements of $b(x,z)$ are equal to norms of the highest weight vectors, namely
	 \begin{subequations}\label{eq:b tildeU}
	 	\begin{align}
	 		\tilde{B}^+_m(\mu) &= \left\langle \tilde{u}_{m + {1}/{2}}(\mu) ,  b_0(1)  \, \tilde{u}_{m}(\mu)\right\rangle = (-1)^{m(2m-1)}\|\tilde{u}_{m}(\mu)\|^2,
	 		\\
	 		\tilde{B}^-_m(\mu)& =\left\langle \tilde{u}_{m-{1}/{2}}(\mu),   b_1(1)   \, \tilde{u}_{m}(\mu)\right\rangle= 
    (-1)^{m(2m-1)} \|\tilde{u}_{m-{1}/{2}}(\mu)\|^2,
	 	\end{align}
	 \end{subequations}
	 while all other matrix elements vanish.		 	 
\end{proposition}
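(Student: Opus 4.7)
The vanishing of matrix elements other than $\tilde B^\pm_m(\mu)$ follows from selection rules: the operators $b_0, b_1$ change the level-$1$ index $i \mapsto 1-i$ and shift the diagonal sl-weight $h_0^\Delta$ by $\pm 1$, so by the decomposition~\eqref{eq:W1 W2} they connect $\tilde u_m(\mu)$ only to $\tilde u_{m \pm 1/2}(\mu)$. For the nonzero matrix elements I combine three ingredients:
\emph{(a)} the bosonic formulas~\eqref{eq:b01}, together with $h_0 v_m = 2m\cdot v_m$, give
\[ b_0(1)\, v_m(\mu) = (-1)^{m(2m-1)}\bigl(v_{m+1/2}(\mu) + \mathcal{D}\bigr), \]
where $\mathcal{D}$ is a sum of Heisenberg descendants of $v_{m+1/2}$ in $\mathcal{F}_{\sqrt{2}(m+1/2)}$ tensored with $v_{\mu,k}$, and analogously for $b_1(1) v_m(\mu) = (-1)^{m(2m-1)}(v_{m-1/2}(\mu) + \mathcal{D}')$;
\emph{(b)} the identity $[g_0, b_0(w)] = 0$ from the proof of Proposition~\ref{Prop:I conj}, whose adjoint combined with~\eqref{eq:b dagger} yields $[g_0^\dagger, b_1(w)] = 0$;
\emph{(c)} the operator $g_0 = \sum_n e^{(1)}_n \gamma_{-n}$ preserves the total Sugawara grading $L_0^{(1)} + L_0^{(2)}$ (each summand has weight $-n + n = 0$), and so do $\re^{-g_0}$, $\re^{g_0^\dagger}$, and $\re^{-g_0^\dagger}\re^{-g_0}$.

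\textbf{Main computation.} For $m \le -1/2$, Theorem~\ref{Th:FormUm} gives $u_m(\mu) = \re^{-g_0}v_m(\mu)$ and $u_{m+1/2}(\mu) = \re^{-g_0}v_{m+1/2}(\mu)$. Commuting $b_0(1)$ past $\re^{-g_0}$ using (b) and applying (a),
\[ \bigl\langle u_{m+1/2}(\mu), b_0(1)\, u_m(\mu) \bigr\rangle = (-1)^{m(2m-1)}\Bigl[\,\|u_{m+1/2}(\mu)\|^2 + \bigl\langle v_{m+1/2}(\mu), \re^{-g_0^\dagger}\re^{-g_0}\mathcal{D}\bigr\rangle\,\Bigr]. \]
The descendant term vanishes: $\mathcal{D}$ lies in strictly larger $L_0^{(1)}+L_0^{(2)}$ grade than $v_{m+1/2}(\mu)$, this grade is preserved by $\re^{-g_0^\dagger}\re^{-g_0}$ by (c), and the Shapovalov form is block-diagonal with respect to it. Dividing by $\|u_m(\mu)\|^2 \|u_{m+1/2}(\mu)\|^2$ yields $\tilde B^+_m(\mu) = (-1)^{m(2m-1)}/\|u_m(\mu)\|^2 = (-1)^{m(2m-1)} \|\tilde u_m(\mu)\|^2$. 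The mirror argument for $m \ge 1$, using $u_l = \re^{g_0^\dagger}v_l$ (Remark~\ref{Rem:l>0}) and $[g_0^\dagger, b_1] = 0$, proves $\tilde B^-_m(\mu) = (-1)^{m(2m-1)}\|\tilde u_{m-1/2}(\mu)\|^2$.

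\textbf{Base case and closure.} The single base case $\tilde B^+_0(\mu) = 1$ is checked directly from $u_0(\mu) = v_0(\mu)$, $u_{1/2}(\mu) = v_{1/2}(\mu)$ (the latter obtained by applying the $\tau\otimes\tau$ symmetry of Remark~\ref{Rem:tau} to $u_0(k-\mu)$), together with (a). All remaining cases follow from the adjoint identity $b_0(1)^\dagger = (-1)^i b_1(1)$ of~\eqref{eq:b dagger}, where $i \in \{0,1\}$ is the level-$1$ index of $\tilde u_m$: this gives $\tilde B^+_m(\mu) = (-1)^i \tilde B^-_{m+1/2}(\mu)$, which translates the proven $m \ge 1$ formula for $\tilde B^-$ into the missing $\tilde B^+$ formulas for $m \ge 1/2$, and analogously covers $\tilde B^-_m$ for $m \le 1/2$. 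The main obstacle is the sign bookkeeping, which ultimately reduces to the congruence $i \equiv 2m \pmod{2}$ satisfied by the definition of $i$.
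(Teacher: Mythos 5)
Your proof is correct and takes essentially the same route as the paper's: both rest on the free-field formulas $u_l=\re^{-g_0}v_l$ (resp.\ $u_l=\re^{g_0^\dagger}v_l$ from Remark~\ref{Rem:l>0}), commuting $b_0$ (resp.\ $b_1$) through the exponential, discarding descendants by the preserved grading, and transferring to the remaining family via the adjoint relation \eqref{eq:b dagger}. The only differences are bookkeeping — your explicit $m=0$ base case and your derivation of $[g_0^\dagger,b_1(w)]=0$ from $[g_0,b_0(w)]=0$ (note the latter identity is not literally stated in the proof of Proposition~\ref{Prop:I conj}, though it follows from the same OPE computation and the paper also invokes it without proof).
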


We will use these formulas in the proof of (main) Theorem~\ref{Th:main}. On the other hand, they can be viewed as a particular case of this theorem; see Example~\ref{Exa:b}.

\begin{proof}
	Let us start with the vanishing property. Inserting \(h_0^\Delta\) into the matrix element we get 
	\begin{equation}
		( \lambda + 2 m )\left\langle \tilde{u}_{m}(\lambda), b_0(z)  \,  \tilde{u}_{l}(\lambda)\right\rangle = \left\langle \tilde{u}_{m}(\lambda), h_0^\Delta  b_0(z)   \, \tilde{u}_{l}(\lambda)\right\rangle =
		 (\lambda +2l + 1)  \left\langle \tilde{u}_{m}(\lambda),  b_0(z)   \,  \tilde{u}_{l}(\lambda)\right\rangle.
	\end{equation}
	Hence \(m\neq l+1/2\) the matrix element vanishes. Similarly, for \(b_1(z)\).
	
	We will use formula \eqref{eq:ul Wakimoto} in the proof, so it will be convenient to rewrite the matrix elements~\eqref{eq:b tildeU} in terms of (differently normalized) highest weight vectors \(u_{m}(\mu)\) 	
	\begin{subequations}
		\begin{align}
			B^+_{-l}(\lambda ) &= \left\langle u_{-l +  1/2}(\lambda),  b_0(1)   \, u_{-l}(\lambda)\right\rangle = (-1)^{l(2l+1)}\|u_{-l+1/2}(\lambda)\|^2,
			\\
			B^-_{ -l}(\lambda)&=\left\langle  u_{-l-1/2}(\lambda), b_1(1)  \,u_{-l}(\lambda)\right\rangle = (-1)^{l(2l+1)}\|u_{-l}(\lambda)\|^2,
			\\
			B^+_m(\mu ) &= \left\langle u_{m +  1/2}(\mu),  b_0(1)  \, u_{m}(\mu)\right\rangle = (-1)^{m(2m-1)} \|u_{m}(\mu)\|^2,
			\\
			B^-_m(\mu)&=\left\langle u_{m-1/2}(\mu),   b_1(1)  \,u_{m}(\mu)\right\rangle= (-1)^{m(2m-1)}\|u_{m-1/2}(\mu)\|^2,
		\end{align}	
	\end{subequations}
	for \(l,m \ge 0\). For $B^+_{-l}(\lambda )$ we have
	\begin{multline}
		 \left\langle u_{-l + 1/2}(\lambda),  b_0(z)  \, u_{-l}(\lambda)\right\rangle 
		 = \left\langle v_{-l + 1/2}(\lambda) , \re^{-g_0^\dagger} \re^{-g_0}  b_0(z)   \, v_{-l}(\lambda)\right\rangle =
		 \\
		 = (-1)^{l(2l+1)}\left\langle v_{-l + 1/2}(\lambda) , \re^{-g_0^\dagger} \re^{-g_0}   \, z^{-l} ( v_{-l+1/2}(\lambda) + o(1) )\right\rangle = (-1)^{l(2l+1)} z^{-l} \|u_{-l+1/2}\|^2. 
	\end{multline}
	Here the second summand in $v_{-l+1/2}(\lambda) + o(1)$ represents vectors of \(L_0\) degree greater then  $v_{-l+1/2}(\lambda)$ and hence orthogonal to the left vector. We also used commutativity between \(b_0(z)\) and \(g_0\). Using the conjugation we have
	\begin{multline}
	    (-1)^{l(2l+1)}\|u_{-l+1/2}(\lambda)\|^2  = B^+_{-l}\left(\lambda \right) = \left\langle u_{-l + 1/2}(\lambda),  b_0(1)  \, u_{-l}(\lambda)\right\rangle = \\
	    =
	    \left\langle u_{-l}(\lambda),  b_1(1) (-1)^{-h_0+1} \,u_{-l + 1/2}(\lambda)\right\rangle = (-1)^{2l} B^-_{-l+1/2}(\lambda).
	    %B^-_{-l+1/2}(\lambda)=\left\langle u_{-l}(\lambda)\big|  b_1(1)  \big|u_{-l + 1/2}(\lambda)\right\rangle= \|u_{-l+1/2}(\lambda)\|^2,
	\end{multline}
	For vectors \(u_m(\lambda)\) we can use formula \eqref{eq:vl l>0}. Similarly to the arguments above we have
	\begin{multline}
		 B^-_{m}(\mu)=\left\langle u_{m-1/2}(\mu),  b_1(z)  \,u_{m }(\mu)\right\rangle = \left\langle v_{m-1/2}(\mu), \re^{g_0} b_1(z) \re^{g_0^\dagger}\,  v_{m}(\mu)\right\rangle
		\\
		=\left\langle v_{m-1/2}(\mu), \re^{g_0} \re^{g_0^\dagger} b_1(z)   \,v_{m}(\mu)\right\rangle= (-1)^{m(2m-1)} \| u_{m-1/2}(\mu)\|^2.
	\end{multline}
	Using conjugation we have 
	\begin{multline}
		 B^+_{m}(\mu)= \left\langle u_{m +  1/2}(\mu),  b_0(1)  \, u_{m}(\mu)\right\rangle
		 \\
		 = \left\langle u_{m }(\mu),  b_1(1) (-1)^{-h_0+1}  \, u_{m+1/2}(\mu)\right\rangle
		 =  (-1)^{m(2m-1)} \| u_{m}(\mu)\|^2.
	\end{multline}
%	
%	
%	
%	\begin{multline}
%		\left\langle u_{m + 1/2}(\mu)\big|  b_0(z)  \big| u_{m}(\mu)\right\rangle 
%		= \left\langle v_{m+1/2}(\mu) \big| b_0(z)  \re^{g_0}  \re^{g_0^\dagger}    \big| v_{m}(\mu)\right\rangle =
%		\\
%		=  \left\langle v_{m}(\mu) \big|   \re^{g_0} \re^{g_0^\dagger} ~{\color{red} }  b_1(z) \big|  v_{m+1/2}(\mu)\right\rangle = {\color{red}(-1)^{-m}}\|u_m(\mu)\|^2
%	\end{multline}
%	Using conjugation we have
%	\begin{equation}
%	 B^-_{ m+1/2}(\mu)=\left\langle u_{m}(\mu)\big|  b_1(1)  \big|u_{m + 1/2}(\mu)\right\rangle =  {\color{red}(-1)^{-1/2} \left\langle u_{m + 1/2}(\mu)\big|  b_0(1)  \big| u_{m}(\mu)\right\rangle}
%	 %\left\langle u_{m + 1/2}(\mu)\big|  b_0(1)  \big| u_{m}(\mu)\right\rangle
%	\end{equation}
\end{proof} 

\begin{remark}
	The proposition is consistent with the isometry $\tau\otimes\tau$ (see Remark \ref{Rem:tau}).
	\begin{multline}
		(-1)^{l(2l-1)}\|u_{-l+1/2}(\lambda)\|^2 = B^+_{-l}(\lambda) =   \big\langle u_{-l +  1/2}(\lambda),  b_0(1)  \, u_{-l}(\lambda)\big\rangle =
		\\
		=  \big\langle\tau\otimes\tau(u_{-l}(\lambda)) ,  \tau\otimes\tau(b_0(1))  \,  \tau\otimes\tau(u_{-l}(\lambda))\big\rangle = B^-_{l + 1/2}(k - \lambda) = (-1)^{l(2l-1)} \|u_{l}(k - \lambda)\|^2.
	\end{multline}
\end{remark}

\subsection{Virasoro conformal blocks}\label{ssec:VirasoroConformalBlocks}

The space of conformal blocks can be defined using the space of coinvariants as in Section \ref{ssec:Virasoro}. Below we define sphere conformal blocks using vertex operators \(\Phi_\Delta\) introduced there.

\begin{define}
	Assume that there are given parameters \(P_i\), \(i\in \{1,\dots,n\}\) and \(P_j'\), \(j\in \{2,\dots,n-2\}\). Assume also that there are given vertex operators  $\Phi_{\Delta_2}(z)\colon \mathbb{M}_{P_1,b}\rightarrow \overline{\mathbb{M}_{P'_2,b}}$, $\Phi_{\Delta_i}(z)\colon\mathbb{M}_{P'_{i-1},b}\rightarrow \overline{\mathbb{M}_{P'_i,b}}$ for $i\in\{3,\dots,n-2\}$, and 	 $\Phi_{\Delta_{n-1}}(z)\colon \mathbb{M}_{P'_{n-2},b}\rightarrow \overline{\mathbb{M}_{P_n,b}}$, where $\Delta_i=\Delta(P_i,b)$. The  
	$n$-point sphere conformal block for Virasoro algebra defined as
	\begin{equation}
		\mathrm{F}_b(\vec{P},\vec{P'};\vec{z}) =\Big \langle v_{P_n,b},\Phi_{\Delta_{n-1}}(z_{n-1})\dots\Phi_{\Delta_{3}}(z_{3})\Phi_{\Delta_{2}}(z_{2}) v_{P_1,b}\Big\rangle,
	\end{equation}
	where \(\vec{P}=(P_1,\dots,P_n)\), \(\vec{P'}=(P_2',\dots,P_{n-2}')\), \(\vec{z}=(z_2,\dots,z_{n-1})\).
\end{define} 
The conformal block is defined up to the choice of normalization of vertex operators (i.e. independent of~\(\vec{z}\) function) to be specified below. This is \(n\)-point conformal block through only dependence on \(n-2\) coordinate is explicit; the remaining two are fixed \(z_1=0\), \(z_n=\infty\).

In the paper, we will mainly use four-point conformal blocks. In this case we can assume that \(z_2=z\), \(z_3=1\) and we have only one intermediate parameter \(P_2'=P'\). In such case, the definition can be also restated in terms of so-called chain vectors.

\begin{define}\phantomsection \label{Def:virchain}
	Let $\Phi_{\Delta_2}(z)\colon \mathbb{M}_{P_1,b} \rightarrow \overline{\mathbb{M}_{P_3,b}}$, $\Delta_2 = \Delta(P_2, b)$ be a Virasoro vertex operator as in sec.~\ref{ssec:Virasoro}. The vector $\mathbb{W}^{b;P_2}_{P_1,P_3}(z) = \Phi_\Delta(z) v_{P_1,b}\in \overline{\mathbb{M}_{P_3,b}} $ is called \emph{Virasoro chain vector}.
\end{define}
It follows from statements in sec. \ref{ssec:Virasoro} that for generic values of parameters the chain vector $\mathbb{W}^{b;P_2}_{P_1,P_3}(z)$ exists and uniquely fixed by its top component, i.e. by scalar product \(\big\langle v_{P_3,b}, \mathbb{W}^{b;P_2}_{P_1,P_3}(z) \big \rangle\). For conformal blocks we have
\begin{equation}\label{eq:Vir conf block Whit}
		\mathrm{F}_b(\vec{P},P';z) =\big\langle  \mathbb{W}^{b;P_3}_{P_4,P'}(1),\mathbb{W}^{b;P_2}_{P_1,P'}(z) \big\rangle. 
\end{equation}
Unless otherwise stated, we assume that Whittaker vectors are normalized by \(\langle v_{P_3,b}, \mathbb{W}^{b;P_2}_{P_1,P_3}(z)  \rangle= z^{\Delta_3-\Delta_1-\Delta_2}\). Hence conformal blocks have the form \(\mathrm{F}_b(\vec{P},P';z) =z^{\Delta'-\Delta_1-\Delta_2}(1+O(z))\).

There is a special case of Virasoro four-point conformal blocks which will be important for us, namely conformal block with one degenerate vertex operator $\Phi_{12}$. In this case differential equation \eqref{eq: Phi12} leads to the equation on function \(\mathrm{F}\).

%
%\begin{notati}
%	In this section we use notation  $|v_{P,b}\rangle$ for the highest weight vector in $\mathbb{M}_{P,b}$. We also use notation $\langle v_{P,b}|$ for scalar product with the  highest weight vector in  $\mathbb{M}_{P,b}$. We assume that $\langle v_{P_2,b}|v_{P_1,b}\rangle = \delta_{P_1,P_2}$. 
%\end{notati}
%The modules $\mathbb{M}_{P'_i,b}$ are called intermediate modules for conformal block $Z$. There is a celebrated equation for Virasoro $4$-point conformal block with one degenerate vertex operator $\Phi_{12}$:
\begin{theorem}\cite{BPZ:1984}
	The four-point conformal block \(	\mathrm{F}_b(\vec{P},P';z) = \langle\Delta_4,\Phi_{\Delta_{3}}(1)\Phi_{1,2}(z)\, \Delta_1\rangle\)
	with degenerate field $\Phi_{12}$ satisfy differential equation
	\begin{equation}\label{eq:hypergeom Vir}
		\left(z(1-z)\partial_z^2 + b^{2}\Big((2z-1)\partial_z + \frac{\Delta_1}{z} + \frac{\Delta_3}{z-1} + \Delta_{12} - \Delta_4\Big)\right)\mathrm{F}(z) = 0.
	\end{equation}
\end{theorem}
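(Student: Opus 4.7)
The plan is to derive the differential equation from the null-vector equation for $\Phi_{1,2}$, converting the action of $L_{-2}$ on $\Phi_{1,2}(z)$ inside the correlator into a differential operator in $z$ using the conformal Ward identities.

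First, I would start with the singular-vector relation from Example~\ref{Exa:Phi 12 21}, namely $u_{1,2} = (L_{-1}^2 + b^{2}L_{-2}) v_{P_{1,2},b} = 0$. Via the operator-state correspondence, $L_{-1}$ translates to $\partial_z$ when acting on $\Phi_{1,2}(z)$, so that inserting $u_{1,2}$ in the matrix element gives
\begin{equation*}
 \bigl\langle v_{P_4,b},\Phi_{\Delta_3}(1)\,\bigl(\partial_z^{2}\Phi_{1,2}(z) + b^{2}(L_{-2}\Phi_{1,2})(z)\bigr)\, v_{P_1,b}\bigr\rangle =0.
\end{equation*}
The first term directly gives $\partial_z^{2}\mathrm{F}(z)$, so the task is to rewrite the second one as an explicit first-order differential operator in $z$ acting on $\mathrm{F}(z)$.

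Next, I would express $(L_{-2}\Phi_{1,2})(z) = \oint_{z}\frac{dw}{2\pi\ri}\,\frac{1}{w-z}\,L(w)\Phi_{1,2}(z)$ and deform the contour so that it encircles the remaining insertion points $0$ and $1$ (with opposite orientation), together with a contribution from $\infty$ captured by the pairing with $\langle v_{P_4,b}|$. At each regular insertion one uses the primary-field OPE that is encoded in the commutation relation \eqref{eq:Phi_Delta L_n}, i.e.\ the OPE $L(w)\Phi_{\Delta_i}(z_i)=\frac{\Delta_i}{(w-z_i)^{2}}\Phi_{\Delta_i}(z_i)+\frac{1}{w-z_i}\partial_{z_i}\Phi_{\Delta_i}(z_i)+\cdots$. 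Residue calculus at $w=z_1=0$, $w=z_3=1$, and $w=\infty$ (the latter contributing $\Delta_4$-terms and a total $\partial_{z_4}$ that vanishes in the standard $z_4\to\infty$ limit) produces
\begin{equation*}
\langle L_{-2}\Phi_{1,2}(z) \rangle = \Bigl(\tfrac{\Delta_1}{z^{2}} + \tfrac{\Delta_3}{(z-1)^{2}} + \tfrac{1}{z}\partial_{z_1}\big|_0 + \tfrac{1}{z-1}\partial_{z_3}\big|_1 + (\text{contribution at }\infty)\Bigr)\mathrm{F}(z).
\end{equation*}

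The derivatives $\partial_{z_1}$, $\partial_{z_3}$, $\partial_{z_4}$ at the fixed points are then eliminated by the three global conformal Ward identities coming from $L_{-1},L_0,L_1$ (the $\mathfrak{sl}_2$-invariance of the four-point function). Solving this linear system expresses them in terms of $\partial_z$, $z$, and the dimensions $\Delta_i$; substituting and multiplying through by $z(1-z)$ assembles the right-hand side into precisely $\bigl(z(1-z)\partial_z^{2}+b^{2}((2z-1)\partial_z+\Delta_1/z+\Delta_3/(z-1)+\Delta_{12}-\Delta_4)\bigr)\mathrm{F}(z)=0$.

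The main obstacle is bookkeeping rather than conceptual: correctly treating the insertion at $\infty$ (which requires viewing $\langle v_{P_4,b}|$ as the limit of $z_4^{2\Delta_4}\Phi_{\Delta_4}(z_4)$ and verifying that the $L_1$-Ward identity supplies exactly the $\Delta_{12}-\Delta_4$ term and the $(2z-1)\partial_z$ term). One must also keep track of the orientation signs when deforming the contour past the insertions. Once these are handled consistently, the stated hypergeometric-type ODE follows immediately.
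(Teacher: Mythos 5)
Your outline is the standard BPZ argument — insert the null descendant $(L_{-1}^2+b^2L_{-2})\Phi_{1,2}$ into the four-point function, convert $L_{-2}$ into a differential operator via the energy-momentum Ward identity, and eliminate $\partial_{z_1},\partial_{z_3},\partial_{z_4}$ through the three global $\mathfrak{sl}_2$ Ward identities — and this is exactly the proof in the cited source \cite{BPZ:1984}; the paper itself gives no independent proof, so there is nothing it does differently. The only caveats are the ones you already flag (the scaled insertion at infinity and contour orientations), and carrying out the asserted final elimination is routine and does reproduce \eqref{eq:hypergeom Vir}.
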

There are two solutions of this equation which correspond to two possible values of \(P'\) namely \(P'=P+s b/2\), where \(s=\pm 1\). These solutions are given by hypergeometric functions. For  
 $\vec{a}  = (a_1, a_2, a_3)\in \mathbb{C}^3$ we will write 
\begin{equation}\label{eq:2F1}
	_2F_1(\Vec{a}|z) = \, _2F_1(a_1, a_2;a_3|z).
\end{equation}
It will be convenient to introduce a transformation of the vectors in $\mathbb{C}^3$
\begin{equation}
	\label{eq:hat I}
	\hat{I}\Vec{a} = (a_1 - a_3 + 1, a_2 - a_3 + 1, 2 - a_3).
\end{equation}
It is easy to see that $\hat{I}^2 = 1$. Furthermore, the functions $ {}_2F_1(\Vec{a}|z) $ and $ z^{1 - a_3} {}_2F_1(\hat{I}\Vec{a}|z) $ satisfy the same hypergeometic equation.

%\begin{equation}
%	A_s = - s b P_1 - b P_4 - b P_3 + 1/2;~ B_s =  - s b P_1 + b P_4 - b P_3 + 1/2;~  C_s =  1 - 2 s b P_1;
%\end{equation}

\begin{corollary}\phantomsection \label{Cor:Vir bl} 
	The four-point conformal blocks with degenerate field $\Phi_{12}$ has the form
	\begin{subequations}\label{eq:Vir hypergeom}
		\begin{align}
			\mathrm{F}_b(\vec{P},P_1+b/2;z)&= z^{D_+}(1 - z)^E \, _2F_1(\vec{A}|z),
			\\
			\mathrm{F}_b(\vec{P},P_1-b/2;z)&= z^{D_-}(1 - z)^E \, _2F_1(\hat{I}\vec{A}|z).
		\end{align}		
	\end{subequations}
	where 
	\begin{gather}
		A_1 = - b P_1 - b P_4 - b P_3 + 1/2,\;\; A_2 =  - b P_1 + b P_4 - b P_3 + 1/2;\;\;  A_3 =  1 - 2 b P_1;
	\\
		D_s = \Delta(P_1 + s b/2, b) - \Delta(P_1 , 
		b) - \Delta(P_{12} , b),\;\;   E =  \Delta(P_3 +  b/2, b) - \Delta(P_3 , 
		b) - \Delta(P_{12} , b).
	\end{gather}
\end{corollary}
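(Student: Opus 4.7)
The plan is to reduce the Fuchsian ODE \eqref{eq:hypergeom Vir} to the Gauss hypergeometric equation by stripping off the leading singular behavior at \(z=0\) and \(z=1\). Since \eqref{eq:hypergeom Vir} has regular singular points only at \(\{0,1,\infty\}\), Riemann's classification guarantees that such a second-order equation is equivalent, after multiplication of the solution by suitable powers of \(z\) and \(1-z\), to the Gauss hypergeometric equation; the proof then reduces to identifying local exponents and matching hypergeometric parameters.

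First I read off the local exponents at \(z=0\) from representation-theoretic input. The OPE \(\Phi_{1,2}(z) v_{P_1,b}\) is controlled by the fusion rules of Example~\ref{Exa:Phi 12 21}, and projects onto two intermediate Verma modules, labelled by \(P' = P_1 \pm b/2\). The corresponding exponents of \(\mathrm{F}(z)\) at \(z=0\) are therefore \(D_\pm = \Delta(P_1 \pm b/2,b) - \Delta_1 - \Delta_{1,2}\); using \(\Delta(P,b) = (b+b^{-1})^2/4 - P^2\) one verifies directly that they are the two roots of the indicial equation of \eqref{eq:hypergeom Vir} at \(z=0\), and obtains the key identity \(D_+ - D_- = -2bP_1 = A_3 - 1\). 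The same analysis at \(z=1\), applied to the OPE \(\Phi_{1,2}(z)\Phi_{\Delta_3}(1)\), produces two exponents \(E_\pm = \Delta(P_3 \pm b/2,b) - \Delta_3 - \Delta_{1,2}\), of which \(E_+\) is the \(E\) of the statement.

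Next I substitute the ansatz \(\mathrm{F}(z) = z^{D_+}(1-z)^{E} G(z)\) into \eqref{eq:hypergeom Vir}. By construction \(G\) has exponent \(0\) at both \(z=0\) and \(z=1\); correspondingly, the \(1/z^2\), \(1/z\), \(1/(1-z)^2\), \(1/(1-z)\) contributions to the coefficient of \(G\) cancel precisely because \(D_+\) and \(E\) satisfy their indicial equations, reducing \eqref{eq:hypergeom Vir} to the standard hypergeometric equation
\begin{equation*}
z(1-z) G''(z) + \bigl[C - (A+B+1)z\bigr] G'(z) - AB\, G(z) = 0.
\end{equation*}
Matching the remaining coefficients, with \(\Delta_{1,2} = -(3b^2+2)/4\) substituted throughout, identifies \((A,B,C) = (A_1,A_2,A_3)\); as a consistency check, the Fuchs relation (sum of all exponents equals \(1\)) must hold identically in \(P_1,P_3,P_4\), which pins down the exponents at \(\infty\) to be \(\{A_1,A_2\}\).

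The Gauss equation has two linearly independent local solutions at \(z=0\), namely \({}_2F_1(\vec A\,|z)\) and \(z^{1-A_3}{}_2F_1(\hat I\vec A\,|z)\). Restoring the prefactor yields the first formula in \eqref{eq:Vir hypergeom}, corresponding to the intermediate channel \(P'=P_1+b/2\); using the identity \(D_- = D_+ + (1-A_3)\) from the first step, the second prefactored solution collapses to \(z^{D_-}(1-z)^E{}_2F_1(\hat I \vec A\,|z)\), which is the second formula in \eqref{eq:Vir hypergeom} and corresponds to \(P'=P_1-b/2\). The main (purely algebraic) obstacle is the explicit verification that the parameters read off from the substitution agree with \((A_1,A_2,A_3)\) as stated; everything else is forced by Riemann's classification of second-order Fuchsian ODEs with three regular singular points.
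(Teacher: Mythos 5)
Your proposal is correct and follows essentially the same route as the paper: the paper derives Corollary~\ref{Cor:Vir bl} directly from the BPZ equation \eqref{eq:hypergeom Vir}, observing that its two solutions, labelled by the intermediate channels \(P'=P_1\pm b/2\) and normalized by the leading behavior \(z^{\Delta'-\Delta_1-\Delta_{1,2}}(1+O(z))\), are the stated hypergeometric expressions. Your explicit steps (reading off the exponents \(D_\pm\), \(E\) from the indicial equations/fusion, stripping the prefactor \(z^{D_+}(1-z)^E\), and matching the Gauss parameters to \(\vec A\) and \(\hat I\vec A\)) are exactly the computation the paper leaves implicit.
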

As was noted above we normalized conformal blocks in the formulas~\eqref{eq:Vir hypergeom}  by \(\mathrm{F}_b(\vec{P},P';z) =z^{\Delta'-\Delta_1-\Delta_2}(1+O(z))\).

%\begin{proposition}
%	There is two dimensional space of solutions of BPZ-equation with basis 
%	\begin{equation}
%		Z_s(P_1, P_{12}, P_3, P_4|z) = z^{D_s}(1 - z)^E \, _2F_1(A_s,B_s;C_s;z),~~~ s =\pm 1.
%	\end{equation}
%\end{proposition}
%
%\begin{proposition}
%	Functions $Z_\pm(P_1, P_{12}, P_3, P_4|z)$ equal to $\mathrm{F}(z) = \langle\Delta_4|\Phi_{\Delta_{3}}(1)\Phi_{1,2}(z)|\Delta_1\rangle$ where intermediate modules $\mathbb{M}_{P_1\pm \frac{b}{2},b}$.
%\end{proposition}

\subsection{$\widehat{\mathfrak{sl}}(2)$ conformal blocks} \label{ssec:sl2ConformalBlocks}

The definition and properties of $\widehat{\mathfrak{sl}}(2)$ conformal blocks are similar to Virasoro ones discussed in the previous section. The main new ingredient is the dependence of the vertex operators on the additional parameter \(x\) which parametrizes Borel subalgebra.
\begin{define}
	Assume that there are given parameters \(\lambda_i\), \(i\in \{1,\dots,n\}\) and \(\lambda_j'\), \(j\in \{2,\dots,n-2\}\). Assume also that there are given vertex operators  $\mathcal{V}_{\lambda_2}(x,z)\colon \mathcal{M}_{\lambda_1,k}\rightarrow \overline{\mathcal{M}_{\lambda'_2,k}}$, $\mathcal{V}_{\lambda_i}(x,z)\colon\mathcal{M}_{\lambda'_{i-1},k}\rightarrow \overline{\mathcal{M}_{\lambda'_i,k}}$ for $i\in\{3,\dots,n-2\}$, and 	 $\mathcal{V}_{\lambda_{n-1}}(x,z)\colon \mathcal{M}_{\lambda'_{n-2},k}\rightarrow \overline{\mathcal{M}_{\lambda_n,k}}$  as in sec.~\ref{ssec:operstatecorr}. The  
	$n$-point sphere conformal block for $\widehat{\mathfrak{sl}}(2)$ algebra defined as
	\begin{equation}
		\Psi_k(\vec{\lambda},\vec{\lambda'};\vec{x},\vec{z}) = \Big\langle v_{\lambda_n,k},\mathcal{V}_{\lambda_{n-1}}(x_{n-1},z_{n-1})\dots\mathcal{V}_{\lambda_{3}}(x_3,z_{3})\mathcal{V}_{\lambda_{2}}(x_2,z_{2})v_{\lambda_1,k}\Big\rangle,
	\end{equation}
	where \(\vec{\lambda}=(\lambda_1,\dots,\lambda_n)\), \(\vec{\lambda'}=(\lambda_2',\dots,\lambda_{n-2}')\), \(\vec{x}=(x_2,\dots,x_{n-1})\), \(\vec{z}=(z_2,\dots,z_{n-1})\).
\end{define}

\begin{define}
	Let $ \mathcal{V}_{\lambda_2}(x, z)\colon \mathcal{M}_{\lambda_1,k}\rightarrow \mathcal{M}_{\lambda_3,k}$ be an \(\widehat{\mathfrak{sl}}(2)\) vertex operator as in sec.~\ref{ssec:operstatecorr}. Vector   $\mathcal{W}^{k;\lambda_2}_{\lambda_1,\lambda_3}(x, z) = \mathcal{V}_{\lambda_2}(x, z)v_{\lambda_1,k} \in \overline{\mathcal{M}_{\lambda_3,k}}$ is called \emph{$\widehat{\mathfrak{sl}}(2)$-chain vector}.
\end{define}
%As was stated above in sec.~\ref{ssec:operstatecorr} and \ref{ssec:Virasoro} the vertex operator is unique up to constant multiple, therefore the chain vectors are unique up to constant multiple. Moreover, the normalization of chain vectors is determined by their scalar products with highest weight vectors \(v_{\lambda_3,k}\) and \(v_{P_3,b}\).

Consider four-point conformal blocks. Similarly to Virasoro case one can assume that \(z_2=z\), \(x_2=x\), \(z_3=1\), \(x_3=1\) and we have only one intermediate parameter \(\lambda_2'=\lambda'\). We have 
\begin{equation}\label{eq:sl2 conf block Whit}
	\Psi_k(\vec{\lambda},\lambda';x,z)=\Big\langle v_{\lambda_4,k},\mathcal{V}_{\lambda_{3}}(1,1)\mathcal{V}_{\lambda_{2}}(x,z)v_{\lambda_1,k}\Big\rangle=\Big\langle \mathcal{W}^{k;\lambda_3}_{\lambda_4,\lambda'}(-1, 1),\mathcal{W}^{k;\lambda_2}_{\lambda_1,\lambda'}(x, z)\Big\rangle. 
\end{equation}
Unless otherwise stated, we assume that Whittaker vectors are normalized by \(\big\langle v_{\lambda_3,k}, \mathcal{W}^{k;\lambda_2}_{\lambda_1,\lambda_3}(x,z) \big \rangle= z^{\Delta_3-\Delta_1-\Delta_2} x^{(\lambda_1+\lambda_2-\lambda_3)/2}\). Hence conformal blocks have the form 
\begin{equation}
	\Psi_k(\vec{\lambda},\lambda';x,z) =z^{\Delta'-\Delta_1-\Delta_2} x^{(\lambda_1+\lambda_2-\lambda')/2}\Big(1+O(x)+O(z)\Big).
\end{equation}

%\begin{notati}
%	In this section we use notation  $|v_{\lambda,k}\rangle$ for the highest weight vector in $\mathcal{M}_{\lambda,k}$. We also use notation $\langle v_{\mu,k}|$ for scalar product with the  highest weight vector in $\mathcal{M}_{\mu,k}$. We assume that $\langle v_{\mu,k}|v_{\lambda,k}\rangle = \delta_{\lambda,\mu}$.  
%\end{notati}
We will need a description of four-point conformal block with insertion of degenerate field of spin $\frac{1}{2}$ which we denote by \(X_1(x,z)\), see \eqref{eq:X nu}. This description follows from Knizhnik-Zamolodchikov equations \cite{KZ:1984} and is well known, see e.g. \cite{Teschner:1997}.
Consider function 
\begin{equation}
	\Psi_k(\vec{\lambda},\lambda';y,x,z)=\Psi_{k,0}(\vec{\lambda},\lambda';y,z)+x\Psi_{k,1}(\vec{\lambda},\lambda';y,z) =\langle v_{\mu,k},\mathcal{V}_{\nu}(y, 1)X_{1}(x,z)\,v_{\lambda,k}\rangle.
\end{equation}
Here \(\vec{\lambda}=(\lambda,\nu, \mu)\). It follows from fusion rules that there are two possible choices of \(\lambda'\) namely \(\lambda'=\lambda\pm 1\).

%Let us define $\Psi(x, z,y) = \Psi_0(x, z) + y \Psi_1(x, z)$ by formula 
%\begin{equation}
%	\Psi_{i}(x, z)= \langle v_{\mu,k}|\mathcal{V}_{\nu}(x, 1)X_{i,1}(z)|v_{\lambda,k}\rangle.
%\end{equation}
%    We call it degenerate conformal block.

%\begin{remark}
%	There are two different conformal blocks $\Psi_+(x, z, y) = \Psi_{+,0}(x, z)+ y\Psi_{+,1}(x, z)$ and $\Psi_-(x, z, y) = \Psi_{-,0}(x, z)+ y\Psi_{-,1}(x, z)$ which correspond two different intermediate modules $\mathcal{M}_{\lambda\pm1,k}$.
%\end{remark}

%There are equations on conformal blocks they are called KZ-equations(\cite{KZ:1984}). Let us write them in our degenerate case.   

We can now write Knizhnik-Zamolodchikov equations.

\begin{proposition}
	Functions $\Psi_{k,i}(\vec{\lambda},\lambda';y, z)$ satisfy system of equations
	\begin{subequations}\label{eq:KZ}
		\begin{align}
			\label{kz1}
			\kappa\partial_z   \Psi_{k,0}&= \frac{(- y^2 \partial_y + \nu y) \Psi_{k,1} + \frac{1}{2}(-2y\partial_y + \nu) \Psi_{k,0}}{z - 1} 
			+ \frac{\lambda \Psi_{k,0}}{2 z},
			\\
			\label{kz2}
			\kappa\partial_z   \Psi_{k,1} &= \frac{\partial_y \Psi_{k,0} - \frac{1}{2}(-2y\partial_y + \nu) \Psi_{k,1}}{z - 1} 
			+\frac{\frac12(-\lambda-2)\Psi_{k,1} - \partial_y \Psi_{k,0}}{z}.
		\end{align}		
	\end{subequations}
	\end{proposition}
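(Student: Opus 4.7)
The system~\eqref{eq:KZ} is the Knizhnik-Zamolodchikov equation for the four-point block $\Psi_k(y,x,z)=\langle v_{\mu,k},\mathcal V_\nu(y,1)X_1(x,z)v_{\lambda,k}\rangle$ with a single spin-$\tfrac12$ degenerate insertion, so my strategy is the classical Sugawara/Ward-identity argument in two steps.

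\textbf{Step 1 (Sugawara conversion of $\partial_z$).} Since $X_1(x,z)$ is Virasoro-primary of weight $\Delta(1,k)=\tfrac{3}{4\kappa}$, we have $\partial_z X_1(x,z)=[L_{-1},X_1(x,z)]$, and hence
\begin{equation*}
\kappa\,\partial_z\Psi_k=\kappa\langle v_\mu,\mathcal V_\nu(y,1)L_{-1}X_1(x,z)v_\lambda\rangle-\kappa\langle v_\mu,\mathcal V_\nu(y,1)X_1(x,z)L_{-1}v_\lambda\rangle.
\end{equation*}
I expand each $L_{-1}$ via the Sugawara formula~\eqref{Sug}. For $L_{-1}v_\lambda$, the highest-weight conditions $e_mv_\lambda=0$ ($m\ge 0$), $f_mv_\lambda=0$ ($m>0$), $h_mv_\lambda=0$ ($m>0$), $h_0v_\lambda=\lambda v_\lambda$ collapse the infinite Sugawara sum to $\kappa L_{-1}v_\lambda=(e_{-1}f_0+\tfrac{\lambda}{2}h_{-1})v_\lambda$. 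The leftmost $L_{-1}$ I commute past $\mathcal V_\nu(y,1)$ using $[L_{-1},\mathcal V_\nu(y,1)]=\partial_w\mathcal V_\nu(y,w)|_{w=1}$; the bulk contribution vanishes because $\langle v_\mu,L_{-1}(\,\cdot\,)\rangle=\langle L_1v_\mu,(\,\cdot\,)\rangle=0$ by $L_n^\dagger=L_{-n}$ and $L_1v_\mu=0$. The derivative $\partial_w\mathcal V_\nu(y,w)|_{w=1}$ is in turn rewritten using $\partial_w\mathcal V_\nu=[L_{-1},\mathcal V_\nu]$ and one further application of Sugawara, exploiting this time the \emph{dual} primary conditions for $v_\mu$.

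\textbf{Step 2 (Mode-to-derivative translation).} After Step 1 the right-hand side is a finite sum of matrix elements $\langle v_\mu,\mathcal V_\nu(y,1)\cdot M\cdot X_1(x,z)v_\lambda\rangle$ with $M\in\{e_{-1},h_{-1},f_0,\dots\}$ inserted between the two vertex operators. I push each such $M$ through $X_1(x,z)$ using the commutation relations stated right after \eqref{eq:X nu}, which convert $e_n,h_n,f_n$ into $z^n$-multiples of $X_{l\pm 1,1}$ or $\pm X_{l,1}$, and then through $\mathcal V_\nu(y,1)$ using Proposition~\ref{Prop: Awata Yamada}, which converts $f_n,h_n,e_n$ into the differential operators $\partial_y$, $-2y\partial_y+\nu$, $-y^2\partial_y+\nu y$. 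Separating the $x^0$- and $x^1$-coefficients via $X_1(x,z)=X_{0,1}(z)+xX_{1,1}(z)$ finally reproduces the two equations~\eqref{kz1} and~\eqref{kz2}.

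\textbf{Main obstacle and cleaner alternative.} The iterated Sugawara expansions in Step 1 are the main technical nuisance. A fully equivalent but more uniform route is the global Ward-identity contour argument: write
\begin{equation*}
\kappa\,\partial_z\Psi_k=\oint_{C_z}\frac{dw}{2\pi\ri}\,\langle v_\mu,\mathcal V_\nu(y,1)\,L(w)\,X_1(x,z)\,v_\lambda\rangle,
\end{equation*}
deform $C_z$ to $-C_0-C_1-C_\infty$, and evaluate each residual contour via the OPE of the Sugawara $L(w)$ with the primary insertion it encircles. The contour at $0$ produces the $\lambda/(2z)\Psi_{k,0}$ and $-\partial_y\Psi_{k,0}/z$ terms, the contour at $1$ produces the three $1/(z-1)$ pieces with the differential operators above, and the contour at $\infty$ contributes the $\tfrac12(-\lambda-2)\Psi_{k,1}/z$ piece (the $-\lambda-2$ shift arising from the standard coordinate change $w\mapsto w^{-1}$ near $\infty$). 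Tracking signs and $z$-powers consistently through the mode-to-derivative translation is the only real difficulty; nothing is conceptually deep.
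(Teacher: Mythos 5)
The paper itself does not prove this proposition; it is quoted as the standard Knizhnik--Zamolodchikov equation with references, so your derivation has to stand on its own, and as written it does not. The decisive defect is \emph{where} you apply the Sugawara relation. In Step 1 you use it only at the point $0$, via $\kappa L_{-1}v_\lambda=(e_{-1}f_0+\tfrac{\lambda}{2}h_{-1})v_\lambda$ (that identity is correct), while the $L_{-1}$ standing to the left of $X_1(x,z)$ is pushed onto $\langle v_\mu|$, leaving $-\langle v_\mu,\partial_w\mathcal V_\nu(y,w)|_{w=1}X_1(x,z)v_\lambda\rangle$, which you then propose to ``rewrite using $\partial_w\mathcal V_\nu=[L_{-1},\mathcal V_\nu]$ and one further application of Sugawara''. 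That move is circular: expanding this commutator and using $\langle v_\mu|L_{-1}=0$ gives back exactly $-\langle v_\mu,\mathcal V_\nu(y,1)L_{-1}X_1(x,z)v_\lambda\rangle$, i.e.\ the term you started from, and the Sugawara sum for an $L_{-1}$ sandwiched between two vertex operators does not truncate by any highest-weight condition. Consequently the premise of Step 2 --- that one is left with a \emph{finite} sum of matrix elements with single modes $e_{-1},h_{-1},f_0,\dots$ inserted between the operators --- fails, and in fact cannot be repaired at that level: pushing finitely many modes through $X_1(x,z)$ and $\mathcal V_\nu(y,1)$ via the relations after \eqref{eq:X nu} and \eqref{eq:Vnu comm rel} only produces kernels which are monomials $z^n$ and $1^n$, so no finite such sum can generate the $\tfrac{1}{z-1}$ poles of \eqref{kz1}--\eqref{kz2}. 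The missing idea is to apply the Sugawara formula \eqref{Sug} to the degenerate state at $z$ itself, $\kappa\,\partial_z X_1(x,z)=\,:\!e(z)(f_0X_1)(x,z)\!:+:\!f(z)(e_0X_1)(x,z)\!:+\tfrac12:\!h(z)(h_0X_1)(x,z)\!:$, and then evaluate each current insertion $J^a(w)$ in the correlator through its simple poles at $w=1$ and $w=0$, using the kernel $\tfrac{1}{w-z}$ supplied by the normal ordering (equivalently, resum the infinite mode sums into geometric series). It is precisely this step that produces $\tfrac{1}{z-1}$ times the $y$-differential operators and $\tfrac{1}{z}$ times the zero-mode data of $v_\lambda$ (the $f_0v_\lambda$ contribution, pushed left, accounts for the $-\partial_y\Psi_{k,0}/z$ term and the shift $-\lambda-2$ in \eqref{kz2}).

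Your ``cleaner alternative'' has the same problem in sharper form. Inserting the full stress tensor $L(w)$ with measure $dw$ and no kernel around $C_z$, and deforming to the other points, only invokes the OPE $L(w)\Phi(z_i)\sim \Delta_i(w-z_i)^{-2}+\partial_{z_i}(w-z_i)^{-1}$, so each residual contour yields a derivative at that point (or an $L_{-1}$-descendant of $v_\lambda$); the outcome is the translation Ward identity $\partial_z\Psi+\partial_w\Psi|_{w=1}+\langle v_\mu,\mathcal V_\nu(y,1)X_1(x,z)L_{-1}v_\lambda\rangle=0$, not the KZ system, and neither the rational kernels $\tfrac{1}{z-1}$, $\tfrac{1}{z}$ nor the $\mathfrak{sl}(2)$ Casimir operators $\partial_y$, $-2y\partial_y+\nu$, $-y^2\partial_y+\nu y$ can appear this way. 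The contour argument must be run at the level of the currents: after the Sugawara reduction at $z$, integrate $\oint\frac{dw}{2\pi\ri}\,\frac{1}{w-z}\langle v_\mu,\mathcal V_\nu(y,1)\,J^a(w)\,(J^a_0X_1)(x,z)\,v_\lambda\rangle$ and pick up the poles at $w=1$ and $w=0$ (the contribution at infinity vanishes because $\langle v_\mu|J^a(w)=O(w^{-1})$). With that replacement, your Step 2 bookkeeping --- translating modes into the differential operators of \eqref{eq:Vnu comm rel} and into shifts of $X_{l,1}$, then separating the $x^0$- and $x^1$-components --- is the right way to finish and does reproduce \eqref{kz1} and \eqref{kz2}.
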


\begin{corollary}\phantomsection \label{Cor:sl2 bl} 
	The conformal blocks have the following expressions in terms of hypergeometric functions
	\begin{subequations}	
		\begin{align}
			\Psi_k(\vec{\lambda},\lambda+1;y,x,z)& = y^\rho z^{\frac{\lambda }{2 \kappa }} (1-z)^{\frac{\nu}{2 \kappa }}
			\left({}_2F_1\big(\Vec{a}|z\big) -  \frac{a_1}{a_3} \frac{x}{y}\, {}_2F_1\big(\Vec{a}+(1,0,1)|z\big) \right),
			\\
			\Psi_k(\vec{\lambda},\lambda-1;y,x,z)& =y^\rho z^{-\frac{\lambda +2}{2 \kappa }} (1-z)^{\frac{\nu}{2 \kappa }}
			\left( z \frac{a_2-a_3}{a_3-1} \, {}_2F_1\big(\hat{I}(\Vec{a})|z\big)+ \frac{x}{y}\, {}_2F_1\big(\hat{I}(\Vec{a}+(1,0,1))|z\big)  \right).
		\end{align}
	\end{subequations}
	where 
%	\begin{equation}
%		Q_+ =-\frac{\lambda -\mu +\nu +1}{2 (\lambda +1)},~~~ Q_- =-\frac{-\lambda +\mu +\nu +1}{2 (\kappa -\lambda -1)}
%	\end{equation}
	\begin{equation}
		\vec{a} = (a_1,a_2,a_3)=(\frac{\lambda - \mu + \nu+1}{2\kappa}, \frac{ \lambda + \mu + \nu+3}{2\kappa}, \frac{1 + \lambda}{\kappa}).
	\end{equation}
\end{corollary}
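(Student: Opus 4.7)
The strategy is to peel off the $y$-dependence of $\Psi_k$ via a global $h_0$ Ward identity, reducing the KZ system \eqref{eq:KZ} to an ODE in a single variable, and then to recognize this ODE as the hypergeometric equation.

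Computing $\langle v_{\mu,k}|\, h_0\, \mathcal{V}_\nu(y,1)\, X_1(x,z)\, v_{\lambda,k}\rangle$ in two ways---using $\langle v_{\mu,k}|h_0 = \mu\langle v_{\mu,k}|$ on one hand, and pushing $h_0$ past the vertex operators using \eqref{eq:Vnu comm rel} together with its degenerate analogue $[h_0, X_1(x,z)] = (-2x\partial_x + 1)X_1(x,z)$ on the other (and $h_0 v_{\lambda,k} = \lambda v_{\lambda,k}$)---yields
\[
    \bigl(2y\partial_y + 2x\partial_x - (\lambda+\nu+1-\mu)\bigr)\Psi_k(\vec\lambda,\lambda';y,x,z) = 0.
\]
Separating the two powers of $x$ forces $\Psi_{k,0}(y,z) = y^\rho F_0(z)$ and $\Psi_{k,1}(y,z) = y^{\rho-1} F_1(z)$ with $\rho = (\lambda+\nu+1-\mu)/2$. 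Substituting into \eqref{eq:KZ} kills all $y$-dependence and yields a coupled first-order linear system in $z$ for $F_0, F_1$.

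Next, I would solve the first KZ equation algebraically for $F_1$ in terms of $F_0, F_0'$ and substitute into the second, producing a single second-order linear ODE for $F_0$ with regular singularities only at $\{0,1,\infty\}$; by uniqueness of Fuchsian equations with three regular singular points, this is a hypergeometric equation after a gauge $F_0(z)=z^\alpha(1-z)^\beta G(z)$. The exponents $\alpha,\beta$ are fixed by fusion: near $z=0$, the product $X_1(x,z)v_{\lambda,k}$ has leading behavior $z^{\Delta(\lambda\pm 1)-\Delta(\lambda)-\Delta(1)}$, which evaluates to $\lambda/(2\kappa)$ in the $\lambda'=\lambda+1$ channel and to $-(\lambda+2)/(2\kappa)$ in the $\lambda'=\lambda-1$ channel; near $z=1$, fusion with $\mathcal{V}_\nu$ gives the leading exponent $(1-z)^{\nu/(2\kappa)}$. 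Matching the remaining indicial exponents at $\infty$ with $\Delta(\mu)$ fixes the triple $\vec a$ in the statement, so $G(z) = {}_2F_1(\vec a|z)$; the channel $\lambda'=\lambda-1$ corresponds to the second Kummer solution $z^{1-a_3}{}_2F_1(\hat I\vec a|z)$, consistent with the prefactor $z^{-(\lambda+2)/(2\kappa)}$.

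Finally, $F_1$ is obtained from the first KZ equation with this $F_0$, and standard ${}_2F_1$ derivative and contiguity identities rewrite the result as a multiple of ${}_2F_1(\vec a + (1,0,1)|z)$ (respectively ${}_2F_1(\hat I(\vec a + (1,0,1))|z)$), producing the explicit relative coefficients $-a_1/a_3$ and $(a_2 - a_3)/(a_3 - 1)$ appearing in the statement. Overall normalization is fixed by the chain-vector convention \eqref{eq:sl2 conf block Whit}. The main obstacle will be the bookkeeping required to pin down $\vec a$ precisely and to track the scalar factors through the hypergeometric contiguity step; the conceptual ingredients---Ward identity, Fuchsian reduction, hypergeometric identification---are all entirely standard.
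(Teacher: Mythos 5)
Your proposal is correct and follows essentially the same route as the paper's sketch: the $h_0$ Ward identity fixes the $y$-dependence as $\Psi_{k,0}=y^\rho f_0(z)$, $\Psi_{k,1}=y^{\rho-1}f_1(z)$ with $\rho=\tfrac12(\lambda-\mu+\nu+1)$, and the KZ system \eqref{eq:KZ} then becomes a first-order $2\times 2$ system in $z$ solved by hypergeometric functions. Your elimination to a second-order Fuchsian equation, identification via the Riemann scheme, and recovery of the second component by contiguity just make explicit the ``simple calculation'' the paper leaves implicit, so there is no substantive difference.
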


\begin{proof}[Sketch of the proof]
	Inserting \(h_0\) into matrix element $\langle v_{\mu,k}|\mathcal{V}_{\nu}(y, 1)X_{i,1}(z)h_0|v_{\lambda,k}\rangle$ we can fix the dependence of $\Psi_{k,i}(y, z)$ on $y$.
	\begin{equation}
		\Psi_{k,0}(\vec{\lambda},\lambda+1;y, z) = y^\rho f_0(z),\qquad \Psi_{k,1}(\vec{\lambda},\lambda+1;y, z) =  y^{\rho - 1} f_1(z),
	\end{equation}
	where $\rho = \frac{1}{2} (\lambda -\mu +\nu +1)$. 
	
	Then KZ equations~\eqref{eq:KZ} in terms of $f_0(z), f_1(z)$ have the form
	\begin{subequations}
		\begin{align}
		\kappa  f_0'(z) &= f_0(z) \left(\frac{\lambda }{2 z}+\frac{\nu -2 \rho }{2 (z-1)}\right)+ f_1(z) \frac{\nu -\rho + 1}{z-1},
		\\
		\kappa  f_1'(z)  &= f_0(z) \left(\frac{\rho }{z-1}-\frac{\rho }{z}\right)+f_1(z) \left(\frac{-\lambda -2}{2z}-\frac{\nu -2 (\rho -1)}{2 (z-1)}\right).
		\end{align}
	\end{subequations}
	After simple calculation, we get the result.
\end{proof}

Note that we normalized conformal blocks above such that the leading coefficient in $x, z/x$ expansion is equal to $1$. 

%\begin{remark}
%	We normalize conformal blocks such that the leading coefficient by $y$ and $z/y$ is equal $1$. It means that we fix matrix elements that 
%	\begin{equation}
%		\langle\lambda \pm 1|X_{0,1}(1) + X_{1,1}(1)|\lambda\rangle = 1
%	\end{equation}
%\end{remark}

\subsection{Coset decomposition and conformal block relations} \label{ssec:coset blocks}

Now, we are ready to come back to coset construction. The vector \(\widetilde{\mathcal{U}}_{\nu,n}(x, z) \tilde u_{l}(\lambda) \)  would be a tensor product of Virasoro and $\widehat{\mathfrak{sl}}(2)$-chain vectors since the operator \(\widetilde{\mathcal{U}}_{\nu,n}\) is a primary field for \(\mathrm{Vir}^{\mathrm{coset}} \oplus \widehat{\mathfrak{sl}}(2)_{k + 1}\). A chain vector in each summand of  \eqref{eq:W1 W2} would be a tensor product of Virasoro and $\widehat{\mathfrak{sl}}(2)$-chain vectors. We fix normalization of such products by 
\begin{multline}\label{eq:chain coset}
         \Big\langle  \mathbb{W}^{b;P(\nu) + n b}_{ P(\lambda) + l b, P(\mu)+ m b}(z)\otimes \mathcal{W}^{k+1,\nu + 2n}_{\lambda + 2l,\mu + 2m }(x, z), \tilde u_{m}(\mu)\Big\rangle 
         \\
         = \|  \tilde u_{m}(\mu)\|^2 z^{\Delta(P(\mu))-\Delta(P(\nu))-\Delta(P(\lambda))}x^{\frac{-\mu+ \nu+\lambda}{2}-m+n+l}.
\end{multline}
Then, the following proposition is just a reformulation of the definitions of $\tilde C_{m,n,l}(\mu,\nu,\lambda)$ and $\tilde B^{s}_{l}(\lambda)$  given in formulas \eqref{eq:tilde C} and \eqref{eq:b tildeU} above.
\begin{proposition}\phantomsection \label{Prop:ChainDecomp}
	We have
\begin{align}\label{eq:Chain coset}
     \widetilde{\mathcal{U}}_{\nu,n}(x, z) \tilde u_{l}(\lambda)  &=\sum_{n\in \mathbb{Z}} \frac{\tilde C_{m,n,l}(\mu,\nu,\lambda)}{\|\tilde u_m(\mu)\|^2}\, \mathbb{W}^{b;  P(\nu) + n b }_{P(\lambda) + l b, P(\mu) + m b}(z)\otimes \mathcal{W}^{k+1,\nu + 2 n}_{\lambda + 2l, \mu + 2m}(x, z), 
     \\
     b(x, z) \tilde u_{l}(\lambda) &= \sum_{s\in\pm 1} \frac{\tilde B^{s}_{l}(\lambda)}{\|\tilde u_{l+{s}/{2}}(\lambda)\|^2}\, \mathbb{W}^{b,P_{1,2}(b)}_{P(\lambda)+lb,P(\lambda)+l+\frac{s}{2}}(z)\otimes \mathcal{W}^{k+1; 1}_{\lambda + 2 l,\lambda + 2l+s}(x, z),
\end{align}
   where $P(\lambda) = P_{GKO}(\lambda),b = b_{GKO}$.
\end{proposition}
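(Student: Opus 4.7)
The plan is to note that this proposition is really a matching-of-normalizations calculation: the proof combines the primary field property of $\widetilde{\mathcal{U}}_{\nu,n}(x,z)$ with the uniqueness of chain vectors, and then identifies the coefficient via a single inner product.

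First I would apply the coset decomposition \eqref{eq:W1 W2} to expand
\[
\widetilde{\mathcal{U}}_{\nu,n}(x,z)\,\tilde u_l(\lambda) \;\in\; \overline{\mathcal{L}_{r,1}\otimes \mathcal{M}_{\mu,k}} \;=\; \bigoplus_{m} \overline{\mathbb{M}_{P(\mu)+mb,b}\otimes \mathcal{M}_{\mu+2m,k+1}},
\]
and observe that each projection is itself a $\mathrm{Vir}^{\mathrm{coset}}\oplus\widehat{\mathfrak{sl}}(2)_{k+1}$-equivariant image of $\tilde u_l(\lambda)$ under the restricted operator. Since $\tilde u_l(\lambda)$ is a highest weight vector in its summand, and since $\widetilde{\mathcal{U}}_{\nu,n}(x,z)$ satisfies exactly the commutation relations of a primary field (Proposition~\ref{Th:Vnunprimary}) of coset-Virasoro momentum $P(\nu)+nb$ and $\widehat{\mathfrak{sl}}(2)_{k+1}$-spin $\nu+2n$, the component in the $m$-summand satisfies the defining relations of the product of a Virasoro chain vector $\mathbb{W}^{b;P(\nu)+nb}_{P(\lambda)+lb,P(\mu)+mb}(z)$ with an $\widehat{\mathfrak{sl}}(2)_{k+1}$ chain vector $\mathcal{W}^{k+1,\nu+2n}_{\lambda+2l,\mu+2m}(x,z)$.

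By the uniqueness statements following Definitions~\ref{Def:virchain} (and its $\widehat{\mathfrak{sl}}(2)$ analogue) for generic momenta, such chain vectors are determined up to scalar by their highest component; consequently the $m$-projection is proportional to this tensor product of chain vectors. To pin down the scalar I would pair both sides with $\tilde u_m(\mu)$. The right-hand side contributes $c_m\cdot \|\tilde u_m(\mu)\|^2\cdot z^{\Delta(P(\mu)+mb)-\Delta(P(\nu)+nb)-\Delta(P(\lambda)+lb)}x^{(-\mu+\nu+\lambda)/2-m+n+l}$ thanks to the normalization \eqref{eq:chain coset}, while the left-hand side, by the $L^{\mathrm{coset}}_0$ and $h^\Delta_0$ covariance of $\widetilde{\mathcal{U}}_{\nu,n}$, equals the same power of $z$ and $x$ multiplied by $\tilde C_{m,n,l}(\mu,\nu,\lambda)$ (this is the statement that $\langle \tilde u_m(\mu),\widetilde{\mathcal{U}}_{\nu,n}(x,z)\tilde u_l(\lambda)\rangle$ at $x=z=1$ is the three-point function \eqref{eq:tilde C}). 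Comparing yields $c_m=\tilde C_{m,n,l}(\mu,\nu,\lambda)/\|\tilde u_m(\mu)\|^2$, which is the claimed formula.

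For the second identity the same argument applies to $b(x,z)$, using Proposition~\ref{Th:bprop} which identifies it as a primary field for $\mathrm{Vir}^{\mathrm{coset}}$ of Kac label $(1,2)$ and for $\widehat{\mathfrak{sl}}(2)_{k+1}$ of spin $1$ (degenerate). The degenerate Virasoro fusion rules for $\Phi_{1,2}$ force the outgoing coset momentum to be $P(\lambda)+lb\pm b/2$, and the corresponding $\widehat{\mathfrak{sl}}(2)_{k+1}$ fusion rules force the outgoing weight to be $\lambda+2l\pm 1$; consistency with $P(\lambda+2l\pm 1)+(l\pm 1/2)b = P(\lambda)+lb\pm b/2$ shows these restrictions agree and cut the sum down to $s\in\{\pm 1\}$. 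Pairing with $\tilde u_{l+s/2}(\lambda)$ and invoking Proposition~\ref{Prop:b(x,z)} produces the coefficient $\tilde B^s_l(\lambda)/\|\tilde u_{l+s/2}(\lambda)\|^2$.

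There is no real obstacle here beyond bookkeeping: the only subtlety is verifying that the leading-order normalization prescription \eqref{eq:chain coset} is compatible with the conformal weights and $\mathfrak{sl}(2)$-weights appearing in $\tilde C_{m,n,l}$, and that the genericity assumptions on $(\lambda,\mu,\nu,k)$ are strong enough for the chain-vector uniqueness used in step two. Both are standard.
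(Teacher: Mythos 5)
Your argument is correct and is essentially the paper's own proof: the paper treats the proposition as a direct reformulation of the definitions of $\tilde C_{m,n,l}$ and $\tilde B^{s}_{l}$, appealing exactly as you do to the primary-field property (Proposition~\ref{Th:Vnunprimary}, resp.\ Proposition~\ref{Th:bprop}), the decomposition \eqref{eq:W1 W2}, uniqueness of chain vectors, and the normalization \eqref{eq:chain coset}. The only quibble is your parenthetical identity $P(\lambda+2l\pm 1)+(l\pm\tfrac12)b=P(\lambda)+lb\pm b/2$, which is not literally true in the GKO parametrization and is also unnecessary: each summand of \eqref{eq:W1 W2} carries a single label $l'$ fixing both the coset momentum and the $\widehat{\mathfrak{sl}}(2)_{k+1}$ weight, so either the $\Phi_{1,2}$ fusion rule or the spin-$\tfrac12$ fusion rule alone already restricts the sum to $l'=l\pm\tfrac12$.
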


\subsubsection{Recurrence relations}\label{sssec:RecurrenceRelations}

Consider the four-point conformal block of the form 
\begin{equation}
	\Big\langle \tilde u_{m}(\mu), \widetilde{\mathcal{U}}_{\nu,n}(1,1)\, b(x, z)\, \tilde u_{l}(\lambda)\Big\rangle.
\end{equation}
Due to Proposition \ref{Prop:ChainDecomp}, it decomposes as a linear combination of Virasoro and \(\widehat{\mathfrak{sl}}(2)\) conformal blocks. According to 
Proposition~\ref{Th:bprop}, these are conformal blocks with the presence of degenerate fields. Therefore we get 

\begin{corollary}The four-point conformal block with presence of fields \(b(x,z)\) has the form 
	\begin{multline}\label{eq:b bl}
		\langle \tilde u_{m}(\mu), \widetilde{\mathcal{U}}_{\nu,n}(1,1) b(x, z) \, \tilde u_{l}(\lambda)\rangle      = \sum_{s\in \pm 1} \frac{\tilde C_{m,n,l+s/2}(\mu,\nu,\lambda)  \tilde B^{s}_{l}(\lambda)}{\|\tilde u_{l+s/2}(\lambda)\|^2} 
		\\
		\times
		\mathrm{F}_b\left(\vec{P}_{\vec{l}},P(\lambda) + lb + sb/2;z\right)
		\Psi_{k+1}\left(\vec{\lambda}_{\vec{l}},\lambda + 2l + s;x,z \right), 
	\end{multline}
	where 
	\begin{equation}
		\vec{P}_{\vec{l}}=(P(\lambda)+lb,P(\nu)+nb,P(\mu)+mb),\quad \vec{\lambda}_{\vec{l}}=(\lambda+2l,\nu+2n,\mu+2m).
	\end{equation}
\end{corollary}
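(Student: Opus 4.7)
The plan is a direct computation combining Proposition~\ref{Prop:ChainDecomp} with the factorization of $\widetilde{\mathcal{U}}_{\nu,n}(1,1)$ as a tensor product of Virasoro and $\widehat{\mathfrak{sl}}(2)_{k+1}$ primary fields, followed by recognition of the resulting bilinear pairings as the conformal blocks defined in Sections~\ref{ssec:VirasoroConformalBlocks} and \ref{ssec:sl2ConformalBlocks}.

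First, I would insert into the left-hand side the chain-vector expansion of Proposition~\ref{Prop:ChainDecomp}:
\begin{equation*}
b(x,z)\tilde u_l(\lambda)=\sum_{s\in\pm 1}\frac{\tilde B^{s}_{l}(\lambda)}{\|\tilde u_{l+s/2}(\lambda)\|^2}\,\mathbb{W}^{b,P_{1,2}(b)}_{P(\lambda)+lb,\,P(\lambda)+lb+sb/2}(z)\otimes\mathcal{W}^{k+1;1}_{\lambda+2l,\,\lambda+2l+s}(x,z).
\end{equation*}
Only the two channels $s=\pm 1$ appear because, by Proposition~\ref{Th:bprop}, $b(x,z)$ is simultaneously the Virasoro degenerate field $\Phi_{1,2}$ (for $\mathrm{Vir}^{\mathrm{coset}}$) and the spin-$\tfrac12$ degenerate field $X_1$ (for diagonal $\widehat{\mathfrak{sl}}(2)_{k+1}^\Delta$), and both sets of fusion rules pick out the same two intermediate weights $(P(\lambda)+lb+sb/2,\lambda+2l+s)$.

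Second, since $\tilde u_n(\nu)$ is a joint highest weight vector for $\mathrm{Vir}^{\mathrm{coset}}\oplus\widehat{\mathfrak{sl}}(2)_{k+1}$ with weight $(P(\nu)+nb,\nu+2n)$ (Proposition~\ref{Th:Vnunprimary}), the operator $\widetilde{\mathcal{U}}_{\nu,n}(1,1)$, when restricted to any fixed summand of the coset decomposition~\eqref{eq:W1 W2}, agrees with the tensor product of the Virasoro primary field $\Phi_{\Delta(P(\nu)+nb,b)}(1)$ and the $\widehat{\mathfrak{sl}}(2)_{k+1}$ vertex operator $\mathcal{V}_{\nu+2n}(1,1)$. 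Applying this factorized action to the chain vector in each $s$-summand therefore produces an extended chain vector whose two legs are $\Phi_{\Delta(P(\nu)+nb,b)}(1)\,\mathbb{W}^{b,P_{1,2}(b)}_{P(\lambda)+lb,\,P(\lambda)+lb+sb/2}(z)$ and $\mathcal{V}_{\nu+2n}(1,1)\,\mathcal{W}^{k+1;1}_{\lambda+2l,\,\lambda+2l+s}(x,z)$.

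Third, I would take the Shapovalov pairing with $\tilde u_m(\mu)$. Because the coset decomposition~\eqref{eq:W1 W2} is orthogonal with respect to the Shapovalov form and $\tilde u_m(\mu)$ lives in the summand $\mathbb{M}_{P(\mu)+mb,b}\otimes\mathcal{M}_{\mu+2m,k+1}$, this pairing splits as a product of one Virasoro and one $\widehat{\mathfrak{sl}}(2)_{k+1}$ scalar product. By the definitions \eqref{eq:Vir conf block Whit} and \eqref{eq:sl2 conf block Whit}, these are exactly the four-point blocks $\mathrm{F}_b(\vec P_{\vec l},P(\lambda)+lb+sb/2;z)$ and $\Psi_{k+1}(\vec\lambda_{\vec l},\lambda+2l+s;x,z)$. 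The remaining scalar factor is read off by comparing with the expansion~\eqref{eq:Chain coset} of $\widetilde{\mathcal{U}}_{\nu,n}(1,1)\tilde u_{l+s/2}(\lambda)$ paired with $\tilde u_m(\mu)$, which reproduces $\tilde C_{m,n,l+s/2}(\mu,\nu,\lambda)$; combined with $\tilde B^{s}_{l}(\lambda)/\|\tilde u_{l+s/2}(\lambda)\|^2$ from the $b$-expansion, this gives the coefficient stated.

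The only non-formal point is bookkeeping of normalizations: the intermediate chain vectors are normalized by \eqref{eq:chain coset} (via the pairing with $\tilde u_m(\mu)$), whereas the four-point block definitions of Sections~\ref{ssec:VirasoroConformalBlocks}--\ref{ssec:sl2ConformalBlocks} normalize chain vectors by their top component. Checking compatibility requires comparing the two prescriptions at the top of each chain and tracking the prefactors $z^{\Delta(P(\mu))-\Delta(P(\nu))-\Delta(P(\lambda))}$ and $x^{(-\mu+\nu+\lambda)/2-m+n+l}$ coming from \eqref{eq:chain coset}; this is mechanical once one observes that both conventions are determined by the action on highest weight vectors.
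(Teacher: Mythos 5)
Your proposal is correct and follows essentially the same route as the paper, which obtains this corollary directly from Proposition~\ref{Prop:ChainDecomp} (the chain-vector expansions of $b(x,z)\tilde u_l(\lambda)$ and of $\widetilde{\mathcal{U}}_{\nu,n}$) together with Proposition~\ref{Th:bprop} identifying $b(x,z)$ as the degenerate $\Phi_{1,2}\otimes X_1$ field; you merely spell out the factorization on each coset summand, the orthogonality of the decomposition, and the normalization bookkeeping that the paper leaves implicit.
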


The Virasoro and \(\widehat{\mathfrak{sl}}(2)\) conformal blocks appeared in the formula \eqref{eq:b bl} are given in Corollaries \ref{Cor:Vir bl} and \ref{Cor:sl2 bl} correspondingly (with \(y=1\) in the latter). Therefore we get  
\begin{subequations}
	\begin{multline}
		\mathrm{F}_b\left(\vec{P}_{\vec{l}},P(\lambda) + lb + b/2;z\right)
		\Psi_{k+1}\left(\vec{\lambda}_{\vec{l}},\lambda + 2l + 1;x,z \right)
		=(1 - z)^{\frac{1}{2}(-r_1-r_2+r_3)} z^{\frac{-1 - r_3}{2}} \\ 
		\Big( {}_2F_1(\Vec{a}|z) \,{}_2F_1(-\Vec{a}-\Vec{r}|z) 
		- x\frac{a_1}{a_3} {}_2F_1(\Vec{a}+(1,0,1)|z)\,{}_2F_1(-\Vec{a}-\Vec{r}|z)\Big)
	\end{multline}
	\begin{multline}
		\mathrm{F}_b\left(\vec{P}_{\vec{l}},P(\lambda) + lb - b/2 ;z\right)
		\Psi_{k+1}\left(\vec{\lambda}_{\vec{l}},\lambda + 2l -1;x,z \right)
		= (1 - z)^{\frac{1}{2}(-r_1-r_2+r_3)} z^{\frac{1 + r_3}{2}} \\
		\Big(  \frac{a_2 - a_3}{a_3 - 1} z\, {}_2F_1(\hat{I}\Vec{a}|z)\, {}_2F_1(\hat{I}(-\Vec{a}-\Vec{r})|z) 
		+ x \, {}_2F_1(\hat{I}(\Vec{a}+(1,0,1))|z)\,{}_2F_1(\hat{I}(-\Vec{a}-\Vec{r})|z)\Big),
	\end{multline}
\end{subequations}
where
\begin{subequations} 
	\begin{align}
		a_1&= \frac{(\lambda + 2 l-\mu -2 m+\nu +2 n+1)}{2(\kappa + 1)},& ~~ r_1&= -l+m-n-\frac{1}{2},
		\\
		a_2&= \frac{ (\lambda + 2 l+\mu +2 m+\nu +2 n+3)}{2(\kappa + 1)},& ~~  r_2&= -l-m-n-\frac{1}{2},
		\\
		a_3&=  \frac{(\lambda +2 l+1)}{\kappa + 1},& ~~  r_3&=-2 l-1.
	\end{align}
\end{subequations}

On the other hand, it follows from the definition of \(b(x,z)\) in formula \eqref{eq:b01} that the action \(b(x,z)\) on \(\tilde u_{l}(\lambda)\) is given by Laurent series in \(z\) for \(l\in \mathbb{Z}\) and by Laurent series in \(z\) times \(z^{1/2}\) for \(l \in \mathbb{Z}+1/2\). Similarly singular part of OPE of \(b(x,z)\) and
$\widetilde{\mathcal{U}}_{\nu,n}(1,1)$ is either Laurent polynomial in \(z-1\) if \(n \in \mathbb{Z}\) or Laurent polynomial in \(z-1\) times \((z-1)^{1/2}\) for \(n \in \mathbb{Z}+1/2\). In any case, after factoring out the term \((1 - z)^{\frac{1}{2}(-r_1-r_2+r_3)} z^{\frac{-1 - r_3}{2}}\) we get a linear combination of products of \({}_2F_1\) hypergeometric functions which has trivial monodromy in \(z\) and can have only poles at \(z=0,1,\infty\). Hence it is a rational function in \(z\). This gives strong restriction on the coefficients in this linear combination. Namely
\begin{equation}
	\frac{\|\tilde u_{l+\frac{1}{2}}(\lambda)\|^2}{\|\tilde u_{l-\frac{1}{2}}(\lambda)\|^2}\frac{\tilde B_{l}^-(\lambda)}{\tilde B_{l}^+(\lambda)}\frac{\tilde C_{m,n,l-1/2}(\mu,\nu,\lambda)}{\tilde C_{m,n,l+1/2}(\mu,\nu,\lambda)}  \frac{a_2 - a_3}{a_3 - 1}=p_{\vec{r}}(\vec{a})
\end{equation}
where \(p_{\vec{r}}(\vec{a})\) is a coefficient found in Proposition~\ref{Prop:MonCanc}. Using Corollary~\ref{Cor:tilde U} and Proposition~\ref{Prop:b(x,z)} after straightforward computation we get a recursion formula.

\begin{proposition}\phantomsection \label{Prop:lrecurrence}
	We have
	\begin{multline}
		\frac{\tilde{C}_{m,n,l-\frac{1}{2}}(\mu,\nu,\lambda)}{\tilde{C}_{m,n,l+\frac{1}{2}}(\mu,\nu,\lambda)} 
		\\
		= (-1)^{-l+m-n-\frac{1}{2}}\frac{\mathtt{s}^{1,-\frac{1}{\kappa}}_{- l - m - n+\frac{1}{2}}(-\frac{2 + \lambda + \mu+\nu}{2\kappa})\mathtt{s}^{1,-\frac{1}{\kappa}}_{-l + m - n +\frac{1}{2}}(-\frac{\lambda - \mu+\nu}{2\kappa})\mathtt{s}^{1,-\frac{1}{\kappa}}_{- l - m + n +\frac{1}{2}}(-\frac{\lambda + \mu - \nu}{2\kappa})}{\mathtt{s}^{1,-\frac{1}{\kappa}}_{l-m-n+\frac{1}{2}}(-\frac{ - \lambda + \mu+\nu}{2\kappa}) \mathtt{s}^{1,-\frac{1}{\kappa}}_{-2 l+1}(-\frac{\lambda+1}{\kappa})\mathtt{s}^{1,-\frac{1}{\kappa}}_{-2 l}(-\frac{\lambda+1}{\kappa})}. 
	\end{multline}
%	{\color{red}Change of sign in $b_{0,1}(z)$ does not change recurrence relation!}
\end{proposition}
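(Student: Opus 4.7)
The plan is to use the four-point conformal block
\begin{equation*}
\mathcal{B}(x,z) \;=\; \Big\langle \tilde u_{m}(\mu),\, \widetilde{\mathcal{U}}_{\nu,n}(1,1)\, b(x, z)\, \tilde u_{l}(\lambda)\Big\rangle
\end{equation*}
as a bridge between the three-point functions at $l+\tfrac12$ and $l-\tfrac12$. By the chain decomposition in Proposition~\ref{Prop:ChainDecomp} together with Proposition~\ref{Th:bprop} (which identifies $b(x,z)$ as the tensor product of a $\widehat{\mathfrak{sl}}(2)_{k+1}$ degenerate field $X_1$ and a Virasoro $\Phi_{1,2}$), $\mathcal{B}(x,z)$ splits into a sum over $s\in\{\pm 1\}$ of products of Virasoro and $\widehat{\mathfrak{sl}}(2)_{k+1}$ conformal blocks, weighted by $\tilde C_{m,n,l+s/2}\,\tilde B^{s}_{l}/\|\tilde u_{l+s/2}(\lambda)\|^{2}$. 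Since only two intermediate channels survive (by fusion rules for the degenerate field), the whole problem reduces to identifying the ratio of the two weights.

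The next step is to substitute the explicit hypergeometric expressions from Corollaries~\ref{Cor:Vir bl} and~\ref{Cor:sl2 bl}. After factoring out the prefactor $(1-z)^{(-r_1-r_2+r_3)/2}\, z^{-(1+r_3)/2}$ displayed in the excerpt, $\mathcal{B}(x,z)$ becomes a bilinear combination in $_2F_1(\vec a|z)\,{}_2F_1(-\vec a-\vec r|z)$ and $z\cdot {}_2F_1(\hat I\vec a|z)\,{}_2F_1(\hat I(-\vec a-\vec r)|z)$ (and their $x$-companions obtained by shifting $\vec a\mapsto\vec a+(1,0,1)$). Crucially, the physical matrix element $\mathcal{B}(x,z)$ has only integer or half-integer exponents at $z=0$ (from the definition \eqref{eq:b01} of $b(x,z)$ acting on $\tilde u_l(\lambda)$) and at $z=1$ (from the OPE with $\widetilde{\mathcal{U}}_{\nu,n}(1,1)$). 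Thus the bilinear combination, stripped of the explicit prefactor, must be monodromy-free around $0$ and $1$, hence a rational function of $z$ on $\mathbb{P}^1$.

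Monodromy-freeness around $z=0$ alone fixes the ratio of the two $s=\pm1$ coefficients uniquely: the two basic solutions of the hypergeometric equation behave like $1$ and $z^{1-a_3}$, and only one linear combination of bilinears cancels the non-integer power $z^{1-a_3}$. Combining this with the analogous cancellation at $z=1$ (or equivalently with the known connection formulas for $_2F_1$) produces the required coefficient $p_{\vec r}(\vec a)$ invoked in the excerpt. This is the main obstacle: one needs the explicit connection-matrix computation and the identification
\begin{equation*}
\frac{\|\tilde u_{l+1/2}(\lambda)\|^{2}}{\|\tilde u_{l-1/2}(\lambda)\|^{2}}\,\frac{\tilde B_{l}^{-}(\lambda)}{\tilde B_{l}^{+}(\lambda)}\,\frac{\tilde C_{m,n,l-1/2}(\mu,\nu,\lambda)}{\tilde C_{m,n,l+1/2}(\mu,\nu,\lambda)}\,\frac{a_{2}-a_{3}}{a_{3}-1}\;=\;p_{\vec r}(\vec a).
\end{equation*}

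Finally, I would substitute the norm ratio from Corollary~\ref{Cor:tilde U} and the values of $\tilde B^{\pm}_{l}(\lambda)$ from Proposition~\ref{Prop:b(x,z)} (each equal to the appropriate norm up to a sign $(-1)^{l(2l-1)}$), and solve for the desired ratio. The remaining work is algebraic bookkeeping: reassemble the resulting product of Gamma functions in $a_1,a_2,a_3,r_1,r_2,r_3$ into the compact segment products $\mathtt{s}^{1,-1/\kappa}_{n}$ using definition \eqref{GammaS}, track the numerology of the sign $(-1)^{-l+m-n-1/2}$ (contributed by the ratio $\tilde B^-_l/\tilde B^+_l$ and by the signs in the $_2F_1$ connection coefficients), and verify that the arguments of the four $\mathtt{s}$-factors match those in the statement of the proposition.
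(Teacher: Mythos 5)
Your proposal follows essentially the same route as the paper: insert the degenerate field $b(x,z)$ into a four-point block, decompose it via Proposition~\ref{Prop:ChainDecomp} and Proposition~\ref{Th:bprop} into the two channels $s=\pm1$ weighted by $\tilde C_{m,n,l+s/2}\,\tilde B^{s}_{l}/\|\tilde u_{l+s/2}(\lambda)\|^{2}$, substitute the hypergeometric expressions of Corollaries~\ref{Cor:Vir bl} and~\ref{Cor:sl2 bl}, impose single-valuedness to extract the coefficient $p_{\vec r}(\vec a)$ of Proposition~\ref{Prop:MonCanc}, and finish with Corollary~\ref{Cor:tilde U} and Proposition~\ref{Prop:b(x,z)}.

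One step is misstated, though it does not sink the argument because you also invoke the condition at $z=1$ and the connection formulas. Monodromy-freeness around $z=0$ does \emph{not} fix the ratio of the two channel coefficients: within each channel the non-integer exponents of the Virasoro block and the $\widehat{\mathfrak{sl}}(2)_{k+1}$ block cancel against each other (after stripping the common prefactor $(1-z)^{(-r_1-r_2+r_3)/2}z^{(-1-r_3)/2}$, the $s=-1$ bilinear carries only the integer extra power $z^{1+r_3}$ relative to the $s=+1$ one), so at $z=0$ both bilinears already have trivial monodromy separately and no linear relation is imposed there. The constraint that actually determines $p_{\vec r}(\vec a)$ is the integrality of exponents at $z=1$ (coming from the OPE of $b(x,z)$ with $\widetilde{\mathcal{U}}_{\nu,n}(1,1)$); in the paper this is implemented by expanding at $z=\infty$ via the connection formula \eqref{eq:hgeominf} and demanding that the cross terms with non-integer powers $z^{\pm(a_1-a_2)+\cdots}$ cancel between $H^{(1)}$ and $p_{\vec r}(\vec a)H^{(2)}$, which then forces rationality in $z$. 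With that correction, the rest of your outline (feeding in the norm ratio and $\tilde B^{\pm}_l$, and repackaging the Gamma factors into the segment products $\mathtt{s}^{1,-1/\kappa}_{n}$) matches the paper's proof.
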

This finishes the proof of Theorem \ref{Th:main}. Recall that the function \(\mathtt{s}\) stands for the product over the segment and was defined in formula~\eqref{GammaS}.

\subsubsection{Conformal block relations and blowup equations}\label{sssec:BlowupRelations}
Consider now the conformal block of the form 
\begin{equation}
	\Big\langle \tilde u_{0}(\mu_4), \widetilde{\mathcal{U}}_{\mu_3,0}(1,1)\, \widetilde{\mathcal{U}}_{\mu_2,0}(x,z) \,\tilde u_{0}(\mu_1)\Big\rangle.
\end{equation} 
Note that the \(\widehat{\mathfrak{sl}}(2)_1\) part of the vectors \(u_{0}(\mu_4), u_{0}(\mu_1)\) and vertex operators \(\widetilde{\mathcal{U}}_{\mu_3,0}, \widetilde{\mathcal{U}}_{\mu_2,0}\) are trivial. Hence this conformal block is equal to  \(\Psi_k\left(\vec{\mu},\lambda;x, z\right)\). On the other hand, we can use decomposition \eqref{eq:W1 W2} and write
\begin{equation}\label{eq:blowup}
	\Psi_k\left(\vec{\mu},\lambda;x, z\right)
	=
	\sum_{l \in \mathbb{Z}}\frac{\tilde C_{0,0, l}(\mu_4,\mu_3,\lambda)\tilde C_{l,0,0}(\lambda,\mu_2,\mu_1)}{\|\tilde u_{l}(\lambda)\|^2}
	\Psi_{k+1}\left(\vec{\mu},\lambda+2l;x, z\right)\mathrm{F}_b\left(\vec{P},P(\lambda)+lb;z \right), 	
\end{equation}
where 
\begin{equation}
	\vec{P}=(P(\mu_1),P(\mu_2),P(\mu_3),P(\mu_4)),\quad \vec{\mu}=(\mu_1,\mu_2,\mu_3,\mu_4).
\end{equation}
Due to AGT correspondence the function \(\mathrm{F}_b\) is equal (up to simple \(U(1)\) factor) to Nekrasov partition function for \(SU(2)\) theory with \(N_f=4\) \cite{Alday:2010Liouville} and the function \(\Psi_k\) is equal to Nekrasov partition function for \(SU(2)\) theory with presence of a surface defect \cite{Alday:2010Affine}, \cite{Nekrasov:2017bps}, \cite{Nekrasov:2022surface}. In this geometric language the relation \eqref{eq:blowup} is a blowup relation with the presence of the surface defect, suggested in \cite{Nekrasov:2023blowups}, \cite{Jeong:2020} (equations without defect were proven by Nakajima and Yoshioka in the seminal paper~\cite{Nakajima:2005instanton}).

\begin{remark}\label{Rem:norm}
	Note that in relation \eqref{eq:blowup} the coefficients are given by rational functions, essentially the products of triangle functions \(\mathtt{t}\). It appears there is another normalization of conformal blocks in which these coefficients are equal to one. On the gauge theory side, the corresponding Nekrasov functions are called \emph{full partition functions}. This is actually normalization used in \cite{Nekrasov:2023blowups}, \cite{Jeong:2020}, for the blowup relation without surface defect see \cite[sec. 4.4]{Nakajima:2003lectures}. We briefly recall some choice of such normalization in Appendix \ref{sec:Barnes}.	
\end{remark}

As another example let us consider one-point torus conformal blocks defined as 
\begin{equation}
	\Psi_k^{\text{tor}}(\lambda,\lambda';x,q)=\operatorname{Tr} \left(q^{L_0}x^{h_0}\mathcal{V}_\lambda(1,1)\right) |_{\mathcal{M}_{\lambda',k}},\quad \mathrm{F}_b^{\text{tor}}(P,P';q)=\operatorname{Tr} \left(q^{L_0}\Phi_\Delta(z)\right) |_{\mathbb{M}_{P',b}}.
\end{equation}
Then we have 
\begin{multline}\label{eq:blowup tor}
	\left(\frac{\sum_{n \in \mathbb{Z}} q^{n^2} x^{2n}}{\prod_{m = 1}^{\infty} (1 - q^m)}\right)\Psi_k^{\text{tor}}(\nu,\lambda;x,q)=\operatorname{Tr} \left(q^{L_0^{(1)}+L_0^{(2)}}x^{h_0^{(1)}+h_0^{(2)}} \widetilde{\mathcal{U}}_{\nu,0}(x,z)\right) |_{\mathcal{L}_{0, 1} \otimes  \mathcal{M}_{\lambda, k}}
	\\
	=\sum_{l \in \mathbb{Z}}  \frac{\tilde C_{l,0, l}(\lambda,\nu,\lambda)}{\|\tilde u_{l}(\lambda)\|^2} \Psi_{k+1}^{\text{tor}}(\nu,\lambda+2l;x,q) \mathrm{F}_b^{\text{tor}}(P(\nu),P(\lambda)+lb;q),
\end{multline}
see character formulas \eqref{eq:char level 1}. Same as above, due to AGT correspondence this can be viewed as a blowup relation for $SU(2)$ theory with adjoint matter with the presence of the surface defect. 

Clearly one can generalize such relations for more point conformal blocks on sphere or torus. In the next section, we will also consider Whittaker limit of these relations.

\section{Kyiv formula for Painlevé $\mathrm{III}_3$ tau-function}  \label{sec:Kyiv}

In this section we deduce Kyiv formulas for the tau function from the coset (or blowup) relations \eqref{eq:blowup} closely following \cite{Nekrasov:2023blowups}, \cite{Jeong:2020}. We restrict ourselves to the case of Painlev\'e \(\mathrm{III}_3\) which corresponds to Whittaker limit of conformal blocks. First, we recall Hamiltonian of the Painlev\'e \(\mathrm{III}_3\) and show the relation between tau function and generating function of canonical transformation. Then we define 
Whittaker vectors and Whittaker limits of conformal blocks. The Whittaker limit of $\widehat{\mathfrak{sl}}(2)$ conformal block satisfies the non-stationary affine Toda equation. In the classical \(k \rightarrow \infty\) this leads to solution of Hamilton-Jacobi equation for Painlevé $\mathrm{III}_3$ Hamiltonian. Taking the classical limit of relations \eqref{eq:blowup} we get the Kyiv formula.

\subsection{Generating function and tau function for Painlev\'e \(\mathrm{III}_3\) }
Recall some facts about Hamiltonian mechanics (see e.g. \cite{Arnold2013} for the reference).

Consider extended phase space with coordinates $(x,p,z)$ where $x$ is a coordinate, $p$ is a momentum, and $z$ is a time. The degenerate Poisson bracket is defined by $\{x, p\} = 1,\{x, z\} = \{p, z\} =0$. Hamiltonian dynamics is defined by one function which is called Hamiltonian $H(x,p;z)$ and differential equations called Hamilton equations
\begin{equation}\label{eq:hamiltoneq}
     \frac{d p}{d z}= -\left(\frac{\partial H}{\partial x}\right)_{p,z}; ~~~  \frac{d x}{d z}= \left(\frac{\partial H}{\partial p}\right)_{x,z}.
\end{equation}
Here and below by additional indices in partial derivatives, we emphasize the variables that are considered to be fixed. 

Assume that there is another pair of functions \(\alpha, \beta\) on the extended phase space and a function \(S(x,\alpha;z)\) such that 
\begin{equation}\label{eq:p beta}
	p= \left(\frac{\partial S}{\partial x}\right)_{\alpha,z}, \quad \beta= -\left(\frac{\partial S}{\partial \alpha}\right)_{x,z},
\end{equation}
and Hamilton-Jacobi equation holds
\begin{equation}\label{eq:Hamilton Jacobi}
	\left(\frac{\partial S}{\partial z}\right)_{\alpha,x} + H\left(x,  \left(\frac{\partial S}{\partial x}\right)_{\alpha,z}; z\right) = 0.
\end{equation}  
The function \(S(x,\alpha;z)\) is a called a generating function of a canonical transformation from coordinates \((x,p,z)\) to \((\alpha,\beta,z)\). The following result is standard
\begin{theorem}\label{Th:HJ}
	In the assumption above, the Hamiltonian flow equations are equivalent to the condition that \(\alpha,\beta\)  are integrals of motions, i.e. \(\partial_z \alpha=\partial_z \beta=0\).
\end{theorem}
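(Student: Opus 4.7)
The plan is to establish the equivalence by differentiating the Hamilton-Jacobi equation in the two ``spatial'' directions and combining the results with the total $z$-derivatives of the defining relations \eqref{eq:p beta} along a trajectory. Since $S$ generates a canonical transformation, the key identities will be obtained from the cross-derivatives of $S$, and the non-degeneracy assumption $\partial^2 S/\partial x\,\partial\alpha\neq 0$ (implicit in the statement that \eqref{eq:p beta} defines a genuine change of variables) will let us pass between the two coordinate systems.

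For the forward direction I would start from $\partial_x$ of \eqref{eq:Hamilton Jacobi}, which, treating $p$ as the function $\partial S/\partial x$ of $(x,\alpha,z)$, gives
\begin{equation*}
\frac{\partial^2 S}{\partial z\,\partial x} + \left(\frac{\partial H}{\partial x}\right)_{p,z} + \left(\frac{\partial H}{\partial p}\right)_{x,z} \frac{\partial^2 S}{\partial x^2} = 0.
\end{equation*}
On the other hand, differentiating $p=\partial S/\partial x$ in $z$ along a trajectory yields
\begin{equation*}
\frac{dp}{dz} = \frac{\partial^2 S}{\partial x^2}\,\frac{dx}{dz} + \frac{\partial^2 S}{\partial x\,\partial\alpha}\,\frac{d\alpha}{dz} + \frac{\partial^2 S}{\partial x\,\partial z}.
\end{equation*}
Assuming Hamilton's equations \eqref{eq:hamiltoneq}, substitution and cancellation leave $(\partial^2 S/\partial x\,\partial\alpha)\,d\alpha/dz=0$, so $d\alpha/dz=0$ by the non-degeneracy. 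For $\beta$ I would then differentiate \eqref{eq:Hamilton Jacobi} in $\alpha$ to get $\partial^2 S/\partial z\,\partial\alpha + (\partial H/\partial p)\,\partial^2 S/\partial x\,\partial\alpha = 0$, and differentiate $\beta=-\partial S/\partial\alpha$ along the trajectory; using $d\alpha/dz=0$ and $dx/dz=\partial H/\partial p$ one reads off $d\beta/dz=0$.

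For the converse I would run the same two computations backwards. Constancy of $\beta$ combined with the $\alpha$-derivative of \eqref{eq:Hamilton Jacobi} gives $dx/dz=\partial H/\partial p$ directly, and constancy of $\alpha$ together with the $x$-derivative of \eqref{eq:Hamilton Jacobi} then gives $dp/dz=-\partial H/\partial x$. The ``main obstacle'' is really only bookkeeping: one must carefully track which variables are held fixed in each partial derivative, because $\partial H/\partial x$ means two different things depending on whether $p$ or $\alpha$ is the second coordinate, and similarly for the mixed partials of $S$. Beyond that the argument is purely formal and uses nothing about the specific form of $H$, so I would write the proof once and for all in a few lines, emphasizing at the outset the non-degeneracy hypothesis that makes $(x,p,z)\leftrightarrow(\alpha,\beta,z)$ a bona fide change of coordinates.
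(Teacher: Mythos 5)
Your argument is correct: the chain-rule computations in both directions check out, and the equivalence does hinge exactly on the non-degeneracy $\partial^2 S/\partial x\,\partial\alpha\neq 0$, which you rightly flag as implicit in \eqref{eq:p beta} defining a genuine change of variables (the paper needs the same hypothesis for $(x,p,z)\leftrightarrow(\alpha,\beta,z)$ to be a coordinate change). However, your route is genuinely different from the paper's. The paper avoids all bookkeeping of mixed partials by working with the Poincar\'e--Cartan one-form: relations \eqref{eq:p beta} and \eqref{eq:Hamilton Jacobi} are packaged into the single identity $\eta = p\,dx - H\,dz = \beta\,d\alpha + dS$, so that $d\eta = d\beta\wedge d\alpha$; Hamilton's equations \eqref{eq:hamiltoneq} are characterized as the kernel of the two-form $d\eta$ in the $(x,p,z)$ coordinates, while in the $(\alpha,\beta,z)$ coordinates that kernel is visibly spanned by $\partial_z$, which is precisely the statement that $\alpha$ and $\beta$ are conserved. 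That argument is shorter, coordinate-free, and symmetric in the two directions (no separate forward and converse steps), whereas your proof is more elementary --- it uses nothing beyond the chain rule --- at the cost of the careful tracking of which variables are held fixed, and of having to invert the two differentiated identities via $S_{x\alpha}\neq 0$ in each direction separately. Either write-up would be acceptable; if you keep yours, state the non-degeneracy hypothesis explicitly at the outset, as you propose.
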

\begin{proof}
	Equations \eqref{eq:p beta} and \eqref{eq:Hamilton Jacobi} imply that   
	\begin{equation}
		\eta= p dx-H dz=\beta d\alpha+dS.
	\end{equation}
	The Hamiltonian equations \eqref{eq:hamiltoneq} correspond to vector field in the kernel of 2-form \(d\eta\) in \((x,p,z)\). In the \((\alpha,\beta,z)\) coordinates the kernel of \(d\eta\) is given by the vector field \(\partial_z\).
\end{proof}

The Hamiltonian of the Painlev\'e $\mathrm{III}_3$ equation has the form (see e.g. \cite{GIL13})
\begin{equation}\label{eq:Painleve}
    H = \frac{1}{z}(x^2 p^2 - x^{-1}z - x).
\end{equation}
\begin{define}
    The function $\tau(x,p;z)$ such that $\dfrac{d \log \tau}{d z} = H(x,p;z)$ is called Painlev\'e $\mathrm{III}_3$ tau-function.
\end{define}
%In terms of $\alpha,\beta$ it is convenient to rewrite it $\left(\frac{\partial \log \tau}{\partial z}\right)_{\alpha,\beta} = H$.

It appears that there is a simple relation between generating function \(S\) satisfying the Hamilton-Jacobi equation and  Painlev\'e $\mathrm{III}_3$ $\tau$-function. Similar relation for the Painlev\'e $\mathrm{VI}$ case given in \cite[sec. 4.3]{Nekrasov:2023blowups}, see also \cite[sec. 7]{Its:2016connection}.
\begin{proposition}\phantomsection \label{Th:tauPainleve}
Suppose function $S(x,\alpha;z)$ satisfies Hamilton-Jacobi equation \eqref{eq:hamiltoneq} for Painlev\'e~$\mathrm{III}_3$ Hamiltonian, functions \(\beta,p\) are defined by relations~\eqref{eq:p beta} and functions $\alpha, \beta$ are integrals of motion. Then 
    \begin{equation}
        \log\tau  =  S - \alpha \beta - 4 z \left(\frac{\partial S}{\partial z}\right)_{\alpha,x} - 2 x \left(\frac{\partial S}{\partial x}\right)_{\alpha,z}.
    \end{equation}
\end{proposition}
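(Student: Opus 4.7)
The plan is to verify directly that the $z$-derivative of the right-hand side along the Hamiltonian flow equals $H$; integrating this relation then yields $\log\tau$ up to an additive constant that can be absorbed into the normalization of $\tau$.

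Denote $F = S - \alpha\beta - 4zS_z - 2xS_x$, with partial derivatives in the sense of \eqref{eq:p beta}--\eqref{eq:Hamilton Jacobi}, and evaluate $dF/dz$ along a Hamiltonian trajectory. From \eqref{eq:p beta} we have $S_x = p$, and the Hamilton-Jacobi equation \eqref{eq:Hamilton Jacobi} gives $S_z = -H$. Since $\alpha, \beta$ are conserved by hypothesis, the term $\alpha\beta$ contributes nothing. Using the chain rule, Hamilton's equation $\dot{x} = H_p$, and the fact that $H$ is conserved along its own flow up to its explicit $z$-dependence (so $dH/dz = H_z$), one obtains
\begin{equation*}
    \frac{dS}{dz} = pH_p - H, \qquad \frac{d(zS_z)}{dz} = -H - zH_z, \qquad \frac{d(xS_x)}{dz} = \frac{d(xp)}{dz} = pH_p - xH_x,
\end{equation*}
where the last identity uses $\dot{p} = -H_x$. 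Summing up yields
\begin{equation*}
    \frac{dF}{dz} = (pH_p - H) - 4(-H - zH_z) - 2(pH_p - xH_x) = 3H - pH_p + 4zH_z + 2xH_x.
\end{equation*}

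Thus the claim $dF/dz = H$ reduces to the Euler-type identity $2H + 4zH_z + 2xH_x - pH_p = 0$ for the Hamiltonian \eqref{eq:Painleve}. This identity is the infinitesimal version of the quasi-homogeneity $H(\lambda^4 z, \lambda^2 x, \lambda^{-1}p) = \lambda^{-2}H(x,p;z)$ of $H = x^2 p^2/z - 1/x - x/z$, and is straightforwardly verified term by term.

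The only genuinely nontrivial step is spotting this specific scaling symmetry of the Painlev\'e $\mathrm{III}_3$ Hamiltonian, with weights $(4,2,-1;-2)$ for $(z,x,p;H)$; the coefficients $4$ and $2$ in the statement of the proposition correspond exactly to the weights of $z$ and $x$. The remaining manipulations are routine uses of Hamilton's and Hamilton-Jacobi equations. An analogous formula should hold for other Painlev\'e Hamiltonians (compare the Painlev\'e~VI version cited from \cite{Nekrasov:2023blowups} in the proposition) with appropriately adjusted scaling weights.
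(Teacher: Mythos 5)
Your proof is correct and follows essentially the same route as the paper's: both compute the $z$-derivative of $F = S - \alpha\beta + 4zH - 2xp$ along the Hamiltonian flow, using $\left(\frac{\partial S}{\partial x}\right)_{\alpha,z}=p$, $\left(\frac{\partial S}{\partial z}\right)_{\alpha,x}=-H$ and Hamilton's equations, and reduce the claim to the identity $2H + 4z H_z + 2x H_x - p H_p = 0$. The only cosmetic difference is that you recognize this identity as the Euler relation for the quasi-homogeneity of $H$ with weights $(4,2,-1;-2)$, whereas the paper verifies it directly from the explicit formula \eqref{eq:Painleve}.
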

\begin{proof}
Let us denote by $F = S - \alpha \beta - 4 z \left(\frac{\partial S}{\partial z}\right)_{\alpha,x} - 2 x \left(\frac{\partial S}{\partial x}\right)_{\alpha,z} = S - \alpha \beta  + 4 z H - 2 x p$. Then
\begin{multline}
    \left(\frac{\partial F}{\partial z}\right)_{\alpha,\beta} = \left(\frac{\partial S}{\partial z}\right)_{\alpha,\beta} + 4 H + 4 z \left(\frac{\partial H}{\partial z}\right)_{\alpha,\beta} - 2 \left(\frac{\partial(x p)}{\partial z}\right)_{\alpha,\beta} = 
    \\
    =3H + 4z \left(\frac{\partial H}{\partial z}\right)_{p,x} + 2 x \left(\frac{\partial H}{\partial x}\right)_{p,z} -  p \left(\frac{\partial H}{\partial p}\right)_{x,z} = H,
\end{multline}
where we used
\begin{equation}
     \left(\frac{\partial S}{\partial z}\right)_{\alpha,\beta}  =  \left(\frac{\partial S}{\partial z}\right)_{\alpha,x} +  \left(\frac{\partial S}{\partial x}\right)_{\alpha,z}  \left(\frac{\partial x}{\partial z}\right)_{\alpha,\beta} =  - H  + p  \left(\frac{\partial H}{\partial p}\right)_{x,z}.
\end{equation}
and formula (\ref{eq:Painleve}) for Hamiltonian.
\end{proof}

\subsection{Whittaker limit of three-point functions}
\begin{define}
    The $\widehat{\mathfrak{sl}}(2)$ Whittaker vector $\mathcal{W}_\lambda^k(x,z)\in \overline{\mathcal{M}_{\lambda,k}}$ is a vector defined by relations 
    \begin{equation}
        e_0 \mathcal{W}_\lambda^k(x,z) = x \mathcal{W}_\lambda^k(x,z),\quad f_1 \mathcal{W}_\lambda^k(x,z) = z x^{-1} \mathcal{W}_\lambda^k(x,z).
    \end{equation}
\end{define}
\begin{define}
    The Virasoro Whittaker vector $\mathbb{W}_P^b(x,z)\in \overline{\mathbb{M}_{P,b}}$ is a vector defined by relations 
    \begin{equation}
        L_1 \mathbb{W}_P^b(z) = z \mathbb{W}_P^b(z),\quad L_2 \mathbb{W}_P^b(z) = 0.
    \end{equation}
\end{define}
%%\begin{proposition}\phantomsection \label{Th:whtexist1}
%%	If $\lambda, k$ are generic then there exists a unique up to constant $\widehat{\mathfrak{sl}}(2)$ Whittaker vector, moreover  $\langle\mathcal{W}(x,z)|v_{\lambda,k}\rangle \neq 0$.%x^{-\frac{\lambda}{2}} z^{\frac{\lambda(\lambda + 2)}{4\kappa}}
%%\end{proposition}
\begin{proposition}\phantomsection \label{Th:whtexist}
	(a) If $\lambda, k$ are generic then there exists a unique up to constant $\widehat{\mathfrak{sl}}(2)$ Whittaker vector, moreover  $\big\langle\mathcal{W}_\lambda^k(x,z),v_{\lambda,k}\big\rangle \neq 0$.
	%x^{-\frac{\lambda}{2}} z^{\frac{\lambda(\lambda + 2)}{4\kappa}}
	
	(b) If $P,b$ are generic then there exists a unique up to constant Virasoro Whittaker vector, moreover $\big\langle\mathbb{W}_P^b(z),v_{P,b}\big\rangle \neq 0$.
\end{proposition}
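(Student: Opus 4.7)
The plan is to treat (b) in detail and observe that (a) follows by an essentially parallel argument. The strategy has three phases: extend the two imposed constraints to a character of the full ``Iwahori-type'' positive subalgebra, translate the resulting Whittaker condition into a recursive system on $L_0$-graded components of $W$, and solve that recursion using irreducibility of the Verma module for generic parameters.

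First I would verify that the two assumed relations already force $L_k W = 0$ for every $k \geq 3$. For $k \geq 3$ the identity $L_k = (2-k)^{-1}[L_1, L_{k-1}]$ (no central term appears, since $1+(k-1) = k \neq 0$) gives inductively $L_k W = -(2-k)^{-1} z L_{k-1} W = 0$ once $L_1 W = zW$ and $L_2 W = 0$ are imposed. In case (a), the single relation $[e_0, f_1] = h_1$ and the Whittaker assumptions directly yield $h_1 W = e_0 f_1 W - f_1 e_0 W = zx^{-1}\cdot xW - x\cdot zx^{-1} W = 0$; then iterated brackets of $h_1$ with $e_0, f_1$ produce $e_n, h_n, f_{n+1}$ for all $n \geq 1$, each acting as zero on $W$. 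Thus in both cases $W$ is determined by a character of the appropriate Iwahori subalgebra.

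Next I would expand $W = \sum_{n \geq 0} W_n$ with respect to the $L_0$-grading on $\overline{\mathbb{M}_{P,b}}$ (respectively $\overline{\mathcal{M}_{\lambda,k}}$), normalized by $W_0 = v_{P,b}$ (respectively $v_{\lambda, k}$). This normalization already settles the non-vanishing $\langle W, v\rangle = 1$, and reduces the Whittaker condition to the recursion
\begin{equation}
L_1 W_{n+1} = z W_n, \qquad L_2 W_{n+2} = 0, \qquad n \geq 0,
\end{equation}
together with the analogous system $e_0 W_{n+1} = x W_n$, $f_1 W_{n+2} = z x^{-1} W_n$ in case (a). Uniqueness is then immediate: the difference of two solutions has each positive-degree homogeneous component annihilated by the whole positive subalgebra, i.e.~is a singular vector, and for generic parameters Theorem~\ref{Th:Delta_mn} (respectively Theorem~\ref{Th:KacKazhdan}) rules out positive-degree singular vectors in the Verma module.

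The hard part will be existence. At each step one must produce $W_{n+1}$ satisfying \emph{both} $L_1 W_{n+1} = zW_n$ and $L_2 W_{n+1} = 0$, and the difficulty is that the linear map $\Psi_n \colon \mathbb{M}_{P,b}[n+1] \to \mathbb{M}_{P,b}[n]\oplus\mathbb{M}_{P,b}[n-1]$, $u\mapsto (L_1 u, L_2 u)$, need not be surjective. My plan is to use the non-degenerate Shapovalov form (available for generic $(P,b)$) to dualize: $(zW_n, 0)$ lies in the image of $\Psi_n$ iff $\langle W_n, \alpha\rangle = 0$ for every pair $(\alpha, \beta) \in \mathbb{M}_{P,b}[n] \oplus \mathbb{M}_{P,b}[n-1]$ with $L_{-1}\alpha + L_{-2}\beta = 0$ in $\mathbb{M}_{P,b}[n+1]$. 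This compatibility is essentially a cocycle identity that reduces, via the Virasoro commutators and the inductive hypothesis on $W_0, \ldots, W_n$, to the tautology $\langle z W_n, \alpha\rangle = \langle L_1 W_{n+1}, \alpha\rangle = \langle W_{n+1}, L_{-1}\alpha\rangle = -\langle W_{n+1}, L_{-2}\beta\rangle = -\langle L_2 W_{n+1}, \beta\rangle = 0$ once one carefully untangles what is known versus what is being constructed. In case (a) the same mechanism applies with $(L_1, L_2)$ replaced by $(e_0, f_1)$, exploiting the free polynomial structure of the negative part of $\mathcal{M}_{\lambda,k}$ to ensure that $e_{-1}$ and $f_0$ (the formal adjoints of $e_0$ and $f_1$) act injectively on each graded piece for generic $(\lambda,k)$.
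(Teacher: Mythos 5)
Your overall mechanism (Shapovalov duality plus absence of singular vectors for generic parameters) is the same one the paper relies on, but as written the proposal has two genuine problems. First, your setup of case (a) is incorrect: with respect to the $L_0$-grading the operator $e_0$ has degree zero, so the componentwise content of $e_0W=xW$ is $e_0W_n=xW_n$, not $e_0W_{n+1}=xW_n$ (and $f_1$ has degree $-1$, not $-2$); moreover the $L_0$-graded pieces of $\mathcal{M}_{\lambda,k}$ are infinite-dimensional (the level-zero piece is spanned by $f_0^jv_{\lambda,k}$), so your finite-dimensional recursion does not apply and your normalization $W_0=v_{\lambda,k}$ is inconsistent with your own first equation. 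The affine case needs a different grading — e.g.\ the principal-type grading in which $e_0$ and $f_1$ both lower degree by one and graded pieces are finite-dimensional, equivalently one must work in the completion allowing infinite sums in both the $L_0$- and $h_0$-directions.

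Second, and more importantly, the existence step is circular: the ``tautology'' you display verifies the orthogonality condition needed to construct $W_{n+1}$ by pairing against $W_{n+1}$ itself, which is exactly the object whose existence is in question; the closing remark about injectivity of $e_{-1}$ and $f_0$ does not deliver the required surjectivity of $u\mapsto(e_0u,f_1u)$ onto the relevant vector either. The missing idea — and the one the paper's two-line proof implicitly uses — is to dualize \emph{before} constructing anything: the Whittaker conditions say precisely that the functional $\psi=\langle W,\cdot\rangle$ satisfies $\psi(L_{-1}u)=z\psi(u)$, $\psi(L_{-k}u)=0$ for $k\ge2$ (respectively $\psi(f_0u)=x\psi(u)$, $\psi(e_{-1}u)=zx^{-1}\psi(u)$). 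These assignments define a one-dimensional character of the opposite nilpotent subalgebra, because they vanish on all commutators (in the affine case $f_0$ and $e_{-1}$ generate that subalgebra and all other modes are iterated brackets), and since the Verma module is \emph{free of rank one} over the enveloping algebra of that subalgebra, $\psi$ exists and is unique with $\psi(v)=1$ — no compatibility check is needed. Non-degeneracy of the Shapovalov form on each (finite-dimensional) graded piece, which is where genericity enters via Theorem~\ref{Th:KacKazhdan} and Theorem~\ref{Th:Delta_mn}, then converts $\psi$ back into the vector $W$ in the completion and gives both uniqueness and $\langle W,v\rangle\neq0$. Your uniqueness argument via singular vectors is fine for case (b), but without the freeness/character observation (or an equivalent direct verification of your orthogonality condition) the existence claim remains unproven.
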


Proposition \ref{Th:whtexist}(a) follows from the fact that for generic values of parameters Verma module \(\mathcal{M}_{\lambda,k}\) is irreducible. Hence the  Shapovalov on it is non-degenerate and any eigenvector of the nilpotent subalgebra  \(\widehat{\mathfrak{n}}\) can be found uniquely up to normalization in the completion $\overline{\mathcal{M}_{\lambda,k}}$. The proof of  Proposition \ref{Th:whtexist}(b) is analogous.

On the hand we can consider limit of chain vector and get the Whittaker vectors
\begin{subequations}
	\begin{align}
		&\lim_{\lambda_2\rightarrow \infty} 	z^{\Delta_1+\Delta_2}x^{(-\lambda_1-\lambda_2)/2}\mathcal{W}^{k;\lambda_2}_{\lambda_1,\lambda_3}(\lambda_2^{-1}x, \lambda_2^{-2} z)=\mathcal{W}_{\lambda_{3}}^k(x,z),			
		\\  
		&\lim_{P_2\rightarrow \infty} z^{\Delta_1+\Delta_2} \mathbb{W}^{b;P_2}_{P_1,P_3}(P_2^{-2}z)=\mathcal{W}_{P_{3}}^b(z).
	\end{align}	
\end{subequations}

Using this limit description we can first define the Whittaker limit of conformal blocks (taking the limit of formulas \eqref{eq:sl2 conf block Whit}, \eqref{eq:Vir conf block Whit})
\begin{define}
	The Whittaker limit of \(\widehat{\mathfrak{sl}}(2)\) conformal block is defined as 
	\begin{equation}
		\Psi_k(\lambda; x, z) = \Big\langle \mathcal{W}_\lambda^k(1,1), \mathcal{W}_\lambda^k(x,z)\Big\rangle.
	\end{equation} 
\end{define}
\begin{define}
	Consider the Whittaker vector $\mathbb{W}(z)\in \mathbb{M}_{P,b}$ then we define Virasoro Whittaker conformal block by formula
	\begin{equation}
		\mathrm{F}_b(P; z) = \Big\langle \mathbb{W}_P^b(1),\mathbb{W}_P^b(z) \Big\rangle.
	\end{equation} 
\end{define}

For the coset decomposition, we can take a limit of decomposition \eqref{eq:Chain coset} and get 
\begin{equation}\label{eq:whit decomp}
	v_0\otimes \mathcal{W}(x,z) =  \sum_{l\in \mathbb{Z}} \frac{1}{ \mathtt{t}^{1,-{1}/{\kappa}}_{-2 l}( \frac{\lambda+1}{-\kappa})\|\tilde u_l(\lambda)\|^2}\mathbb{W}_{P(\lambda)+lb}^b(z)\otimes \mathcal{W}_{\lambda+2l}^k(x,z).
\end{equation}   
Here the tensor product of Whittaker vectors \(\mathbb{W}_{P(\lambda)+lb}^b(z)\otimes \mathcal{W}_{\lambda+2l}^k(x,z) \in \overline{\mathbb{M}_{P+lb,b}\otimes \mathcal{M}_{\lambda + 2l, k + 1}}\) is normalized as 
\begin{equation}
	\langle \mathbb{W}_l(z)\otimes \mathcal{W}_l(x,z), \tilde u_l(\lambda)\rangle = \|\tilde u_l(\lambda)\|^2 z^{\Delta(P(\lambda),b)+l^2}x^{-l-\lambda/2},
\end{equation}
c.f. normalization of chain vectors above \eqref{eq:chain coset}. Taking the scalar product of decomposition~\eqref{eq:whit decomp} (or taking the limit of conformal relation \eqref{eq:blowup}) we get
%The following theorem explains the behavior of the Whittaker vector during coset decomposition.
%\begin{proposition}\phantomsection \label{Prop:whitt}
%We have
% \begin{equation}
%   v_0\otimes \mathcal{W}(x,z) =  \sum_{l\in \mathbb{Z}} \frac{\mathbb{W}_l(z)\otimes \mathcal{W}_l(x,z)}{\|\tilde u_l(\lambda)\|^2 \mathtt{t}^{1,-\frac{1}{\kappa}}_{-2 l}( \frac{\lambda+1}{-\kappa})}
%\end{equation}   
%\end{proposition}
%Where we used notation $\mathbb{W}_l(z)\otimes \mathcal{W}_l(x,z)$ is tensor product of Whittaker vectors in $\mathbb{M}_{P+lb,b}\otimes \mathcal{M}_{\lambda + 2l, k + 1}\subset \mathcal{L}_{0,1}\otimes \mathcal{M}_{\lambda,k}$, normalised by the formula
%\begin{equation}
%    \langle \mathbb{W}_l(z)\otimes \mathcal{W}_l(x,z)| \tilde u_l(\lambda)\rangle = \|\tilde u_l(\lambda)\|^2.
%\end{equation}

\begin{theorem}\phantomsection \label{Th:blowuplim}
    There is a relation on Whittaker limit of conformal blocks
    \begin{equation}\label{eq:blowup Whittaker}
    	\Psi_k(\lambda; x, z) = \sum_{l \in \mathbb{Z}} \frac{1}{\mathtt{t}^{1,-\frac{1}{\kappa}}_{-2 l}( \frac{\lambda}{-\kappa})\mathtt{t}^{1,-\frac{1}{\kappa}}_{-2 l}( \frac{\lambda+1}{-\kappa})} \mathrm{F}_b(P(\lambda) + lb; z) \Psi_{k+1}(\lambda+ 2l; x, z).
    \end{equation} 
%    where $\mathtt{C}_l(\lambda) = \frac{1}{\|\tilde u_l(\lambda)\|^2 \left(\mathtt{t}^{1,-\frac{1}{\kappa}}_{-2 l}( \frac{\lambda+1}{-\kappa})\right)^2}  =  \frac{1}{\mathtt{t}^{1,-\frac{1}{\kappa}}_{-2 l}( \frac{\lambda}{-\kappa})\mathtt{t}^{1,-\frac{1}{\kappa}}_{-2 l}( \frac{\lambda+1}{-\kappa})}$
\end{theorem}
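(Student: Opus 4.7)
The plan is to derive \eqref{eq:blowup Whittaker} as a Whittaker limit of the four-point blowup relation \eqref{eq:blowup}. Concretely, I would specialize $\mu_1 = \mu_4 = \lambda$ (so that the outer legs carry only the $\widehat{\mathfrak{sl}}(2)$ degrees of freedom of interest) and then send $\mu_2, \mu_3 \to \infty$ with the standard rescalings of $z$ and $x$ appearing in the definition of the Whittaker vector as a limit of chain vectors. Under these scalings, both $\Psi_{k+1}(\vec{\mu},\lambda+2l;x,z)$ and $\mathrm{F}_b(\vec{P},P(\lambda)+lb;z)$ converge, after factoring out the anomalous powers of $z$, to the one-point Whittaker blocks $\Psi_{k+1}(\lambda+2l;x,z)$ and $\mathrm{F}_b(P(\lambda)+lb;z)$ on the RHS of \eqref{eq:blowup Whittaker}. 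Equivalently, and somewhat more directly as alluded to in the excerpt, one can pair the operator-valued decomposition \eqref{eq:whit decomp} with a second Whittaker vector on the left and use orthogonality of distinct summands of the coset decomposition.

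The heart of the proof is to compute the limit of the $l$-dependent prefactor $\tilde{C}_{0,0,l}(\lambda,\mu_3,\lambda)\tilde{C}_{l,0,0}(\lambda,\mu_2,\lambda)/\|\tilde u_l(\lambda)\|^2$ from \eqref{eq:blowup} using the explicit expression of Theorem~\ref{Th:main}. Substituting $m = n = 0$ into \eqref{eq:td C t}, four triangle functions $\mathtt{t}^{1,-1/\kappa}$ involving $\mu_3$ appear in $\tilde{C}_{0,0,l}$; three of them grow in $\mu_3$ with powers that are exactly absorbed by the rescaling prefactors of $z$ needed to convert the four-point blocks into Whittaker blocks, while the fourth produces a finite contribution. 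The analogous situation occurs for $\tilde{C}_{l,0,0}(\lambda,\mu_2,\lambda)$. Combining the two finite contributions with $1/\|\tilde u_l(\lambda)\|^2$ via Corollary~\ref{Cor:tilde U} and applying the symmetry property \eqref{eq:t symmetry}, the coefficient collapses to
\[
\frac{1}{\mathtt{t}^{1,-1/\kappa}_{-2l}(\tfrac{\lambda}{-\kappa})\,\mathtt{t}^{1,-1/\kappa}_{-2l}(\tfrac{\lambda+1}{-\kappa})}
\]
as claimed, and the sign factors produced by the two three-point functions cancel.

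The main obstacle is this asymptotic bookkeeping: one must track the divergent powers of $\mu_2, \mu_3$ through all the triangle factors in each of $\tilde{C}_{0,0,l}$ and $\tilde{C}_{l,0,0}$ and verify that they exactly match the scalings used in the definition of the Whittaker limit of conformal blocks. It is essential that the surviving factor be symmetric in $\lambda$ and $\lambda+1$ rather than, say, a square of one of them; this reflects the fact that the ``left'' and ``right'' Whittaker vectors being paired have slightly asymmetric decompositions through the coset, and serves as a built-in consistency check that the asymptotic analysis has been carried out correctly. Once signs are reconciled via \eqref{eq:t symmetry} and the identities relating $\mathtt{s}$ and $\mathtt{t}$ in \eqref{GammaS}, the formula \eqref{eq:blowup Whittaker} follows from \eqref{eq:blowup} by direct substitution.
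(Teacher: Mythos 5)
Your approach is essentially the paper's own: the paper obtains \eqref{eq:blowup Whittaker} by taking the Whittaker limit of the chain-vector decomposition \eqref{eq:Chain coset} to get \eqref{eq:whit decomp} and then pairing with a second Whittaker vector, noting explicitly that one may instead take the limit of the four-point relation \eqref{eq:blowup} — precisely the two routes you describe (and your specialization $\mu_1=\mu_4=\lambda$ is harmless but unnecessary, since the outer weights drop out of the Whittaker blocks). One detail of your bookkeeping is misstated, though it does not change the outcome: for $l\neq 0$ \emph{all four} numerator factors $\mathtt{t}^{1,-\frac{1}{\kappa}}$ in $\tilde C_{0,0,l}(\mu_4,\mu_3,\lambda)$ (and likewise in $\tilde C_{l,0,0}(\lambda,\mu_2,\mu_1)$) involve the large weight and diverge, and it is their combined divergence that is absorbed by the $l$-dependent rescaling of $x,z$; the finite $l$-dependent survivor is not a fourth numerator factor but the denominator $\mathtt{t}^{1,-\frac{1}{\kappa}}_{-2l}\bigl(\frac{\lambda+1}{-\kappa}\bigr)$ of each three-point function, which is independent of $\mu_2,\mu_3$, and these two denominators combined with $1/\|\tilde u_l(\lambda)\|^2$ via Corollary~\ref{Cor:tilde U} collapse to the coefficient $1/\bigl(\mathtt{t}^{1,-\frac{1}{\kappa}}_{-2l}(\frac{\lambda}{-\kappa})\,\mathtt{t}^{1,-\frac{1}{\kappa}}_{-2l}(\frac{\lambda+1}{-\kappa})\bigr)$, as claimed.
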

%\begin{proof}
%    Follows from the Proposition \ref{Prop:whitt}
%\end{proof}

\subsection{Kyiv formula}
%Let us reprove explicit formula for $F(\alpha, x,z)$ using developed techniques.
\begin{proposition}
	The function $\Psi_k(\lambda;x,z)$ satisfies Toda differential equation
	\begin{equation}\label{eq:Toda}
    	z \partial_z \Psi_k(\lambda;x,z) = \frac{1}{\kappa}\Big(   z x^{-1}  +  x +   x^2 \partial_x^2 \Big)\Psi_k(\lambda;x,z).
	\end{equation}    
\end{proposition}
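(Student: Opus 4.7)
The plan is to compute $L_0\mathcal{W}_\lambda^k(x,z)$ in two ways and equate. With the normalization of the Whittaker vector inherited from the limit of the chain vector (so that the level-$n$ component carries a factor $z^{\Delta_\lambda+n}$, where $\Delta_\lambda=\lambda(\lambda+2)/(4\kappa)$), the Sugawara grading operator acts simply as
\begin{equation*}
L_0\mathcal{W}_\lambda^k(x,z) = z\partial_z\mathcal{W}_\lambda^k(x,z).
\end{equation*}
On the other hand, extracting $L_0$ from the Sugawara current \eqref{Sug} yields the mode expansion
\begin{equation*}
2\kappa L_0 = (e_0f_0+f_0e_0) + 2\sum_{m\ge 1}(e_{-m}f_m+f_{-m}e_m) + \sum_{m\ge 1}h_{-m}h_m + \tfrac{1}{2}h_0^2.
\end{equation*}

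Applying this expression to $\mathcal{W}_\lambda^k(x,z)$, I would use the full Whittaker conditions. Besides $e_0\mathcal{W}=x\mathcal{W}$ and $f_1\mathcal{W}=zx^{-1}\mathcal{W}$, the vector also satisfies $e_m\mathcal{W}=0$ for $m\ge 1$, $f_m\mathcal{W}=0$ for $m\ge 2$, and $h_m\mathcal{W}=0$ for $m\ge 1$; these follow either from uniqueness in Proposition~\ref{Th:whtexist} combined with the commutation relations, or directly from the Whittaker limit of the chain vector via~\eqref{eq:Vnu comm rel}. The sums $\sum_{m\ge 1}f_{-m}e_m\mathcal{W}$ and $\sum_{m\ge 1}h_{-m}h_m\mathcal{W}$ vanish, $\sum_{m\ge 1}e_{-m}f_m\mathcal{W}$ collapses to $zx^{-1}e_{-1}\mathcal{W}$, and $[e_0,f_0]=h_0$ together with $e_0\mathcal{W}=x\mathcal{W}$ gives $(e_0f_0+f_0e_0)\mathcal{W}=2xf_0\mathcal{W}+h_0\mathcal{W}$, leading to
\begin{equation*}
2\kappa L_0\mathcal{W}_\lambda^k(x,z)=\bigl(2zx^{-1}e_{-1}+2xf_0+h_0+\tfrac{1}{2}h_0^2\bigr)\mathcal{W}_\lambda^k(x,z).
\end{equation*}

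Then I would take the Shapovalov pairing against $\mathcal{W}_\lambda^k(1,1)$. The adjointness $e_{-1}^\dagger=f_1$ and $f_0^\dagger=e_0$, combined with $f_1\mathcal{W}(1,1)=e_0\mathcal{W}(1,1)=\mathcal{W}(1,1)$, turns the first two operator terms into the scalar multipliers $2zx^{-1}\Psi_k$ and $2x\Psi_k$. The decisive remaining ingredient is the differential-operator identity $h_0\mathcal{W}_\lambda^k(x,z)=-2x\partial_x\mathcal{W}_\lambda^k(x,z)$, obtained by applying $h_0$ to the chain-vector expression and using $[h_0,\mathcal{V}_{\lambda_2}(x,z)]=(-2x\partial_x+\lambda_2)\mathcal{V}_{\lambda_2}(x,z)$; the divergent scalar $\lambda_1+\lambda_2$ is exactly canceled by the normalization prefactor $x^{-(\lambda_1+\lambda_2)/2}$ defining the Whittaker limit. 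Iterating gives $h_0^2\mathcal{W}=4(x\partial_x)^2\mathcal{W}$, so the $h_0+\tfrac{1}{2}h_0^2$ contributions combine into $-2x\partial_x\Psi_k+2(x\partial_x+x^2\partial_x^2)\Psi_k=2x^2\partial_x^2\Psi_k$, the first-order pieces cancelling exactly. Equating the two expressions for $\langle\mathcal{W}(1,1),2\kappa L_0\mathcal{W}(x,z)\rangle$ and dividing by $2\kappa$ yields the Toda equation~\eqref{eq:Toda}.

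The main obstacle is justifying the two technical ingredients: the extension of the Whittaker conditions to all higher modes, and the identity $h_0\mathcal{W}=-2x\partial_x\mathcal{W}$. Both depend on careful control of the $\lambda_2\to\infty$ limit of the chain-vector realization and tracking the $\lambda_2$-dependent normalization prefactors. The structural feature that makes the argument close is the cancellation of the linear $x\partial_x$ pieces coming from $h_0$ and from $\tfrac{1}{2}h_0^2$, which is precisely what is needed for the right-hand side of the Sugawara computation to match the differential operator $zx^{-1}+x+x^2\partial_x^2$ appearing in~\eqref{eq:Toda}.
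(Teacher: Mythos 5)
Your proposal is correct and follows essentially the same route as the paper: insert the Sugawara $L_0$ into $\langle\mathcal{W}_\lambda^k(1,1),\,\cdot\;\mathcal{W}_\lambda^k(x,z)\rangle$, use the Whittaker conditions (only $e_0$, $f_0$, $e_{-1}f_1$ and the $h_0$ terms survive), and replace $h_0$ by $-2x\partial_x$ so that the $h_0+\tfrac12 h_0^2$ contribution collapses to $2x^2\partial_x^2$. Your extra care with the higher-mode annihilation conditions and the normalization-dependent identity $h_0\mathcal{W}=-2x\partial_x\mathcal{W}$ simply makes explicit steps the paper leaves implicit.
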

Of course, this is just a Knizhnik-Zamolodchikov equation in the Whittaker limit.
\begin{proof}
	Recall that by Sugawara construction \eqref{Sug} we have
	\begin{equation}
		L_0 = \frac{1}{2\kappa}(e_0 f_0 + f_0 e_0 + \frac{1}{2} h_0^2 + 2 e_{-1} f_1 +  2 f_{-1} e_1 + h_{-1} h_1) + \dots.
	\end{equation}
	Inserting \(L_0\) into the formula for the conformal block we get
	\begin{multline}
		z \partial_z \Psi_k(\lambda;x,z) = \langle \mathcal{W}_\lambda^k(1,1), L_0\mathcal{W}_\lambda^k(x,z)\rangle
		\\
		= \frac{1}{2\kappa}\left(2 x + 2 z x^{-1} + \frac{- 2 x \partial_x(- 2 x \partial_x+ 2)}{2}\right)\Psi_k(\lambda;x,z).    
	\end{multline}
	The proposition is proven.
\end{proof}

Let us consider the limit $\kappa\rightarrow\infty$. The conformal block has asymptotic behavior of the form 
\begin{equation}\label{eq:Psi classical}
	\log\Psi_k(-2\sigma \kappa; - x\kappa^2, z\kappa^4) =  - \kappa S(\sigma;  x, z ) + S_0 (\sigma; x,z) + O(1/\kappa),
\end{equation}
where  $S(\sigma;x,z), S_0(\sigma;x,z)$ are certain functions that do not depend on $\kappa$. The following proposition follows directly from 
the Toda equation \eqref{eq:Toda}.
\begin{proposition}
	The function $S(\sigma;  x, z )$ satisfies equation
	\begin{equation}\label{eq:HJPainleve}
		- z\partial_z S =  x^2 (\partial_x S)^2 - z x^{-1} - x.
	\end{equation}
\end{proposition}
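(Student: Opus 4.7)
The strategy is a straightforward WKB-style (semiclassical) analysis: plug the asymptotic expansion \eqref{eq:Psi classical} into the Toda equation \eqref{eq:Toda} and extract the leading-order terms in $\kappa$.

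First I would rewrite the Toda equation in the rescaled variables appearing in \eqref{eq:Psi classical}. Setting $\Lambda=-2\sigma\kappa$, $X=-x\kappa^2$, $Z=z\kappa^4$ and denoting $\tilde\Psi(\sigma;x,z)=\Psi_k(\Lambda;X,Z)$, the chain rule gives $Z\partial_Z=z\partial_z$, $X^2\partial_X^2=x^2\partial_x^2$, $ZX^{-1}=-z\kappa^2/x$, and $X=-x\kappa^2$. Hence \eqref{eq:Toda} becomes
\begin{equation}
z\partial_z\tilde\Psi=\Bigl(-\frac{z\kappa}{x}-x\kappa\Bigr)\tilde\Psi+\frac{1}{\kappa}\,x^2\partial_x^2\tilde\Psi.
\end{equation}
Note that the parameter $\sigma$ entering through $\Lambda$ is just a spectator in the differential operators; it labels the family of solutions.

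Next I would convert this into an equation for $\log\tilde\Psi$. Using $\partial_x^2\tilde\Psi/\tilde\Psi=\partial_x^2\log\tilde\Psi+(\partial_x\log\tilde\Psi)^2$, dividing the rescaled Toda equation by $\tilde\Psi$ gives
\begin{equation}
z\partial_z\log\tilde\Psi=-\frac{z\kappa}{x}-x\kappa+\frac{x^2}{\kappa}\Bigl(\partial_x^2\log\tilde\Psi+(\partial_x\log\tilde\Psi)^2\Bigr).
\end{equation}
Now I substitute the ansatz $\log\tilde\Psi=-\kappa S(\sigma;x,z)+S_0(\sigma;x,z)+O(\kappa^{-1})$. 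The left-hand side is $-\kappa\,z\partial_z S+O(1)$. On the right-hand side, $(\partial_x\log\tilde\Psi)^2=\kappa^2(\partial_x S)^2+O(\kappa)$ is the dominant contribution (multiplied by $1/\kappa$, this yields an $O(\kappa)$ term), while $\partial_x^2\log\tilde\Psi=O(\kappa)$ only contributes at $O(1)$ after division by $\kappa$. Thus at order $\kappa$ the equation reads
\begin{equation}
-z\partial_z S=-\frac{z}{x}-x+x^2(\partial_x S)^2,
\end{equation}
which is exactly \eqref{eq:HJPainleve}.

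There is no real obstacle; the only care needed is tracking the rescaling factors and the signs (in particular the sign in $X=-x\kappa^2$ and the factor of $2$ in $\Lambda=-2\sigma\kappa$), and verifying that the subleading terms $S_0$ and $O(\kappa^{-1})$ indeed contribute only at orders lower than $\kappa$. Note that this matches the expected Hamilton–Jacobi form \eqref{eq:Hamilton Jacobi} for the Painlev\'e $\mathrm{III}_3$ Hamiltonian \eqref{eq:Painleve}: with $p=\partial_x S$ we recognize $-z\partial_z S=x^2p^2-zx^{-1}-x=zH$, consistent with a time variable $t=\log z$ (so $\partial_t S=z\partial_z S$), preparing the ground for the application of Theorem~\ref{Th:tauPainleve} in the subsequent derivation of the Kyiv formula.
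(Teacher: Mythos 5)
Your proof is correct and is exactly the route the paper intends: the paper only remarks that the proposition ``follows directly from the Toda equation,'' and your rescaling $X=-x\kappa^2$, $Z=z\kappa^4$, the passage to $\log\tilde\Psi$, and the extraction of the order-$\kappa$ terms (with $\partial_x^2\log\tilde\Psi$ contributing only at subleading order) fill in that computation accurately, including the signs. The closing aside is slightly off but harmless: since the Hamiltonian \eqref{eq:Painleve} already carries a $1/z$ prefactor, $-z\partial_z S=x^2(\partial_x S)^2-zx^{-1}-x$ is the Hamilton--Jacobi equation \eqref{eq:Hamilton Jacobi} in the time variable $z$ itself, with no need to pass to $t=\log z$.
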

Note that the equation \eqref{eq:HJPainleve} coincides with Hamilton-Jacobi equation \eqref{eq:Hamilton Jacobi} for  Painlev\'e~$\mathrm{III}_3$ Hamiltonian \eqref{eq:Painleve}. 

Now we take the classical limit of the coset decomposition. Note that the algebras $\widehat{\mathfrak{sl}}(2)_{k}$ and $\widehat{\mathfrak{sl}}(2)_{k + 1}$ are both become classical. On the other hand, the coset Virasoro algebra in this limit has $b = -\ri$ and central charge \(c=1\).

\begin{theorem}\phantomsection \label{Th:KyivFormula}
	 Tau function for the Painlev\'e $\mathrm{III}_3$ equation has the following expansion
	 \begin{equation}\label{eq:Kyiv}
         \tau(\sigma,\beta;z)= \sum_{l \in \mathbb{Z}} \frac{1}{\mathtt{t}^{1,0}_{2l}(-2\sigma)^{2}} \re^{  l\beta }\mathrm{F}_1(\ri(\sigma  +  l) ; z).
     \end{equation}
%where $F(\sigma;  x, z ) = (1 - \sigma \partial_{\sigma} - 2 x\partial_x - 4 z \partial_z)  S(\sigma;  x, z )$.
\end{theorem}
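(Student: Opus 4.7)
The plan is to take the classical limit $\kappa\to\infty$ of the Whittaker blowup relation \eqref{eq:blowup Whittaker}, substituting $\lambda = -2\sigma\kappa$ and using the scaling from \eqref{eq:Psi classical}. The left-hand side then reads $\Psi_k\sim e^{-\kappa S(\sigma;x,z)+S_0(\sigma;x,z)}$; the leading exponent $-\kappa S$ is already governed by the Hamilton-Jacobi equation for the Painlev\'e $\mathrm{III}_3$ Hamiltonian, so the $O(1)$ term $S_0$ should encode the tau function through Proposition~\ref{Th:tauPainleve}.

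For each summand on the right I would rewrite $\Psi_{k+1}(-2\sigma\kappa+2l;\,\cdot\,)$ as $\Psi_{k+1}(-2\sigma'(\kappa+1);\,\cdot\,)$ with $\sigma' = \sigma-(\sigma+l)/(\kappa+1)$, and rescale $x,z$ by $(\kappa/(\kappa+1))^{2}$ and $(\kappa/(\kappa+1))^{4}$ to match the asymptotic form for $\Psi_{k+1}$. Taylor-expanding $S(\sigma';x',z')$ around $(\sigma,x,z)$, the leading $\kappa$-order reproduces $-\kappa S(\sigma;x,z)$ on every summand, while the $O(1)$ correction contributes $-S+(\sigma+l)\partial_\sigma S+2x\partial_x S+4z\partial_z S$ to the exponent. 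Simultaneously $b_{\mathrm{GKO}}\to -\ri$, so $\mathrm{F}_b\to \mathrm{F}_1$ with momentum $P(\lambda)+lb\to -\ri(\sigma+l)$, and the triangle products $\mathtt{t}^{1,-1/\kappa}_{-2l}$ degenerate to their $\epsilon_2=0$ limit. After cancelling the common factor $e^{-\kappa S+S_0}$, the subleading identity takes the form
\begin{equation*}
	e^{S-2x\partial_x S-4z\partial_z S} = \sum_{l\in\mathbb{Z}} \frac{e^{(\sigma+l)\partial_\sigma S}}{[\text{classical } \mathtt{t}\text{-factor}]^{2}}\,\mathrm{F}_1\bigl(\ri(\sigma+l);z\bigr).
\end{equation*}

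Setting $\beta=-\partial_\sigma S$, which is an integral of motion by Theorem~\ref{Th:HJ}, the left-hand side equals $\tau\cdot e^{\sigma\beta}$ by Proposition~\ref{Th:tauPainleve}. Absorbing $e^{\sigma\beta}$ into the summand converts $e^{(\sigma+l)\partial_\sigma S}=e^{-(\sigma+l)\beta}$ into $e^{-l\beta}$; a reindexing $l\mapsto -l$ combined with the symmetry \eqref{eq:t symmetry} of $\mathtt{t}$ and the $P\leftrightarrow -P$ symmetry of Virasoro conformal blocks brings this into the form of the Kyiv formula \eqref{eq:Kyiv}.

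The main obstacle I expect is the combinatorial bookkeeping: identifying the $\kappa\to\infty$ limit of $\mathtt{t}^{1,-1/\kappa}_{-2l}(\lambda/(-\kappa))\,\mathtt{t}^{1,-1/\kappa}_{-2l}((\lambda+1)/(-\kappa))$ with $\mathtt{t}^{1,0}_{2l}(-2\sigma)^{2}$ via \eqref{eq:t symmetry}, and tracking the sign conventions so that $e^{l\beta}$ emerges with the correct sign and $\mathrm{F}_1$ with argument $\ri(\sigma+l)$ rather than its opposite. A secondary subtlety is that $S_0(\sigma';x';z')=S_0(\sigma;x,z)+O(1/\kappa)$ must hold, which requires enough regularity of $S_0$ to allow its cancellation from both sides of the asymptotic identity.
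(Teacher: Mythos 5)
Your proposal follows essentially the same route as the paper's own proof: take the classical $\kappa\to\infty$ limit of the Whittaker blowup relation \eqref{eq:blowup Whittaker} with the scaling \eqref{eq:Psi classical}, Taylor-expand $S$ at the shifted arguments so the leading $\re^{-\kappa S}$ cancels, identify the subleading identity through Theorem~\ref{Th:HJ} and Proposition~\ref{Th:tauPainleve} with $\beta=-\partial_\sigma S$, and use the degenerations $b_{GKO}\to-\ri$, $P(\lambda)+lb\to-\ri(\sigma+l)$, $\mathtt{t}^{1,-1/\kappa}\to\mathtt{t}^{1,0}$. The only loose end, a residual $z$-independent prefactor of the type $\re^{\sigma\beta}$ in your absorption step, is harmless because $\sigma,\beta$ are integrals of motion and $\tau$ is fixed only up to such factors, which is the same level of convention-dependence present in the paper's treatment.
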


The formula \eqref{eq:Kyiv} is called the Kyiv formula, it was first conjectured in \cite{Gamayun:2012Conformal}, \cite{GIL13}. There are several other proofs this formula, namely \cite{Iorgov:2015Isomonodromic}, \cite{Bershtein:2015Bilinear}, \cite{Gavrylenko:2018Fredholm}, \cite{Gavrylenko:2018Pure}; as was mentioned above here we follow the logic of \cite{Nekrasov:2023blowups}, \cite{Jeong:2020}.

Note that usually coefficients in the Kyiv formula are written in terms of Barnes \(G\)-function. However, after a certain redefinition of variables, these coefficients can be made rational, see e.g. \cite[eq. (3.9)]{Bershtein:2017Backlund}. In the formula \eqref{eq:Kyiv} we also used rational coefficients  \(\mathtt{t}^{1,0}_{2l}(-2\sigma)^{2}\). It appears they are equivalent to the ones used in loc. cit. via additional change of variables. If we started from the conformal block in full normalization (see Remark \ref{Rem:norm} and Appendix \ref{sec:Barnes}) we would get Barnes functions without extra changes of variables. 

\begin{proof} 
	We take \(k\rightarrow \infty\) limit of the relation~\eqref{eq:blowup Whittaker}. For the left side we use expansion~\eqref{eq:Psi classical}. For the \(\widehat{\mathfrak{sl}}(2)\) conformal blocks in the right side we have	
	\begin{multline}
	    \log \Psi(-2\sigma \kappa + 2 l, \kappa + 1; -x\kappa^2, z\kappa^4) 
	    \\
	    = -(\kappa+1) S\left(\sigma \left(\frac{\kappa}{\kappa+1}\right) - \frac{l}{\kappa + 1};x\left(\frac{\kappa}{\kappa+1}\right)^2,z\left(\frac{\kappa}{\kappa+1}\right)^4\right)  + S_0 (\sigma; x,z) + O(1/\kappa)
	    \\
	    = -(\kappa+1 - l \partial_{\sigma} - \sigma \partial_{\sigma} - 2 x\partial_x - 4 z \partial_z)S(\sigma;x,z) + S_0 (\sigma; x,z) + O(1/\kappa).
	\end{multline}
	Therefore the leading divergent contributions of the left side and right side are equal to \(\exp\Big(-\kappa S(\sigma;  x, z )\Big)\) and cancel. In the subleading order we get 
	\begin{equation}
		\re^{(1-\sigma\partial_\sigma-2x\partial_x-4z\partial_z)S(\sigma,x;z)}= \sum_{l \in \mathbb{Z}} \frac{1}{\mathtt{t}^{1,0}_{2l}(-2\sigma)^{2}} \re^{ - l \partial_{\sigma}  S(\sigma;  x, z )}\mathrm{F}_1(\ri(\sigma  +  l) ; z).
	\end{equation}
	
	Let \(\beta=-\partial_\sigma S(\sigma,x;z)\), \(p=\partial_x S(\sigma,x;z)\). Now we can apply Theorem~\ref{Th:HJ}. Namely we know that \(S\) satisfies Hamilton-Jacobi equation and assume that \(\sigma,\beta\) are integrals of motion, then \(x,p\) satisfy dynamics \eqref{eq:hamiltoneq} with Painlev\'e \(\mathrm{III}_3\) Hamiltonian. Furthermore, by Proposition~\ref{Th:tauPainleve} the left side is equal to the tau function. Putting all things together we get formula~\eqref{eq:Kyiv}.
\end{proof}

%\begin{remark}
%	The parameter $\sigma$ could be considered as initial condition and does not changes on equations of motions, so we can define $\alpha = \sigma, \beta = \left(\frac{\partial S}{\partial \sigma}\right)_{x,z}$. Then, function $F(\sigma, x, z)$ is exactly the function $F$ from Proposition \ref{Th:tauPainleve}.
%\end{remark}

\section{Selberg Integrals} \label{sec:selbint}

In this section, we use free field realization of vertex operators and matrix elements calculated in Theorem \ref{Th:main} for the computation of Selberg-type integrals. In Section \ref{ssec:Un free} we prove the formula for operator $\mathcal{U}_{\nu,n}(x,z)$  in free field realization. This formula can be viewed as an operator analog of the formula for the highest weight vector $u_{n}(\nu)$. In Section~\ref{ssec:Selberg} we use this formula for the computation of Selberg integrals. In Section~\ref{ssec:Integral classical} we rewrite the answer as a product of Gamma functions, find constant term identity, and compare particular cases with identities from \cite{Forrester:1995normalization} and \cite{Petrov:2014}. 

\subsection{Integral representation of vertex operators}\label{ssec:Un free}
We want to find an analog of the Proposition~\ref{Prop:O&Vnu 1} for the primary fields \(\mathcal{U}_{\nu,n}\) defined in formula~\eqref{eq:U nu n def}. A bit informally one can rewrite Proposition~\ref{Prop:O&Vnu 1} in form
\begin{equation}\label{eq:Vnu = O nu}
	\mathcal{V}_{\nu}(x,z) =  \int\limits_{0\le t_N \le \dots \le t_1 \le z}  \!\!\!\! (1 + x \gamma(z))^{\nu} :\!\re^{\frac{\nu} {\sqrt{2\kappa}}\varphi(z)}\!: \prod_{i=1}^N   S(t_i) dt_i, 
\end{equation}
if $- \mu + \nu + \lambda = 2 N$, $N\in \mathbb{Z}_{\ge 0}$. The \(\nu\)-th power is well defined since \(\gamma(z)\) is commutative and we can write \(x\)-expansion using binomial formula.

\begin{theorem}\label{Th:U nu n}
	Assume $- \mu + \nu + \lambda  = 2 N$, where $N\in \mathbb{Z}_{\ge 0}$ and suppose that $n\le 0$. Then we have
	\begin{equation}\label{eq:U nu n = O}
		\mathcal{U}_{\nu,n}(x,z) = \re^{-g_0} :\! \re^{n\sqrt{2}\phi(z)} \!\!: (-1)^{n h_0^{(1)}(h_0^{(1)} - 1)} \re^{g_0} \sum_{r\ge 0} \binom{\nu + 2 n}{r} x^{r} \mathcal{O}^{(N)}_{r; \nu  }(z).
	\end{equation}     
\end{theorem}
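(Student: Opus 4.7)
Let $T_{\nu,n}(x,z)$ denote the right-hand side of \eqref{eq:U nu n = O}. The plan is to verify that $T_{\nu,n}(x,z)$ is a primary field for $\widehat{\mathfrak{sl}}(2)^\Delta_{k+1}\oplus \mathrm{Vir}^{\mathrm{coset}}$ with the same weights as $u_n(\nu)$, satisfying the relations of Proposition \ref{Th:Vnunprimary}, and then to conclude $T_{\nu,n}=\mathcal U_{\nu,n}$ by uniqueness of vertex operators between generic modules. The heuristic: Theorem \ref{Th:FormUm} gives $u_n(\nu)=\re^{-g_0}(v_n\otimes v_{\nu,k})$, and the vertex operator associated to $v_n\otimes v_{\nu,k}$ is the product of the level-$1$ image $B_n(z)=\,:\!\re^{n\sqrt{2}\phi(z)}\!:\!(-1)^{nh^{(1)}_0(h^{(1)}_0-1)}$ of $v_n$ with the Wakimoto realization $\sum_r\binom{\nu}{r}x^r\mathcal O^{(N)}_{r;\nu}(z)$ of $\mathcal V_\nu(x,z)$ from Proposition \ref{Prop:O&Vnu 1}. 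Formula \eqref{eq:U nu n = O} is exactly this product dressed by conjugation by $\re^{\pm g_0}$, with the binomial shift $\binom{\nu}{r}\to\binom{\nu+2n}{r}$ forced by the diagonal primary-field condition with weight $\nu+2n$.

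For the $\widehat{\mathfrak{sl}}(2)^\Delta_{k+1}$-primariness I conjugate $[e^\Delta_r, T_{\nu,n}(x,z)]$ by $\re^{g_0}$. By Lemma \ref{Lem:efconj}, $\re^{g_0}e^\Delta_r\re^{-g_0}=\beta_r$, and the conjugated operator is $\re^{g_0}T_{\nu,n}\re^{-g_0}=B_n(z)\widehat M_{\nu,n}(x,z)$, where $\widehat M_{\nu,n}(x,z)=\re^{g_0}M_{\nu,n}(x,z)\re^{-g_0}$ is obtained from $M_{\nu,n}(x,z)=\sum_r\binom{\nu+2n}{r}x^r\mathcal O^{(N)}_{r;\nu}(z)$ by the substitution $S(t)\mapsto \widetilde S(t)$ of \eqref{eq:Stilde}. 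Since $\beta$ commutes with $B_n$ (level-$1$ factor) and with the $e^{(1)}$-correction inside $\widetilde S$, the commutator $[\beta_r,B_n\widehat M_{\nu,n}]$ reduces, via \eqref{eq:O comm rel e}, to $z^r B_n(z)\sum_{s\ge 0}(s+1)\binom{\nu+2n}{s+1}x^{s+1}\widehat{\mathcal O}^{(N)}_{s;\nu}(z)$, and the identity $(s+1)\binom{\nu+2n}{s+1}=(\nu+2n-s)\binom{\nu+2n}{s}$ collapses this to $z^r\bigl((\nu+2n)x-x^2\partial_x\bigr)B_n(z)\widehat M_{\nu,n}(x,z)$. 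Conjugating back by $\re^{-g_0}$ on the left yields the required primary-field relation for $e^\Delta_r$. The $h^\Delta$ and $f^\Delta$ relations follow analogously from Lemma \ref{Lem:efconj} together with the remaining commutators of Proposition \ref{Prop:O comm rel}.

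The $\mathrm{Vir}^{\mathrm{coset}}$-primariness follows from $L^{\mathrm{coset}}=L^{(1)}+L^{(2)}-L^\Delta$. The level-$1$ factor $B_n(z)$ is an $L^{(1)}$-primary of weight $n^2$, the Wakimoto factor $\widehat M_{\nu,n}(x,z)$ has a known $L^{(2)}$-transformation via Sugawara, and the $-L^\Delta$-correction is extracted by an OPE computation parallel to Lemma \ref{Lem:efconj}; combining the three pieces yields the conformal weight $\Delta(P_{\mathrm{GKO}}(\nu)+nb_{\mathrm{GKO}},b_{\mathrm{GKO}})$ of the coset primary $u_n(\nu)$ predicted by Theorem \ref{Th:cosetdec}. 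Uniqueness of vertex operators between generic modules (the coset analogue of Proposition \ref{Prop:coinv generic}) then gives $T_{\nu,n}=c\cdot\mathcal U_{\nu,n}$, and $c=1$ is fixed by matching the base case $n=0$, where \eqref{eq:U nu n = O} collapses to the free-field realization \eqref{eq:Vnu=O 1} of $\mathcal V_\nu(x,z)=\mathcal U_{\nu,0}(x,z)$.

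The main obstacle will be the coset-Virasoro verification: $L^{\mathrm{coset}}$ is quadratic in the affine currents, so its action on $\widehat M_{\nu,n}(x,z)$ through $L^\Delta$ requires additional OPE computations mixing $g(z)$, the modified screenings $\widetilde S$, and the bosonic exponential $B_n(z)$, whose OPEs with $e^{(1)}(t)$ produce $(t-z)^{2n}$-type factors that must recombine combinatorially with $\binom{\nu+2n}{r}$. The restriction $n\le 0$ is essential, as Theorem \ref{Th:FormUm} and hence the heuristic $u_n(\nu)=\re^{-g_0}v_n(\nu)$ hold only in this range; for $n>0$ one would replace \eqref{eq:ul Wakimoto} by \eqref{eq:vl l>0} and obtain a formally dual statement (cf.\ Remark \ref{Rem:l>0}).
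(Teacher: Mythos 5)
Your first half (the \(\widehat{\mathfrak{sl}}(2)^\Delta_{k+1}\) primariness via conjugation by \(\re^{g_0}\), Lemma~\ref{Lem:efconj}, the commutators \eqref{eq:O comm rel} and the binomial identity, with the reduction to the screening-free case) coincides with Step~1 of the paper's first proof, including the role of \(n\le 0\) in \([f^{(1)}_r,\cdot]=0\). The proposal breaks down at the identification step. First, you route the argument through a direct verification that the right-hand side is a \(\mathrm{Vir}^{\mathrm{coset}}\) primary of weight \(\Delta(P_{GKO}(\nu)+nb_{GKO},b_{GKO})\); you acknowledge this as ``the main obstacle'' and only sketch it, so the hardest step of your route is not actually carried out. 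The paper never performs this computation: it only needs the \(L_0\)-eigenvalue (conformal dimension) of the state corresponding to the operator, not the full set of coset-Virasoro commutation relations.

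Second, and more seriously, the concluding uniqueness claim is not valid as stated. An operator \(\mathcal{L}_{i,1}\otimes\mathcal{M}_{\lambda,k}\to\overline{\mathcal{L}_{r,1}\otimes\mathcal{M}_{\mu,k}}\) satisfying the primary relations of Proposition~\ref{Th:Vnunprimary} for \(\widehat{\mathfrak{sl}}(2)^\Delta_{k+1}\oplus\mathrm{Vir}^{\mathrm{coset}}\) is \emph{not} unique up to one scalar: by the decompositions \eqref{eq:W1 W2} both source and target split into infinitely many coset summands, and the primary conditions fix the component between each pair of summands only up to its own constant \(c_{m,l}\) (one may rescale \(\mathcal{U}_{\nu,n}\) channel by channel and preserve all the relations you verify). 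So coset-primariness plus matching the \(n=0\) case does not give \(T_{\nu,n}=\mathcal{U}_{\nu,n}\); fixing all the \(c_{m,l}\) is essentially the content of Theorem~\ref{Th:main}, which you may not assume here. The missing ingredient is the paper's Step~2: one shows that the right-hand side is a \emph{field}, i.e.\ lies in the image of \(\mathtt{Y}_1\otimes\mathtt{Y}_k\) (using \(T_{\nu,n}(x,z)=\re^{xf_0^\Delta}T_{\nu,n}(0,z)\re^{-xf_0^\Delta}\) and the explicit generating set of fields in the free-field realization). Then the argument moves to the level of states: the corresponding vector in \(\mathcal{L}_{2\{n\}}\otimes\mathcal{F}_{\nu,k}\) is an \(\widehat{\mathfrak{sl}}(2)^\Delta\)-highest weight vector of weight \(\nu+2n\) with the conformal dimension of \(u_n(\nu)\), hence proportional to \(u_n(\nu)\) by \eqref{eq:W1 W2}, and a single normalization check finishes the proof. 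This state-level step is what collapses the infinitely many channel constants to one, and it simultaneously removes the need for your coset-Virasoro computation; without it (or a substitute, e.g.\ the paper's second proof via \(I(z)\), OPE associativity and induction on \(n\)), your argument does not close.
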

Note that the sign factor in formula \eqref{eq:U nu n = O} is just \(1\) for \(n\in \mathbb{Z}\) and coincides with the sign factor in the definition \eqref{eq:b01} of \(b_0(z), b_1(z)\)  for \(n\in \mathbb{Z}+\frac12\). This formula can also be rewritten in form 
\begin{equation}\label{eq:U nu n = O 2}
	\mathcal{U}_{\nu,n}(x,z) = \!\!\! \int\limits_{0\le t_N \le \dots \le t_1 \le z}  \!\!\!\!\!\!\!\!  \Big(\re^{-g_0}:\! \re^{n\sqrt{2}\phi(z)} \!\!: (-1)^{n h_0^{(1)}(h_0^{(1)} - 1)} \re^{g_0} \Big)  (1 + x \gamma(z))^{\nu+2n} :\!\re^{\frac{\nu} {\sqrt{2\kappa}} \varphi(z)}\!\!:\prod_{i=1}^N   S(t_i)dt_i .
\end{equation}

%\begin{corollary}
%	We have
%	\begin{equation}
%		\mathcal{U}_{\nu,n}(x,z) =  \re^{-g_0}\re^{\sqrt{2}n\phi(z)} (-1)^{n h_0(h_0 - 1)} \re^{g_0}\sum_{r\ge 0} \binom{\nu + 2 n}{r} x^{r} \mathcal{O}^{(N)}_{r; \nu  }(z).
%	\end{equation}     
%\end{corollary}

\begin{example}	\label{Exa:U nu 1/2}
%	By definition $\mathcal{U}_{\nu,-\frac{1}{2}}(x,z) = \mathtt{Y}_1\otimes\mathtt{Y}_k(u_{-\frac{1}{2}}(\nu)|x,z)$. 
	Using the formulas \eqref{eq:V-1/2} and \eqref{eq:Vnu = O nu}we get
	\begin{multline}\label{eq:U nu 1/2}
		\mathcal{U}_{\nu,-{1}/{2}}(x,z)=\Big(b_1(z) \mathcal{V}_{\nu}(x,z)   - \frac{1}{\nu} (b_0(z)+xb_1(z)) \partial_x \mathcal{V}_{\nu}(x,z) \Big)
		\\
		= \int\limits_{0\le t_N \le \dots \le t_1 \le z} \!\!\!\!   \Big(b_1(z) -  b_0(z) \gamma(z) \Big)
		(1 + x \gamma(z))^{\nu-1} : \! \re^{\frac{\nu} {\sqrt{2\kappa}}\varphi(z)}\!\!: \prod_{i=1}^N   S(t_i) dt_i
		\\
		= \int\limits_{0\le t_N \le \dots \le t_1 \le z} \!\!\!\!   \Big(\re^{-g_0}  b_1(z) \re^{g_0}\Big)
		(1 + x \gamma(z))^{\nu-1} :\! \re^{\frac{\nu} {\sqrt{2\kappa}}\varphi(z)}\!\!: \prod_{i=1}^N   S(t_i) dt_i.
	\end{multline}
	So we proved formula \eqref{eq:U nu n = O 2} for \(n=-1/2\).
\end{example}

%\begin{proposition}
%	Suppose $- \mu + \nu + \lambda = 2 N$, where $N\in \mathbb{Z}_{\ge 0}$. Then
%	\begin{equation}
%		\mathcal{U}_{\nu,-\frac{1}{2}}(x,z) = (1 - x \gamma(x,z)) (b_1(z) - \gamma(z) b_0(z))\mathcal{V}_{\nu}(x,z)
%	\end{equation}
%\end{proposition}
%\begin{proof}
%	By definition $\mathcal{U}_{\nu,-\frac{1}{2}}(x,z) = \mathtt{Y}_1\otimes\mathtt{Y}_k(u_{-\frac{1}{2}}(\nu)|x,z)$. So, the proof is an explicit calculation based on formula (\ref{eq:V-1/2}) and Proposition \ref{Prop:O&Vnu 1}.
%\end{proof}

Theorem~\ref{Th:U nu n} can be viewed as an operator analog of Theorem~\ref{Th:FormUm}. We had two proofs of the latter, one based on explicit computation and another one based on the operator \(I(z)\). Similarly, one can expect two proofs of the Theorem~\ref{Th:U nu n}. 

There is a simplifying step that is common to both proofs. Namely, it is sufficient to prove formula \eqref{eq:U nu n = O} for \(N=0\). Indeed, any vector in \(u \in  L_i\otimes \mathcal{M}_\nu\) is a descendant of the highest weight vector. Therefore the corresponding field \(\mathtt{Y}\otimes \mathtt{Y}(u)\)) is obtained by action of \(e^{(j)}(z), h^{(j)}(z), f^{(j)}(z) \), \(j=1,2\) to the field corresponding to highest weight vector i.e. \(\mathcal{V}_\nu(x,z)\) or \(b(x,z)\mathcal{V}_\nu(x,z)\). However, \(\int S(t) dt\) commutes formally with \(e^{(j)}(z), h^{(j)}(z), f^{(j)}(z) \), \(j=1,2\), therefore commutation relations do not depend on screening insertions.

%Just like before, when we had $2$ proofs of a theorem \ref{Th:FormUm},   there are $2$ possible ways to prove the Theorem \ref{Th:U nu n}. The first is to check explicitly the commutation relations with $\widehat{\mathfrak{sl}}(2)_{k+1}$ (see Propositions \ref{Th:Vnunprimary}, \ref{Prop:UnunIntegalPrimary}) and to prove that it is a field (see Proposition \ref{Prop:Umdescendent}) . We do not need to check relations with coset Virasoro because of grading reasons. The second uses some extended version of associativity, formulated in the two lemmas \ref{Th:AssociativityGen} and \ref{Th:AssociativityLevel1}. 
%\subsubsection{The first proof of the theorem \ref{Th:U nu n}}
\begin{proof}[First proof of Theorem \ref{Th:U nu n}] 
	Let \(\mathit{U}_{\nu,n}(x, z)\) denotes the right side of the formula \eqref{eq:U nu n = O}. It is convenient to decompose the proof into three steps. 
	
	\textbf{Step 1.}
	%\begin{proposition}\label{Prop:UnunIntegalPrimary}
	%In the conditions of the Theorem \ref{Th:U nu n}, the expression 
	%\begin{equation}
	%   \mathit{U}_{\nu,n}(x, z) = (1 - x \gamma(x,z))^{-2n} e^{-g_0}\re^{\sqrt{2}n\phi(z)}(-1)^{n h_0(h_0 - 1)}\re^{g_0}\mathcal{V}_{\nu}(x,z)
	%\end{equation}
	Operator \(\mathit{U}_{\nu,n}(x, z)\) satisfies the following commutation relations
	\begin{subequations}
		\begin{align}
			\label{eq:U nu n e}
			[e^{\Delta}_r,    \mathit{U}_{\nu,n}(x, z)] &= z^r(-x^2 \partial_{x} + (\nu + 2n) x)  \mathit{U}_{\nu,n}(x, z),
			\\
			[h^{\Delta}_r,    \mathit{U}_{\nu,n}(x, z)] &= z^r(-2 x \partial_{x} +  (\nu + 2n))  \mathit{U}_{\nu,n}(x, z),\\
			\label{eq:U nu n f}
			[f^{\Delta}_r,   \mathit{U}_{\nu,n}(x, z)] &= z^r \partial_{x}\, \mathit{U}_{\nu,n}(x, z)	.      
		\end{align}
	\end{subequations}
	%\end{proposition}
	It is sufficient to check commutation relations with \(e^{\Delta}_r\) and \(f^{\Delta}_r\).  Recall that we have Lemma~\ref{Lem:efconj} that states
	\begin{equation}\label{eq:conj e,f}
		\re^{g_0}e^\Delta(w) \re^{-g_0} = \beta(w),\quad
		\re^{g_0}f^\Delta(w) \re^{-g_0} =  f^{\Delta}(w) + h^{(1)}(w)\gamma(w)  +\gamma'(w).
	\end{equation}
	Hence it is sufficient to compute commutation relations of the right side \eqref{eq:conj e,f} with 
	\begin{equation}
		 \re^{g_0}\mathit{U}_{\nu,n}(x, z) \re^{-g_0} =  \mathcal{V}_{\nu,n}(x, z)=:\! \re^{n\sqrt{2}\phi(z)} \!\!: (-1)^{n h_0^{(1)}(h_0^{(1)} - 1)}  \sum_{p\ge 0} \binom{\nu + 2 n}{p} x^{p} \mathcal{O}^{(0)}_{p; \nu  }(z).
	\end{equation}
	
	For the commutation with \(\beta_r\) only \(\mathcal{O}^{(0)}_{p; \nu}(z)\) are important. Hence using \eqref{eq:O comm rel e} we have 
	\begin{multline}
		\Big[\beta_r,  \sum_{p\ge 0} \binom{\nu + 2 n}{p} x^{p} \mathcal{O}^{(0)}_{p; \nu  }(z)\Big]	
		= z^r \sum_{p\ge 0} (p+1)\binom{\nu + 2 n}{p+1} x^{p+1} \mathcal{O}^{(0)}_{p; \nu  }(z)
		\\ =z^r\Big(-x^2 \partial_{x} + (\nu + 2n) x\Big)  \sum_{p\ge 0} \binom{\nu + 2 n}{p} x^{p} \mathcal{O}^{(0)}_{p; \nu  }(z).
	\end{multline}
	Hence \eqref{eq:U nu n e} is proven.
	
	For the relation \eqref{eq:U nu n f} note that 
	\begin{equation}
		[\gamma_r,  \mathcal{V}_{\nu,n}(x, z)] =0,\qquad [h^{(1)}_r,   \mathcal{V}_{\nu,n}(x, z)] = z^r  2n \mathcal{V}_{\nu,n}(x, z), \qquad  [f^{(1)}_r,  \mathcal{V}_{\nu,n}(x, z)] =0, 
	\end{equation}
	where in last commutator we used \(n<0\).  Therefore  
	\begin{equation}
		[\sum_s h^{(1)}_{r-s} \gamma_s,   \mathcal{V}_{\nu,n}(x, z)] = z^r  2n \gamma(z) \mathcal{V}_{\nu,n}(x, z).
	\end{equation} 
	It remains to compute (using \eqref{eq:O comm rel f})
	\begin{multline}
	   \Big[f^{(2)}_r, \sum_{p\ge 0} \binom{\nu + 2 n}{p} x^{p} \mathcal{O}^{(N)}_{p; \nu  }(z)\Big] = z^r  \sum_{p\ge 0} (\nu-p)\binom{\nu + 2 n}{p} x^{p} \mathcal{O}^{(0)}_{p+1; \nu  }(z) 
	   \\
	   =z^r\Big (\partial_x -2n \gamma(z)\Big)\Big(\sum_{p\ge 0} \binom{\nu + 2 n}{p} x^{p} \mathcal{O}^{(0)}_{p; \nu  }(z)\Big).
	\end{multline}
	Putting all things together we get \eqref{eq:U nu n f}.
	
	\textbf{Step 2.} We want to show that operator \(\mathit{U}_{\nu,n}(x, z)\) is a field, i..e. belong to the image of \(\mathtt{Y}\otimes \mathtt{Y}\). In case of \(x=0\) the space of fields is generated by 
	\begin{equation}
		:\!P^{(1)}\big[\partial \phi \big] \re^{m\sqrt{2}\phi(z)} (-1)^{m h_0^{(1)}(h_0^{(1)} - 1)}\!:\otimes 
		:\! P^{(2)}\big[\partial \varphi, \beta, \gamma \big] \re^{\frac{\nu} {\sqrt{2\kappa}} \varphi(z)}\!:
	\end{equation}
	where \(P^{(1)}\), \(P^{(2)}\) are two differential polynomials (c.f. formula \eqref{eq:operator-state}). The space of fields for arbitrary \(x\) is obtained by conjugation by \(\re^{x f_0^{\Delta}}\). But due to relation~\eqref{eq:U nu n f} we have 
	\begin{equation}
	    \mathit{U}_{\nu,n}(x, z) = \re^{x f_0^{\Delta}} \mathit{U}_{\nu,n}(0, z)  \re^{- x f_0^{\Delta}},
	\end{equation}
	so we are done.

	\textbf{Step 3.} The space of fields can be identified with the space of states \(L_i\otimes \mathcal{M}_\nu\). We know that $\mathit{U}_{\nu,n}(x, z)$ is a field, in Step 1 we showed that the corresponding vector is the highest weight vector for \(\mathfrak{sl}(2)_{k+1}\) with highest weight \(\nu+2n\). Moreover, it is straightforward to see that the corresponding vector has a conformal dimension equal to the conformal dimension of \(u_n(\nu)\). There exists only one up to proportionally vector with this property in \(L_{2\{n\}}\otimes \mathcal{M}_\nu\). It remains to check the normalization.
\end{proof}

\begin{proof}[Sketch of second proof of Theorem \ref{Th:U nu n}] 
	It is convenient to decompose the proof into three steps. 
	
	\textbf{Step 1'.} Consider $\nu = 1, \mu = \lambda + 1$. As a particular case of the Example \ref{Exa:U nu 1/2} we have 
	\begin{equation}
		\mathcal{U}_{1,-{1}/{2}}(x,z) = \Big(\re^{-g_0}  b_1(z) \re^{g_0}\Big)
		\re^{\frac{1} {\sqrt{2\kappa}}\varphi(z)} =I(z) 
	\end{equation}
	where $I(z)$ is defined by formula \eqref{eq:defI} and we used Proposition~\ref{Prop:I conj}, c.f. Example \ref{Exa:I}.

	\textbf{Step 2'.} One can use the associativity of the operator product expansion in the form 
	\begin{equation}
		\mathtt{Y}(v_2|x,z) \mathtt{Y}(v_1|y,w)
		= \pm \mathtt{Y}\bigg( \mathtt{Y}(v_2|x- y,z - w) v_1\bigg|y,w\bigg).  
	\end{equation}
	Since we work in free field realization, the associativity essentially follows from the similar properties of the lattice algebra \(\mathcal{L}_{0,1}\), \(\beta-\gamma\) system, and the bosonic field \(\varphi\). We omit the details.

	\textbf{Step 3'.} Recall that we can restrict ourselves to the case without screenings, i.e. \(N=0\). We prove formula \eqref{eq:U nu n = O 2} using induction on \(n\). Assuming that the formula is proven for $n$ let us prove it for $n - \frac{1}{2}$. 
%	We may use the Lemmas \ref{Th:AssociativityGen} and \ref{Th:AssociativityLevel1}
%	\begin{equation}
%		\mathcal{U}_{1,-\frac{1}{2}}(x,z)\mathcal{U}_{\nu,n}(y,w) = \mathtt{Y}_1\otimes \mathtt{Y}_k(\mathcal{U}_{1,-\frac{1}{2}}(x-y,z - w) u_n(\nu)|y,w)(-1)^{2n h_0}
%	\end{equation}
	
	It was shown in Proposition \ref{Prop:I vl} that 
	\begin{equation}
		\mathcal{U}_{1,-{1}/{2}}(x,z) u_n(\nu) = (-1)^{n(2n - 1)} z^{d(n,\nu, k)} \Big(u_{n-\frac{1}{2}}(\nu+1) + O(z) \Big) .
	\end{equation}
	Therefore, using associativity we get
	\begin{equation}
		\mathcal{U}_{1,-\frac{1}{2}}(x,z)\mathcal{U}_{\nu,n}(y,w)
  %\mathtt{Y}_k(\mathcal{U}_{1,-\frac{1}{2}}(x-y,z - w) u_n(\nu)|y,w) 
		= \pm (z - w)^{d(n,\nu,k)} \left(\mathcal{U}_{\nu+1,n-\frac{1}{2}}(y,w)
		%(-1)^{2n h_0+2 n(n - \frac{1}{2})} 
		+ O\left(z - w \right)\right)    
	\end{equation}	
	On the other hand, we can explicitly calculate the left side
	\begin{multline}
		\mathcal{U}_{1,-\frac{1}{2}}(x,z)\mathcal{U}_{\nu,n}(y,w) \\
		=  \re^{-g_0}b_1(z)\re^{g_0} :\! \re^{\frac{1}{\sqrt{2\kappa}}\varphi(z)} \!\!: \Big(\re^{-g_0}:\!\re^{n\sqrt{2}\phi(w)} \!\!:(-1)^{n h_0^{(1)}(h_0^{(1)} - 1)} \re^{g_0} \Big)  (1 + y \gamma(w))^{\nu+2n} :\!\re^{\frac{\nu} {\sqrt{2\kappa}} \varphi(w)}\!\!:
		\\
		= \pm (z - w)^{d(n,\nu,k)} \Big(\Big(\re^{-g_0}:\!\re^{(n-\frac{1}{2})\sqrt{2}\phi(w)}\!\!: (-1)^{(n - \frac{1}{2}) h_0^{(1)}(h_0^{(1)} - 1)  } \re^{g_0} \Big) (1 + y \gamma(w))^{\nu+2n}   :\!\re^{\frac{\nu+1} {\sqrt{2\kappa}} \varphi(w)}\!\!: 
		\\
		+ O\left(z - w \right) \Big)
	\end{multline}
So, we are done.
\end{proof}

\subsection{Integral}\label{ssec:Selberg}
Let \(N \in \mathbb{Z}_{\ge 0}\), \(l,m,n \in \frac12\mathbb{Z}_{\ge 0}\) such that \(l+m+n \in \mathbb{Z}\) and \(l+m+n\le N\). We want to compute integral
\begin{multline}\label{eq:S integral}
	S^N_{n,m,l} (\alpha, \beta, g) =
	\int_0^1 \cdots \int_0^1 \prod_{i=1}^N t_i^{\alpha-1}(1-t_i)^{\beta-1}
	\prod_{1 \le i < j \le N} |t_i - t_j |^{2 g} \\ \Big(\prod_{i=1}^{l+m+n}(1-t_i)^{1-2n}t_i^{-2l} \prod_{1 \le i < j \le l+m+n} |t_i - t_j |^{2} \Big) dt_1 \cdots dt_N
\end{multline}	

Note that \(S^N_{0,0,0} (\alpha, \beta, g)\) is a standard Selberg integral, see e.g. \cite{Forrester:2008importance}.

%The following result follows from the Theorem \ref{Th:main} and discussion above. 

\begin{theorem} \phantomsection \label{Th:Selberg} The Selberg-type integral \eqref{eq:S integral} is equal to 
	\begin{multline} \label{eq:S t}
		{S^N_{n,m,l} (\alpha, \beta, g)}=(-1)^{(l{+}m{+}n)(l{+}m{-}3n{+}1)/2} S^N_{0,0,0}(\alpha, \beta, g) (l+m+n)! 
		%	\\ \frac{\mathtt{t}^{-1,g}_{l{+}m{-}n}(\beta+(N-1)g)\mathtt{t}^{-1,g}_{n{+}m{-}l{-}1}(\alpha+(N-1)g)\mathtt{t}^{-1,g}_{l{+}m{+}n{-}1}(1+Ng)\mathtt{t}^{-1,g}_{m{-}l{-}n{-}1}(\alpha+\beta+(N-2)g)}{\mathtt{t}^{-1,g}_{2m-1}(\alpha+\beta+2(N-1)g)\mathtt{t}^{-1,g}_{2l}(-\alpha+1)\mathtt{t}^{-1,g}_{2n-1}(-\beta+1)}
		\\
		\frac
		{
			\mathtt{t}^{-1,g}_{l{+}m{-}n}(\beta+(N-1)g)
			\mathtt{t}^{-1,g}_{l{-}n{-}m}(1-\alpha-Ng)
			\mathtt{t}^{-1,g}_{{-}l{-}m{-}n}(-(N+1)g)
			\mathtt{t}^{-1,g}_{l{+}n{-}m}(1-\alpha-\beta-(N-1)g)
		}
		{
			\mathtt{t}^{-1,g}_{-2m}(1-\alpha-\beta-(2N-1)g)
			\mathtt{t}^{-1,g}_{2l}(1-\alpha)
			\mathtt{t}^{-1,g}_{-2n}(\beta-g)
		}
	\end{multline}	
\end{theorem}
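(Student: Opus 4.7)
The plan is to identify $S^N_{n,m,l}(\alpha,\beta,g)$ as the Wakimoto/free-field evaluation of the three-point function $\tilde C_{m,n,l}(\mu,\nu,\lambda)$, whose closed form is already given by Theorem~\ref{Th:main}. Concretely, I substitute the integral representation of $\widetilde{\mathcal U}_{\nu,n}(1,1)$ from Theorem~\ref{Th:U nu n} into~\eqref{eq:tilde C}, and use Theorem~\ref{Th:FormUm} together with Remark~\ref{Rem:l>0} to write $u_l(\lambda)$ as $\re^{-g_0}v_l(\lambda)$ and $u_m(\mu)$ as $\re^{g_0^\dagger}v_m(\mu)$. The $\re^{\pm g_0}$-factors either cancel against each other or, when pushed past a screening $S(t_i)$, convert it to the conjugated screening $\tilde S(t_i)=\;:\!\re^{-\sqrt{2/\kappa}\varphi(t_i)}\!:\!(\beta(t_i)-e^{(1)}(t_i))$ of~\eqref{eq:Stilde}. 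The resulting matrix element is thus an $N$-fold integral (one screening per $t_i$) over an ordered simplex; expanding each $\tilde S(t_i)$ into its two summands turns it into a finite sum of such integrals indexed by the subset $S\subset\{1,\dots,N\}$ of indices at which the $-e^{(1)}(t_i)$ piece has been selected.

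The next step is to identify the non-vanishing subsets and compute the Wick contractions. Conservation of the $h_0^{(1)}$-charge between bra and ket fixes $|S|=l+m+n$, and symmetrization within $S$ (and separately within its complement), together with rewriting the simplex integral as an integral over $[0,1]^N$, produces the combinatorial prefactor $(l+m+n)!$ in front of~$S^N_{n,m,l}$. The $\varphi$-contractions between screenings and between screenings and the three bosonic vertex insertions at $0,1,\infty$ yield the standard Selberg measure $\prod_i t_i^{\alpha-1}(1-t_i)^{\beta-1}\prod_{i<j}|t_i-t_j|^{2g}$, thereby identifying $g=-1/\kappa$ and $\alpha,\beta$ as specific linear combinations of $\lambda,\nu$ and $1/\kappa$. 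The selected level-one exponentials $e^{(1)}(t_i)=\;:\!\re^{\sqrt 2\phi(t_i)}\!:$ for $i\in S$ contract with the extremal-vector insertions $:\!\re^{l\sqrt 2\phi(0)}\!:$ and $:\!\re^{m\sqrt 2\phi(\infty)}\!:$ and with the level-one factor $:\!\re^{n\sqrt 2\phi(1)}\!:$ of $\widetilde{\mathcal U}_{\nu,n}$, producing $\prod_{i\in S}t_i^{-2l}(1-t_i)^{-2n}$, and with each other to give the extra Vandermonde $\prod_{i<j\in S}(t_i-t_j)^2$. The remaining $\beta\gamma$-contractions, needed to balance the ghost number against the $\gamma(1)^r$ and $(1+x\gamma(1))^{\nu+2n}$ factors of $\widetilde{\mathcal U}_{\nu,n}$, contribute an additional $\prod_{i\in S}(1-t_i)$ that upgrades the exponent $-2n$ to the $1-2n$ appearing in~\eqref{eq:S integral}.

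Comparing the resulting integral with the value of the ratio $\tilde C_{m,n,l}(\mu,\nu,\lambda)/\tilde C_{0,0,0}(\mu,\nu,\lambda)$ from Theorem~\ref{Th:main} (the denominator being, by construction, the standard Selberg integral $S^N_{0,0,0}$) then yields~\eqref{eq:S t}. The main obstacle will be the careful bookkeeping of signs, cocycle factors, and branch choices. In particular, reproducing the overall sign $(-1)^{(l+m+n)(l+m-3n+1)/2}$ requires tracking the interplay of the cocycles $(-1)^{nh_0^{(1)}(h_0^{(1)}-1)}$ from~\eqref{eq:b01} and~\eqref{eq:U nu n = O}, the sign in~\eqref{eq:td C t}, and the orientation of the multivalued integrand. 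A secondary issue is extending the argument from $l,m,n\le 0$, where $u_l=\re^{-g_0}v_l$ applies directly, to non-negative values, where $\re^{g_0^\dagger}$ must be used (Remark~\ref{Rem:l>0}); this is most cleanly handled by invoking the reflection~\eqref{eq:t symmetry} of the triangle function $\mathtt t$, under which both sides of~\eqref{eq:S t} transform consistently with Theorem~\ref{Th:main}.
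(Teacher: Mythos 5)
Your proposal follows essentially the same route as the paper's own proof: write $\tilde C_{m,n,l}$ in the Wakimoto realization via Theorem~\ref{Th:FormUm}, Remark~\ref{Rem:l>0} and Theorem~\ref{Th:U nu n}, expand the conjugated screenings $\tilde S(t_i)$ into their two summands and sum over the subset (of cardinality $l+m+n$, fixed by the $h_0^{(1)}$-charge) where $e^{(1)}$ is chosen, evaluate the Wick contractions to recover the integral \eqref{eq:S integral}, and then compare with the closed form of Theorem~\ref{Th:main} to obtain \eqref{eq:S t}. The only slips are bookkeeping ones you already anticipated, e.g.\ the coupling is $g=+1/\kappa$ (the screening--screening contraction gives $(t_i-t_j)^{2/\kappa}$), and the extra factor $(1-t_i)$ for $i\in S$ arises because those points lack the $\beta(t_i)\gamma(1)$ contraction present at the remaining points, rather than from a contraction attached to $S$ itself.
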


Recall that the function \(\mathtt{t}\) denotes the product over integer points in the triangle and was defined in the formula~\eqref{eq:triangle}. Note that the overall sign in formula \eqref{eq:S t}, which was a kind of difficulty for the three-point function above, here can be specified by the positivity of integral for big real positive values of \(\alpha,\beta, g\)

\begin{proof} 
	Let us express $\tilde C_{m,n,l}(\mu,\nu,\lambda)$ as an integral. Assume that $m\in \frac12 \mathbb{Z}_{\ge 0}$, $n,l \in \frac12 \mathbb{Z}_{\le 0}$, and $\frac{-\mu+ \nu + \lambda }{2} = N\in\mathbb{Z}_{\ge0}$. Using Theorems~\ref{Th:U nu n},~\ref{Th:FormUm} and Remark~\ref{Rem:l>0} we get
	\begin{multline}
		\Big\langle u_{m}(\mu),\mathcal{U}_{\nu,n}(1,1)\, u_l(\lambda)\Big\rangle 
		\\
		= \sum_{r\ge 0} \binom{\nu + 2 n}{r}  \Big\langle v_{m}(\mu), \re^{g_0} \Big(\re^{-g_0}:\! \re^{n\sqrt{2}\phi(1)}\!\!: (-1)^{n h_0^{(1)} (h_0^{(1)} - 1)} \re^{g_0}\mathcal{O}^{(N)}_{r; \nu}(1)\Big)\re^{-g_0} v_l(\lambda)\Big\rangle
		\\
		=  \sum_{r\ge 0} \binom{\nu + 2 n}{r}  \int\limits_{0\le t_N \le \dots \le t_1 \le 1} \!\!\!\!   \Big\langle v_{m}(\mu),\Big(\gamma^r(1):\!\re^{n\sqrt{2}\phi(1)}\!\!:(-1)^{n h_0^{(1)} (h_0^{(1)} - 1)}:\! \re^{\frac{\nu} {\sqrt{2\kappa}}\varphi(1)}\!\!: \Big)
		\\
		\prod_{i=1}^N \re^{g_0} S(t_i) \re^{-g_0}\, v_l(\lambda)\Big\rangle \prod_{i = 1}^N dt_i
	\end{multline}
	Recall that $\tilde{S}(t) = \re^{g_0} S(t) \re^{-g_0} = (\beta(t) - \re^{\sqrt{2}\phi(t)}):\! \re^{-\frac{2} {\sqrt{2\kappa}}\varphi(t)}\!\!:$, see formula~\eqref{eq:Stilde}. Then we can rewrite integral in the right side
	\begin{multline}\label{eq:subset I}
		\!\!\!\!\!\!  \int\limits_{0\le t_N \le \dots \le t_1 \le 1} \!\!\!\!\!\!\!\!\!\!  \Big \langle v_{m}(\mu),\Big(\gamma^r(1) :\! \re^{n\sqrt{2}\phi(1)}\!\!: (-1)^{n h_0^{(1)} (h_0^{(1)}{-}1)} :\! \re^{\frac{\nu} {\sqrt{2\kappa}}\varphi(1)} \!\!: \Big)
		\\
		\prod_{i=1}^N (\beta(t_i) - :\! \re^{\sqrt{2}\phi(t_i)}\!\!:):\!\re^{-\frac{2}{\sqrt{2\kappa}}\varphi(t_i)}\!\!:  v_l(\lambda)\Big\rangle \prod_{i = 1}^N dt_i
		\\
		= (-1)^{N-r + 4n(m - n)(m - n - \frac{1}{2})}\sum_{I \subset \{1,\dots,N\}}\;\; \int\limits_{0\le t_N \le \dots \le t_1 \le 1}  \Big\langle v_{m}(\mu),\gamma^r(1) :\!\re^{n\sqrt{2}\phi(1)}\!\!:\, :\!\re^{\frac{\nu} {\sqrt{2\kappa}}\varphi(1)}\!\!: 
		\\
		\times\prod_{i=1}^N \Big(  (1- \delta_{i \in I} ) \beta(t_i) +\delta_{i \in I} :\!\re^{\sqrt{2}\phi(t_i)}\!\!:\Big)  :\!\re^{-\frac{2}{\sqrt{2\kappa}}\varphi(t_i)}\!\!: v_l(\lambda)\Big \rangle \prod_{i = 1}^N dt_i.		
%		\\
%		= (-1)^{N-r + 4n(m - n)(m - n - \frac{1}{2})}\binom{N}{r} \int\limits_{0\le t_N \le \dots \le t_1 \le 1}  \langle v_{m}(\mu),\gamma^r(1) \re^{n\sqrt{2}\phi(1)}\re^{\frac{\nu} {\sqrt{2\kappa}}\varphi(1)} 
%		\\
%		\times\prod_{i =N-r+1}^{N}\beta(t_i) \re^{-\frac{2}{\sqrt{2\kappa}}\varphi(t_i)}\prod_{i=1}^{N-r} \re^{\sqrt{2}\phi(t_i)} \re^{-\frac{2}{\sqrt{2\kappa}}\varphi(t_i)}\, v_l(\lambda)\rangle \prod_{i = 1}^N dt_i.
	\end{multline}
	The matrix element vanishes unless the number of \(\beta\) is equal to the number of \(\gamma\). Therefore we can assume that \(|I|=N-r\). Furthermore, in order to have non-zero scalar product in \(\widehat{\mathfrak{sl}}(2)_1\) we need $m=  n + |I|+l$. Hence $r=N-m + n + l$ and we have
	\begin{multline}
		\Big\langle v_{m}(\mu),\gamma^r(1) :\!\re^{n\sqrt{2}\phi(1)}\!\!:\, :\!\re^{\frac{\nu} {\sqrt{2\kappa}}\varphi(1)} \!\!:
		\prod_{i=1}^N \Big(  (1- \delta_{i \in I} ) \beta(t_i) +\delta_{i \in I} :\!\re^{\sqrt{2}\phi(t_i)}\!\!:\Big) :\! \re^{-\frac{2}{\sqrt{2\kappa}}\varphi(t_i)}\!\!: v_l(\lambda)\Big\rangle
		\\
		=(-1)^r r! \prod_{i=1}^N t_i^{-\frac{\lambda}{\kappa}}(1 - t_i)^{-\frac{\nu}{\kappa}-1}\prod_{1 \le i<j\le N} (t_i - t_j)^{\frac{2}{\kappa}}\prod_{i  \in I }t_i^{2l}(1-t_i)^{2n+1}\prod_{ i,j \in I, \, i<j}(t_i - t_j)^2.
	\end{multline}

%	\begin{multline}
%		\langle v_{m}(\mu)|\gamma^r(1) \re^{\sqrt{2}n\phi(1)}\re^{\frac{\nu} {\sqrt{2\kappa}}\varphi(1)}\prod_{i=N-r+1}^{N}\beta(t_i) \prod_{i=1}^{N-r} \re^{\sqrt{2}\phi(t_i)} \prod_{i=1}^{N} \re^{-\frac{2}{\sqrt{2\kappa}}\varphi(t_i)}| v_l(\lambda)\rangle  
%		\\
%		= (-1)^r r!\prod_{i=1}^N t_i^{-\frac{\lambda}{\kappa}}(1 - t_i)^{-\frac{\nu}{\kappa}-1}\prod_{1 \le i<j\le N} (t_i - t_j)^{\frac{2}{\kappa}}\prod_{i = 1}^{N-r}t_i^{2l}(1-t_i)^{2n+1}\prod_{1\le i<j\le N-r}(t_i - t_j)^2.
%	\end{multline}
	Now we can replace \((t_i - t_j)^{\frac{2}{\kappa}}\) by \(|t_i - t_j|^{\frac{2}{\kappa}}\) and the integral domain by a cube \([0,1]^N\) with additional factor \(N!\). Then the value of the integral does not depend on the choice of \(I\) and we can assume that \(I=\{1,\dots,N-r\}\) with additional factor \(\binom{N}{r}\). Finally we get integral~\eqref{eq:S integral}, namely
	\begin{multline}
		\frac{\tilde{C}_{m,n,l}(\mu,\nu,\lambda)}{\|\tilde u_{l}(\lambda)\|^2\|\tilde u_{n}(\nu)\|^2} = \Big\langle u_{m}(\mu),\mathcal{U}_{\nu,n}(1,1)\, u_l(\lambda)\Big\rangle
		\\ 
		= 
		(-1)^{N+ 4n(m - n)(m - n - \frac{1}{2})}\binom{\nu + 2n}{N -m + n + l}\frac{1}{(m - n - l)!}S^N_{-n,m,-l}\left(-\frac{\lambda}{\kappa}+1,-\frac{\nu}{\kappa},\frac{1}{\kappa}\right).   
	\end{multline}
	
	Therefore we get 
	\begin{multline}
		\frac{S^N_{-n,m,-l}\left(-\frac{\lambda}{\kappa}+1,-\frac{\nu}{\kappa},\frac{1}{\kappa}\right)}{S^N_{0,0,0}\left(-\frac{\lambda}{\kappa}+1,-\frac{\nu}{\kappa},\frac{1}{\kappa}\right)}
		\\
		=(-1)^{ 4n(m - n)(m - n - \frac{1}{2})} \frac{\tilde{C}_{m,n,l}(\nu{+}\lambda{-}2N,\nu,\lambda)}{\tilde{C}_{0,0,0}(\nu{+}\lambda{-}2N,\nu,\lambda)} \frac{(m-n-l)!}{ \left\|\tilde u_{l}(\lambda)\right\|^2\|\tilde u_{n}(\nu)\|^2} \binom{\nu}{N}\binom{\nu + 2n}{N -m + n + l}^{-1}	    
	\end{multline}
	It implies the result.
\end{proof}

\begin{remark}
	Similarly, for \(n=0\), one can use bosonization of the vertex operator given in Proposition \ref{Prop:O&Vnu 2}. Then we get a relation 
	\begin{equation}
		\frac{S^N_{0,m,-l}\left(-\frac{\lambda}{\kappa}+1,\frac{\nu+2}{\kappa},\frac{1}{\kappa}\right)}{S^N_{0,0,0}\left(-\frac{\lambda}{\kappa}+1,\frac{\nu+2}{\kappa},\frac{1}{\kappa}\right)}=(-1)^{m-l} 		\frac{(m-l)!}{\left\|\tilde u_{l}(\lambda)\right\|^2}
		\frac{\tilde{C}_{m,0,l}(\lambda{-}\nu{-}2{-}2N,\nu,\lambda)}{\tilde{C}_{0,0,0}(\lambda{-}\nu{-}2{-}2N,\nu,\lambda) }.		
	\end{equation}
	One can view this as an additional check of the agreement between three-point functions formula \eqref{eq:td C t} and Selberg integral formula \eqref{eq:S t}.
\end{remark}

\subsection{Symmetries, constant term, particular cases}\label{ssec:Integral classical}

Note that the right side of Selberg integral formula \eqref{eq:S t} agrees with the natural symmetry of the integral
\begin{equation}
	S^N_{n,m,l} (\alpha, \beta, g)=S^N_{l+1/2,m,n-1/2} (\beta,\alpha, g)
\end{equation}
There are also additional symmetries which are not clear from the integral form but follow easily from the answer \eqref{eq:S t}
\begin{align}
	\label{eq:m-n sym}
	\frac{S^N_{n,m,l} (\alpha, \beta, g)}{S^N_{0,0,0}(\alpha, \beta, g)}&= (-1)^{2(m-n)(n+m+l)}	\frac{S^N_{m,n,l} (\alpha, 1-\alpha-\beta-2(N-1)g, g)}{S^N_{0,0,0}(\alpha, 1-\alpha-\beta-2(N-1)g, g)},\\
	\label{eq:m-l sym}
	\frac{S^N_{n,m,l} (\alpha, \beta, g)}{S^N_{0,0,0}(\alpha, \beta, g)}&=	(-1)^{(2m-2l-1)(n+m+l)} \frac{S^N_{n,l+1/2,m-1/2} (1-\alpha-\beta-2(N-1)g,\beta g)}{S^N_{0,0,0}( 1-\alpha-\beta-2(N-1)g,\beta, g)},
\end{align}

Since \(S^N_{0,0,0}(\alpha, \beta, g)\) is the standard Selberg integral, formula \eqref{eq:S t} actually gives an explicit answer for \(S^N_{n,m,l} (\alpha, \beta, g)\). This answer can be written in terms of gamma functions. To present the answer in a more manageable form, we will use notations 
\begin{equation}
	G_{N,R,r}(\alpha,g)	= 
	\left[ 
	\begin{aligned}
		&\prod\limits_{j=0}^{N-1} \! \Gamma \big(\alpha{+}jg{+}\mathds{1}_{j>N-R}(j{-}(N-R)){+}\mathds{1}_{j<2 r}(j{-}2 r)\big)	&\text{ if \(R>0\)}, 
		\\
		&\frac
		{
			\prod\limits_{j=0}^{N-R-1} \!\! \Gamma \big(\alpha{+} jg{+}\mathds{1}_{j<2 r}(j{-}2 r)\big)		}
		{
			\prod\limits_{j=N}^{N-R-1}  \Gamma \big(\alpha{+}jg{+}(j{-}(N{-}R))\big)
		}
		&\text{ if \(R\le 0 \)},
	\end{aligned} 
	\right.
\end{equation}
\begin{equation}
	\tilde{G}_{N,R,r}(\alpha,g)	= 
	\left[ 
	\begin{aligned}
		&\frac
		{
			\prod\limits_{j=-R+1}^{-1} \!\! \Gamma \big(\alpha+ jg+R+j\big)		}
		{
			\prod\limits_{j=-R+1}^{N-1}  \Gamma \big(\alpha+jg+\mathds{1}_{j>N-2 r}(j+2r-N)\big)
		}
		&\text{ if \(R> 0 \)},	
		\\
		&\prod\limits_{j=0}^{N-1} \! \Gamma \big(\alpha+jg+\mathds{1}_{j<-R}(j+R)+\mathds{1}_{j>N-2r}(j+2 r-N)\big)^{-1}	&\text{ if \(R\le 0\)}. 
	\end{aligned} 
	\right.
\end{equation}
\begin{corollary} The integral \ref{eq:S integral} has the from 
	\begin{multline} \label{eq:Selberg integral}
		S^N_{n,m,l}(\alpha,\beta,g)=\Gamma(1+m+l+n) \prod_{j=0}^{N-1} \frac{ \Gamma\big(1+(j+1)g+\mathds{1}_{j>N{-}m{-}l{-}n}(j{-}N{+}m{+}l{+}n)\big)}{\Gamma(1+g)} \\ 
		G_{N,m+n-l,l}(\alpha,g)	G_{N,l+m-n+1,n-1/2}(\beta,g)	\tilde{G}_{N,m-n-l,m}(\alpha+\beta+(N-1)g,g).
	\end{multline}
\end{corollary}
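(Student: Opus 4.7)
The plan is to combine Theorem~\ref{Th:Selberg} with the classical Selberg evaluation of $S^N_{0,0,0}(\alpha,\beta,g)$ and then massage the ratio of triangle products $\mathtt{t}^{-1,g}$ into the Gamma-function bundles $G_{N,R,r}$ and $\tilde G_{N,R,r}$. First I would substitute the classical Selberg formula
\[
S^N_{0,0,0}(\alpha,\beta,g)=\prod_{j=0}^{N-1}\frac{\Gamma(\alpha+jg)\Gamma(\beta+jg)\Gamma(1+(j+1)g)}{\Gamma(\alpha+\beta+(N-1+j)g)\Gamma(1+g)}
\]
into the right-hand side of \eqref{eq:S t}. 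This already accounts for the factor $\prod_{j=0}^{N-1}\Gamma(1+(j+1)g)/\Gamma(1+g)$ up to a product of ``shift corrections'' which should match the index range $j>N-m-l-n$ appearing in the final numerator; that correction will come from the factor $\mathtt{t}^{-1,g}_{-l-m-n}(-(N+1)g)$ in \eqref{eq:S t} after the identity
\[
\mathtt{t}^{-1,g}_{-(l+m+n)}\bigl(-(N+1)g\bigr)\cdot(l+m+n)!
=\prod_{j=N-l-m-n}^{N-1}\frac{\Gamma(1+(j+1)g+(j-N+l+m+n))}{\Gamma(1+(j+1)g)},
\]
which I would verify by expanding the triangle product column by column.

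Next I would handle each of the remaining four triangle products individually. The key computational step is the segment/column identity
\[
\mathtt{t}^{-1,g}_{n}(\alpha)=\prod_{j=0}^{n-1}\mathtt{s}^{-1,g}_{j}(\alpha)=\prod_{j=0}^{n-1}\frac{\Gamma(\alpha+jg+j+1)}{\Gamma(\alpha+jg)} \quad (n>0),
\]
with the symmetric statement \eqref{eq:t symmetry} taking care of $n<0$. Applying this to $\mathtt{t}^{-1,g}_{2l}(1-\alpha)$ in the denominator of \eqref{eq:S t} together with $\prod_{j=0}^{N-1}\Gamma(\alpha+jg)$ from $S^N_{0,0,0}$ produces exactly $G_{N,m+n-l,l}(\alpha,g)$ in the two regimes $m+n-l\gtrless 0$; that is what governs the split into the two cases in the definition of $G_{N,R,r}$. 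An entirely analogous computation on $\mathtt{t}^{-1,g}_{l+m-n}(\beta+(N-1)g)\cdot\mathtt{t}^{-1,g}_{-2n}(\beta-g)^{-1}$ together with the $\Gamma(\beta+jg)$ product yields $G_{N,l+m-n+1,n-1/2}(\beta,g)$; and the remaining pair $\mathtt{t}^{-1,g}_{l-m-n}(1-\alpha-Ng)\cdot\mathtt{t}^{-1,g}_{l+n-m}(1-\alpha-\beta-(N-1)g)/\mathtt{t}^{-1,g}_{-2m}(1-\alpha-\beta-(2N-1)g)$ absorbed against the Barnes-type denominator $\prod_{j=0}^{N-1}\Gamma(\alpha+\beta+(N-1+j)g)^{-1}$ from $S^N_{0,0,0}$ produces $\tilde G_{N,m-n-l,m}(\alpha+\beta+(N-1)g,g)$.

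The only real bookkeeping obstacle is that both $G_{N,R,r}$ and $\tilde G_{N,R,r}$ are defined piecewise in the sign of $R$, so the verification must be done separately in the four sign regimes determined by the signs of $m+n-l$, $l+m-n+1$, and $m-n-l$. I would handle this uniformly by using the reflection identity \eqref{eq:t symmetry} to rewrite every $\mathtt{t}^{-1,g}_{n}$ with $n<0$ as a reciprocal of a $\mathtt{t}^{-1,g}_{-n-1}$; this flips numerators and denominators in exactly the way the $\mathds{1}_{j<\cdot}$ and $\mathds{1}_{j>\cdot}$ indicator shifts inside $G,\tilde G$ do. Finally, the sign $(-1)^{(l+m+n)(l+m-3n+1)/2}$ in \eqref{eq:S t} must be checked to disappear after inserting reflection signs from \eqref{eq:t symmetry} applied to the negative-index triangles; since the Corollary's right-hand side carries no sign, the proof is complete once this sign bookkeeping closes, which I expect to reduce to a parity computation mod~$2$ in $l+m+n$ and $m-n-l$.
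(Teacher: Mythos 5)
Your overall route is the intended one --- the paper states this corollary without a separate argument precisely because it is the direct rewriting you describe: insert the classical Selberg evaluation of $S^N_{0,0,0}$ into \eqref{eq:S t} and regroup the triangle products, case by case in the sign of $R$, into the bundles $G_{N,R,r}$ and $\tilde G_{N,R,r}$, with \eqref{eq:t symmetry} handling negative indices and the global sign. However, both identities you actually write down to carry this out are false, and followed literally they do not reproduce \eqref{eq:Selberg integral}. Your ``segment/column identity'' is wrong: grouping the triangle \eqref{eq:triangle} with $\epsilon_1=-1$, $\epsilon_2=g$ along rows of constant $j$ gives
\begin{equation*}
\mathtt{t}^{-1,g}_{n}(\alpha)=\prod_{j=0}^{n-1}\frac{\Gamma(\alpha-jg+n-j)}{\Gamma(\alpha-jg)}\qquad (n>0),
\end{equation*}
so already at $n=2$ the true value is $\alpha(\alpha+1)(\alpha-g)$ while your formula gives $\alpha(\alpha+g)(\alpha+g+1)$; moreover $\prod_{j=0}^{n-1}\mathtt{s}^{-1,g}_{j}(\alpha)=\mathtt{t}^{-1,g}_{n-1}(\alpha)$, not $\mathtt{t}^{-1,g}_{n}(\alpha)$, since the telescoping of \eqref{GammaS} runs over $j=1,\dots,n$. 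With the wrong sign of $g$ and the wrong offsets the shifted arguments $\Gamma(\alpha+jg+\mathds{1}_{j<2r}(j-2r)+\cdots)$ defining $G$ and $\tilde G$ cannot be matched. The correct matching does work, e.g.\ $\mathtt{t}^{-1,g}_{2l}(1-\alpha)=\pm\,\mathtt{t}^{-1,g}_{-2l-1}(\alpha-g)$ by \eqref{eq:t symmetry}, and row-grouping gives $\mathtt{t}^{-1,g}_{-2l-1}(\alpha-g)=\prod_{j=0}^{2l-1}\Gamma(\alpha+jg)/\Gamma(\alpha+jg+j-2l)$, which against $\prod_j\Gamma(\alpha+jg)$ from Selberg produces exactly the $j<2r$ shifts of $G_{N,m+n-l,l}(\alpha,g)$. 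Note also a grouping slip: the $j>N-R$ shifts of $G_{N,m+n-l,l}(\alpha,g)$ must come from $\mathtt{t}^{-1,g}_{l-n-m}(1-\alpha-Ng)=\pm\,\mathtt{t}^{-1,g}_{m+n-l-1}(\alpha+(N-1)g)$, so this triangle belongs to the $\alpha$-bundle; you assigned it to $\tilde G_{N,m-n-l,m}(\alpha+\beta+(N-1)g,g)$, which depends on $\alpha+\beta$ only and cannot absorb it.

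The first displayed identity is also incorrect: writing $M=l+m+n$, a direct computation (substitute $j'=M-j$ inside the triangle) gives
\begin{equation*}
\mathtt{t}^{-1,g}_{-M}\bigl(-(N+1)g\bigr)=(-1)^{M(M-1)/2}\prod_{j=N-M+1}^{N-1}\frac{\Gamma\bigl(1+(j+1)g+(j-N+M)\bigr)}{\Gamma\bigl(1+(j+1)g\bigr)},
\end{equation*}
with no factorial on either side. The factor $(l+m+n)!$ of \eqref{eq:S t} is not absorbed there: it survives verbatim as the prefactor $\Gamma(1+m+l+n)$ of \eqref{eq:Selberg integral}, so your bookkeeping, which spends it inside the Gamma ratio, would leave the final formula off by $(l+m+n)!$ (the product should also start at $j=N-M+1$, matching the strict indicator $\mathds{1}_{j>N-m-l-n}$, though the $j=N-M$ term is trivially $1$, and the sign $(-1)^{M(M-1)/2}$ must be fed into your final parity check). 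Once these identities and the grouping are corrected, the rest of your plan --- the case analysis in the signs of $m+n-l$, $l+m-n+1$, $m-n-l$ and the cancellation of $(-1)^{(l+m+n)(l+m-3n+1)/2}$ against the reflection signs from \eqref{eq:t symmetry} --- does close and yields the corollary.
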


\begin{example}
	Consider particular case \(l+m+n=N\). In this case, the integral becomes the standard Selberg integral with shifted parameters
	\begin{multline}
		S^N_{n,N-n-l,l}(\alpha,\beta,g)=	\int_0^1 \cdots \int_0^1 \prod_{1 \le i \le N} t_i^{\alpha-1-2l}(1-t_i)^{\beta-2n}
		\prod_{1 \le i < j \le N} |t_i - t_j |^{2 g+2} dt_1 \cdots dt_N
		\\
		= \prod_{j=1}^{N-1} \frac{\Gamma(\alpha-2l+j (g+1))\Gamma(\beta+1-2n+j (g+1))\Gamma(1+(j+1) (g+1)}{\Gamma(\alpha+\beta+1-2l-2n+(N+j-1)(g+1))\Gamma(2+g)}.
	\end{multline}
	It is straightforward to compare this with the formula \eqref{eq:Selberg integral}.	
\end{example}

\begin{example} 
	Another simple example is \(m=1,l=n=0\). In this case, the integral reduces to the very particular case of the Aamoto integrals \cite{Aomoto:1987jacobi} and we have from \eqref{eq:S t} that 
	\begin{equation}
		S^N_{0,1,0}(\alpha,\beta,g)=S^N_{0,0,0}(\alpha,\beta,g)  \frac{\beta+(N-1)g}{\alpha+\beta+(2N-2)g}.
	\end{equation}
\end{example}

Using the standard argument (see e.g. \cite{Forrester:2008importance}) the evaluation of the integral \eqref{eq:S integral} is equivalent to the computation of the constant term
\begin{multline}\label{eq:M C.T.}
	M^{N,a,b}_{N',a',b'}(g) = \operatorname{C.T.}\left[\prod_{1\le j \le N} (1-x_j)^a(1-1/x_j)^b
	\prod_{1 \le i \neq j \le N} (1-{x_i}/{x_j})^g \right.\\ \left.
	\Big(\prod_{1\le j \le N'}(1-x_j)^{a'}(1-1/x_j)^{b'} \!\!\!\!\!\prod_{1 \le i \neq j \le N'}\!\!(1-{x_i}/{x_j}) \Big)\right],
\end{multline}	
where  \(\alpha=-b-(N-1)g\), \(\beta=a+b+1\), \(N'=l+m+n\), \(a'=m-n-l\), \(b'=l+1-m-n\). In order to have Laurent polynomial we assume that \(a,b,g\) are non-negative integer numbers and \(N',a',b'\) satisfy conditions
\begin{equation} \label{eq:N' a' b'}
	0\le N'\le N,\;\; -N'\le a' \le N',\;\; 1-N' \le b' \le 1-a',\;\; a+b+a'+b'\ge 0.
\end{equation}
Note that the constant term \eqref{eq:M C.T.} has obvious \(a \leftrightarrow b\), \(a' \leftrightarrow b'\) symmetry which corresponds to \(l \leftrightarrow m\) symmetry \eqref{eq:m-l sym}. The following result follows from Theorem \ref{Th:Selberg}
\begin{corollary}The constant term \eqref{eq:M C.T.} under conditions \eqref{eq:N' a' b'} has the form 
	\begin{multline} \label{eq:M t}
		{M^{N,a,b}_{N',a',b'} (g)}= M^{N,a,b}_{0,0,0}(g) N'! 
		%	\\
		\frac
		{
			\mathtt{t}^{-1,g}_{-N'-a'-b'}(-a-b-g N)
			\mathtt{t}^{-1,g}_{-b'}(-b)
			\mathtt{t}^{-1,g}_{-N'}(-(N+1)g)
			\mathtt{t}^{-1,g}_{-a'}(-a)
		}
		{
			\mathtt{t}^{-1,g}_{-a'-N'}(-a-g N)
			\mathtt{t}^{-1,g}_{-b'-N'}(-b-gN)
			\mathtt{t}^{-1,g}_{-a'-b'}(-a-b)
		}.
	\end{multline}	
\end{corollary}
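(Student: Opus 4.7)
The plan is to deduce the constant term identity \eqref{eq:M t} directly from Theorem~\ref{Th:Selberg} via the classical Morris--Selberg correspondence (cf.\ \cite{Forrester:2008importance}), under the parameter dictionary forced by the definition of $M$. Solving the relations $N'=l+m+n$, $a'=m-n-l$, $b'=l+1-m-n$ gives uniquely
\begin{equation*}
n = \tfrac{1-a'-b'}{2},\qquad m = \tfrac{N'+a'}{2},\qquad l = \tfrac{N'+b'-1}{2},
\end{equation*}
and one checks that under the hypotheses \eqref{eq:N' a' b'} these lie in $\tfrac12\mathbb{Z}_{\ge 0}$ with $l+m+n = N' \le N$ and $l+m+n\in\mathbb{Z}$, so Theorem~\ref{Th:Selberg} applies.

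First I would unfold the constant term as a torus integral $\tfrac{1}{(2\pi i)^N}\oint\cdots\oint$ over $|x_j|=1$ and extract monomial factors by writing $(1-x_j)^a(1-1/x_j)^b = (-1)^b x_j^{-b}(1-x_j)^{a+b}$, and similarly for the primed factors, and $\prod_{i\neq j}(1-x_i/x_j)^g = \prod_{i<j}(-1)^g x_i^{-g}x_j^{-g}|x_i-x_j|^{2g}$ in the appropriate sense. The standard contour deformation argument converts this to an integral over $[0,1]^N$ of Selberg type, yielding
\begin{equation*}
M^{N,a,b}_{N',a',b'}(g) = C(N,a,b,g)\cdot S^N_{n,m,l}(\alpha,\beta,g),\qquad \alpha = -b-(N-1)g,\ \beta = a+b+1,
\end{equation*}
where the proportionality constant $C(N,a,b,g)$ depends only on the unprimed data (the primed factors simply enlarge the set of distinguished $N'$ integration variables in \eqref{eq:S integral}, introducing the factor $N'!$ from symmetrization and a sign). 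In particular the ratio $M^{N,a,b}_{N',a',b'}/M^{N,a,b}_{0,0,0}$ equals $S^N_{n,m,l}/S^N_{0,0,0}$ times an explicit rational prefactor.

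Next I would substitute formula \eqref{eq:S t} and rewrite each triangle factor $\mathtt{t}^{-1,g}_{\bullet}(\bullet)$ in terms of $(a,b,a',b',N)$ using the symmetry \eqref{eq:t symmetry}. For instance, $l+m-n = N'+a'+b'-1$ and $\beta+(N-1)g = a+b+1+(N-1)g$, so
\begin{equation*}
\mathtt{t}^{-1,g}_{l+m-n}\!\bigl(\beta+(N-1)g\bigr) = (-1)^{(N'+a'+b'-1)(N'+a'+b')/2}\,\mathtt{t}^{-1,g}_{-N'-a'-b'}(-a-b-gN),
\end{equation*}
matching the first factor in the numerator of \eqref{eq:M t}; the other six triangles transform analogously, and the accumulated signs cancel the sign in \eqref{eq:S t} to produce the sign-free expression \eqref{eq:M t}.

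The main obstacle will be bookkeeping the overall sign and the combinatorial prefactor through the contour deformation, as both the Morris--Selberg conversion and the symmetry \eqref{eq:t symmetry} contribute $(-1)^{\binom{k}{2}}$-type signs. A clean way to pin down $C(N,a,b,g)$ and the overall sign is to evaluate both sides in a trivial subcase (e.g.\ $N'=a'=b'=0$, which tautologically matches the classical Morris identity, and $N'=1,\ a'=0,\ b'=1$) and then propagate the normalization; this avoids any need to redo the contour manipulation carefully for general $(N',a',b')$.
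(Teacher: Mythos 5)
Your proposal is correct and follows essentially the same route as the paper: the paper likewise obtains \eqref{eq:M t} by invoking the standard Morris--Selberg equivalence (citing \cite{Forrester:2008importance}) with exactly the dictionary $\alpha=-b-(N-1)g$, $\beta=a+b+1$, $N'=l+m+n$, $a'=m-n-l$, $b'=l+1-m-n$, and then substituting the evaluation of Theorem~\ref{Th:Selberg}, rewriting the triangle factors via \eqref{eq:t symmetry}. Your parameter inversion $n=\tfrac{1-a'-b'}{2}$, $m=\tfrac{N'+a'}{2}$, $l=\tfrac{N'+b'-1}{2}$ and the sample conversion of the first $\mathtt{t}$-factor are consistent with the paper, so the proposal is a fleshed-out version of the paper's own (uneffected) argument.
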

Note that \(M^{N,a,b}_{0,0,0}(g)\) is the Morris constant term, so the formula \eqref{eq:M t} gives an explicit expression for \({M^{N,a,b}_{N',a',b'} (g)}\). Since now all parameters are integer numbers the rational functions on right side sometimes \eqref{eq:M t} require some care because naively they can lead to 0/0 indeterminacy. 
\begin{example} 
	In the case \(a'=b'=0\) the constant term \eqref{eq:M C.T.} coincides with the one conjectured by Forrester \cite{Forrester:1995normalization} and proven in \cite[Th. 6.2]{Petrov:2014} (more precisely one has to take \(q\rightarrow 1\) limit of \(m=0\) case of this theorem)
	\begin{multline}
		M^{N,a,b}_{N',0,0}(g) = \operatorname{C.T.}\left[\prod_{1\le j \le N} (1-x_j)^a(1-1/x_j)^b
		\prod_{1 \le i \neq j \le N} (1-{x_i}/{x_j})^g 
		%\right.\\ \left.
		\!\!\!\prod_{1 \le i \neq j \le N'}\!\!(1-{x_i}/{x_j}) \right]
		%		\\ 
		%		=M^{N,a,b}_{0,0,0}(g) N'!  	\frac
		%		{
			%			\mathtt{t}^{-1,g}_{-N'}(-a-b-g N)
			%				\mathtt{t}^{-1,g}_{-N'}(-(N+1)g)
			%		}
		%		{
			%			\mathtt{t}^{-1,g}_{-N'}(-a-g N)
			%			\mathtt{t}^{-1,g}_{-N'}(-b-gN)
			%		}
		\\ 
		=M^{N,a,b}_{0,0,0}(g) N'! \prod_{j=N-N'}^{N-1}  \frac{(a+b+gj+(j-N+N'))^{\downarrow j-N+N'}(g+gj+(j-N+N'))^{\downarrow j-N+N'}}{(a+gj+(j-N+N'))^{\downarrow j-N+N'}(b+gj+(j-N+N'))^{\downarrow j-N+N'}}.  						
	\end{multline}
	where \(x^{\downarrow k}=\prod_{i=0}^{k-1}(x-i)\).	
\end{example}

\begin{example}
	In the case \(a'=1, b'=0\) the constant term coincides with another particular case of \cite[Th. 6.2]{Petrov:2014}, namely one has to take \(q\rightarrow 1\) limit and set \(n_0=n-m\) in notations of loc. cit
	\begin{multline}
		M^{N,a,b}_{N',1,0}(g) = \operatorname{C.T.}\!\left[\prod_{1\le j \le N} \!\!(1-x_j)^a(1-1/x_j)^b
		\!\!\!\!\prod_{1 \le i \neq j \le N}\!\!\! (1-{x_i}/{x_j})^g 
		%\right.\\ \left.
		\!\!\prod_{1\le j \le N'}\!\!(1-x_j)
		\!\!\!\!\prod_{1 \le i \neq j \le N'}\!\!\!(1-{x_i}/{x_j}) \right]
		%		\\ 
		%		=M^{N,a,b}_{0,0,0}(g) N'!  		\frac
		%		{
			%			\mathtt{t}^{-1,g}_{-N'-1}(-a-b-g N)
			%			\mathtt{t}^{-1,g}_{-N'}(-(N+1)g)
			%		}
		%		{
			%			\mathtt{t}^{-1,g}_{-1-N'}(-a-g N)
			%			\mathtt{t}^{-1,g}_{-N'}(-b-gN)
			%		}
		\\ 
		=M^{N,a,b}_{0,0,0}(g) N'! \!\! \prod_{j=N-N'}^{N-1} \!\!\! \frac{(a+b+gj+(j{-}N{+}N'{+}1))^{\downarrow j{-}N{+}N'{+}1}(g+gj+(j{-}N{+}N'))^{\downarrow j{-}N{+}N'}}{(a+gj+(j{-}N{+}N'{+}1))^{\downarrow j{-}N{+}N'{+}1}(b+gj+(j{-}N{+}N'))^{\downarrow j{-}N{+}N'}}. 				
	\end{multline}		
\end{example}

It would be interesting to find a more direct proof of the constant term identity \eqref{eq:M t}, for example using the methods of \cite{Petrov:2014}. Perhaps the conditions \eqref{eq:N' a' b'} can be weakened. 

Another possible question is whether the constant term \eqref{eq:M C.T.} has meaning from the point of view of the Calogero-Sutherland model similar to \cite{Forrester:1995normalization}.

\appendix    

\section{Monodromy cancellation} \label{sec:hypmonodr}

Recall notations for hypergeometric function introduced in formula \eqref{eq:2F1}. Recall also transformation \(\hat{I}\) defined in \eqref{eq:hat I}. In order to write monodromy of hypergeometric function we will need also transformations \(\hat{R}\) and \(\hat{S}\)
\begin{equation}
    \hat{R} \Vec{a} = (a_1, a_1 - a_3 + 1, a_1 - a_2 + 1),\quad \hat{S} \Vec{a} = (a_2, a_1, a_3).
\end{equation}
These transformation are not independent, namely they satisfy relations $\hat{I} =\hat{R} \hat{S} \hat{R},  \hat{R}^2 =  \hat{S}^2 = 1,  (\hat{R} \hat{S})^4 = 1$. The group of transformations of $\mathbb{R}^3$ which generated by $\hat{R},\hat{I},\hat{S}$ is isomorphic to dihedral group  $\mathrm{Dih}_4$, i.e. group of symmetries of a square. The following proposition is standard.

\begin{proposition}
	There is a following identity for hypergeometric functions
	\begin{equation}\label{eq:hgeominf}
	   {}_2F_1(\Vec{a}| z) = g(\Vec{a}) (-z)^{-a_1} \, {}_2F_1(\hat{R}\Vec{a}|z^{-1})+	
	   g(\hat{S}\Vec{a})(-z)^{-a_2} {}_2F_1(\hat{R}\hat{S}\Vec{a}|z^{-1}).
	\end{equation}
	where $ g(\Vec{a}) = \dfrac{\Gamma (a_3) \Gamma (a_2-a_1) }{\Gamma (a_2) \Gamma (a_3-a_1)}$.
\end{proposition}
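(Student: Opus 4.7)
The plan is to prove the stated connection formula by exploiting the fact that both sides satisfy the same ordinary differential equation (the Gauss hypergeometric equation), together with an integral representation that gives the coefficients explicitly. First I would observe that $\mathrm{F}(z) := {}_2F_1(\vec{a}|z)$ satisfies the standard second-order hypergeometric equation, which has regular singular points at $0, 1, \infty$ with characteristic exponents $(a_1, a_2)$ at $\infty$. Provided $a_1 - a_2 \notin \mathbb{Z}$, the two functions $(-z)^{-a_1}{}_2F_1(\hat{R}\vec{a}|z^{-1})$ and $(-z)^{-a_2}{}_2F_1(\hat{R}\hat{S}\vec{a}|z^{-1})$ are linearly independent solutions of the same equation, each being the canonical Frobenius solution at $\infty$ associated with one of the two characteristic exponents. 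Hence there exist constants $A(\vec{a}), B(\vec{a})$, depending only on $\vec{a}$ (not on $z$), such that
\begin{equation}
	{}_2F_1(\vec{a}|z) \;=\; A(\vec{a})\,(-z)^{-a_1}\,{}_2F_1(\hat{R}\vec{a}|z^{-1}) + B(\vec{a})\,(-z)^{-a_2}\,{}_2F_1(\hat{R}\hat{S}\vec{a}|z^{-1}).
\end{equation}
Since the right side is manifestly symmetric under $\hat{S}$ (swap of $a_1$ and $a_2$) followed by swapping $A$ and $B$, it suffices to compute $A(\vec{a})$; then $B(\vec{a}) = A(\hat{S}\vec{a}) = g(\hat{S}\vec{a})$.

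To identify $A(\vec{a})$ I would use the Mellin--Barnes integral representation
\begin{equation}
	{}_2F_1(\vec{a}|z) = \frac{\Gamma(a_3)}{\Gamma(a_1)\Gamma(a_2)}\cdot\frac{1}{2\pi\ri}\int_{\mathcal{C}} \frac{\Gamma(a_1+s)\Gamma(a_2+s)\Gamma(-s)}{\Gamma(a_3+s)}(-z)^{s}\,ds,
\end{equation}
valid for $|\arg(-z)|<\pi$, where $\mathcal{C}$ is a contour separating the poles of $\Gamma(-s)$ from those of $\Gamma(a_i+s)$. Closing $\mathcal{C}$ to the right and summing residues at $s=n$ recovers the defining series. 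Closing instead to the left picks up residues at $s = -a_1 - n$ and $s = -a_2 - n$ ($n \in \mathbb{Z}_{\ge 0}$): the first family assembles, after standard Gamma-function manipulations using $\Gamma(\alpha+n)/\Gamma(\alpha) = (\alpha)_n$ and $\Gamma(a_2 - a_1 - n) = (-1)^n \Gamma(a_2-a_1)/(1+a_1-a_2)_n$, into $(-z)^{-a_1}{}_2F_1(\hat{R}\vec{a}|z^{-1})$ multiplied by the prefactor
\begin{equation}
	\frac{\Gamma(a_3)}{\Gamma(a_1)\Gamma(a_2)}\cdot\frac{\Gamma(a_1)\Gamma(a_2-a_1)}{\Gamma(a_3-a_1)} \;=\; \frac{\Gamma(a_3)\Gamma(a_2-a_1)}{\Gamma(a_2)\Gamma(a_3-a_1)} \;=\; g(\vec{a}),
\end{equation}
and the second family produces the term $g(\hat{S}\vec{a})(-z)^{-a_2}{}_2F_1(\hat{R}\hat{S}\vec{a}|z^{-1})$ by the same computation after swapping $a_1 \leftrightarrow a_2$.

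The only delicate point is justifying the contour deformation: one must show that the integrand decays sufficiently fast along vertical strips so that the sum over residues converges and equals the original integral. This is the standard Stirling-asymptotic estimate $|\Gamma(\sigma+\ri t)| \sim \sqrt{2\pi}|t|^{\sigma-1/2}e^{-\pi|t|/2}$, from which the ratio of Gamma functions in the integrand decays exponentially in $|\mathrm{Im}\,s|$ whenever $|\arg(-z)|<\pi$, validating the contour closure. This is the main (but routine) obstacle; the rest is Gamma-function bookkeeping. Once $A(\vec{a}) = g(\vec{a})$ is established, the case $a_1 - a_2 \in \mathbb{Z}$ follows by analytic continuation in the parameters, with the apparent poles of $g$ and $g\circ\hat{S}$ canceling against logarithmic contributions in a standard way.
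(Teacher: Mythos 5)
Your proposal is correct, and in fact it supplies more than the paper does: the paper states this proposition without proof, invoking it as the standard Gauss connection formula between the solution normalized at $z=0$ and the two Frobenius solutions at $z=\infty$, so there is no in-paper argument to compare with. Your route — first using the hypergeometric ODE to see that, when $a_1-a_2\notin\mathbb{Z}$, the functions $(-z)^{-a_1}\,{}_2F_1(\hat{R}\vec{a}|z^{-1})$ and $(-z)^{-a_2}\,{}_2F_1(\hat{R}\hat{S}\vec{a}|z^{-1})$ span the solution space near $\infty$, and then extracting the coefficients from the Barnes integral by closing the contour to the left and summing the residues at $s=-a_1-n$ and $s=-a_2-n$ — is exactly the classical derivation (Whittaker--Watson style), and your Gamma-function bookkeeping for the prefactor $g(\vec{a})=\Gamma(a_3)\Gamma(a_2-a_1)/\bigl(\Gamma(a_2)\Gamma(a_3-a_1)\bigr)$ is right; note that the Mellin--Barnes computation alone already produces both terms, so the preliminary ODE step is harmless redundancy. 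The one inaccurate point is your closing sentence: when $a_1-a_2\in\mathbb{Z}$ the identity as written does not follow by analytic continuation — both $g(\vec{a})$ and $g(\hat{S}\vec{a})$ are genuinely singular and the correct continuation formula acquires logarithmic terms — so the statement simply requires generic parameters. This is harmless for the paper, since the only place the formula is used, Proposition~\ref{Prop:MonCanc}, explicitly assumes $a_1,a_2,a_3$ generic, but you should state that hypothesis rather than claim the degenerate case.
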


Let $\Vec{r} = (r_1,r_2,r_3)\in \mathbb{Z}^3$. Consider the following products of hypergeometric functions
\begin{subequations}
	\begin{align}
		H^{(1)}_{\Vec{r}}(\Vec{a}|z)&= {}_2F_1(\Vec{a}|z)  {}_2F_1(-\Vec{a}-\Vec{r}|z), 
		\\
		H^{(2)}_{\Vec{r}}(\Vec{a}|z)&=z^{2 + r_3} {}_2F_1(\hat{I}\Vec{a}|z)  {}_2F_1(\hat{I}(-\Vec{a}-\Vec{r})|z).
	\end{align}	
\end{subequations}
%\long\gdef\COMMENT#1{\begin{multline}
%       H^{(1)}_{\Vec{r}}(\Vec{a}|z) = (-1)^{-r_1}  g(\Vec{a}) g(-\Vec{a}-\Vec{r}) z^{r_1} {}_2F_1(\hat{R}\Vec{a}|z^{-1})  {}_2F_1(\hat{R}(-\Vec{a}-\Vec{r})|z^{-1})+
%       \\
%       +(-1)^{- r_2} g(\Vec{a}')g(-\Vec{a}'-\Vec{r}')z^{r_2} {}_2F_1(\hat{R}(\Vec{a}')|z^{-1})  {}_2F_1(\hat{R}(-\Vec{a}'-\Vec{r}')|z^{-1}) +
%       \\
%       +(-1)^{-a_1 + a_2 - r_1} g(\Vec{a}')g(-\Vec{a}-\Vec{r}) z^{ a_1 - a_2 + r_1} {}_2F_1(\hat{R}(\Vec{a}')|z^{-1})  {}_2F_1(\hat{R}(-\Vec{a}-\Vec{r})|z^{-1}) +
%       \\
%       +(-1)^{a_1 - a_2 - r_2} g(\Vec{a})g(-\Vec{a}'-\Vec{r}') z^{ - a_1 + a_2 + r_2} {}_2F_1(\hat{R}(\Vec{a})|z^{-1})  {}_2F_1(\hat{R}(-\Vec{a}'-\Vec{r}')|z^{-1}). 
%\end{multline}}

\begin{proposition}\phantomsection \label{Prop:MonCanc}
	Assume that \(a_1,a_2,a_3\) are generic. Then the function 
	\begin{equation}
		H_{\Vec{r}}(\Vec{a}|z) = H^{(1)}_{\Vec{r}}(\Vec{a}|z) + p_{\Vec{r}}(\Vec{a}) H^{(2)}_{\Vec{r}}(\Vec{a}|z)
	\end{equation} 
	is a rational function of $z$ with poles in $0, 1$ and  $\infty$, if and only if 
	\begin{multline}\label{eq:p formula}
	    p_{\Vec{r}}(\Vec{a}) = (-1)^{1 + r_3}\frac{  g(\hat{S}\Vec{a})g(-\Vec{a}-\Vec{r})}{ g(\hat{S}\hat{I}\Vec{a})g(\hat{I}(-\Vec{a}-\Vec{r})) } =  (-1)^{1 + r_3} \frac{\Gamma(a_3)}{ \Gamma(a_3 +2 + r_3)} \frac{\Gamma(-a_3 -r_3)}{ \Gamma(-a_3 + 2)}\times
	     \\
	    \times \frac{\Gamma(a_1  + 1 + r_1)}{ \Gamma(a_1)}
	     \frac{\Gamma(-a_2 + 1)}{ \Gamma(-a_2 - r_2)}
	     \frac{\Gamma(a_1-a_3 + 1)}{ \Gamma(a_1 - a_3 + r_1 - r_3)}
	     \frac{\Gamma(a_3  - a_2 + 1 - r_2 + r_3)}{ \Gamma(a_3  - a_2)}.
	\end{multline}
	\end{proposition}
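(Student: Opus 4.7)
The strategy is the classical monodromy-cancellation argument (cf.~\cite{zamolodchikov:1989}, \cite{Teschner:1995Liouville}, \cite{Bershtein:2015Bilinear}): rationality of $H_{\vec{r}}(\vec{a}|z)$ on $\mathbb{P}^1$ with poles only at $0,1,\infty$ is equivalent to single-valuedness on $\mathbb{P}^1 \setminus \{0,1,\infty\}$ together with meromorphic behaviour at the three punctures, so I would check each singular point separately. At $z=0$ every ${}_2F_1$-factor appearing in $H^{(1)}_{\vec{r}}$ and $H^{(2)}_{\vec{r}}$ is analytic with leading term $1$, so $H^{(1)}_{\vec{r}}$ is analytic at $0$, while $H^{(2)}_{\vec{r}}$ equals $z^{2+r_3}$ times an analytic function; since $r_3 \in \mathbb{Z}$, both are single-valued and meromorphic in a neighbourhood of $z = 0$, imposing no constraint on $p_{\vec{r}}$.

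At $z = \infty$ I would apply the connection formula \eqref{eq:hgeominf} to each hypergeometric factor. After absorbing $z^{2+r_3} = (-1)^{r_3}(-z)^{2+r_3}$ in $H^{(2)}_{\vec{r}}$, both $H^{(1)}_{\vec{r}}$ and $H^{(2)}_{\vec{r}}$ become sums of four contributions with powers $(-z)^{\epsilon}$, $\epsilon \in \{r_1, r_2, -a_1+a_2+r_2, a_1-a_2+r_1\}$. The first two exponents are integers and do not obstruct single-valuedness; the last two are non-integer for generic $\vec{a}$ and carry the entire $\gamma_\infty$-monodromy. A direct coefficient extraction gives the coefficient of $(-z)^{-a_1+a_2+r_2}$ as $g(\vec{a})\,g(\hat{S}(-\vec{a}-\vec{r}))$ in $H^{(1)}_{\vec{r}}$ and $(-1)^{r_3}g(\hat{I}\vec{a})\,g(\hat{S}\hat{I}(-\vec{a}-\vec{r}))$ in $H^{(2)}_{\vec{r}}$, and the coefficient of $(-z)^{a_1-a_2+r_1}$ as $g(\hat{S}\vec{a})\,g(-\vec{a}-\vec{r})$ and $(-1)^{r_3}g(\hat{S}\hat{I}\vec{a})\,g(\hat{I}(-\vec{a}-\vec{r}))$ respectively. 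Imposing the vanishing of the second pair of coefficients in $H^{(1)}_{\vec{r}} + p_{\vec{r}}(\vec{a})H^{(2)}_{\vec{r}}$ determines
\begin{equation*}
p_{\vec{r}}(\vec{a}) = (-1)^{1+r_3}\,\frac{g(\hat{S}\vec{a})\,g(-\vec{a}-\vec{r})}{g(\hat{S}\hat{I}\vec{a})\,g(\hat{I}(-\vec{a}-\vec{r}))},
\end{equation*}
which, after substituting the definition of $g$ and cancelling common $\Gamma$-factors, simplifies to the expression \eqref{eq:p formula}.

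The main obstacle is the \emph{consistency} step: with $p_{\vec{r}}$ chosen as above, the other non-integral coefficient must also vanish, and this is a priori an independent condition on a single parameter. Conceptually, since $\vec{r}\in\mathbb{Z}^3$ the monodromy representation of the hypergeometric equation with parameters $-\vec{a}-\vec{r}$ is dual to the one with parameters $\vec{a}$, so the four-dimensional product solution space is a twist of $\operatorname{End}(V_{\vec{a}})$ on which the monodromies act by conjugation; the trivial subrepresentation $\mathbb{C}\cdot\mathrm{Id}$ is canonical and one-dimensional for generic $\vec{a}$, singling out $H_{\vec{r}}$ uniquely up to scalar and forcing the two cancellation conditions at $\infty$ to coincide. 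Equivalently, the required identity
$g(\vec{a})\,g(\hat{S}(-\vec{a}-\vec{r}))\,g(\hat{S}\hat{I}\vec{a})\,g(\hat{I}(-\vec{a}-\vec{r})) = g(\hat{S}\vec{a})\,g(-\vec{a}-\vec{r})\,g(\hat{I}\vec{a})\,g(\hat{S}\hat{I}(-\vec{a}-\vec{r}))$
reduces by the reflection formula $\Gamma(x)\Gamma(1-x)=\pi/\sin(\pi x)$ to an elementary trigonometric relation. Once consistency is established, single-valuedness at $z = 1$ is automatic from the relation $\gamma_0\gamma_1\gamma_\infty=\mathrm{Id}$ in $\pi_1(\mathbb{P}^1\setminus\{0,1,\infty\})$ combined with trivial monodromy at $z = 0$ and $z = \infty$; together with local meromorphicity at each puncture, which is visible from the same Kummer-type expansions, this yields that $H_{\vec{r}}(\vec{a}|z)$ is a rational function of $z$ with poles only at $0, 1, \infty$.
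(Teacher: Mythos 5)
Your proposal is correct and takes essentially the same route as the paper's proof: expand $H^{(1)}_{\vec{r}}$ and $H^{(2)}_{\vec{r}}$ at $z=\infty$ via the connection formula \eqref{eq:hgeominf}, demand cancellation of the two non-integer-power sectors to determine $p_{\vec{r}}(\vec{a})$, check that the two resulting determinations of $p_{\vec{r}}$ agree, and then deduce rationality from the trivial monodromy at $z=0,\infty$ and hence at $z=1$. The only difference is in presentation: where the paper records the series identities \eqref{eq:hypprod} (so each non-integer sector vanishes entirely) and simply asserts that the two formulas for $p_{\vec{r}}$ are equivalent, you justify this consistency either by the duality of the two hypergeometric monodromy representations (which also guarantees the whole sectors vanish) or by the reflection-formula computation, which is the same straightforward check made explicit.
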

\begin{proof} 
	It follows from the formula \eqref{eq:hgeominf} that
	\begin{multline}\label{eq:H1 infinity}
	       H^{(1)}_{\Vec{r}}(\Vec{a}|z) = (-1)^{-r_1}  g(\Vec{a})  g(-\Vec{a}-\Vec{r}) z^{ r_1}\, {}_2F_1(\hat{R}\Vec{a}|z^{-1})  \,{}_2F_1(\hat{R}(-\Vec{a}-\Vec{r})|z^{-1})+
	       \\
	       +(-1)^{- r_2} g(\hat{S}\Vec{a})g(\hat{S}(-\Vec{a}-\Vec{r}))z^{r_2}  \,{}_2F_1(\hat{R}\hat{S}\Vec{a}|z^{-1})  \,{}_2F_1(\hat{R}\hat{S}(-\Vec{a}-\Vec{r})|z^{-1}) +
	       \\
	       +(-1)^{-a_1 + a_2 - r_1} g(\hat{S}\Vec{a})g(-\Vec{a}-\Vec{r}) z^{ a_1 -  a_2 + r_1} \,{}_2F_1(\hat{R}\hat{S}\Vec{a}|z^{-1})  \,{}_2F_1(\hat{R}(-\Vec{a}-\Vec{r})|z^{-1}) +
	       \\
	       +(-1)^{a_1 - a_2 - r_2} g(\Vec{a})g(\hat{S}(-\Vec{a}-\Vec{r})) z^{ -  a_1 + a_2 + r_2} \,{}_2F_1(\hat{R}\Vec{a}|z^{-1})  \,{}_2F_1(\hat{R}\hat{S}(-\Vec{a}-\Vec{r})|z^{-1}), 
	\end{multline}
	\begin{multline}\label{eq:H2 infinity}
	       H^{(2)}_{\Vec{r}}(\hat{I}\Vec{a}|z) = (-1)^{-r_1 + 2 + r_3}   g(\hat{I}\Vec{a}) g(\hat{I}(-\Vec{a}-\Vec{r})) z^{r_1 } \,{}_2F_1(\hat{R}\hat{I}\Vec{a}|z^{-1})  \,{}_2F_1(\hat{R}\hat{I}(-\Vec{a}-\Vec{r})|z^{-1})+
	       \\
	       +(-1)^{- r_2 + 2 + r_3}  g(\hat{S}\hat{I}\Vec{a})g(\hat{S}\hat{I}(-\Vec{a}-\Vec{r}))z^{r_2 } \,{}_2F_1(\hat{R}\hat{S}\hat{I}\Vec{a}|z^{-1})  \,{}_2F_1(\hat{R}\hat{S}\hat{I}(-\Vec{a}-\Vec{r})|z^{-1}) +
	       \\
	       +(-1)^{-a_1 + a_2 - r_1 + 2 + r_3}  g(\hat{S}\hat{I}\Vec{a})g(\hat{I}(-\Vec{a}-\Vec{r})) z^{ a_1 - a_2 + r_1} \,{}_2F_1(\hat{R}\hat{S}\hat{I}\Vec{a}|z^{-1})  \,{}_2F_1(\hat{R}\hat{I}(-\Vec{a}-\Vec{r})|z^{-1}) +
	       \\
	       +(-1)^{a_1 - a_2 - r_2 + 2 + r_3}  g(\hat{I}\Vec{a})g(\hat{S}\hat{I}(-\Vec{a}-\Vec{r})) z^{ - a_1 + a_2 + r_2 } \,{}_2F_1(\hat{R}\hat{I}\Vec{a}|z^{-1})  \,{}_2F_1(\hat{R}\hat{S}\hat{I}(-\Vec{a}-\Vec{r})|z^{-1}). 
	\end{multline}
	There is a correspondence between terms in right sides of formulas \eqref{eq:H1 infinity} and \eqref{eq:H2 infinity}, namely 
	\begin{align}\label{eq:hypprod}
	    {}_2F_1(\hat{R}\Vec{a}|z^{-1})  {}_2F_1(\hat{R}(-\Vec{a}-\Vec{r})|z^{-1})  &= {}_2F_1(\hat{R}\hat{I}\Vec{a}|z^{-1})  {}_2F_1(\hat{R}\hat{I}(-\Vec{a}-\Vec{r})|z^{-1}),
	    \\
	    {}_2F_1(\hat{R}\hat{S}\Vec{a}|z^{-1})   {}_2F_1(\hat{R}\hat{S}(-\Vec{a}-\Vec{r})|z^{-1}) &={}_2F_1(\hat{R}\hat{S}\hat{I}\Vec{a}|z^{-1})  {}_2F_1(\hat{R}\hat{S}\hat{I}(-\Vec{a}-\Vec{r})|z^{-1}),
	    \\
	    {}_2F_1(\hat{R}\hat{S}\Vec{a}|z^{-1})   {}_2F_1(\hat{R}(-\Vec{a}-\Vec{r})|z^{-1})&= {}_2F_1(\hat{R}\hat{S}\hat{I}\Vec{a}|z^{-1})  {}_2F_1(\hat{R}\hat{I}(-\Vec{a}-\Vec{r})|z^{-1}),
	    \\
	    {}_2F_1(\hat{R}\Vec{a}|z^{-1})   {}_2F_1(\hat{R}\hat{S}(-\Vec{a}-\Vec{r})|z^{-1})&= {}_2F_1(\hat{R}\hat{I}\Vec{a}|z^{-1})  {}_2F_1(\hat{R}\hat{S}\hat{I}(-\Vec{a}-\Vec{r})|z^{-1}),
	\end{align}
	where we used
	\begin{equation}
	    \hat{R} = \hat{S}\hat{R}\hat{I}, ~~~ \hat{R}\hat{S} =  \hat{S}\hat{R}\hat{S}\hat{I}, ~~~    {}_2F_1(\Vec{a}|z) = {}_2F_1(\hat{S}\Vec{a}|z).
	\end{equation}
	
	Due to our assumptions, the third and fourth terms in the right sides of formulas \eqref{eq:H1 infinity} and \eqref{eq:H2 infinity} should cancel each other. The cancellation of the third term gives 
	\begin{equation}
	     p_{\Vec{r}}(\Vec{a}) =  (-1)^{1 + r_3}\frac{  g(\hat{S}\Vec{a})g(-\Vec{a}-\Vec{r})}{ g(\hat{S}\hat{I}\Vec{a})g(\hat{I}(-\Vec{a}-\Vec{r})) } 
	\end{equation}
	and for the forth term we get
	\begin{equation}
	     p_{\Vec{r}}(\Vec{a}) =   (-1)^{1 + r_3}\frac{  g(\Vec{a})g(\hat{S}(-\Vec{a}-\Vec{r}))}{ g(\hat{I}\Vec{a})g(\hat{S}\hat{I}(-\Vec{a}-\Vec{r})) } 
	\end{equation}
	Using definition of function \(g(\Vec{a})\) it is straightforward to see that these formulas are equivalent and equivalent to \eqref{eq:p formula}.
	
	On the other hand,  the function \(H_{\Vec{r}}(\Vec{a}|z)\) can have singularities only at \(z=0,1,\infty\) and these singularities are branching points. For \(p_{\Vec{r}}(\Vec{a})\) given by formula \eqref{eq:p formula} the arguments above shows that the function \(H_{\Vec{r}}(\Vec{a}|z)\) at \(z=0,\infty\) can have only poles. Hence the monodromy of \(H_{\Vec{r}}(\Vec{a}|z)\) at \(z=1\) is trivial. Therefore the singularities at \(z=1\) are also just poles. Hence \(H_{\Vec{r}}(\Vec{a}|z)\) is a rational function.
\end{proof}

\section{Three point functions} \label{sec:Barnes}
We basically follow \cite{Nekrasov:2018bps}, see also \cite[App. E]{Nakajima:2003lectures}.

Let \(\chi\) be a (probably infinite) sum of terms of the form \(\re^{-\tau \xi}\), where \(\tau\) is a formal parameter. By conjugation \(\chi^*\) we denote operation which acts as \(\re^{-\tau \xi} \mapsto \re^{\tau \xi}\). Let us introduce the function (plethystic exponent) 
\begin{equation}\label{eq:gamma}
	\mathsf{E} \left[ \chi\right]= \exp\left(\frac{d}{ds}|_{s=0}\frac{1}{\Gamma(s)}\int_{0}^{+\infty}\frac{d\tau}{\tau} \tau^s \chi^* \right).
\end{equation}
%It is easy to see that  \(\mathsf{E} \left[\re^{-\tau } \right]\)
%
%
%
%\(\gamma_{\epsilon_1,\epsilon_2}(x)\left[\re^{-\tau a}\right]=- \log a\). Let us define  by 
%
%
%\begin{equation}\label{eq:gamma}
%	\gamma_{\epsilon_1,\epsilon_2}(x)\left[ \chi \right]:=
%	\frac{d}{ds}|_{s=0}\frac{1}{\Gamma(s)}\int_{0}^{+\infty}\frac{d\tau}{\tau} \tau^s \chi.
%\end{equation}
Then for any finite sum we have
\begin{equation}
	\mathsf{E} \left[ \sum \re^{\tau\xi_i}-\sum \re^{\tau\eta_j}\right] = \frac{\prod \eta_j}{\prod \xi_i}.
\end{equation}
Using the definition \eqref{eq:gamma} the plethystic exponent can be also defined for the infinite sums (under the certain restrictions in order to ensure convergence).

Let us introduce functions 
\begin{align}
	&\mathsf{C}^{\mathrm{Vir}}(u_1,u_2,u_3;q_1,q_2) = 
	\mathsf{E}	\left[ 
	\frac{q_1^2q_2^2(u_1u_2u_3 -u_2^2 - u_3^{2})+u_1u_2^{-1}u_3^{-1}+u_1^{-1}u_2u_3^{-1}+u_1^{-1}u_2^{-1}u_3 - u_1^{-2}}{(1-q_1)(1-q_2)}, 
	\right]
	\\
	&\mathsf{C}^{\mathfrak{sl}}(u_1,u_2,u_3;q_1,q_2)=\mathsf{E}\left[ \frac{q_1^2q_2(u_1u_2u_3-u_1^2-u_2^2)+u_1u_2^{-1}u_3^{-1}+u_1^{-1}u_2u_3^{-1}+u_1^{-1}u_2^{-1}u_3- u_1^{-2}}{(1-q_1)(1-q_2)} \right].
\end{align}
Note that triangle function \(\mathtt{t}\)  introduced in \eqref{eq:triangle} has following expressions
\begin{subequations}\label{eq:triangle pleth}
	\begin{align}
		\mathsf{E}	\left[ \frac{v q_1^n-v}{(1-q_1)(1-q_2/q_1)}+\frac{v q_2^n-v}{(1-q_1/q_2)(1-q_2)} \right] &= (-1)^{n(n+1)/2}\mathtt{t}_n^{\epsilon_1,\epsilon_2}(-\alpha),
		\\
		\mathsf{E}	\left[ \frac{q_2}{q_1}\frac{v q_1^n-v}{(1-q_1)(1-q_2/q_1)}+\frac{v q_2^n-v}{(1-q_1/q_2)(1-q_2)} \right] &= \mathtt{t}_{-n}^{\epsilon_1,\epsilon_2}(\alpha-\epsilon_1),	
	\end{align}	
\end{subequations}
where \(v=\re^{\tau \alpha}, q_1=\re^{\tau\epsilon_1}, q_2=\re^{\tau\epsilon_2}\).
%\begin{equation}
%	\mathtt{t}^{\epsilon_1,\epsilon_2}_n(\alpha)= 
%		\begin{cases}
%			\prod_{i,j \ge 0, i+j<n} (\alpha-i\epsilon_1-j\epsilon_2) \quad  &n>0 
%			\\ 
%			\prod_{i,j > 0, i+j\le-n} (\alpha+i\epsilon_1+j\epsilon_2) \quad   &n<0
%		\end{cases}.
%\end{equation}
%The empty product is assumed to equal to 1.
Using this we have 
\begin{multline}\label{eq:ratio of C}
	\frac{\mathsf{C}^{\mathfrak{sl}}(u_1 q_1^l ,u_2 q_1^n ,u_3 q_1^m;q_1,q_1^{-1}q_2)\mathsf{C}^{\mathrm{Vir}}(u_1 q_2^m,u_2 q_2^n,u_3 q_2^l;q_1 q_2^{-1},q_2)}{\mathsf{C}^{\mathfrak{sl}}(u_1,u_2,u_3;q_1,q_1^{-1}q_2)\mathsf{C}^{\mathrm{Vir}}(u_1,u_2,u_3;q_1 q_2^{-1},q_2)}
%	\\= (-1)^{\frac{1}{2} (l+m+n+1) (l+m+n)+4 (l m+l n+m n)+(2 l-1) l}
	\\
	 =\pm \frac{\mathtt{t}^{\epsilon_1,\epsilon_2}_{-m-n-l}(a_1{+}a_2{+}a_3{+}\epsilon_1) \mathtt{t}^{\epsilon_1,\epsilon_2}_{m-n-l}(a_1{+}a_2{-}a_3) \mathtt{t}^{\epsilon_1,\epsilon_2}_{-m+n-l}(a_1{-}a_2{+}a_3) \mathtt{t}^{\epsilon_1,\epsilon_2}_{-m-n+l}({-}a_1{+}a_2{+}a_3)}{\mathtt{t}^{\epsilon_1,\epsilon_2}_{-2l}(2a_1) \mathtt{t}^{\epsilon_1,\epsilon_2}_{-2n}(2a_2+\epsilon_1)  \mathtt{t}^{\epsilon_1,\epsilon_2}_{-2m}(2a_3+\epsilon_1)} 
	\\
	=\pm \frac{\tilde{C}_{m,n,l}(\mu,\nu,\lambda)}{\tilde{C}_{0,0,0}(\mu,\nu,\lambda) \|\tilde{u}_l(\lambda)\|^2}
\end{multline}
where in first transformation we used notations \(u_1=\re^{\tau a_1}, u_2=\re^{\tau a_2}, u_3=\re^{\tau a_3}, q_1=\re^{\tau\epsilon_1}, q_2=\re^{\tau\epsilon_2}\) and in second transformation we used 
Theorems~\ref{Th:main}, Corollary \ref{Cor:tilde U}, and identification of parameters
\begin{equation}\label{eq:identification}
	\frac{\epsilon_1}{\epsilon_2}=-\frac{1}{\kappa}, \quad \frac{a_1}{\epsilon_2}=-\frac{\lambda}{2\kappa}\quad \frac{a_2}{\epsilon_2}=-\frac{\mu}{2\kappa} \quad \frac{a_3}{\epsilon_2}=-\frac{\nu}{2\kappa}.
\end{equation}
The overall signs in formula \eqref{eq:ratio of C} can be easily computed from relations \eqref{eq:triangle pleth}, however they are not illuminating and we omit them for simplicity. 

Using these functions we can renormalize four-point conformal blocks. For the algebra \(\widehat{\mathfrak{sl}}(2)\) we define it as follows
\begin{equation}
	\Psi_k^{\mathrm{full}}\left(\vec{\mu},\lambda;x, z\right)=(-1)^{\lambda}\mathsf{C}^{\mathfrak{sl}}(u_1,u_2,u';q_1,q_2) \mathsf{C}^{\mathfrak{sl}}(u',u_3,u_4;q_1,q_2) \Psi_k\left(\vec{\mu},\lambda;x, z\right),
\end{equation}
where \(u_i=\re^{\tau \lambda_i}\), \(u'=\re^{\tau \lambda'}\), \(q_1=\re^{2 \tau}\), \(q_2=\re^{-2 \kappa \tau}\), c.f. \eqref{eq:identification}. For the Virasoro case we define 
\begin{equation}
	\mathrm{F}_b^{\mathrm{full}}\left(\vec{P},P';z \right) 	=\mathsf{C}^{\mathrm{Vir}}(u_1,u_2,u';q_1,q_2) \mathsf{C}^{\mathrm{Vir}}(u',u_3,u_4;q_1,q_2) \mathrm{F}_b\left(\vec{P},P';z \right),
\end{equation}
where \(u_i=\exp(\tau\big(\frac{2P_i}{b+b^{-1}}-1)\big)\), \(u'=\exp(\tau\big(\frac{2P'}{b+b^{-1}}-1)\big)\), \(q_1=\exp\big(\frac{2b^{-1} \tau}{(b+b^{-1}}\big)\), \(q_2=\exp\big(\frac{2b \tau}{(b+b^{-1}}\big)\).

Note that this normalization factors satisfy relation 
\begin{equation}\label{eq:coset vacuum}
	\mathsf{C}^{\mathfrak{sl}}(u_1,u_2,u_3;q_1,q_2)=\mathsf{C}^{\mathfrak{sl}}(u_1,u_2,u_3;q_1,q_1^{-1}q_2)\mathsf{C}^{\mathrm{Vir}}(u_1,u_2,u_3;q_1 q_2^{-1},q_2).
\end{equation}
The \(q_1,q_2\) parameters here exactly correspond to coset relations, e.g. \eqref{eq:blowup}, Namely, we take, \(q_1=\re^{2 \tau}\), \(q_2=\re^{-2 (k+2) \tau}\), then for \(\mathsf{C}^{\mathfrak{sl}}\) on the right side parameters are \(\re^{2 \tau}, \re^{-2 (k+3) \tau}\) i.e. correspond to \(\widehat{\mathfrak{sl}}(2)_{k+1}\) and for \(\mathsf{C}^{\mathrm{Vir}}\) parameters are \(\re^{2 (k+3)\tau}, \re^{-2 (k+2) \tau}\) i.e. correspond to Virasoro algebra with $b = b_{GKO}=-\ri \sqrt{\frac{k+2}{k+3}}$. Relation on parameters \(u\) also agrees with relation $P(\lambda) = P_{GKO}(\lambda)=-\frac{\lambda +1}{2 (k+2) } b_{GKO}$.

Using this normalization and relations \eqref{eq:ratio of C}, \eqref{eq:coset vacuum} the (coset, blowup) relation on conformal block \eqref{eq:blowup} takes the form
\begin{equation}
	\Psi_k^{\mathrm{full}}\left(\vec{\mu},\lambda;x, z\right)
	=
	\sum_{l \in \mathbb{Z}}
	\Psi_{k+1}^{\mathrm{full}}\left(\vec{\mu},\lambda+2l;x, z\right)\mathrm{F}_b^{\mathrm{full}}\left(\vec{P},P(\lambda)+lb;z \right). 	
\end{equation}

Similarly, one can renormalize conformal blocks and hide coefficients in other (coset, blowup) relations, e.g. in \eqref{eq:blowup tor} or \eqref{eq:blowup Whittaker}.

%{
%\color{red}
%We assume that vertex operators $\Phi_\Delta(w):V_1 \rightarrow V_2$ are normalized such that 
%\begin{equation}
%	\langle v_{P_2,b}| \Phi_\Delta(1) v_{P_1,b} \rangle \sim \mathsf{C}^{\mathrm{Vir}}(\re^{\tau P_1},\re^{\tau\left(\frac{1}{2}(b+b^{-1})-P\right)},\re^{\tau P_2};\re^{\tau b},\re^{\tau b^{-1}}).
%\end{equation}
%Here \( \sim\) 
%means that we omit some factors which depends on only of the parameters \(P_1,P,P_2\). %Note that this expression depends on \(\tau\), but below we will consider rations of these functions and they will not depend on \(\tau\)
%
%Similarly we can normalize vertex operators for \(\widehat{\mathfrak{sl}}(2)\) as
%{\color{teal}\begin{equation}
%		\langle v_{\mu,k}| \mathcal{U}_\nu(1, 1) v_{\lambda,k} \rangle \sim \mathsf{C}^{sl}(\re^{\tau (\lambda+1)}, \re^{\tau(\nu+1)}, \re^{\tau (\mu+1)};\re^{\tau}, \re^{-\tau \kappa}).
%	\end{equation}
%}
%}

\paragraph{Data Availability}  The authors declare that the data supporting the findings of this study are available within the paper.

\paragraph{Conflict of Interest} The authors have no relevant financial or non-financial interests to disclose.

\bibliographystyle{alpha}
\addcontentsline{toc}{section}{\refname}  
\bibliography{Coset}

\noindent \textsc{School of Mathematics, University of Edinburgh, Edinburgh, UK 
%	\\ Kavli IPMU, University of Tokyo, Kashiwa, Japan
%	\\ Landau Institute for Theoretical Physics, Chernogolovka, Russia,
%	\\ Center for Advanced Studies, Skoltech, Moscow, Russia,
	}

\emph{E-mail}:\,\,\textbf{mbersht@gmail.com}\\

\noindent\textsc{Section de Math\'ematiques, Universit\'e de Gen\'eve, Geneva, Switzerland
	\\	HSE University, Moscow, Russia
	}

\emph{E-mail}:\,\,\textbf{trufaleks2022@gmail.com}\\

\noindent \textsc{Hebrew University of Jerusalem, Jerusalem, Israel
	\\	HSE University, Moscow, Russia
	}

\emph{E-mail}:\,\,\textbf{borfeigin@gmail.com}	        
\end{document}